\tikzstyle{empty}=[circle,draw=black!80,thick]
\tikzstyle{emptyn}=[circle,draw=black!80,fill=white,scale=0.5] 
\tikzstyle{nero}=[circle,draw=black!80,fill=black!80,thick] 
\newcommand{\triv}{\mathbbm{1}}
\newcommand{\N}{\mathbb{N}}
\newcommand{\cB}{\mathcal{B}}
\newcommand{\cC}{\mathcal{C}}
\newcommand{\cD}{\mathcal{D}}
\newcommand{\cLR}{\mathcal{LR}}
\newcommand{\cP}{\mathcal{P}}
\newcommand{\cQ}{\mathcal{Q}}
\newcommand{\cR}{\mathcal{R}}
\newcommand{\cV}{\mathcal{V}}
\newcommand{\cW}{\mathcal{W}}
\newcommand{\cX}{\mathcal{X}}
\newcommand{\sfs}{\mathsf{s}}
\newcommand{\sfx}{\mathsf{x}}
\newcommand{\sfy}{\mathsf{y}}
\newcommand{\Lin}{\operatorname{Lin}}
\newcommand{\Irr}{{\operatorname{Irr}}}
\newcommand{\Char}{{\operatorname{Char}}}
\newcommand{\down}{\big\downarrow}
\newcommand{\up}{\big\uparrow}
\newcommand{\tworow}[2]{\mathsf{t}_{#1}[{#2}]}
\newcommand{\hook}[2]{\mathsf{h}_{#1}[{#2}]}
\newtheorem{theorem}{Theorem}[section]
\newtheorem{lemma}[theorem]{Lemma}
\newtheorem{corollary}[theorem]{Corollary}
\newtheorem{proposition}[theorem]{Proposition}
\newtheorem{definition}[theorem]{Definition}
\newtheorem{notation}[theorem]{Notation}
\theoremstyle{definition}
\newtheorem{example}[theorem]{Example}
\theoremstyle{remark}
\newtheorem{remark}[theorem]{Remark}
\numberwithin{equation}{section}
\begin{document}

\title{Positivity of Sylow branching coefficients for symmetric groups}


\author{Stacey Law}
\address[S. Law]{Department of Pure Mathematics and Mathematical Statistics, University of Cambridge, Cambridge CB3 0WB, UK}
\email{swcl2@cam.ac.uk}

\thanks{}
 
\begin{abstract}
	In this article we investigate the positivity of Sylow branching coefficients for symmetric groups when $p=3$. In particular, we complete the discussion begun by Giannelli and the author in \cite[\textit{J.~Algebra} \textbf{506} (2018), 409--428]{GL1} and developed in \cite[\textit{J.~London~Math.~Soc.} (2) \textbf{103} (2021), 697--728]{GL2} concerning the case of odd primes.
\end{abstract}

\keywords{}

\subjclass[2020]{20C15;20C30}

\maketitle

\section{Introduction}\label{sec:intro}

Let $p$ be a prime. Studying the relationship between the character theory of a finite group $G$ and that of a Sylow $p$-subgroup $P$ of $G$, and how character theoretic information determines the structural properties of the groups, are key areas of research in the representation theory of finite groups. 
For instance, a classical theorem of It\^{o} and Michler \cite{Ito,Michler} shows that $P$ is abelian and normal in $G$ if and only if $p\nmid\chi(1)$ for all $\chi\in\Irr(G)$, the set of irreducible complex characters of $G$. 
Malle and Navarro later characterised the two properties of abelian and normality separately in \cite{MN21,MN12} respectively: $P$ is abelian if and only if $p\nmid\chi(1)$ for all $\chi\in\Irr(G)$ belonging to the principal $p$-block of $G$, as predicted by Brauer's famous Height Zero Conjecture \cite{Brauer}; and $P\trianglelefteq G$ if and only if $p\nmid\chi(1)$ for all irreducible constituents $\chi$ of $\triv_P\up^G$, where $\triv$ denotes the trivial character of $P$. 

The latter may be restated in terms of so-called Sylow branching coefficients, which concern the restriction of irreducible characters of a finite group to their Sylow subgroups and their decomposition into irreducible constituents.
For $\chi\in\Irr(G)$ and $\phi\in\Irr(P)$, the \textit{Sylow branching coefficient} corresponding to $\chi$ and $\phi$ is the multiplicity
\[ Z^\chi_\phi := \langle \chi\down_{P_n}, \phi\rangle. \] 
Thus the above may be restated in terms of the \emph{positivity} of Sylow branching coefficients as: $P\trianglelefteq G$ if and only if $p\nmid\chi(1)$ for all $\chi\in\Irr(G)$ such that $Z^\chi_{\triv_P}>0$. We remark that the final condition was recently sharpened to $p\nmid Z^\chi_{\triv_P}$ in \cite{GLLV} and that both results utilised the classification of finite simple groups, with the alternating groups being one of the most involved cases to study.

Along these lines of investigation, there have been a number of recent results on Sylow branching coefficients for the closely related symmetric groups. Let $P_n$ be a Sylow $p$-subgroup of the symmetric group $S_n$. 
The positivity of $Z^\chi_{\triv_{P_n}}$ in the case of $p$ odd was completely described in \cite{GL1}, and the generalisation to arbitrary linear characters $\phi$ of $P_n$ when $p\ge 5$ was investigated in \cite{GL2}, where it was noted that the situation for $p=3$ differed significantly. In this introduction we will allude to some of these differences between the cases of $p=3$ and $p\ge 5$; these differences will be discussed in detail later in the article once we have introduced the necessary notation.

The main aim of this article is to continue the study, begun in \cite{GL2}, of $Z^\chi_\phi$ when $p$ is an odd prime for all $\phi\in\Lin(P_n)$, the set of all linear characters of $P_n$. In particular, we extend the main tools of \cite{GL2} and introduce new combinatorial machinery to analyse for all such $\phi$ the set $\Omega(\phi):=\{\chi\in\Irr(S_n)\mid Z^\chi_{\phi}>0\}$ 
when $p=3$.
The set $\Irr(S_n)$ is naturally in bijection with the set $\cP(n)$ of partitions of $n$. For $\lambda\in\cP(n)$, let $\chi^\lambda\in\Irr(S_n)$ be its corresponding irreducible character. Abbreviating $Z^{\chi^\lambda}_{\phi}$ to $Z^{\lambda}_{\phi}$, we may therefore view $\Omega(\phi)$ as a subset of $\cP(n)$ via
\[ \Omega(\phi) = \{\lambda\in\cP(n)\ |\ Z^{\lambda}_{\phi}>0 \}. \]

Our first main result is \textbf{Theorem~\ref{thm:a}}, which explicitly determines $\Omega(\phi)$ when $p=3$ for $\phi$ belonging to a large subset of $\Lin(P_n)$ called the \textit{quasi-trivial} characters (see Definition~\ref{def:quasi-trivial}). This generalises \cite[Theorem A]{GL1} and completes the description of $\Omega(\phi)$ for all quasi-trivial characters when $p$ is odd, since the case of $p\ge 5$ is described in \cite[Theorem 2.9]{GL2}. While those forms of $\Omega(\phi)$ arising when $p\ge 5$ also arise in the case of $p=3$, in the latter case there turn out to be many additional forms of $\Omega(\phi)$ which are attained by infinite subfamilies of quasi-trivial characters of $P_n$, and which therefore could not be captured by the analysis used to treat $p\ge 5$.

The precise statement of Theorem~\ref{thm:a} will be presented in Section~\ref{sec: sylow prelims}, since it depends on a parametrisation of the linear characters of $P_n$, which is recorded in detail in the same section.
While we defer the full technical statement, the key idea is that much of $\Omega(\phi)$ can be described using sets of the following kind: for any $n,t\in\N$, we define
\[ \cB_n(t) :=\{\lambda\in\cP(n) \mid \lambda_1\le t,\ l(\lambda)\le t\}. \]
In other words, $\cB_n(t)$ is the set of partitions of $n$ whose Young diagram fits inside a $t\times t$ square grid. We also define integers $m(\phi)$ and $M(\phi)$ in order to give tight bounds on the set $\Omega(\phi)$ via $\cB_n\big(m(\phi)\big) \subseteq \Omega(\phi) \subseteq \cB_n\big(M(\phi)\big)$. In other words,
\[ m(\phi):=\max\{t\in\N\mid\cB_n(t)\subseteq\Omega(\phi)\}	\quad\text{and}\quad M(\phi):=\min\{t\in\N\mid\Omega(\phi)\subseteq\cB_n(t)\}. \]
Our second main result is the explicit determination of the values $m(\phi)$ and $M(\phi)$ for \textit{all} $\phi\in\Lin(P_n)$ when $p=3$. This is a much more intricate description than its $p\ge 5$ counterpart \cite[Theorem 2.11]{GL2}: as above, the number of general forms that occur when $p=3$ is notably greater than when $p\ge 5$. We state our results on the values of $m(\phi)$ and $M(\phi)$ for all $\phi\in\Lin(P_n)$ in \textbf{Theorem~\ref{thm:b-primepower}} (when $n$ is a power of 3) and \textbf{Theorem~\ref{thm:b-general}} (for general $n\in\N$), from where we will also see that these bounds almost fully determine $\Omega(\phi)$ as $M(\phi)-m(\phi)$ turns out to be very small.

Finally, when $p=3$ we prove in \textbf{Theorem~\ref{thm:c}} that if $\Omega_n:=\bigcap_{\phi\in\Lin(P_n)}\Omega(\phi)$, then
\[ \cB_n(\tfrac{2n}{9})\subseteq\Omega_n. \]
From this we can immediately deduce that $\underset{n\to\infty}{\lim} \frac{|\Omega_n|}{|\cP(n)|} = 1$, completing the analogous set of results for all primes $p$ (for $p\ge 5$, see \cite[Theorem 5.8]{GL2}; for $p=2$, see \cite[Theorem C]{LO}).

\begin{remark}
The case of $p\ge 5$ was studied in \cite{GL2}, but the proofs of our main results Theorems~\ref{thm:a}, \ref{thm:b-primepower} and \ref{thm:b-general} for $p=3$ have some substantial technical differences compared to that for $p\ge 5$, as we will see throughout the article. Nevertheless, $\Omega(\phi)$ is always closed under conjugation of partitions when $p$ is an odd prime, since in this case $P_n$ is contained in the alternating subgroup of $S_n$.

When $p=2$, the situation is completely different; for one, $\Omega(\phi)$ is no longer closed under conjugation in general. While (even) the characterisation of $\Omega(\triv_{P_n})$ is still open when $p=2$, different techniques involving plethysm coefficients were used recently to prove new results on $Z^\lambda_{\triv_{P_n}}$ for $p=2$, including an analogue of Theorem~\ref{thm:c} for $p=2$ in \cite[Theorem C]{LO}, which resolves questions of \cite{GL1,GL2}.\hfill$\lozenge$
\end{remark}

This article is structured as follows.
In Section~\ref{sec:prelims} we recall background on the representation theory of symmetric groups and their Sylow $p$-subgroups, and set up the relevant notation in order to state our main results, Theorems~\ref{thm:a}, \ref{thm:b-primepower} and~\ref{thm:b-general}. In particular, in Section~\ref{sec:sn-lr} we collate a number of combinatorial results on Littlewood--Richardson coefficients from \cite{GL2} which we will build upon in order to tackle the case of $p=3$. In Section~\ref{sec:LR}, we further develop the combinatorics of these coefficients.

Since the structure of Sylow subgroups of symmetric groups depends crucially on the case of prime powers, in Section~\ref{sec:prime-power} we consider when $n$ is a power of 3, proving Theorem~\ref{thm:a} in this case (i.e.~Theorem~\ref{thm:a-prime power}), as well as Theorem~\ref{thm:b-primepower}. For the case of general $n$ of our main results, the proof of Theorem~\ref{thm:a} is completed in Section~\ref{sec:a-full}, and Theorem~\ref{thm:b-general} is proved in Section~\ref{sec:b-full}. Finally, we conclude this article by deducing Theorem~\ref{thm:c}.

\bigskip

\section{Preliminaries}\label{sec:prelims}
Let $p$ be a fixed prime and let $P_n$ denote a Sylow $p$-subgroup of $S_n$. Following some initial background which holds for all primes, we will specialise to $p=3$ following Theorem~\ref{thm:M} for the remainder of Section~\ref{sec:prelims}, and after Proposition~\ref{prop:case2} we will fix $p=3$ for the rest of the article.

For a finite group $G$, let $\Char(G)$ denote the set of complex characters of $G$, and let $\Irr(G)$ (resp.~$\Lin(G)$) denote the subset of those which are irreducible (resp.~linear). If $\chi\in\Irr(G)$ and $\varphi\in\Char(G)$, we write $\chi\mid\varphi$ to mean that $\chi$ is an irreducible constituent of $\varphi$.

Let $\N:=\{1,2,\dotsc\}$ and $\N_0:=\N\cup\{0\}$. For $m\in\N$, define $[m]:=\{1,2,\dotsc,m\}$ and $[\overline{m}]:=\{0,1,\dotsc,m-1\}$. We use $\delta_{\alpha,\beta}$ (or $\delta_{\alpha\beta})$ to denote the Kronecker delta of two variables $\alpha$ and $\beta$.

For a partition $\lambda=(\lambda_1,\lambda_2,\dotsc,\lambda_{l(\lambda)})$, its size, length and conjugate are denoted by $|\lambda|$, $l(\lambda)$ and $\lambda'$ respectively.
For $A\subseteq\bigcup_{n\in\N} \cP(n)$, define $A':=\{\lambda'\ |\ \lambda\in A\}$ and $A^\circ:=A\cup A'$. In other words, ${}^\circ$ denotes taking the closure under the conjugation of partitions.

If $n\in\N$ satisfies $n\ge\lambda_1$, then we let $(n,\lambda)$ denote the partition $(n,\lambda_1,\lambda_2,\dotsc,\lambda_{l(\lambda)})$. We also denote by $\lambda+\mu$ the component-wise addition of two partitions $\lambda$ and $\mu$.

Throughout, we will also use Mackey's Theorem on the induction and restriction of characters.
\begin{theorem}[{\cite[\textsection 5]{IBook}}]\label{thm:mackey}
	Let $G$ be a finite group. Suppose $H,K\le G$ and let $\phi\in\Char(H)$. Then
	\[ \phi\up_H^G\down_K = \sum_{g\in H\setminus G/K}\phi^g\down_{H^g\cap K}^{H^g}\up^K. \]
	In particular, if $KH=G$ then $\phi\up_H^G\down_K = \phi\down^H_{H\cap K}\up^K$.
\end{theorem}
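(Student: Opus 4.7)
The plan is to prove Mackey's formula by direct character-theoretic computation starting from the Frobenius formula for induced characters: for $x \in G$,
\[ \phi\up_H^G(x) = \frac{1}{|H|}\sum_{\substack{g\in G \\ gxg^{-1}\in H}} \phi(gxg^{-1}). \]
I would evaluate this at $x = k \in K$ and reorganise the right-hand side according to the $(H,K)$-double coset partition of $G$.

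First, I would fix a complete set of representatives $\{g_1,\dotsc,g_r\}$ for $H\backslash G/K$, so $G = \bigsqcup_{i=1}^r Hg_iK$. Each $g \in G$ can then be written as $g = hg_ik'$ with $h\in H$ and $k'\in K$; a standard orbit-counting argument shows that this parametrisation covers each element of $Hg_iK$ exactly $|H^{g_i}\cap K|$ times, under the convention $H^{g_i} := g_i^{-1}Hg_i$.

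Next, for each fixed $i$, I would isolate the contribution of the double coset $Hg_iK$ to the sum. Substituting $g = hg_ik'$, invoking $H$-invariance of $\phi$, and conjugating through $g_i$, the condition $gkg^{-1}\in H$ reduces to $k'k(k')^{-1}\in H^{g_i}$, and the corresponding summand becomes $\phi^{g_i}\bigl(k'k(k')^{-1}\bigr)$, where $\phi^{g_i}(y):=\phi(g_iyg_i^{-1})$ is the standard conjugate character on $H^{g_i}$. Combining the prefactor $1/|H|$ with the multiplicity $|H|\,|K|/|H^{g_i}\cap K|$ from the counting, the contribution collapses exactly to
\[ \bigl(\phi^{g_i}\down^{H^{g_i}}_{H^{g_i}\cap K}\up^K\bigr)(k). \]
Summing over $i$ gives the first assertion. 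For the ``In particular'' statement, if $KH = G$ then also $HK=G$, so $H\cdot 1\cdot K = G$ is already a single double coset; taking $g_1 = 1$ leaves $H^{g_1}\cap K = H\cap K$ and the formula collapses to $\phi\down^H_{H\cap K}\up^K$.

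The main obstacle is the bookkeeping: one must fix and consistently apply a convention for $H^g$ (either $g^{-1}Hg$ or $gHg^{-1}$) and verify that the double-coset counting cancels the $1/|H|$ prefactor exactly. A cleaner alternative plan is module-theoretic: let $V$ afford $\phi$, decompose the bimodule $\mathbb{C}[G] = \bigoplus_i \mathbb{C}[Hg_iK]$, tensor with $V$ over $\mathbb{C}[H]$, and identify each summand with $\mathrm{Ind}_{H^{g_i}\cap K}^{K}\mathrm{Res}_{H^{g_i}\cap K}^{H^{g_i}} V^{g_i}$ via the $K$-equivariant map $hg_ik\otimes v \mapsto k\otimes v$; this trades the combinatorial counting for verifying well-definedness and equivariance of this map.
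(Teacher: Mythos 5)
Your proposal is correct, and there is nothing in the paper to compare it against in detail: the paper does not prove this statement, it simply quotes Mackey's theorem as standard background from Isaacs, \S 5, and uses it as a tool. Your computation is the classical double-coset proof of that cited result. Two small bookkeeping points, neither of which breaks your plan: (i) the multiplicity with which the map $(h,k')\mapsto hg_ik'$ covers an element of $Hg_iK$ is $|H^{g_i}\cap K|$, whereas $|H|\,|K|/|H^{g_i}\cap K|$ is the \emph{size} of the double coset rather than a multiplicity; the cancellation actually happens because the summand $\phi^{g_i}\bigl(k'k(k')^{-1}\bigr)$ does not depend on $h$, so the $h$-sum produces the factor $|H|$ that kills the prefactor $1/|H|$ and leaves $\tfrac{1}{|H^{g_i}\cap K|}\sum_{k'}$, which is exactly the Frobenius formula for induction from $H^{g_i}\cap K$ to $K$; (ii) the condition you derive is $k'k(k')^{-1}\in H^{g_i}$, while that induction formula requires $k'k(k')^{-1}\in H^{g_i}\cap K$ --- these agree since $k'k(k')^{-1}\in K$ automatically. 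The ``in particular'' reduction is also fine: $KH=G$ gives $HK=(KH)^{-1}=G$, so $G=H\cdot 1\cdot K$ is a single double coset with representative $1$, and the formula collapses to $\phi\down^H_{H\cap K}\up^K$ as you say. The module-theoretic alternative you sketch is likewise standard and would work.
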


\subsection{Wreath products}\label{sec:wreath}
Let $G$ be a finite group, $n\in\N$ and $H\le S_n$. Consider the imprimitive wreath product $G\wr H$. 
As in \cite[Chapter 4]{JK}, we denote the elements of $G\wr H$ by $(g_1,\dotsc,g_n;h)$ for $g_i\in G$ and $h\in H$. Let $V$ be a $\mathbb{C}G$--module and let its character be $\phi$.
Let $V^{\otimes n}$ be the corresponding $\mathbb{C}G^{\times n}$--module. The left action of $G\wr H$ on $V^{\otimes n}$ defined by linearly extending
\[ (g_1,\dotsc,g_n;h)\ :\quad v_1\otimes \cdots\otimes v_n \longmapsto g_1v_{h^{-1}(1)}\otimes\cdots\otimes g_nv_{h^{-1}(n)} \]
turns $V^{\otimes n}$ into a $\mathbb{C}(G\wr H)$--module, which we denote by $\widetilde{V^{\otimes n}}$ (see \cite[(4.3.7)]{JK}), with corresponding character $\tilde{\phi}$. For $\psi\in\Char(H)$, we let $\psi$ also denote its inflation to $G\wr H$ and define
\[ \cX(\phi;\psi):=\tilde{\phi}\cdot\psi \in \Char(G\wr H). \]
Note that if $K\le G$ and $L\le H$, then we have
\[ \cX(\phi;\psi)\down^{G\wr H}_{K\wr L} = \cX(\phi\down^G_K; \psi\down^H_L). \]
Moreover, if $\varphi\in\Irr(K)$ then we denote by $\Irr(G\mid\varphi)$ the set of characters $\chi\in\Irr(G)$ such that $\varphi$ is an irreducible constituent of the restriction $\chi\down_K$. Then for $\phi\in\Irr(G)$ and $\psi\in\Irr(H)$, we have that $\cX(\phi;\psi)\in\Irr(G\wr H\mid\phi^{\times n})$, as $\tilde{\phi}\in\Irr(G\wr H)$ is an extension of $\phi^{\times n}\in\Irr(G^{\times n})$. Further, Gallagher's Theorem \cite[Corollary 6.17]{IBook} gives
\[ \Irr(G\wr H\mid \phi^{\times n}) = \{\cX(\phi;\psi)\mid \psi\in\Irr(H)\}. \]
Specifically, if $C_p$ is cyclic of order $p$, then every $\psi\in\Irr(G\wr C_p)$ is either of the form
\begin{itemize}\setlength\itemsep{0.5em}
\item[(a)] $\psi=(\phi_{i_1}\times\cdots\times\phi_{i_p})\up^{G\wr C_p}_{G^{\times p}}$, where $\phi_{i_1},\dotsc,\phi_{i_p}\in\Irr(G)$ are not all equal (notice such $\psi$ is indeed irreducible by Clifford theory); or
\item[(b)] $\psi=\cX(\phi;\theta)$ for some $\phi\in\Irr(G)$ and $\theta\in\Irr(C_p)$.
\end{itemize}
When (a) holds, $\psi\down_{G^{\times p}}$ is the sum of the $p$ irreducible characters of $G^{\times p}$ whose $p$ factors are a cyclic permutation of $\phi_{i_1},\dotsc,\phi_{i_p}$. When (b) holds, $\psi\down_{G^{\times p}} = \phi^{\times p}\cdot\theta(1) = \phi^{\times p}$.

We record some useful results describing the irreducible constituents of restrictions and inductions of characters of wreath products. The first is elementary, while the proofs of the next two may be found in \cite[Lemmas 2.18, 2.19]{SLThesis}.

\begin{lemma}\label{lem: easy observation}
Let $G$ be a finite group and $H\le S_n$ for some $n\in\N$. Let $\chi\in\Irr(G)$. Then 
\[ \chi^{\times n}\up^{G\wr H}_{G^{\times n}} = \sum_{\theta\in\Irr(H)} \theta(1)\cdot\cX(\chi;\theta). \]
\end{lemma}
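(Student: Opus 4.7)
The plan is to prove this by combining Frobenius reciprocity with the description of $\Irr(G\wr H\mid \chi^{\times n})$ recalled just before the lemma statement. Since $\chi^{\times n}\up^{G\wr H}_{G^{\times n}}\in\Char(G\wr H)$, I would decompose it against the basis of $\Irr(G\wr H)$ by computing $\langle \chi^{\times n}\up^{G\wr H}_{G^{\times n}}, \psi\rangle$ for each $\psi\in\Irr(G\wr H)$.

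First I would apply Frobenius reciprocity to get
\[ \bigl\langle \chi^{\times n}\up^{G\wr H}_{G^{\times n}},\, \psi\bigr\rangle_{G\wr H} = \bigl\langle \chi^{\times n},\, \psi\down_{G^{\times n}}\bigr\rangle_{G^{\times n}}. \]
Next, I would observe that $\chi^{\times n}$ is $H$-invariant (the action of $H$ on $G^{\times n}$ permutes the $n$ identical tensor factors), so by Clifford theory every $\psi\in\Irr(G\wr H)$ with $\chi^{\times n}\mid\psi\down_{G^{\times n}}$ lies in $\Irr(G\wr H\mid \chi^{\times n})$. By the Gallagher-type description quoted in the excerpt, this set is exactly $\{\cX(\chi;\theta)\mid \theta\in\Irr(H)\}$. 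For all other $\psi\in\Irr(G\wr H)$, the inner product above vanishes.

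For $\psi = \cX(\chi;\theta) = \tilde{\chi}\cdot\theta$, the extension $\tilde{\chi}$ restricts to $\chi^{\times n}$, and $\theta$ (being inflated from $H$) restricts to $\theta(1)\cdot\triv_{G^{\times n}}$, so
\[ \cX(\chi;\theta)\down_{G^{\times n}} = \theta(1)\cdot \chi^{\times n}. \]
Hence $\langle \chi^{\times n}\up^{G\wr H}_{G^{\times n}}, \cX(\chi;\theta)\rangle = \theta(1)$, and summing over $\theta\in\Irr(H)$ yields the claimed decomposition.

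There is no real obstacle here: the statement is essentially the combination of Frobenius reciprocity with the structural result $\cX(\chi;\theta)\down_{G^{\times n}} = \theta(1)\cdot\chi^{\times n}$ already recorded in the paragraph preceding the lemma, so the proof amounts to assembling ingredients that are given for free.
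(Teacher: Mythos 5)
Your argument is correct and is precisely the elementary argument the paper has in mind (it omits the proof, noting only that the lemma is elementary): Frobenius reciprocity reduces the computation to restrictions, Gallagher's theorem as quoted identifies the constituents lying over $\chi^{\times n}$ as the $\cX(\chi;\theta)$, and the observation $\cX(\chi;\theta)\down_{G^{\times n}}=\theta(1)\cdot\chi^{\times n}$ gives the multiplicities. Nothing is missing.
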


\begin{lemma}\label{lem: 9.5}
	Let $p$ be an odd prime and $G$ be a finite group. 
	Let $\Delta\in\Char(G)$ and $\alpha\in\Irr(G)$. If $\langle \Delta,\alpha\rangle\ge 2$, then
	$ \langle \cX(\Delta;\theta), \cX(\alpha;\beta)\rangle \ge 2$
	for all $\theta,\beta\in\Irr(C_p)$.
\end{lemma}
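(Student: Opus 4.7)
Write $\Delta = 2\alpha + \Delta''$ with $\Delta'' \in \Char(G)$, and let $V = U_1 \oplus U_2 \oplus W$ be an underlying $\mathbb{C}G$-module realising this decomposition, where $U_1 \cong U_2$ afford $\alpha$ and $W$ affords $\Delta''$. The plan is to analyse the $\mathbb{C}(G\wr C_p)$-module $\widetilde{V^{\otimes p}}$, whose character is $\tilde{\Delta}$, via the natural decomposition
\[ V^{\otimes p} = \bigoplus_{f : [p] \to \{1,2,3\}} V_{f(1)}\otimes \cdots \otimes V_{f(p)}, \]
where $V_3 := W$. The $C_p$ action cyclically permutes these summands: constant $f$ give $C_p$-fixed summands with characters $\tilde{\alpha}$ (twice) and $\widetilde{\Delta''}$, while each non-constant $f$ lies in a $C_p$-orbit of size exactly $p$ (as $p$ is prime), contributing an induced character $(V_{f(1)}\otimes \cdots \otimes V_{f(p)})\up^{G \wr C_p}_{G^{\times p}}$ to $\tilde{\Delta}$.

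The key step is to isolate the contributions from non-constant $f : [p] \to \{1,2\}$, i.e.~those taking values only in the indices corresponding to copies of $\alpha$. There are $2^p - 2$ such functions, forming $(2^p - 2)/p$ orbits (an integer by Fermat's little theorem), and each corresponding tensor product has $G^{\times p}$-character $\alpha^{\times p}$. By Lemma~\ref{lem: easy observation}, each of these orbits contributes $\alpha^{\times p} \up^{G\wr C_p}_{G^{\times p}} = \sum_{\beta \in \Irr(C_p)} \cX(\alpha;\beta)$ to $\tilde{\Delta}$.

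To pass to $\cX(\Delta; \theta) = \tilde{\Delta} \cdot \theta$, the observation is that $\theta$ restricts trivially to $G^{\times p}$, so by Frobenius reciprocity $(\alpha^{\times p}\up^{G\wr C_p}_{G^{\times p}}) \cdot \theta = \alpha^{\times p}\up^{G\wr C_p}_{G^{\times p}}$. Consequently, for every $\theta, \beta \in \Irr(C_p)$,
\[ \langle \cX(\Delta; \theta), \cX(\alpha; \beta) \rangle \ \ge\ \frac{2^p - 2}{p}\ \ge\ 2, \]
where the final inequality is an equality for $p = 3$ and follows from $2^{p-1} \ge p+1$ for larger odd primes. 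The main technical hurdle is the careful bookkeeping in the $\mathbb{C}(G \wr C_p)$-decomposition of $V^{\otimes p}$ (and the verification that the non-constant orbits from the two copies of $\alpha$ already suffice, without needing to track constituents from $\Delta''$); once these orbits are correctly identified and counted, the remaining character-theoretic manipulations are routine.
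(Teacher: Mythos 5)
Your proof is correct: the decomposition of $V^{\otimes p}$ into $C_p$-orbits of tensor-factor summands, the identification of the $(2^p-2)/p$ regular orbits supported on the two copies of $\alpha$ (each with $G^{\times p}$-character $\alpha^{\times p}$, hence each contributing $\sum_{\beta}\cX(\alpha;\beta)$ by Lemma~\ref{lem: easy observation}), the projection-formula step $\big(\alpha^{\times p}\up^{G\wr C_p}_{G^{\times p}}\big)\cdot\theta=\alpha^{\times p}\up^{G\wr C_p}_{G^{\times p}}$, and the final bound $\tfrac{2^p-2}{p}\ge 2$ for odd $p$ are all sound, with the complementary summands contributing a genuine (hence nonnegative) remainder. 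The paper defers this proof to \cite[Lemma 2.18]{SLThesis}, and your orbit-counting argument is essentially the standard one (equivalently: $\widetilde{2\alpha}$ is $\tilde{\alpha}$ times the permutation character of $C_p$ on $\{1,2\}^p$, which contains the regular character of $C_p$ with multiplicity $\tfrac{2^p-2}{p}$), so nothing needs correcting.
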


\begin{lemma}\label{lem: 9.6}
Let $G$, $H$ be finite groups with $H\le S_m$ for some $m\in\N$, and let $\theta\in\Irr(H)$. Let $\alpha\in\Irr(G)$ and $\Delta\in\Char(G)$ be such that $\langle\Delta,\alpha\rangle=1$. Then for any $\beta\in\Irr(H)$,
\[ \langle\cX(\Delta;\theta), \cX(\alpha;\beta)\rangle = \langle\theta,\beta\rangle = \delta_{\theta,\beta}. \]
\end{lemma}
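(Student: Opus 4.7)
The plan is to prove the lemma by a direct character calculation. The main tool is the explicit cycle formula for the extended character $\tilde\Delta$: for any $(g_1,\dotsc,g_m;h)\in G\wr H$,
\[ \tilde\Delta(g_1,\dotsc,g_m;h) = \prod_{C}\Delta(g_C), \]
where $C$ runs over the cycles of $h$ and $g_C$ denotes the product of the $g_i$ taken in order around $C$; the same formula holds for $\tilde\alpha$. Substituting into $\langle\cX(\Delta;\theta),\cX(\alpha;\beta)\rangle_{G\wr H}$ separates the calculation into an outer sum over $h\in H$ weighted by $\theta(h)\overline{\beta(h)}$, and, for each fixed $h$, an inner sum over $(g_1,\dotsc,g_m)\in G^{\times m}$ of $\prod_C(\Delta\overline{\alpha})(g_C)$.

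The key combinatorial step is to evaluate that inner $G^{\times m}$-sum. Since the cycles of $h$ partition $\{1,\dotsc,m\}$, the sum factorises cycle by cycle. For a cycle $C=(i_1,\dotsc,i_\ell)$, the change of variables $x=g_{i_1}g_{i_2}\cdots g_{i_\ell}$ is $|G|^{\ell-1}$-to-$1$ onto $G$, so
\[ \sum_{(g_i)_{i\in C}\in G^{\ell}}(\Delta\overline{\alpha})(g_{i_1}\cdots g_{i_\ell}) = |G|^{\ell-1}\sum_{x\in G}(\Delta\overline{\alpha})(x) = |G|^{\ell}\langle\Delta,\alpha\rangle_G. \]
Multiplying across all cycles of $h$ yields a total contribution of $|G|^{m}\langle\Delta,\alpha\rangle_G^{r(h)}$, where $r(h)$ denotes the number of cycles of $h$.

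Combining these pieces produces the clean formula
\[ \langle\cX(\Delta;\theta),\cX(\alpha;\beta)\rangle_{G\wr H} = \frac{1}{|H|}\sum_{h\in H}\theta(h)\overline{\beta(h)}\langle\Delta,\alpha\rangle_G^{r(h)}. \]
The hypothesis $\langle\Delta,\alpha\rangle_G=1$ then collapses the factor $\langle\Delta,\alpha\rangle_G^{r(h)}$ to $1$ uniformly in $h$, so the right-hand side reduces to $\langle\theta,\beta\rangle_H=\delta_{\theta,\beta}$ by orthogonality of irreducible characters of $H$, which is the claim. There is no serious obstacle; the only point demanding care is the correct application of the cycle formula for $\tilde\Delta$, which follows directly from the definition of the $G\wr H$-action on $\widetilde{V^{\otimes m}}$ recalled in Section~\ref{sec:wreath} and is standard (see, e.g.\ \cite[Chapter 4]{JK}).
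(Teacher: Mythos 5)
Your proof is correct. The cycle-product formula $\tilde\Delta(g_1,\dotsc,g_m;h)=\prod_C\Delta(g_C)$ is valid for the character of $\widetilde{V^{\otimes m}}$ for an arbitrary (not necessarily irreducible) module $V$, the change of variables on each cycle is indeed $|G|^{\ell-1}$-to-one onto $G$, and the resulting identity $\langle\cX(\Delta;\theta),\cX(\alpha;\beta)\rangle=\tfrac{1}{|H|}\sum_{h\in H}\theta(h)\overline{\beta(h)}\,\langle\Delta,\alpha\rangle^{r(h)}$ immediately gives the claim when $\langle\Delta,\alpha\rangle=1$. Note that the paper does not prove this lemma in situ but cites the author's thesis \cite[Lemma 2.19]{SLThesis}; the argument there (and the one suggested by how the paper otherwise works with these characters) is module-theoretic rather than computational: one writes $\Delta=\alpha+\Delta'$ with $\langle\Delta',\alpha\rangle=0$, observes that $V_\alpha^{\otimes m}$ is the unique $G\wr H$-summand of $V^{\otimes m}$ whose restriction to $G^{\times m}$ involves $\alpha^{\times m}$, and then invokes Gallagher's theorem to get $\langle\cX(\alpha;\theta),\cX(\alpha;\beta)\rangle=\langle\theta,\beta\rangle$. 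Your direct orthogonality computation is a genuinely different and equally valid route; it is self-contained, produces the general closed formula above for any value of $\langle\Delta,\alpha\rangle$ (so it also yields Lemma~\ref{lem: 9.5} as a by-product, by evaluating the sum over $C_p$), at the cost of being a calculation rather than exhibiting the structural reason (the uniqueness of the summand lying over $\alpha^{\times m}$) that the paper's viewpoint makes visible.
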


\medskip

\subsection{The Sylow $p$-subgroups of $S_n$}\label{sec: sylow prelims}
We refer the reader to \cite[Chapter 4]{JK} for further detail on Sylow subgroups of symmetric groups. 
For any $k\in\N_0$, the Sylow $p$-subgroup $P_{p^k}$ is isomorphic to the $k$-fold wreath product $P_p\wr \cdots\wr P_p$, noting that $P_p$ is cyclic of order $p$.
If $n\in\N$ has $p$-adic expansion $n=\sum_{i=1}^t a_ip^{n_i}$, that is, $t\in\N$, $0\le n_1<\cdots<n_t$ and $a_i\in[p-1]$ for all $i$, then $P_n\cong (P_{p^{n_1}})^{\times a_1}\times\cdots\times (P_{p^{n_t}})^{\times a_t}$.

To fix a parametrisation of the linear characters of $P_n$, we begin with $n=p^k$.
Let $\Irr(P_p)=\{\phi_0, \phi_1,\dotsc, \phi_{p-1}\}=\Lin(P_p)$, where $\phi_0=\triv_{P_p}$ is the trivial character. 
When $k\geq 2$, 
\[ \Lin(P_{p^k}) = \bigsqcup_{\phi\in\Lin(P_{p^{k-1}})} \Irr(P_{p^k}\ |\ \phi^{\times p}) = \bigsqcup_{\phi\in\Lin(P_{p^{k-1}})} \{\cX(\phi;\psi)\ |\ \psi\in\Lin(P_p)\} \]
by \cite[Corollary 6.17]{IBook}. Thus we may parametrise $\Lin(P_{p^k}) = \{\phi(s) \mid s\in [\overline{p}]^k \}$ as follows: if $k=0$ and $s$ is the empty sequence, then $\phi(s)=\triv_{P_1}$, while if $k=1$ and $s=(\sfx)$ for $\sfx\in[\overline{p}]$, then $\phi(s)=\phi_{\sfx}$. If $k\geq 2$, then for any $s=(\sfs_1,\ldots, \sfs_k)\in [\overline{p}]^k$, we recursively define 
\[ \phi(s):=\cX\big(\phi(s^-); \phi_{\sfs_k}\big), \]
where $s^-=(\sfs_1,\ldots, \sfs_{k-1})\in [\overline{p}]^{k-1}$. For any $i\in[k-1]$, the associativity of wreath products gives $\phi(s)=\cX\big(\phi((\sfs_1,\dotsc,\sfs_i));\phi((\sfs_{i+1},\dotsc,\sfs_k))\big)$. 
Moreover, under this labelling the trivial character $\triv_{P_{p^k}}$ corresponds to the sequence $(0,\dotsc,0)\in[\overline{p}]^k$. (This indexing of $\Lin(P_{p^k})$ may equivalently be obtained from its canonical bijection to $\Lin(P_{p^k}/P_{p^k}')$, noting that $P_{p^k}/P_{p^k}'\cong (P_p)^k$.)

\begin{remark}\label{rem:binary}
	Let $s,t\in[\overline{p}]^k$. From \cite[Lemma 4.3]{SLThesis}, $\phi(s)$ and $\phi(t)$ are $N_{S_{p^k}}(P_{p^k})$--conjugate if and only if the sequences $s$ and $t$ contain 0s in exactly the same positions. If this condition on $s$ and $t$ holds, then $\phi(s)\up^{S_{p^k}}=\phi(t)\up^{S_{p^k}}$ and so $\Omega(\phi(s))=\Omega(\phi(t))$. Since our main object of study is $\Omega(\phi)$ for $\phi\in\Lin(P_{p^k})$, it therefore suffices to consider only $\phi=\phi(s)$ for $s\in\{0,1\}^k$, rather than $s\in[\overline{p}]^k=\{0,1,\dotsc,p-1\}^k$. 
	
	When the meaning is clear, we make a number of abbreviations, such as $s=(\sfs_1,\sfs_2,\dotsc,\sfs_k)$ to $\sfs_1 \sfs_2 \dotsc \sfs_k$, strings of $l$ consecutive zeros (resp.~ones) to $0^l$ (resp.~$1^l$), $\Omega((\sfs_1,\dotsc,\sfs_k))$ to $\Omega(\sfs_1,\dotsc,\sfs_k)$, and for $\sfx\in\{0,1\}$ we let $(s,\sfx)$ denote the concatenation of $s$ and $(\sfx)$. \hfill$\lozenge$
\end{remark}

Now let $n\in\N$ be arbitrary. Suppose $n$ has $p$-adic expansion $n=\sum_{i=1}^t a_ip^{n_i}$ where $0\le n_1<\cdots<n_t$ and $a_i\in[p-1]$.
Since $P_n\cong (P_{p^{n_1}})^{\times a_1}\times\cdots\times (P_{p^{n_t}})^{\times a_t}$, then
\begin{equation}\label{eqn: index}
	\Lin(P_n) = \{\phi(\underline{\mathbf{s}})\ |\ \underline{{\mathbf{s}}}=\big(\mathbf{s}(1,1),\ldots, \mathbf{s}(1,a_1), \mathbf{s}(2,1),\ldots, \mathbf{s}(2,a_2), \ldots, \mathbf{s}(t,a_t)\big) \},
\end{equation}
where for all $i\in[t]$ and $j\in[a_i]$ we have that ${\bf{s}}(i,j)\in[\overline{p}]^{n_i}$, and 
\[ \phi(\underline{\mathbf{s}}) := \phi(\mathbf{s}(1,1))\times\cdots\times\phi(\mathbf{s}(1,a_1))\times \phi(\mathbf{s}(2,1))\times\cdots\times\phi(\mathbf{s}(2,a_2))\times\cdots\times\phi(\mathbf{s}(t,a_t)). \]

\begin{notation}\label{not:correspond}
	When we write that $\phi(\underline{\mathbf{s}})$ is a linear character of $P_n$, we mean that $\underline{\mathbf{s}}$ is 
	of the form described in (\ref{eqn: index}) above. 
	To simplify notation, we define $R:=\sum_{i=1}^t a_i$, let $\{s_1,\dotsc,s_R\}$ be the multiset given by $\{\mathbf{s}(i,j) \mid i\in[t],\ j\in[a_i]\}$, and say that \emph{$\phi$ corresponds to $\{s_1,\dotsc,s_R\}$}.
\end{notation}

\begin{remark}
	Let $\phi(\underline{\mathbf{s}})$ and $\phi(\underline{\mathbf{t}})\in\Lin(P_n)$. By \cite[Lemma 4.5]{SLThesis}, $\phi(\underline{\mathbf{s}})$ and $\phi(\underline{\mathbf{t}})$ are $N_{S_n}(P_n)$--conjugate if and only if there exists some $\sigma\in S_{a_1}\times S_{a_2}\times\cdots\times S_{a_t}$ such that for all $i\in[t]$ and $j\in[a_i]$, the sequences $\mathbf{s}(i,j)$ and $\mathbf{t}(i,\sigma(j))\in [\overline{p}]^{n_i}$ contain 0s in exactly the same positions. When this holds, then $\phi(\underline{\mathbf{s}})\up^{S_n} = \phi(\underline{\mathbf{t}})\up^{S_n}$, and so $\Omega(\phi(\underline{\mathbf{s}})) = \Omega(\phi(\underline{\mathbf{t}}))$. In particular, the order of 
	$s_1,\dotsc,s_R$ as described above does not matter for our purposes of studying $\Omega(\phi)$.\hfill$\lozenge$
\end{remark}

If $\phi\in\Lin(P_{p^k})$ corresponds to sequence $s\in\{0,1\}^k$, we sometimes denote $\Omega(\phi)$, $m(\phi)$ and $M(\phi)$ by $\Omega(s)$, $m(s)$ and $M(s)$ respectively. Similarly if $\phi\in\Lin(P_n)$ corresponds to $\underline{\mathbf{s}}$ as in \eqref{eqn: index}, we sometimes use the notation $\Omega(\underline{\mathbf{s}})$, $m(\underline{\mathbf{s}})$ and $M(\underline{\mathbf{s}})$.

With a parametrisation of $\Lin(P_n)$ fixed, we can now define what it means for a linear character of $P_n$ to be \textit{quasi-trivial}.

\begin{definition}\label{def:quasi-trivial}
	Let $p$ be a prime. Let $n\in\N$ and $\phi\in\Lin(P_n)$. Suppose $\phi$ corresponds to the multiset of sequences $\{s_1,\dotsc, s_R\}$. We say that $\phi$ is \emph{quasi-trivial} if there is at most one non-zero entry in each component sequence $s_i$.
	(Note that $\phi=\triv_{P_n}$ corresponds to $\{s_1,\dotsc,s_R\}$ where each sequence $s_i$ contains only 0s, or is empty.)
\end{definition}

\medskip

\subsection{The main results}\label{sec:main-thms}
In this subsection we present the statements of our main results: a description of $\Omega(\phi)$ for quasi-trivial $\phi\in\Lin(P_n)$ (Theorem~\ref{thm:a}), and the values of $m(\phi)$ and $M(\phi)$ for all $\phi\in\Lin(P_n)$ (Theorems~\ref{thm:b-primepower} and~\ref{thm:b-general}). 

First, we record in our current notation \cite[Theorem A]{GL1}, which describes $\Omega(\triv_{P_n})$.
\begin{theorem}\label{thm:GL1A}
	Let $p$ be an odd prime and $n\in\N$. If $p=3$ then further suppose $n>10$. Then
	\[ \Omega(\triv_{P_n}) = \begin{cases}
		\cP(p^k)\setminus\{(p^k-1,1),(2,1^{p^k-2})\} & \text{if}\ n=p^k\text{ for some }k\in\N,\\
		\cP(n) & \text{otherwise}.
	\end{cases} \]
	(The values of $\Omega(\triv_{P_n})$ when $p=3$ and $n\le 10$ are listed in Table~\ref{table:m<27} below.)
\end{theorem}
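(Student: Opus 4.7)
The starting point is Frobenius reciprocity, which gives $Z^\lambda_{\triv_{P_n}} = \langle \chi^\lambda, \pi_n\rangle$ where $\pi_n := \triv_{P_n}\up^{S_n}$; thus the task reduces to describing the irreducible constituents of the permutation character $\pi_n$ on the cosets of $P_n$ in $S_n$.

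For the negative half in the prime-power case, recall that $\chi^{(p^k-1,1)} = \rho - \triv$, where $\rho$ is the permutation character of the natural action of $S_{p^k}$ on $[p^k]$. Since $P_{p^k}$ is transitive on $[p^k]$, Frobenius reciprocity yields $\langle \rho, \pi_{p^k}\rangle = 1 = \langle \triv, \pi_{p^k}\rangle$, hence $Z^{(p^k-1,1)}_{\triv_{P_{p^k}}} = 0$. Since $p$ is odd, $P_{p^k}$ is contained in the alternating subgroup of $S_{p^k}$, so the sign character restricts trivially to $P_{p^k}$ and $\Omega(\triv_{P_{p^k}})$ is closed under conjugation of partitions; in particular $(2,1^{p^k-2}) = (p^k-1,1)'$ is also excluded.

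For the positive direction when $n$ is not a prime power, the plan is to use the $p$-adic expansion $n=\sum_i a_i p^{n_i}$ (which has more than one ``piece'' in the factorisation $P_n = \prod_i (P_{p^{n_i}})^{\times a_i}$) and the containment of this product in the Young subgroup $Y = \prod_i (S_{p^{n_i}})^{\times a_i}$ of $S_n$. Induction in stages writes $\pi_n$ as the $Y$-to-$S_n$ induction of $\bigotimes_i \pi_{p^{n_i}}^{\otimes a_i}$, so by the Littlewood--Richardson rule, $\chi^\lambda$ appears in $\pi_n$ if and only if $\lambda$ admits an LR-expansion with factors $\mu_{i,j}\in\Omega(\triv_{P_{p^{n_i}}})$. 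By the inductive hypothesis each factor set $\Omega(\triv_{P_{p^{n_i}}})$ misses at most two partitions; combining this with elementary LR constructions (for instance, freely prescribing a largest row or column for one factor and absorbing the remainder into other factors) should allow every $\lambda\in\cP(n)$ to be realised.

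The main obstacle is the positive direction in the prime-power case, which I would prove by induction on $k$ after directly verifying the small base cases (including the exceptional values for $p=3$, $n\le 10$). For the inductive step, write $P_{p^k} = P_{p^{k-1}}\wr C_p$ and induce in stages through $S_{p^{k-1}}\wr C_p$. By Mackey applied using $(S_{p^{k-1}})^{\times p}\cdot(P_{p^{k-1}}\wr C_p) = S_{p^{k-1}}\wr C_p$, the intermediate induction $\triv_{P_{p^k}}\up^{S_{p^{k-1}}\wr C_p}$ restricts on the base group to $\pi_{p^{k-1}}^{\otimes p}$, and the wreath-product character theory recalled in Section~\ref{sec:wreath} decomposes it into constituents of two shapes: those of the form $\cX(\chi^\mu;\theta)$ for $\mu\in\Omega(\triv_{P_{p^{k-1}}})$ and $\theta\in\Irr(C_p)$, and those of the form $(\chi^{\mu_1}\times\cdots\times\chi^{\mu_p})\up^{S_{p^{k-1}}\wr C_p}$ for tuples $(\mu_1,\ldots,\mu_p)\in\Omega(\triv_{P_{p^{k-1}}})^{\times p}$ that are not all equal. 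Inducing further to $S_{p^k}$ converts each of these into a positive combination of $\chi^\nu$'s governed by Littlewood--Richardson coefficients $c^\nu_{\mu_1,\ldots,\mu_p}$. The technical core is then to show that for every $\nu\in\cP(p^k)\setminus\{(p^k-1,1),(2,1^{p^k-2})\}$ there exists an admissible tuple $(\mu_1,\ldots,\mu_p)$ with $c^\nu_{\mu_1,\ldots,\mu_p}>0$; I expect this to require a careful case analysis separating hooks, near-rectangles, and generic shapes (using in particular that $\Omega(\triv_{P_{p^{k-1}}})$ is nearly all of $\cP(p^{k-1})$ by the inductive hypothesis), and to constitute the bulk of the combinatorial work.
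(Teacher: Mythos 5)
A preliminary remark: the paper does not prove this statement at all — Theorem~\ref{thm:GL1A} is simply quoted from \cite[Theorem A]{GL1} — so the only meaningful comparison is with the strategy of that cited work, which your outline (induction on $k$ through $P_{p^{k-1}}\wr C_p$, Mackey, the wreath-product character theory of Section~\ref{sec:wreath}, and Littlewood--Richardson coefficients) does follow in spirit. Your negative half is correct and complete: transitivity of $P_{p^k}$ on $[p^k]$ gives $Z^{(p^k-1,1)}_{\triv_{P_{p^k}}}=0$, and closure of $\Omega(\triv_{P_{p^k}})$ under conjugation (since $P_{p^k}$ lies in the alternating group for $p$ odd) removes $(2,1^{p^k-2})$ as well.

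Both positive directions, however, are left as plans rather than proofs, and that is where essentially all of the difficulty lies. For $n=p^k$, the constituents of the form $(\chi^{\mu_1}\times\cdots\times\chi^{\mu_p})\up^{S_{p^{k-1}}\wr C_p}$ only reach those $\nu$ lying in $\cD\big(p,p^{k-1},\Omega(\triv_{P_{p^{k-1}}})\big)$ (cf.\ Lemma~\ref{lem: DinOm}); partitions $\nu$ for which every tuple with $c^\nu_{\mu_1,\dotsc,\mu_p}>0$ is forced to be constant — for instance $(p^k)$, $(1^{p^k})$ and rectangles or near-rectangles — must instead be reached through the $\cX(\mu;\theta)$ constituents, and deciding when $\chi^\nu\down_{S_{p^{k-1}}\wr S_p}$ contains a suitable $\cX(\mu;\theta)$ whose restriction hits the trivial character requires genuine extra input (results in the spirit of Theorems~\ref{thm:PW9.1} and~\ref{thm:GTT3.5}), not merely an unspecified ``careful case analysis''. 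Likewise, the non-prime-power case is not automatic from ``each factor misses at most two partitions'': for $p=3$ the factors are as small as $\Omega(\triv_{P_3})=\{(3),(1^3)\}$, and the $\star$-combinatorics is delicate — indeed $\Omega(\triv_{P_4})=\cP(4)\setminus\{(2,2)\}$, and $\Omega(\triv_{P_6})$, $\Omega(\triv_{P_{10}})$ are also proper subsets of $\cP(n)$, so your absorption argument must explain exactly where the hypothesis $n>10$ enters (the paper's Lemma~\ref{lem: 10.3} is devoted to precisely this kind of statement for the factor $\{(3),(1^3)\}$). As written, the proposal identifies the correct skeleton but defers the combinatorial core that constitutes the actual proof.
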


For odd primes, both $p=3$ and otherwise, it turns out that the shape of $\Omega(\phi)$ and the values of $m(\phi)$ and $M(\phi)$ are largely governed by the positions of certain 1s in the sequences $s\in\{0,1\}^k$ when $n=p^k$ and $\phi=\phi(s)$, and similarly by the positions of 1s in various $s_i$ when $n\in\N$ is general and $\phi$ corresponds to $\{s_1,\dotsc,s_R\}$. Therefore we make the following collection of definitions in order to refer to such positions.

\begin{definition}\label{def:sequences}
	Let $k\in\N$ and $s=(\sfs_1,\sfs_2,\dotsc,\sfs_k)\in\{0,1\}^k$.
	\begin{itemize}
		\item[$\circ$] For $z\in\{0,1,\dotsc,k\}$, let $U_k(z):=\{s\in\{0,1\}^k \mid \#\{i\in[k] \mid \sfs_i=1\} = z \}$. 
		
		That is, $U_k(z)$ is the set of binary sequences of length $k$ containing exactly $z$ many 1s.	
		
		\item[$\circ$] If $s\in U_k(z)$ where $z\ge 1$, then define $f(s):=\min\{i\in[k] \mid \sfs_i=1\}$. 
		\item[$\circ$] If $s\in U_k(z)$ where $z\ge 2$, then define $g(s)=\min\{i>f(s) \mid \sfs_i=1 \}$. 
	\end{itemize}
	In other words, $f(s)$ and $g(s)$ are just the positions of the leftmost and second leftmost 1s in the sequence $s$, when they exist.
	
	Now suppose $\sfs_1=1$.
	\begin{itemize}
		\item[$\circ$] Define $F(s) := \max\{i\in[k] \mid \sfs_1=\sfs_2=\cdots=\sfs_i=1 \} $. Clearly $z\ge F(s)$.
		\item[$\circ$] If $s\in U_k(z)$ where $z\ge F(s)+1$, then define $G(s) := \min\{ i>F(s) \mid \sfs_i=1\}$.
		\item[$\circ$] If $s\in U_k(z)$ where $z\ge F(s)+2$, then define $H(s) := \min\{ i>G(s) \mid \sfs_i=1 \}$.
	\end{itemize}
	In other words, $F(s)$ is the rightmost position in the consecutive subsequence of 1s containing $\sfs_1$. Then $G(s)$ is the position of the leftmost 1 to the right of $F(s)$, and $H(s)$ is the next leftmost 1 to the right of $G(s)$, if these exist. To give a visual example\footnote{
	The following may be useful mnemonics for Definition~\ref{def:sequences}:
	\begin{itemize}
		\item $U_k(z)$ counts the number of \textbf{U}nits or 1s in $s$;
		\item reading from the left, $f(s)$ is the position of the \textbf{f}irst 1 in $s$ (and $g(s)$ is the position of the next 1);
		\item when $s$ starts with consecutive 1s, $F(s)$ is the position of the last 1 in that \textbf{F}irst block of 1s (and $G(s)$ and $H(s)$ are the positions of the next 1s).
	\end{itemize}
	},
	\[ s = (\underset{\sfs_1=1}{1}, 1, \dotsc, 1, \underset{F(s)}{1}, 0, \dotsc, 0, \underset{G(s)}{1}, 0, \dotsc, 0, \underset{H(s)}{1}, \dotsc) \]
\end{definition}

The above notation would already allow us to state our main result giving the values of $M(\phi)$ and $m(\phi)$ for all $\phi\in\Lin(P_n)$ in the case where $n$ is a power of 3 (Theorem~\ref{thm:b-primepower}). However, we save the statement for later, and present all of our main results together at the end of Section~\ref{sec:main-thms}. 

Formulas for $M(\phi)$ for all odd primes were already found in Theorem~\ref{thm:GL1A} and \cite[Theorems 4.5, 5.2]{GL2}, which we record in the following theorem.

\begin{theorem}
	\label{thm:M}
	Let $p$ be any odd prime. 
	\begin{itemize}
		\item[(a)] Let $k\in\N$ and $s\in\{0,1\}^k$, and suppose $s\in U_k(z)$ for some $z\in\{0,1,\dotsc,k\}$. Then
		\begin{small}
			\[ M(s) = \begin{cases}
				p^k & \text{if }z=0\ \ (\text{i.e.}\ \phi(s)=\triv_{P_{p^k}}),\\
				p^k-p^{k-f(s)} & \text{if }z>0.
			\end{cases} \]
		\end{small}
		
		\item[(b)] Let $n\in\N$ and suppose $\phi(\underline{\mathbf{s}})\in\Lin(P_n)$ as in \eqref{eqn: index}. Then $M(\underline{\mathbf{s}}) = \sum_{(i,j)} M\big(\mathbf{s}(i,j)\big)$.
	\end{itemize}
\end{theorem}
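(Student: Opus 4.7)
The plan is to prove both parts inductively, exploiting the recursive wreath structure of $P_{p^k}$ together with the fact that $\Omega(\phi)$ is closed under partition conjugation for odd $p$. Part (b) reduces directly to the direct product decomposition: since $P_n = \prod_{(i,j)} P_{p^{n_i}}$ and $\phi(\underline{\mathbf{s}})$ is the corresponding external product, inducing stage by stage through the Young subgroup $\prod_{(i,j)} S_{p^{n_i}} \le S_n$ and applying the Littlewood--Richardson rule yields $\lambda_1 \le \sum_{(i,j)} M(\mathbf{s}(i,j))$ for every $\lambda \in \Omega(\phi(\underline{\mathbf{s}}))$, with equality attained by concatenating the factor-wise optimal first rows; conjugation closure then extends the same bound to $l(\lambda)$.

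For part (a), the case $z = 0$ is immediate from Theorem~\ref{thm:GL1A}. For $z \ge 1$ with $j = f(s)$, the upper bound $M(s) \le p^k - p^{k-j}$ follows by a single-step argument using the wreath view $P_{p^k} = P_{p^j} \wr P_{p^{k-j}}$: under this view, $\phi(s) \down_{P_{p^j}^{\times p^{k-j}}} = \phi(s')^{\times p^{k-j}}$ where $s' = (\sfs_1,\ldots,\sfs_j)$ still has $f(s') = j$ and hence $\phi(s')$ is non-trivial. For any $\lambda \in \Omega(\phi(s))$, the Littlewood--Richardson decomposition $\chi^\lambda \down_{S_{p^j}^{\times p^{k-j}}} = \sum c^\lambda_{\nu^{(1)},\ldots,\nu^{(p^{k-j})}} \prod_i \chi^{\nu^{(i)}}$ (with $\nu^{(i)} \vdash p^j$) must admit at least one tuple where each $\nu^{(i)} \in \Omega(\phi(s'))$; since $\chi^{(p^j)} \down_{P_{p^j}} = \triv_{P_{p^j}} \ne \phi(s')$, we have $(p^j) \not\in \Omega(\phi(s'))$, so each $\nu^{(i)}_1 \le p^j - 1$, giving $\lambda_1 \le \sum \nu^{(i)}_1 \le (p^j-1)p^{k-j} = p^k - p^{k-j}$.

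For the lower bound $M(s) \ge p^k - p^{k-j}$, I would exhibit $\lambda = (p^k - p^{k-j}, \mu) \in \Omega(\phi(s))$ for a suitable $\mu \vdash p^{k-j}$, by induction on $k$. Under the complementary wreath view $P_{p^k} = P_{p^{k-j}} \wr P_{p^j}$, the stabiliser in $P_{p^k}$ of a single block of size $p^{k-j}$ is a natural copy of $P_a \times P_b$ with $a = p^k - p^{k-j}$ and $b = p^{k-j}$. A Frobenius reciprocity computation on the permutation character of $P_{p^k}$ on $p^j$ blocks --- exemplified in the extreme case $j=k$ by the identity $\langle \chi^{(p^k-1,1)} \down_{P_{p^k}}, \phi((0^{k-1},1)) \rangle = 1$, which follows from the decomposition $\chi^{(p^k-1,1)} = \rho - \triv_{S_{p^k}}$ of the natural permutation character $\rho$ together with the triviality of $\phi((0^{k-1},1))$ on the point stabiliser (the latter being contained in the base $P_{p^{k-1}}^{\times p}$ of $P_{p^k} = P_{p^{k-1}} \wr P_p$) --- combined with Pieri's rule for $\chi^{(a,\mu)} \down_{S_a \times S_b}$ reduces positivity of $\langle \chi^{(a,\mu)} \down_{P_{p^k}}, \phi(s) \rangle$ to a smaller-rank instance on $P_b = P_{p^{k-j}}$, which is handled inductively by choosing $\mu$ to realise the maximum first row in $\Omega$ of the residual character of $\phi(s)$ on the block.

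The hard part will be this lower bound construction in full generality: one must choose $\mu$ adapted to the entire position pattern of 1s in $s$ past $f(s) = j$, and interface correctly with the wreath structure of $P_a \times P_b$ inside $P_{p^k}$. Controlling the Mackey contributions to $\chi^{(a,\mu)} \down_{P_{p^k}}$ coming from double cosets outside the block stabiliser requires careful use of the wreath product character formulas of Section~\ref{sec:wreath} (in particular Lemmas~\ref{lem: 9.5} and \ref{lem: 9.6}), and setting up the induction so that the residual character on $P_b$ remains within a class to which the inductive hypothesis applies is the main technical step, mirroring \cite[Theorems 4.5, 5.2]{GL2}.
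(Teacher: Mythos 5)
Two of your three ingredients are fine, and they coincide with the standard arguments behind this statement (which the paper itself does not reprove: Theorem~\ref{thm:M} is recorded from Theorem~\ref{thm:GL1A} and \cite[Theorems 4.5, 5.2]{GL2}). Part (b) is exactly Lemma~\ref{lem: omega star omega} plus the facts that $c^\lambda_{\mu\nu}>0$ forces $\lambda_1\le\mu_1+\nu_1$ and that componentwise sums realise equality, with conjugation closure handling lengths. Your upper bound in (a) — restrict to the base $P_{p^j}^{\times p^{k-j}}$ for $j=f(s)$, note $(p^j)\notin\Omega(\sfs_1,\dots,\sfs_j)$, so every $\nu^{(i)}_1\le p^j-1$ — is correct and is the argument used in \cite{GL2}.

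The genuine gap is the lower bound $M(s)\ge p^k-p^{k-j}$, which is the hard half, and the block-stabiliser route you sketch does not go through as stated. Put $a=p^k-p^{k-j}$, $b=p^{k-j}$. Pieri only gives you first-row control because every constituent of $(\triv_{S_a}\times\chi^\mu)\up^{S_{p^k}}_{S_a\times S_b}$ has first row at least $a$; so your plan needs $\langle(\triv_{S_a}\times\chi^\mu)\up^{S_{p^k}}\down_{P_{p^k}},\phi(s)\rangle>0$. After Frobenius reciprocity and Mackey, the term coming from your copy of $P_a\times P_b$ (the block stabiliser) is $\langle \triv_{P_a}\times(\chi^\mu\down_{P_b}),\,\phi(s)\down_{P_a\times P_b}\rangle$, and $\phi(s)\down_{P_a\times P_b}=\rho\times\phi(\sfs_1,\dots,\sfs_{k-j})$ where $\rho\in\Lin(P_a)$ is the restriction of $\phi(s)$ to the stabiliser of the remaining $p^j-1$ blocks. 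This $\rho$ is nontrivial as soon as some of $\sfs_1,\dots,\sfs_{k-j}$ is nonzero (in particular whenever $f(s)\le k-f(s)$), and typically also via $\phi(\sfs_{k-j+1},\dots,\sfs_k)$ on the point stabiliser of $P_{p^j}$; it is trivial essentially only in the extreme case $f(s)=k$ that you computed. So the difficulty is not merely the outside double cosets you defer to Lemmas~\ref{lem: 9.5} and~\ref{lem: 9.6}: the term you are counting on is itself zero in general. Pairing instead with $\chi^\alpha\times\chi^\mu$ where $\rho\mid\chi^\alpha\down_{P_a}$ restores positivity of some constituent, but only with first row $\ge\alpha_1<a$, so it no longer certifies $M(s)\ge a$; and arguing in the reverse direction — deducing $\phi(s)\mid\chi^{(a,\mu)}\down_{P_{p^k}}$ from the behaviour of $\chi^{(a,\mu)}\down_{P_a\times P_b}$ — is invalid, since restriction to a subgroup of $P_{p^k}$ gives only necessary conditions. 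A further warning sign is that the character your block carries is the head $\phi(\sfs_1,\dots,\sfs_{k-j})$, whereas the partitions $\mu$ with $(M(s),\mu)\in\Omega(s)$ are governed by the tail $(\sfs_{f(s)+1},\dots,\sfs_k)$ (Lemma~\ref{lem: 9.8}). The known proofs obtain the attainment of $p^k-p^{k-f(s)}$ by recursing through $S_{p^{k-1}}\wr S_p\ge P_{p^{k-1}}\wr P_p$ (or splitting $s$ at $f(s)$), using multiplicity-one constituents of $\chi^\lambda\down_{S_{p^{k-1}}\wr S_p}$ from Theorem~\ref{thm:PW9.1}, Theorem~\ref{thm:GTT3.5} and \cite[Theorem 1.5]{dBPW} together with Lemmas~\ref{lem: 9.5} and~\ref{lem: 9.6}, in the style of Lemmas~\ref{lem: 001} and~\ref{lem:4.7}; that is the machinery your sketch would need to rebuild, and it is genuinely different from a permutation-character computation on blocks.
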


However, the number of forms of $\Omega(\phi)$ and $m(\phi)$ that arise when $p=3$ far exceed those for $p\ge 5$: a visual comparison is given by Figures~\ref{fig:5} and~\ref{fig:3}, and indeed, a much more intricate analysis is required to fully describe the behaviour for $p=3$ compared with $p\ge 5$.

\smallskip

For the remainder of Section~\ref{sec:main-thms}, let $p=3$ unless otherwise stated. Since for $n=\sum_{i=1}^t a_ip^{n_i}$, the Sylow subgroup $P_n$ is a direct product of smaller Sylow subgroups $P_{p^{n_i}}$, we will tackle the $n=p^k$ case first, before combining these to analyse what happens for general $n\in\N$.

When considering $\phi\in\Lin(P_{3^k})$, the forms of $\Omega(\phi)$ and $m(\phi)$ present a more uniform pattern for $k\ge 3$. Therefore, we will often state definitions and results for $k\ge 3$, then observe that instances of small $k$ may be computed directly or are otherwise straightforward. Equally, to deal with general $\varphi\in\Lin(P_n)$ for $n$ not necessarily a power of 3, we will split out the `small case behaviour' by instead considering characters of the form $\phi\times\psi\in\Lin(P_{n+l})$ where $n\in\N$ is divisible by 27 and $l\in\{0,1,\dotsc,26\}$.

Next, to each sequence $s\in\{0,1\}^k$ we assign a \textit{type}, $\sigma(s)$, which reflects the general form of $\Omega(s)$ and $m(s)$. We also define an associated quantity $N(s)$ which will be used to help compute the value of $m(\phi)$ for arbitrary linear $\phi$ later. Indeed, we will see later that $N(s)$ is the maximal $N$ such that 
$(N,n-N)$ or $(N,1^{n-N})$ is contained in $\Omega(s)$ (and actually both will be contained in $\Omega(s)$). 

Partitions of these shapes will play an important role: specifically, we will call a partition $\lambda$ \emph{thin} if $\lambda$ is a hook, or if $\lambda$ satisfies $l(\lambda)\le 2$ or $\lambda_1\le 2$ (see Definition~\ref{def:thin} below).

\begin{definition}\label{def: typephi3}
	Let $k\in\N_{\ge 3}$. In Table~\ref{table:types} below, we list all possible sequences $s\in\{0,1\}^k$ in the second column. For each $s$, its 
	\emph{type}\footnote{
			The naming of the types $\sigma(s)$ has been chosen to reflect certain patterns in the values of $N(s)$ compared with $m(s)$. The quantities $N(s)$ are central for computing $m(\phi)$ for arbitrary linear $\phi$ later: see Definition~\ref{def: all p=3} (i) and Theorems~\ref{thm:a} and~\ref{thm:b-general} below.
			
			A key observation is that these types $\sigma(s)$ which have been defined for $p=3$ are very different from the types $\tau(s)$ \cite[Definition 2.6]{GL2} for $p\ge 5$. While the purpose of both $\sigma(s)$ and $\tau(s)$ is to group linear characters $\phi(s)\in\Lin(P_{p^k})$ according to the general forms of $\Omega(s)$ and $m(s)$, the groupings that arise when $p\ge 5$ are insufficient to describe and hence not applicable to $p=3$.} 
		is the number $\sigma(s)$ defined in the first column. We also define the number $N(s)$ for each $s$ in the third column. In light of Definition~\ref{def:quasi-trivial}, we indicate in the fourth column whether or not the sequences corresponding to each type are quasi-trivial.
	
	\begin{table}[h]
		\centering
		\begin{small}
			\[ \def\arraystretch{1.2}
			\begin{array}{|l|cl|cl|c|}
				\hline
				\sigma(s) & s & & \multicolumn{2}{c|}{N(s)} & \text{quasi-trivial?}\\
				\hline
				\hline
				1 & (0\dotsc 0), & k\ge 3 & m(s)+2 & =3^k & \checkmark \\
				10 & (1\dotsc 10\dotsc 0), & 2\le F(s)\le k-3 & m(s)+2 & = \tfrac{3^k+3^{k-F(s)}}{2} & \times \\ 
				11 & (1\dotsc 10) & 2\le F(s) = k-1 & m(s)+2 & = \tfrac{3^k+3}{2} & \times \\
				2 & (0\dotsc 01) & k\ge 3 & m(s) & =3^k-1 & \checkmark \\
				21 & (1\dotsc 1) & k\ge 3 & m(s) & =\tfrac{3^k+1}{2} & \times \\
				3 & (0\dotsc 010\dotsc 0) & 2\le f(s)\le k-3 & m(s)+1 & =3^k-3^{k-f(s)} & \checkmark \\
				30 & (0\dotsc 0100) & 2\le f(s)=k-2 & m(s)+1 & 
				= 3^k-9 & \checkmark \\
				31 & (0\dotsc 010) & 2\le f(s)=k-1 & m(s)+1 & 
				= 3^k-3 & \checkmark \\
				5 & (10\dotsc 0) & k\ge 4 & m(s)+2 & 
				= 2\cdot 3^{k-1} & \checkmark \\
				6 & (100) & & m(s)+5 & =18 & \checkmark \\
				7 & (1\dotsc 100) & 2\le F(s)=k-2 & m(s)+5 & = \tfrac{3^k+9}{2} & \times \\
				\hdashline
				22 & \text{all other}\ s & & m(s) & & \times \\
				\hline
			\end{array}\]
		\end{small}
		\caption{Definition of type $\sigma(s)$ and $N(s)$ for $s\in\{0,1\}^k$ with $k\ge 3$.}\label{table:types}
	\end{table}
\end{definition}

\begin{definition}\label{def:k<3}
	Let $l\in\{1,2,\dotsc,26\}$ and $\psi\in\Lin(P_l)$. Suppose $\psi$ corresponds to $\{s_1,\dotsc,s_R\}$. We list all possible $\psi$ in the first column of Table~\ref{table:m<27} below. The set $\Omega(\psi)$ may be obtained by direct computation; this is listed in the second column for reference. We define $N(\psi)$ for each $\psi$ in the third column. We also define subsets $\Psi_1$ and $\Psi_2$ of $\bigsqcup_{l=1}^{26}\Lin(P_l)$ as indicated.
	
	
	\begin{table}[h]
		\centering
		\begin{footnotesize}
			\[ \def\arraystretch{1.2}
			\begin{array}{cl|c|l}
				\hline
				\psi & & \Omega(\psi) & N(\psi)\\
				\hline
				\hline
				\triv_{P_l} & l\in\{3,4,6,9,10\} & \cP(l)\setminus X^\circ;\ {\scriptstyle X=\{(2,1)\}, \{(2,2)\},} & l\\
				&& {\scriptstyle \{(3,2,1)\}, \{(8,1),(5,4),(4,3,2)\}\ \text{or}\ \{(5,5)\}} &\\
				\bm{\triv_{P_l}} & \bm{l\notin\{3,4,6,9,10\}} & \bm{\cP(l)} & \bm{l}\\
				\hline
				
				\phi_1\cdot\triv_{P_3} & (l=6) & \cB_6(5)\setminus\{(3,3)\}^\circ & 5 \\
				\phi_1\cdot\triv_{P_9} & (l=12) & \cB_{12}(11)\setminus\{(6^2),(4^3)\}^\circ & 11\\
				\bm{\phi_1\cdot\triv_{P_{l-3}}} & \bm{\scriptstyle{l\ge 3,\ l\notin\{6,9,10,11,12,18,19,20\}}} & \bm{\cB_l(l-1)} & \bm{l-1} \hspace{8pt} 
				
				\smash{\left.\begin{array}{@{}c@{}}\\[\jot]\\[\jot]\\[\jot]\\[\jot]\end{array}\right\}} \Psi_2\\[\jot]
				
				\phi(0,1) & (l=9) & \cB_9(8)\setminus\{(3^3)\} & 8\\
				\bm{\phi(0,1)\cdot\triv_{P_{l-9}}} & \bm{l>9} & \bm{\cB_l(l-1)} & \bm{l-1}\\
				\hline
				
				\phi(1,0) & (l=9) & \cB_9(6)\setminus\{(5,4),(5,1^4),(6,2,1) \}^\circ & 6\\
				\phi(1,0)\cdot\triv_{P_1} & (l=10) & \cB_{10}(7)\setminus\{(5,5),(7,2,1)\}^\circ & 7\\
				\bm{\phi(1,0)\cdot\triv_{P_{l-9}}} & \bm{l\ge 11} & \bm{\cB_l(l-3)\setminus\{(l-3,2,1)\}^\circ} & \bm{l-3}\\
				
				\phi(1,0)\cdot\phi_1 & (l=12) & \cB_{12}(8)\setminus\{(6,6),(8,2,2)\}^\circ & 8 \hspace{25pt} 
				
				\smash{\left.\begin{array}{@{}c@{}}\\[\jot]\\[\jot]\\[\jot]\\[\jot]\\[\jot]\\[\jot]\end{array}\right\}} \Psi_1\\[\jot]
				
				\bm{\phi(1,0)\cdot\phi_1\cdot\triv_{P_{l-12}}} & \bm{l\ge 13,\ l\notin\{18, 19, 20\}} & \bm{\cB_l(l-4)\setminus\{(l-4,2,2)\}^\circ} & \bm{l-4}\\
				
				\bm{\phi(1,0)\cdot\phi(0,1)\cdot\triv_{P_{l-18}}} & \bm{l\ge 18} & \bm{\cB_l(l-4)\setminus\{(l-4,2,2)\}^\circ} & \bm{l-4}\\
				
				\bm{\phi(1,0)\cdot\phi(1,0)\cdot\triv_{P_{l-18}}} & \bm{l\ge 18} & \bm{\cB_l(l-6)\setminus\{(l-6,3,2,1)\}^\circ} & \bm{l-6}\\
				\hline
				
				\phi(1,1) & (l=9) & \Omega(\psi)\supseteq\cB_{9}(5)\setminus\{(3^3)\}\ \text{a.c.n.o.t.p} & 5\\
				\phi(1,1)\cdot\triv_{P_3} & (l=12) & \Omega(\psi)\supseteq\cB_{12}(8)\setminus\{(6,6)\}^\circ\ \text{a.c.n.o.t.p} & 8\\
				\phi(1,1)\cdot\phi(1,0) & (l=18) & \Omega(\psi)\supseteq\cB_{18}(11)\setminus\{(9,9)\}^\circ\ \text{a.c.n.o.t.p} & 11\\
				\phi(1,1)\cdot\triv_{P_9} & (l=18) & \Omega(\psi)\supseteq\cB_{18}(14)\setminus\{(9,9)\}^\circ\ \text{a.c.n.o.t.p} & 14\\
				\bm{s_i=(1,1)\ \text{\bf{for some}}\ i}  & \text{\bf{except above}} & \bm{\Omega(\psi)\supseteq\cB_l(m(\psi))}\ \text{\bf{a.c.n.o.t.p}} & \bm{m(\psi)}\\
				\hline
				
				\text{\bf{all other}}\ \bm{\psi} & \bm{\scriptstyle{(\text{in this case }l-m(\psi)\ge 2)}} & \bm{\Omega(\psi) = \cB_l(m(\psi))} & \bm{m(\psi)}\\
				\hline
			\end{array}\]
		\end{footnotesize}
		\caption{Definition of $N(\psi)$ for $\psi\in\Lin(P_l)$, $l<27$. Here `and contains no other thin partitions' is abbreviated to \emph{a.c.n.o.t.p}.
		We remark that the bold entries 
		represent general forms of $\Omega(\psi)$, while the unbolded entries mimic the general forms but with some small partitions excluded.
		While not indicated in the table, we note that $\psi$ is quasi-trivial if and only if $s_i\ne(1,1)$ for all $i\in[R]$.
		}\label{table:m<27}
	\end{table}
\end{definition}

\begin{definition}\label{def: all p=3}
	Let $n\in\N$ be divisible by 27, and $l\in\{0,1,\dotsc,26\}$.
	Let $\phi\times\psi\in\Lin(P_{n+l})$ where $\phi\in\Lin(P_n)$ and $\psi\in\Lin(P_l)$. (If $l=0$ then we may omit the $\psi$ term, or equally treat it as $\psi=\triv_{P_0}$ where $P_0$ is the trivial group.) Suppose $\phi$ corresponds to $\{s_1,\dotsc,s_R\}$.
	\begin{itemize}
		\item[(i)] Define $N(\phi):=\sum_{i=1}^R N(s_i)$ and $N(\phi\times\psi) := N(\phi)+N(\psi)$. 
		
		(See Tables~\ref{table:types} and~\ref{table:m<27} for the values of $N(s_i)$ and $N(\psi)$.)
		
		\item[(ii)] If $\phi$ is quasi-trivial, equivalently if $\sigma(s_i)\in \{1,2,3,30,31,5,6\}$ for all $i\in[R]$, then define
		\[ \sigma(\phi):=(y_1,y_2,y_3,y_{30},y_{31},y_5,y_6) \]
		where $y_j = \#\{i\in[R] \mid \sigma(s_i)=j\}$.
		(We remark that $\phi$ is quasi-trivial if and only if $\phi(s_i)$ is for each $i$, and $\phi\times\psi$ is quasi-trivial if and only if both $\phi$ and $\psi$ are.)
		
		\item[(iii)] Let $E\supset F$ be the sets defined as follows:
		\[ E\ :=\ {\tiny \begin{matrix}
			\{ (R-1,0,1,0,0,0,0),\\
			\ (R-1,0,0,1,0,0,0),\\
			\ (R-1,0,0,0,1,0,0),\\
			\ (R-2,0,0,0,2,0,0),\\
			\ (R-2,1,0,1,0,0,0),\\
			\ (R-2,1,0,0,1,0,0),\\
			\ (R-1,0,0,0,0,1,0),\\
			\ (R-2,1,0,0,0,0,1),\\
			\ \ (R-1,0,0,0,0,0,1)\},
		\end{matrix}}\qquad
		F\ :=\ \begin{matrix}
			\{ (R-1,0,0,1,0,0,0),\\
			(R-1,0,0,0,1,0,0),\\
			(R-1,0,0,0,0,0,1)\}.
		\end{matrix} \]
	\end{itemize}
\end{definition}

\begin{remark}
	Notice that if $\sigma(\phi)\in E$, then $n-N(\phi)\in\{3,4,6,9,10\}\cup\{3^k \mid k\ge 3\}$; this is exactly the set of $t\in\N$ such that $\Omega(\triv_{P_t})\ne\cP(t)$ by Theorem~\ref{thm:GL1A}.
	Moreover, if $\sigma(\phi)\in F$ then $n-N(\phi)\in\{3,9\}$, exactly the subset 
	such that $\Omega(\triv_{P_{t+1}})\ne\cP(t+1)$ also holds.\hfill$\lozenge$
\end{remark}

We are now ready to give the precise statement of our first main result, which determines $\Omega(\phi)$ for all quasi-trivial characters $\phi$ of $P_n$ when $p=3$. Note the operator $\star$ is defined in Definition~\ref{def:star operator} below.

\begin{theorem}\label{thm:a}
	Let $p=3$. Let $n\in\N$ be divisible by 27 and $l\in\{0,1,\dotsc,26\}$. Let $\phi\times\psi\in\Lin(P_{n+l})$ be quasi-trivial, where $\phi\in\Lin(P_n)$ and $\psi\in\Lin(P_l)$. Suppose $\phi$ corresponds to $\{s_1,\dotsc,s_R\}$, and let $N=N(\phi\times\psi)$. Then
	the value of $\Omega(\phi\times\psi)$ is as follows:
	\[ \begin{array}{ll}
		\hline
		\cP(n)\setminus\{(n-1,1)\}^\circ & \text{if}\ \sigma(\phi)=(1,0,0,0,0,0,0)\ \text{and}\ l=0,\\
		\cC & \text{if}\ \sigma(\phi)=(0,0,0,0,0,1,0)\ \text{and}\ l=0,\\
		\Omega(1,0,0)\star\Omega(\psi) & \text{if}\ \sigma(\phi)=(0,0,0,0,0,0,1),\\
		\cB_{n+l}(N-1)\sqcup\{(N,\mu) \mid \mu\in\Omega(\triv_{P_{n+l-N}}) \}^\circ & \text{if}\ \sigma(\phi)\in E\ \text{and}\ \psi=\triv_{P_l}\\
		& \qquad\text{\tiny (excluding $\sigma(\phi)=(0,0,0,0,0,1,0)$ and $l=0$,}\\
		& \qquad\text{\tiny and $\sigma(\phi)=(0,0,0,0,0,0,1)$)};\ \text{or}\\
		& \quad\sigma(\phi)\in F\ \text{and}\ \psi\in\Psi_2;\ \text{or}\\
		& \quad\sigma(\phi)=(R,0,0,0,0,0,0)\ \text{and}\ \psi\in\Psi_1;\ \text{or}\\
		& \quad\sigma(\phi)=(R-1,0,0,0,1,0,0)\ \text{and}\\
		& \qquad\qquad \psi=\phi(1,0)\cdot\triv_{P_{l-9}},\ l\ge 9;\ \text{or}\\
		& \quad\sigma(\phi)=(R-1,1,0,0,0,0,0)\ \text{and}\\
		& \qquad\qquad \psi=\phi(1,0)\cdot\triv_{P_{l-9}},\ l\ge 9;\\
		\cB_{n+l}(N) & \text{otherwise},\\
		\hline
	\end{array} \]
	where $a:=\tfrac{n}{3}$ and $\cC:=\cB_n(2a)\setminus\{(2a,a-1,1),(2a,2,1^{a-2}),(2a-1,a+1),(2a-1,1^{a+1}) \}^\circ$.
\end{theorem}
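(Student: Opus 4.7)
The plan is to combine two ingredients: the prime power case Theorem~\ref{thm:a-prime power}, which handles $\Omega(\phi(s))$ for $s\in\{0,1\}^k$ of the seven quasi-trivial types $1,2,3,30,31,5,6$, together with a factorisation formula expressing $\Omega(\phi\times\psi)$ in terms of the $\Omega$'s of its factors. Since $27\mid n$, we have the direct product decomposition $P_{n+l}\cong P_{n_1}\times\cdots\times P_{n_R}\times P_l$, where each $P_{n_i}=P_{3^{k_i}}$ with $k_i\ge 3$. Restricting $\chi^\lambda$ first to $S_{n_1}\times\cdots\times S_{n_R}\times S_l$ introduces Littlewood--Richardson coefficients, and then restricting further to the Sylow subgroups yields the key identity
\[ \Omega(\phi\times\psi) \;=\; \Omega(s_1)\star\Omega(s_2)\star\cdots\star\Omega(s_R)\star\Omega(\psi), \]
with $\star$ being the $\cLR$-based operation from Definition~\ref{def:star operator}. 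Each $\Omega(s_i)$ has one of a small number of shapes governed by $\sigma(s_i)$ and determined by Theorem~\ref{thm:a-prime power}, while $\Omega(\psi)$ is read off Table~\ref{table:m<27}.

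The second step is to analyse how $\star$ acts on these shapes. For the seven quasi-trivial types, Theorem~\ref{thm:a-prime power} gives $\Omega(s_i)\supseteq\cB_{n_i}(N(s_i))$; for types $1$, $5$ and $6$ the inclusion is strict, with additional \emph{near-box} partitions of the form $(N(s_i),\mu)^\circ$ where $\mu\in\Omega(\triv_{P_{n_i-N(s_i)}})$. When iterating the $\star$-product, the combinatorial machinery of Sections~\ref{sec:sn-lr} and~\ref{sec:LR} shows that one generically recovers $\cB_{n+l}(N)$, because positivity of an iterated LR coefficient $c^\lambda_{\mu_1,\dots,\mu_R,\nu}$ forces $\lambda$ to fit inside the ambient $N\times N$ box as soon as all of the $\mu_i,\nu$ do. The exceptional rows in the conclusion record precisely those configurations in which at most one factor can donate its extra near-box partitions upward to $\phi\times\psi$ without them being absorbed by the others.

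The third step is a case-by-case verification against the list. When $\sigma(\phi)\in E$ and $\psi=\triv_{P_l}$, exactly one $s_i$ is of a non-generic quasi-trivial type and contributes a near-box family which, once $\star$-ed through all the other $\cB$ factors, produces $\{(N,\mu) : \mu\in\Omega(\triv_{P_{n+l-N}})\}^\circ$; the condition $\sigma(\phi)\in E$ is designed exactly so that $n-N(\phi)\in\{3,4,6,9,10\}\cup\{3^k:k\ge 3\}$, i.e.~so that $\Omega(\triv_{P_{n+l-N}})\ne\cP(n+l-N)$. When $\sigma(\phi)\in F$ and $\psi\in\Psi_2$, or in the other listed sub-cases involving $\phi(1,0)$-type $\psi$, a similar but more delicate mechanism occurs where $\psi$ itself supplies compatible near-box partitions aligning with those from $\phi$. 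In every other configuration the near-box contributions are destroyed by additional $\star$-factors, leaving $\Omega(\phi\times\psi)=\cB_{n+l}(N)$. The special cases $\sigma(\phi)=(1,0,\dots,0)$ with $l=0$ and $\sigma(\phi)=(0,0,0,0,0,1,0)$ with $l=0$ are immediate from the prime power theorem, while $\sigma(\phi)=(0,0,0,0,0,0,1)$ is a direct $\star$-product calculation of $\Omega(1,0,0)\star\Omega(\psi)$.

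The main obstacle is sharpness: the inclusion $\cB_{n+l}(N)\subseteq\Omega(\phi\times\psi)$ follows quickly from general bounds on $m(\phi\times\psi)$, but the reverse inclusion requires ruling out specific thin partitions --- hooks, and those with $\lambda_1\le 2$ or $l(\lambda)\le 2$ --- that could in principle arise via LR compatibility. One must show that for every thin $\lambda$ lying outside the stated set, every LR-compatible tuple $(\mu_1,\dots,\mu_R,\nu)$ contributing to the restriction of $\chi^\lambda$ has some $\mu_i\notin\Omega(s_i)$ or $\nu\notin\Omega(\psi)$. Because for thin $\lambda$ the restriction to a Young subgroup decomposes in a very constrained manner (with each tensor factor forced to also be thin), this reduces to a finite verification against the prime power non-positivity in Theorem~\ref{thm:a-prime power} together with the tabulated data of Table~\ref{table:m<27}.
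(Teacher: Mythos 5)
Your overall strategy coincides with the paper's: factor $\Omega(\phi\times\psi)$ as $\Omega(s_1)\star\cdots\star\Omega(s_R)\star\Omega(\psi)$ via Lemma~\ref{lem: omega star omega}, feed in the prime-power shapes from Theorem~\ref{thm:a-prime power} and Table~\ref{table:m<27}, and evaluate the iterated $\star$-product case by case (this is what Lemmas~\ref{lem: 12}--\ref{lem: 56}, Proposition~\ref{prop:qt} and Theorem~\ref{thm:pre-A'} do). But two of your intermediate claims are wrong in ways that would derail the argument. First, it is \emph{not} true that $\Omega(s_i)\supseteq\cB_{n_i}(N(s_i))$ for the quasi-trivial types: this holds only for type $2$, where $m(s_i)=N(s_i)$. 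For types $3,30,31$ one has $\Omega(s_i)=\cB_{n_i}(N(s_i)-1)\sqcup\{(N(s_i),\mu)\mid\mu\in\Omega(\triv_{P_{n_i-N(s_i)}})\}^\circ$, which omits for instance $(N(s_i),3^j-1,1)$; type $5$ omits four partitions with first part $2a$ or $2a-1$; type $1$ omits $(3^k-1,1)$. Taken literally, your claim together with Proposition~\ref{prop: B star B} would yield $\Omega(\phi\times\psi)\supseteq\cB_{n+l}(N)$ unconditionally, contradicting the exceptional rows of the very statement you are proving. Relatedly, the ``near-box'' layer $\{(N(s_i),\mu)\}^\circ$ belongs to types $3,30,31$ (and, in modified form, $1$ and $6$), not to types $1,5,6$ as you assert.

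Second, and more seriously, the sharpness argument cannot be reduced to thin partitions. In the exceptional cases the partitions that must be shown to lie \emph{outside} $\Omega(\phi\times\psi)$ are $(N,\mu)^\circ$ with $\mu\in\cP(n+l-N)\setminus\Omega(\triv_{P_{n+l-N}})$, and essentially none of these are thin: when $n+l-N=9$ one must exclude $(N,5,4)$ and $(N,4,3,2)$; when $n+l-N=6$, the partition $(N,3,2,1)$; when $n+l-N=3^j$ with $j\ge 3$, the partition $(N,3^j-1,1)$. Your reduction (``for thin $\lambda$ each tensor factor is forced to be thin, hence a finite check'') says nothing about these. The mechanism the paper uses is different: since $N=\sum_i M(s_i)+M(\psi)$, any $\lambda$ with $\lambda_1=N$ and $c^\lambda_{\mu_1,\dotsc,\mu_R,\nu}>0$ with $\mu_i\in\Omega(s_i)$, $\nu\in\Omega(\psi)$ forces $(\mu_i)_1=M(s_i)$ and $\nu_1=M(\psi)$; Lemma~\ref{lem: iteratedLR} then identifies the part of $\lambda$ below its first row as an element of $\Omega(\triv_{P_{n_1-N(s_1)}})\star\cdots\star\Omega(\triv_{P_{l-N(\psi)}})$, and it is the failure of products such as $\cP(1)\star\Omega(\triv_{P_3})$ or $\Omega(\triv_{P_3})\star\Omega(\triv_{P_6})$ to be the full partition set (contrast Lemma~\ref{lem: P star 00} and Lemma~\ref{lem: 10.3}) that generates the lists $E$, $F$, $\Psi_1$, $\Psi_2$. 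Without this top-layer analysis the case distinction in the statement cannot be derived.
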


\begin{remark}
	\begin{itemize}
		\item[(i)] We take $n\ge 1$ in Theorem~\ref{thm:a} above 
		since the case of $n=0$ (i.e.~$n+l=l<27$) is given in Table~\ref{table:m<27}.
		
		\item[(ii)] As observed in Definition~\ref{def: all p=3}, $\Omega(\triv_{P_t})=\cP(t)$ for all $t\notin\{3,4,6,9,10\}\cup\{3^k\mid k\ge 3\}$. 
		The set $\Omega(\triv_{P_t})$ is listed in Table~\ref{table:m<27} if $t\in\{3,4,6,9,10\}$, and is $\cP(3^k)\setminus\{(3^k-1,1)\}^\circ$ if $t\in\{3^k\mid k\ge 3\}$ (the first case in Theorem~\ref{thm:a}). 
		Hence $\Omega(\triv_{P_{n+l-N}})$ in Theorem~\ref{thm:a} is known in all cases.
		
		\item[(iii)] The exact value of $\Omega(1,0,0)$ is listed following Proposition~\ref{prop:case2} below, and that of $\Omega(\psi)$ in Table~\ref{table:m<27}. Thus $\Omega(1,0,0)\star\Omega(\psi)$ may be obtained by direct computation using the Littlewood--Richardson rule (see Definition~\ref{def:star operator} and Theorem~\ref{theorem:LR} below).
		
		\item[(iv)] For a comparison with the case of $p\ge 5$, we refer the reader to \cite[Theorem 2.9]{GL2}.
		\hfill$\lozenge$
	\end{itemize}
\end{remark}

Next, we give precise statements of our second main result, which determine $m(\phi)$ and $M(\phi)$ for all $\phi\in\Lin(P_n)$ when $p=3$. For clarity, we split this into the cases of $n=3^k$ in Theorem~\ref{thm:b-primepower} and arbitrary $n$ in Theorem~\ref{thm:b-general}.

\begin{theorem}\label{thm:b-primepower}
	Let $k\in\N$ and $\phi(s)\in\Lin(P_{3^k})$. 
	Suppose $s=(\sfs_1,\dotsc,\sfs_k)\in\{0,1\}^k$ with $s\in U_k(z)$ for some $z\in\{0,1,\dotsc,k\}$. Then
	\begin{small}
		\[ M(s) = \begin{cases}
			3^k & \text{if}\ z=0,\\
			3^k-3^{k-f(s)} & \text{if}\ z>0.
		\end{cases} \]	
	\end{small}
	Moreover, if $\sfs_1=0$ then
	\begin{small}
		\[ m(s) = \begin{cases}
			3 & \text{if}\ s=(0,0),\\
			3^k-2 & \text{if}\ z=0\ \text{and}\ k\ne 2,\\
			2 & \text{if}\ s=(0,1),\\
			3^k-3^{k-f(s)}-1+\delta_{f(s),k} & \text{if}\ z=1\ \text{and}\ k\ne 2,\\
			3^k-3^{k-f(s)}-3^{k-g(s)} & \text{if}\ z\ge 2.
		\end{cases} \]
	\end{small}
	If $\sfs_1=1$ and $z=F(s)$, then
	\begin{small}
		\[ m(s) = \begin{cases}
			2 & \text{if}\ F(s)=k=2\ \ (\text{i.e.}\ s=(1,1)),\\
			\tfrac{3^k+1}{2} & \text{if}\ F(s)=k\ne 2,\\
			\tfrac{3^k-1}{2} & \text{if}\ F(s)=k-2,\\
			\tfrac{3^k+3^{k-F(s)}}{2}-2 & \text{otherwise},
		\end{cases} \]
	\end{small}
	while if $\sfs_1=1$ and $z>F(s)$ then
	\begin{small}
		\[ m(s) = \begin{cases}
			\tfrac{3^k+3^{k-F(s)}}{2} - 3^{k-G(s)} & \text{if}\ z=F(s)+1,\\
			\tfrac{3^k+3^{k-F(s)}}{2} - 3^{k-G(s)} - 3^{k-H(s)} & \text{if}\ z\ge F(s)+2.
		\end{cases} \]
	\end{small}
\end{theorem}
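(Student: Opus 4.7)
The formula for $M(s)$ is already provided by Theorem~\ref{thm:M}(a), so what remains is the determination of $m(s)$. The plan is to induct on $k$, using the recursive isomorphism $P_{3^k}\cong P_{3^{k-1}}\wr P_3$ together with the identity $\phi(s)=\cX(\phi(s^-);\phi_{\sfs_k})$ from Section~\ref{sec: sylow prelims}, where $s^-=(\sfs_1,\ldots,\sfs_{k-1})$. Small values of $k$ (say $k\le 3$) serve as the base; they can be verified directly using the data of Theorem~\ref{thm:GL1A} and Table~\ref{table:m<27}, and this is also what feeds the Kronecker correction $\delta_{f(s),k}$ and the special formulas for $F(s)\in\{k-2,k\}$ into the general statement.

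The central technical step is to analyse $\chi^\lambda\down_{P_{3^k}}$ along the chain $S_{3^k}\supseteq S_{3^{k-1}}\wr S_3\supseteq P_{3^{k-1}}\wr P_3=P_{3^k}$. The restriction $\chi^\lambda\down_{S_{3^{k-1}}^{\times 3}}$ decomposes with multiplicities given by the generalised Littlewood--Richardson coefficients $c^\lambda_{\mu_1\mu_2\mu_3}$, and via the two families of irreducibles of $S_{3^{k-1}}\wr S_3$ recalled in Section~\ref{sec:wreath} together with Lemmas~\ref{lem: easy observation}--\ref{lem: 9.6}, this translates into a criterion for $\lambda\in\Omega(s)$ phrased in terms of whether some $\mu\in\Omega(s^-)$ satisfies $c^\lambda_{\mu\mu\mu}>0$ (the dominant contribution, coming from the ``diagonal'' triples). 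The inductive hypothesis therefore reduces the problem to a purely combinatorial question: whether the Young diagram of $\lambda$ admits a decomposition into three equal-sized skew shapes whose shared underlying partition lies in the previously determined set $\Omega(s^-)$.

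For each row of the theorem one then verifies two things: a \emph{lower bound}, that every $\lambda\in\cB_{3^k}(m(s))$ admits such a diagonal LR decomposition with $\mu\in\Omega(s^-)$; and an \emph{upper bound}, that some specific $\lambda\in\cB_{3^k}(m(s)+1)$ does not. The LR machinery collected in Section~\ref{sec:LR} and developed from \cite{GL2} delivers the lower bound by producing explicit equal-triple skew decompositions for the partitions fitting in the required box. For the upper bound, the obstructing partitions are always thin (hooks or shapes with at most two rows or two columns), and their presence is forced by the positions of the leading 1s $f(s),g(s)$ or $F(s),G(s),H(s)$ in $s$: these positions determine the smallest ``$3^{k-f(s)}$-block'' within which a diagonal split must fit, giving rise to the arithmetic factors $3^{k-f(s)},3^{k-g(s)},3^{k-F(s)},3^{k-G(s)},3^{k-H(s)}$ appearing in the table.

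The principal obstacle is the fragmentation of the case analysis: the constant shifts by $-1$ or $-2$, the correction $\delta_{f(s),k}$, and the separate subcases $F(s)=k$ versus $F(s)=k-2$ versus $z=F(s)+1$ versus $z\ge F(s)+2$ all reflect genuine boundary phenomena in the induction. When $f(s)$ or $F(s)$ approaches $k$, the recursion bottoms out at small tower heights $j\in\{1,2\}$ where $\Omega(\triv_{P_{3^j}})$ is already strictly smaller than $\cP(3^j)$, and analogous small exceptions from Table~\ref{table:m<27} must be carefully propagated through the wreath structure. Once that bookkeeping is set up, the inductive step itself reduces to a direct combinatorial check using the LR splittings of Section~\ref{sec:LR} together with the inductively known description of $\Omega(s^-)$.
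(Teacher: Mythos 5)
Your overall strategy---induction on $k$ through $P_{3^k}\cong P_{3^{k-1}}\wr P_3$, restriction along $S_{3^k}\supseteq S_{3^{k-1}}\wr S_3\supseteq P_{3^k}$, Littlewood--Richardson combinatorics for the lower bound and thin obstructions for the upper bound---is the paper's. But there is a genuine gap in your inductive invariant. You reduce $\lambda\in\Omega(s)$ to the purely set-theoretic condition that some $\mu\in\Omega(s^-)$ has $c^\lambda_{\mu\mu\mu}>0$. That is not enough data to run the induction. For a diagonal constituent $\mu^{\times 3}$ of $\chi^\lambda\down_{(S_{3^{k-1}})^{\times 3}}$, Lemma~\ref{lem: 9.6} shows that when $Z^\mu_{\phi(s^-)}=1$ the contribution to $Z^\lambda_{\phi(s^-,\sfx)}$ is $\delta_{\theta,\phi_\sfx}$ for the particular $\theta\in\Irr(P_3)$ singled out by the wreath decomposition of $\chi^\lambda$ over $\mu^{\times 3}$ (e.g.\ $(3)$ versus $(2,1)$, via Theorem~\ref{thm:PW9.1}, Theorem~\ref{thm:GTT3.5} and \cite[Theorem 1.5]{dBPW}), whereas when $Z^\mu_{\phi(s^-)}\ge 2$ Lemma~\ref{lem: 9.5} gives a contribution at least $2$ for \emph{every} $\sfx$. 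So the inductive hypothesis must carry not only the set $\Omega(s^-)$ but also the exact values (one versus at least two) of $Z^\nu_{\phi(s^-)}$ for the extremal thin partitions $\nu\in\{\tworow{3^{k-1}}{m(s^-)},\hook{3^{k-1}}{m(s^-)}\}$, together with whether $\Omega(s^-)\setminus\cB_{3^{k-1}}(m(s^-))$ contains thin partitions.

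This extra state is precisely what separates Proposition~\ref{prop:4.8} (where $m(s,\sfx)=3m(s)$ for all $\sfx$) from Proposition~\ref{prop:case2} (where $m(s,0)=3m(s)$ but $m(s,1)=3m(s)-1$), and it is the mechanism forcing $m(s)$ to drop by $3^{k-g(s)}$ only at the second distinguished $1$ and by $3^{k-H(s)}$ only at the third, with no further decrements afterwards. Without tracking these multiplicities your induction cannot distinguish these cases, and what you describe as ``bookkeeping'' is in fact the substance of the argument: the transition diagram of Figure~\ref{fig:3} and the chain of results Lemmas~\ref{lem:1000-base}--\ref{lem:1100x} and Propositions~\ref{prop:case2}--\ref{prop:case3}. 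A smaller inaccuracy: the computational base cases are not confined to $k\le 3$; the family $s=(1,0,\dotsc,0)$ requires the explicit $k=4$ computation of Lemma~\ref{lem:1000-base} before the inductive Lemma~\ref{lem:1000} can take over.
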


\begin{remark}
	Note that the general form of $m(s)$ is $3^k$ minus $3^{k-d}$ where $d$ runs over the positions of certain leftmost 1s in $s$. For example, $\tfrac{3^k+3^{k-F(s)}}{2}=3^k-3^{k-1}-3^{k-2}-\cdots-3^{k-F(s)}$. \hfill$\lozenge$
\end{remark}

\begin{theorem}\label{thm:b-general}
	Let $n\in\N$ with 3-adic expansion $n=\sum_{i=1}^t a_i\cdot 3^{n_i}$. Suppose $\phi\in\Lin(P_n)$ corresponds to $\{s_1,\dotsc,s_R\}$. Then $M(\phi) = \sum_{i=1}^R M(s_i)$. Moreover,
	\[ m(\phi) = N(\phi) \]
	unless (at least) one of the following holds:
	\begin{itemize}
		\item $R=1$, $\phi=\phi(s)$, $\sigma(s)\in\{1,10,11,5,6,7\}$, in which case $m(\phi)$ is given by Theorem~\ref{thm:b-primepower}.
		\item $R=2$, $\phi=\phi(s_1)\times\phi(s_2)$ where $\{\sigma(s_1),\sigma(s_2)\}=\{11,21\}$, in which case $m(\phi)=\tfrac{n}{2}-1$.
		\item $R=2$, $\phi=\phi(s)\times\triv_{P_3}$ where $\sigma(s)=21$, in which case $m(\phi)=\tfrac{n}{2}-1$.
		\item $n<27$, in which case $m(\phi)$ can be seen from Table~\ref{table:m<27} (in all such instances, $m(\phi)\ge\tfrac{2n}{9}$).
		\item $\phi$ is quasi-trivial, and, writing $\phi=\phi'\times\psi$ where $\psi\in\Lin(P_l)$ for $l<27$ and $27\mid n-l$, one of the following holds, in which case $m(\phi)=N(\phi)-1$:
		\begin{itemize}
			\item[$\circ$] $\sigma(\phi')\in E$ and $\psi=\triv_{P_l}$;
			\item[$\circ$] $\sigma(\phi')\in F$ and $\psi\in\Psi_2$;
			\item[$\circ$] $\sigma(\phi')=(R,0,0,0,0,0,0)$ (i.e.~$\phi=\triv_{P_n}$) and $\psi\in\Psi_1$;
			\item[$\circ$] $\sigma(\phi')=(R-1,0,0,0,1,0,0)$ and $\psi=\phi(1,0)\cdot\triv_{P_{l-9}}$, $l\ge 9$; or
			\item[$\circ$] $\sigma(\phi')=(R-1,1,0,0,0,0,0)$ and $\psi=\phi(1,0)\cdot\triv_{P_{l-9}}$, $l\ge 9$.
		\end{itemize}
	\end{itemize}
\end{theorem}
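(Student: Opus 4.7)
My plan is to first dispense with $M(\phi)$: the formula $M(\phi)=\sum_i M(s_i)$ is exactly Theorem~\ref{thm:M}(b), which holds for any odd prime and so in particular for $p=3$. The substance of the proof is the computation of $m(\phi)$. The overall architecture is to decompose $\phi$ as a direct product $\phi=\phi(s_1)\times\cdots\times\phi(s_R)\times\psi$, where $\psi\in\Lin(P_l)$ with $l<27$ encapsulates the low-order behaviour, and to reduce the computation to the primitive cases already handled by Theorem~\ref{thm:b-primepower} (for the blocks of 3-power size) and Table~\ref{table:m<27} (for $\psi$). The key glue is multiplicativity $\Omega(\phi_1\times\phi_2)=\Omega(\phi_1)\star\Omega(\phi_2)$, which follows from the Littlewood--Richardson decomposition of $\chi^\lambda\!\down_{S_{n_1}\times S_{n_2}}$ combined with the identity $Z^\lambda_{\phi_1\times\phi_2}=\sum_{\mu,\nu}c^\lambda_{\mu\nu}\,Z^\mu_{\phi_1}\,Z^\nu_{\phi_2}$ (where $\star$ is the Littlewood--Richardson product on sets of partitions).

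For the lower bound $m(\phi)\ge N(\phi)$, I would exploit the fact that Theorem~\ref{thm:b-primepower} and Table~\ref{table:m<27} supply not only the inclusion $\cB_{n_i}(m(s_i))\subseteq\Omega(s_i)$ but also the additional thin partitions reaching up to $N(s_i)$ recorded in Table~\ref{table:types}. I would then invoke the star-operator lemmas of Section~\ref{sec:LR}, which should state that if $\cB_a(m)\cup T\subseteq\Omega(\phi_1)$ with $T$ a prescribed collection of thin partitions bounded by $N$, and similarly $\cB_b(m')\cup T'\subseteq\Omega(\phi_2)$ bounded by $N'$, then $\cB_{a+b}(N+N')\subseteq\Omega(\phi_1)\star\Omega(\phi_2)$. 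Iterating this pairwise across $i=1,\dotsc,R$ and finally against $\Omega(\psi)$ yields $\cB_n(N(\phi))\subseteq\Omega(\phi)$, hence $m(\phi)\ge N(\phi)$ in the generic case. Conversely, for the upper bound $m(\phi)\le N(\phi)$, I would exhibit an explicit two-row or hook partition in $\cB_n(N(\phi)+1)\setminus\Omega(\phi)$, propagated from a thin partition excluded from some $\Omega(s_i)$ via the Littlewood--Richardson decomposition; the relevant exclusion is dictated by $\sigma(s_i)$.

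The exceptional cases are then handled on a case-by-case basis. The $R=1$ prime-power cases with $\sigma(s)\in\{1,10,11,5,6,7\}$ are read directly off Theorem~\ref{thm:b-primepower}; the two $R=2$ cases involving types $11$ and $21$ are short calculations using the explicit $\Omega$-sets for these (non-quasi-trivial) primitive characters; and the $n<27$ case is read off from Table~\ref{table:m<27}, where one checks directly that $m(\phi)\ge\tfrac{2n}{9}$ in each instance. The subtle quasi-trivial exceptions in the final bullet, where $m(\phi)=N(\phi)-1$, are the crux: here Theorem~\ref{thm:a} gives $\Omega(\phi)=\cB_n(N-1)\sqcup\{(N,\mu)\mid\mu\in\Omega(\triv_{P_{n-N}})\}^\circ$, and the conditions $\sigma(\phi')\in E$ with $\psi=\triv_{P_l}$ (or the small analogues involving $\psi\in\Psi_1,\Psi_2$ and the extra $\phi(1,0)\cdot\triv_{P_{l-9}}$ cases) are engineered precisely so that $n-N\in\{3,4,6,9,10\}\cup\{3^k\mid k\ge 3\}$, the exceptional set of Theorem~\ref{thm:GL1A} for which $\Omega(\triv_{P_{n-N}})\ne\cP(n-N)$. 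In each such configuration a specific partition of the form $(N,n-N-1,1)^\circ$ or $(N,2,1^{n-N-2})^\circ$ is missing, which prevents $\cB_n(N)$ from being contained in $\Omega(\phi)$ and forces the drop to $N-1$.

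The main obstacle will be the bookkeeping in this last step: one must verify that the lists $E$, $F$, $\Psi_1$ and $\Psi_2$ are sharp, i.e.~that outside these lists the set $\cB_n(N(\phi))$ really is contained in $\Omega(\phi)$, and inside them some specific thin partition of size $N$ is excluded. Making this sharp requires carefully tracking the Littlewood--Richardson expansions of the candidate thin partitions and confirming that the excluded shapes coming from Theorem~\ref{thm:GL1A} and Table~\ref{table:m<27} align exactly with the quasi-trivial structure predicted by Theorem~\ref{thm:a}; the observation recorded after Definition~\ref{def: all p=3} (that the sets $E$ and $F$ are precisely the configurations producing the small exceptional values of $n-N$) is the bridge that makes this alignment possible.
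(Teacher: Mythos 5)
Your high-level architecture matches the paper's: $M(\phi)=\sum_i M(s_i)$ is indeed just Theorem~\ref{thm:M}(b), the decomposition into 3-power blocks plus a remainder of size $l<27$ combined with $\Omega(\phi_1\times\phi_2)=\Omega(\phi_1)\star\Omega(\phi_2)$ (Lemma~\ref{lem: omega star omega}) is exactly how the paper proceeds, the upper bound $m(\phi)\le N(\phi)$ via a pigeonhole argument on thin partitions is the paper's argument, and you correctly locate the source of the quasi-trivial exceptions in Theorem~\ref{thm:a} and the exceptional set of Theorem~\ref{thm:GL1A}. The problem is the "glue" step you rely on for the generic lower bound. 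The lemma you invoke — if $\cB_a(m)\cup T\subseteq\Omega(\phi_1)$ with $T$ thin partitions reaching up to $N$, and similarly for $\phi_2$ up to $N'$, then $\cB_{a+b}(N+N')\subseteq\Omega(\phi_1)\star\Omega(\phi_2)$ — does not exist in Section~\ref{sec:LR} and is false in that generality. The theorem's own exceptional cases are counterexamples: take $\phi_1=\triv_{P_{27}}$ (so $\Omega(\phi_1)=\cP(27)\setminus\{(26,1)\}^\circ$, $N=27$, with $(27),(1^{27})\in\Omega(\phi_1)$) and $\phi_2=\phi(1,0)\in\Psi_1$ (so $(6,3),(6,1^3)\in\Omega(\phi_2)$, $N'=6$). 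Both factors satisfy your hypotheses, yet $(33,2,1)\notin\Omega(\phi_1\times\phi_2)$: every Littlewood--Richardson decomposition of $(33,2,1)$ into a partition of $27$ and one of $9$ lying in the respective $\Omega$-sets is blocked precisely by the exclusions $(26,1)\notin\Omega(\triv_{P_{27}})$ and $(6,2,1)\notin\Omega(\phi(1,0))$, so $m(\phi)=32=N(\phi)-1$, not $N(\phi)$. Since the generic and exceptional configurations satisfy identical hypotheses of your proposed lemma, no such lemma can separate them; deciding exactly when $\cB_n(N(\phi))\subseteq\Omega(\phi)$ holds \emph{is} the content of the theorem, and your proposal has no mechanism for it beyond the instruction to "verify the lists are sharp".

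What the paper does instead is an extensive case analysis of star products of the partially known sets $\Omega(s_i)$: for quasi-trivial components, Lemmas~\ref{lem: 12}--\ref{lem: 56}, Proposition~\ref{prop:qt} and Theorem~\ref{thm:pre-A'} (using Lemma~\ref{lem: 10.3}, Lemma~\ref{lem: P star 00} and the cases (1)--(3) analysis of $\cB_m(t)\star(\cB_q(N-1)\sqcup\{(N,\nu)\}^\circ)$); for non-quasi-trivial and mixed components, Lemmas~\ref{lem:2122}--\ref{lem: rest}, whose key device is the replacement technique for LR fillings (Lemmas~\ref{lem: LR tworow filling} and~\ref{lem: LR hook filling}), which allows one to trade $\Omega(t)\star X$ for $\cQ\star X$ with $\cQ$ an honest ball so that Proposition~\ref{prop: B star B} applies, all collected in Corollary~\ref{cor: theorem B'}. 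Relatedly, your claim that the $\{11,21\}$ and $\sigma(s)=21$ with $\triv_{P_3}$ exceptions are "short calculations using the explicit $\Omega$-sets" is not available: for types $10,11,21,22,7$ the sets $\Omega(s)$ are never determined explicitly (only that they contain a prescribed ball minus certain thin partitions and contain no other thin partitions), so pinning down that exactly the single thin partition $(\tfrac{n}{2},\tfrac{n}{2})^\circ$ drops out — giving $m(\phi)=\tfrac{n}{2}-1$ — again requires the replacement arguments of Lemma~\ref{lem: type 11} and Lemma~\ref{lem: rest}(a), not a direct computation. Until you supply arguments of this kind (or some substitute) for each combination of types, the proposal does not establish the dichotomy between $m(\phi)=N(\phi)$ and the listed exceptions.
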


\begin{example}
	Fix $p=3$. \textit{(i)} Let $n=3^4$. We consider all $s\in\{0,1\}^4$ and tabulate the previously discussed data for $s$. Below, $qt$ means `quasi-trivial' and $a.c.n.o.t.p$ for `and contains no other thin partitions'.
	
	\begin{footnotesize}
		\[ \begin{array}{c|c|ccccc|cc|c|l}
			s & \sigma(s) & f(s) & g(s) & F(s) & G(s) & H(s) & m(s) & M(s) & qt? & \Omega(s)\\
			\hline
			0000 & 1 & - & - & - & - & - & 79 & 81 & \checkmark & \cP(81)\setminus\{(80,1)\}^\circ\\
			0001 & 2 & 4 & - & - & - & - & 80 & 80 & \checkmark & \cB_{81}(80)\\
			0010 & 31 & 3 & - & - & - & - & 77 & 78 & \checkmark & \cB_{81}(77)\sqcup\{(78,\mu)\mid\mu\in\Omega(0)\}^\circ\\
			0011 & 22 & 3 & 4 & - & - & - & 77 & 78 & \times & \cB_{81}(77)\sqcup\{(78,\mu)\mid\mu\in\Omega(1)\}^\circ\\
			0100 & 30 & 2 & - & - & - & - & 71 & 72 & \checkmark & \cB_{81}(71)\sqcup\{(72,\mu)\mid\mu\in\Omega(00)\}^\circ\\
			0101 & 22 & 2 & 4 & - & - & - & 71 & 72 & \times & \cB_{81}(71)\sqcup\{(72,\mu)\mid\mu\in\Omega(01)\}^\circ\\
			0110 & 22 & 2 & 3 & - & - & - & 69 & 72 & \times & \supseteq\cB_{81}(69)\ \text{a.c.n.o.t.p}\\
			0111 & 22 & 2 & 3 & - & - & - & 69 & 72 & \times & \supseteq\cB_{81}(69)\ \text{a.c.n.o.t.p} \\
			1000 & 5 & 1 & - & 1 & - & - & 52 & 54 & \checkmark &  {\scriptstyle \cB_{81}(54)\setminus\{(54,26,1),(54,2,1^{25}),(53,28),(53,1^{28})\}^\circ} \\
			1001 & 22 & 1 & 4 & 1 & 4 & - & 53 & 54 & \times & \cB_{81}(53)\sqcup\{(54,\mu)\mid\mu\in\Omega(001)\}^\circ\\
			1010 & 22 & 1 & 3 & 1 & 3 & - & 51 & 54 & \times & \supseteq\cB_{81}(51)\ \text{a.c.n.o.t.p} \\
			1011 & 22 & 1 & 3 & 1 & 3 & 4 & 50 & 54 & \times & \supseteq\cB_{81}(50)\ \text{a.c.n.o.t.p} \\
			1100 & 7 & 1 & 2 & 2 & - & - & 40 & 54 & \times & (\text{see Lemma~\ref{lem:110x} (i)}) \\
			1101 & 22 & 1 & 2 & 2 & 4 & - & 44 & 54 & \times & \supseteq\cB_{81}(44)\ \text{a.c.n.o.t.p} \\
			1110 & 11 & 1 & 2 & 3 & - & - & 40 & 54 & \times & (\text{see Lemma~\ref{lem:111} (i)}) \\
			1111 & 21 & 1 & 2 & 4 & - & - & 41 & 54 & \times & \supseteq\cB_{81}(41)\ \text{a.c.n.o.t.p} \\
			\hline
		\end{array} \]
	\end{footnotesize}

	From Theorem~\ref{thm:a}, we know $\Omega(s)$ exactly if $\phi(s)$ is quasi-trivial. We are able to determine $\Omega(s)$ completely for $s\in\{0011, 0101, 1001\}$ although these are non-quasi-trivial because $M(s)-m(s)=1$, so the result follows from Lemma~\ref{lem: 9.8}. That $\Omega(s)$ contains $\cB_{81}(m(s))$ and no other thin partitions for the $s$ indicated follows from Propositions~\ref{prop:4.8} and~\ref{prop:case2}. 
	
	\smallskip
	
	\textit{(ii)} Let $n=3^4+2\cdot 3^2=99$. Each $\phi(\underline{\textbf{s}})\in\Lin(P_{99})$ is labelled by $\phi(\underline{\textbf{s}})=\phi(r)\times\phi(s)\times\phi(t)$ for some $r\in\{0,1,2\}^4$ and $s,t\in\{0,1,2\}^2$. To give some examples, we consider $\phi(\underline{\textbf{s}})$ of form $\phi(0^4)\times\phi(s)\times\phi(t)$ with $s,t\in\{0,1\}^2$: 
	
	\begin{footnotesize}
		\[ \begin{array}{cc|cc|c|cc}
			0^4, s, t && N(0^4), N(s), N(t); & N(\phi(\underline{\textbf{s}})) & qt? && \Omega(s)\\
			\hline
			0^4, 00, 00 && 81, 9, 9; & 99 & \checkmark && \cP(99)\\
			0^4, 00, 01 && 81, 9, 8; & 98 & \checkmark && \cB_{99}(98)\\
			0^4, 01, 01 && 81, 8, 8; & 97 & \checkmark && \cB_{99}(97)\\
			0^4, 00, 10 && 81, 9, 6; & 96 & \checkmark && \cB_{99}(95)\sqcup\{(96,\mu)\mid\mu\in\Omega(\triv_{P_3})\}^\circ\\
			0^4, 10, 01 && 81, 6, 8; & 95 & \checkmark && \cB_{99}(94)\sqcup\{(95,\mu)\mid\mu\in\Omega(\triv_{P_4})\}^\circ\\
			0^4, 10, 10 && 81, 6, 6; & 93 & \checkmark && \cB_{99}(92)\sqcup\{(93,\mu)\mid\mu\in\Omega(\triv_{P_6})\}^\circ\\
			0^4, 00, 11 && 81, 9, 5; & 95 & \times && \cB_{99}(95)\subseteq\Omega(s)\subseteq\cB_{99}(96)\\
			0^4, 01, 11 && 81, 8, 5; & 94 & \times && \cB_{99}(94)\subseteq\Omega(s)\subseteq\cB_{99}(95)\\
			0^4, 10, 11 && 81, 6, 5; & 92 & \times && \cB_{99}(92)\subseteq\Omega(s)\subseteq\cB_{99}(93)\\
			0^4, 11, 11 && 81, 5, 5; & 91 & \times && \cB_{99}(91)\subseteq\Omega(s)\subseteq\cB_{99}(93)\\
			\hline
		\end{array} \]
	\end{footnotesize}

	From Theorem~\ref{thm:a} we know $\Omega(\underline{\textbf{s}})$ exactly if $\phi(\underline{\textbf{s}})$ is quasi-trivial. In the non-quasi-trivial cases, we observe that almost all of $\Omega(\underline{\textbf{s}})$ is determined since $M(\underline{\textbf{s}})-m(\underline{\textbf{s}})$ is small.
	\hfill$\lozenge$
\end{example}

\medskip

\subsection{Representation theory of $S_n$ and the Littlewood\textendash Richardson rule}\label{sec:sn-lr}

We refer the reader to \cite{J,JK} for further detail.
For each $n\in\N$, $\Irr(S_n)$ is naturally parametrised by $\cP(n)$. For a partition $\lambda\in\cP(n)$, also written $\lambda\vdash n$, its corresponding character is denoted by $\chi^\lambda\in\Irr(S_n)$. We note that $\triv_{S_n}$ corresponds to $(n)$, and the sign character to $(1^n)$ \cite[2.1.7]{JK}. When clear from context, we sometimes abbreviate $\chi^\lambda$ to simply $\lambda$, for instance 
$\cX(\chi^\lambda;\chi^\mu)$ 
to $\cX(\lambda;\mu)$.

The Young diagram associated to the partition $\lambda=(\lambda_1,\lambda_2,\dotsc,\lambda_k)$ is defined by $[\lambda]:=\{(i,j)\in\N\times\N\ |\ 1\leq i\leq k,\ 1\leq j\leq\lambda_i\}$.
Commonly, Young diagrams are drawn using boxes:
\[ \text{e.g.}\quad \lambda=(3,2),\qquad [\lambda]={\tiny\yng(3,2)}\ , \]
viewed in matrix orientation with $(1,1)$ upper-leftmost and $(i,j)$ lying in row $i$ and column $j$. 


A partition $\lambda\in\cP(n)$ is a \textit{hook} if $\lambda=(n-x,1^x)$ for some $x\in[\overline{n}]$. Equivalently, $\lambda$ is a hook if $(2,2)\notin[\lambda]$.
The following definition of \textit{thin} partitions will play a crucial role in the descriptions of $\Omega(\phi)$ for $\phi\in\Lin(P_n)$ in our main theorems.

\begin{definition}\label{def:thin}
	\begin{itemize}
		\item[(i)] We call a partition $\lambda$ \emph{thin} if $\lambda$ is a hook, or a partition with $l(\lambda)\le 2$ or $\lambda_1\le 2$.
		
		\item[(ii)] For $m\le n\in\N$, we denote the hook partition $(m,1^{n-m})$ by $\hook{n}{m}$. 
		
		When $m\ge\tfrac{n}{2}$, we denote the partition $(m,n-m)$ by $\tworow{n}{m}$. 
		
		That is, $\hook{n}{m}$ and $\tworow{n}{m}$ are the \textbf{h}ook, respectively \textbf{t}wo-row, partitions of $n$ whose first row has length $m$. (Strictly speaking, by `two-row partition' we actually mean it has at most two rows, since if $m=n$ then $\tworow{n}{m}=(n)$.) 
	\end{itemize}
\end{definition}


We record some results on constituents of character restrictions related to wreath product groups that we will need later.

\begin{theorem}[{\cite[Corollary 9.1]{PW}}]\label{thm:PW9.1}
	Let $m,n\in\N$, $\mu\vdash m$ and $\nu\vdash n$. The lexicographically greatest $\lambda\vdash mn$ such that $\chi^\lambda\mid\cX(\mu;\nu)\up^{S_{mn}}_{S_m\wr S_n}$ is $\lambda=(n\mu_1,\dotsc,n\mu_{l(\mu)-1}, n(\mu_{l(\mu)}-1)+\nu_1,\nu_2,\dotsc,\nu_{l(\nu)})$. Moreover, $\chi^\lambda$ occurs with multiplicity 1.
\end{theorem}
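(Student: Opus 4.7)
The natural plan is to translate the statement into a leading-term question for symmetric function plethysms, via the Frobenius characteristic map. Under $\mathrm{ch}\colon R(S_{mn})_{\mathbb{Q}} \to \Lambda^{mn}$, which sends $\chi^\lambda \mapsto s_\lambda$ and is compatible with induction from wreath products, the induced character $\cX(\chi^\mu;\chi^\nu)\up^{S_{mn}}_{S_m\wr S_n}$ corresponds to the plethysm $s_\nu[s_\mu]$ (a standard identity, see e.g.~Macdonald). Writing $s_\nu[s_\mu] = \sum_\lambda a^\lambda_{\nu,\mu}\, s_\lambda$, the theorem reduces to showing that the lex-largest $\lambda$ with $a^\lambda_{\nu,\mu} > 0$ is $\lambda^* := (n\mu_1,\dotsc,n\mu_{l(\mu)-1}, n(\mu_{l(\mu)}-1)+\nu_1, \nu_2, \dotsc, \nu_{l(\nu)})$, and that $a^{\lambda^*}_{\nu,\mu} = 1$. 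As a sanity check, $|\lambda^*| = n|\mu| - n + |\nu| = mn$, and $\lambda^*$ is indeed a partition since $n\mu_{l(\mu)-1} \ge n\mu_{l(\mu)} \ge n(\mu_{l(\mu)}-1)+\nu_1$ (using $\nu_1 \le n$) and $n(\mu_{l(\mu)}-1)+\nu_1 \ge \nu_1 \ge \nu_2$.

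To prove the plethysm statement, I would realise $s_\nu[s_\mu]$ as the character of the $GL_N$-module $V_\nu[V_\mu] \subseteq V_\mu^{\otimes n}$, the $V_\nu$-isotypic component under the natural $S_n$-action by permuting tensor factors (with $N$ sufficiently large). The upper bound follows from the fact that every weight of $V_\mu^{\otimes n}$ is a sum of $n$ weights of $V_\mu$, each $\preceq \mu$ in dominance, and dominance refines lex on partitions of fixed size, so every $V_\lambda$-summand has $\lambda \preceq n\mu$; moreover, the weight-$n\mu$ subspace of $V_\mu^{\otimes n}$ is one-dimensional, spanned by the $S_n$-invariant vector $v_\mu^{\otimes n}$, which lies in the $V_{(n)}[V_\mu]$-summand. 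For $\nu \ne (n)$ one then identifies the lex-maximum highest weight in the $V_\nu$-isotypic component by constructing an explicit highest-weight vector of weight $\lambda^*$: take $n-1$ copies of $v_\mu$ together with one weight vector in $V_\mu$ of weight $(\mu_1,\dotsc,\mu_{l(\mu)-1},\mu_{l(\mu)}-(n-\nu_1),\nu_2,\dotsc,\nu_{l(\nu)})$, and apply the Young symmetrizer of shape $\nu$ to the resulting tensor. A direct check of highest-weight vector axioms, plus Kostka-positivity for the relevant single-box-move weight spaces of $V_\mu$, gives multiplicity one.

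The main obstacle will be the non-vanishing of the Young-symmetrized tensor in the second step: the symmetrizer of shape $\nu$ can annihilate naive candidates when the chosen weight vectors are not suitably generic, so one must spread the weight drops carefully across the $n$ tensor slots to avoid symmetry-based cancellations — a well-known combinatorial pitfall in plethysm computations. An alternative, potentially cleaner route uses the Jacobi--Trudi identity $s_\nu = \det(h_{\nu_i-i+j})_{1 \le i,j \le l(\nu)}$. Applying the plethystic substitution $f \mapsto f[s_\mu]$ (a ring homomorphism) termwise yields $s_\nu[s_\mu] = \det(h_{\nu_i-i+j}[s_\mu])$, reducing the problem to the special case $h_r[s_\mu]$. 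That the lex-max Schur coefficient of $h_r[s_\mu]$ is $s_{r\mu}$ with multiplicity one follows from $h_r[s_\mu] = \mathrm{ch}(\mathrm{Sym}^r(V_\mu))$ and the $S_r$-invariant highest-weight vector $v_\mu^{\otimes r}$, or equivalently from iterating the Littlewood--Richardson rule on $s_\mu^r$. Expanding the determinant and tracking lex-leading contributions via iterated LR, the main diagonal $\prod_i h_{\nu_i}[s_\mu]$ contributes exactly $s_{\lambda^*}$ with multiplicity one, and one argues that all signed off-diagonal terms produce only strictly lex-lower Schur functions. The bookkeeping of this cancellation is the technical heart of the argument.
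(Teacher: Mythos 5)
This statement is not proved in the paper at all: it is quoted verbatim from \cite[Corollary 9.1]{PW}, where it is deduced from Paget--Wildon's classification of the dominance-maximal constituents of the plethysm $s_\nu[s_\mu]$ (via their plethystic semistandard tableaux machinery), from which the lexicographically greatest constituent and its multiplicity are read off. Your reduction of the statement to a leading-term question for $s_\nu[s_\mu]$, and your check that $\lambda^*$ is a partition of $mn$, are fine; but both of your proposed routes have gaps exactly where the theorem has content, namely the upper bound (nothing lex-greater than $\lambda^*$ occurs) and the multiplicity-one claim.

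In your first route, exhibiting a highest-weight vector of weight $\lambda^*$ would only prove that $\lambda^*$ \emph{occurs}; your maximality argument excludes only the single weight $n\mu$ when $\nu\neq(n)$, and says nothing about the many partitions $\lambda$ with $\lambda^*<_{\mathrm{lex}}\lambda<_{\mathrm{lex}}n\mu$ (for instance $(n\mu_1,\dotsc,n\mu_{l(\mu)-1},\,n(\mu_{l(\mu)}-1)+\nu_1+1,\dotsc)$), which a priori could appear in the $\nu$-isotypic piece. Moreover the proposed vector is not even defined in general: the prescribed weight $\lambda^*-(n-1)\mu$ has entry $\mu_{l(\mu)}-(n-\nu_1)$, which is negative whenever $\nu_1<n-\mu_{l(\mu)}$ (e.g.\ $\mu=(2,1)$, $\nu=(1^3)$), so it is not a weight of $V_\mu$; the weight drop must be spread over several tensor slots, and then both non-vanishing under the symmetrizer and multiplicity one require real arguments, not ``a direct check'' (note also that the $S_n$-isotypic component carries $s_\nu[s_\mu]$ with multiplicity $\dim S^\nu$, so multiplicities must be read off the Hom-space, not the isotypic component). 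In your second route, the asserted structure of the Jacobi--Trudi expansion is false: by iterated Littlewood--Richardson the lex-leading term of the diagonal product $\prod_i h_{\nu_i}[s_\mu]$ is $s_{n\mu}$, not $s_{\lambda^*}$, and the off-diagonal products $\prod_i h_{\nu_i-i+\sigma(i)}[s_\mu]$ reach the same lex level, so the determinant involves massive cancellations among terms lex-greater than $\lambda^*$; proving that after cancellation nothing above $\lambda^*$ survives for $\nu\neq(n)$, and that $s_{\lambda^*}$ survives with coefficient exactly $1$, is precisely the content of the theorem and is not supplied. As written, neither route establishes the maximality of $\lambda^*$ nor the multiplicity-one statement, so the proposal does not constitute a proof; if you want a self-contained argument you would essentially have to reprove the dominance-maximality analysis of \cite{PW}.
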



\begin{theorem}[{\cite[Theorem 3.5]{GTT}}]\label{thm:GTT3.5}
	Let $p$ be an odd prime and $k\in\N$. Let $K:=S_{p^{k-1}}\wr S_p$ and $\chi\in\Irr_{p'}(S_{p^k})$. Then $\chi\down_K$ has a unique irreducible constituent $\chi^\ast$ lying in $\Irr_{p'}(K)$, and it appears with multiplicity 1. The map $\chi\mapsto\chi^\ast$ gives a bijection $\Irr_{p'}(S_{p^k})\to\Irr_{p'}(K)$.
	More precisely, $\chi=\chi^\lambda$ where $\lambda\vdash p^k$ is a hook. If $\lambda=(p^k-(mp+x),1^{mp+x})$ for some $x\in[\overline{p}]$, then $\chi^\ast\in\{\cX(\mu;\nu_1),\cX(\mu;\nu_2)\}$ where $\mu=(p^{k-1}-m,1^m)$, $\nu_1=(p-x,1^x)$ and $\nu_2=(x+1),1^{p-1-x})$.
\end{theorem}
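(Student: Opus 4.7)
The plan is to combine the hook-length formula, the Littlewood--Richardson rule specialised to hook shapes, and Clifford theory for wreath products. First, both $\Irr_{p'}(S_{p^k})$ and $\Irr_{p'}(K)$ admit explicit parametrisations by hook partitions: the hook-length formula together with $v_p((p^k)!) = (p^k-1)/(p-1)$ shows that $\chi^\lambda(1)$ is coprime to $p$ iff $\lambda$ is a hook, giving $|\Irr_{p'}(S_{p^k})| = p^k$; and by the type~(a)/type~(b) dichotomy recalled in Section~\ref{sec:wreath}, together with the same hook criterion applied in $S_{p^{k-1}}$ and $S_p$, we get $\Irr_{p'}(K) = \{\cX(\mu;\nu) : \mu = (p^{k-1}-m,1^m),\ \nu = (p-x,1^x),\ m\in[\overline{p^{k-1}}],\ x\in[\overline{p}]\}$, also of cardinality $p^k$ (type~(a) characters are induced from a proper subgroup of index $p$ and hence have $p$-divisible degree). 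As both sets have the same size, producing at least one $p'$-degree constituent of multiplicity~$1$ in $\chi\down_K$ suffices; uniqueness and bijectivity then follow by a dimension count on the $p'$-part of $\chi\down_K$.

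Second, I would compute $\chi^\lambda\down_L$ where $L := (S_{p^{k-1}})^{\times p}$ is the base subgroup of $K$ and $\lambda = (p^k-j, 1^j)$ with $j = mp + x$, $m\in[\overline{p^{k-1}}]$, $x\in[\overline{p}]$. The basic building block is the restriction rule for a hook character,
\begin{equation*}
\chi^{(a+b-j, 1^j)}\down_{S_a\times S_b} = \sum_{(j_1, j_2)} \chi^{(a-j_1, 1^{j_1})}\times\chi^{(b-j_2, 1^{j_2})},
\end{equation*}
where $(j_1, j_2)$ ranges over pairs of non-negative integers with $j_1 + j_2 \in \{j-1, j\}$ producing valid hook pairs, each with multiplicity~$1$; this follows from enumerating Littlewood--Richardson tableaux of skew hook shape. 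Iterating this rule $p-1$ times decomposes $\chi^\lambda\down_L$ as a sum of tensor products $\chi^{(p^{k-1}-j_1, 1^{j_1})}\times\cdots\times\chi^{(p^{k-1}-j_p, 1^{j_p})}$ indexed by tuples with $\sum_i j_i \in \{j-(p-1), \dotsc, j\}$ and multiplicities counting the chains of partial sums producing each tuple.

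Finally, by Gallagher's theorem (Section~\ref{sec:wreath}), type~(a) irreducibles of $K$ never contain $\mu^{\times p}$ in their $L$-restriction, while type~(b) irreducibles $\cX(\mu;\theta)$ contribute $\mu^{\times p}\cdot\theta(1)$. Therefore
\begin{equation*}
\langle \chi^\lambda\down_L,\, \mu^{\times p}\rangle = \sum_{\theta\in\Irr(S_p)}\theta(1)\langle \chi^\lambda\down_K,\, \cX(\mu;\theta)\rangle.
\end{equation*}
Imposing $j_1 = \cdots = j_p = m'$ in the previous step forces $pm' \in \{j-(p-1),\dotsc,j\}$, uniquely fixing $m' = m$. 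For this $\mu = (p^{k-1}-m, 1^m)$, non-hook $\theta\vdash p$ have $p$-divisible degree and so contribute divisibly by $p$ on the right; together with the combinatorics of step~2 showing that the $\mu^{\times p}$-multiplicity on the left is congruent modulo $p$ to $\binom{p-1}{x}\equiv(-1)^x$, I would conclude that exactly one of $\cX(\mu;\nu_1)$ or $\cX(\mu;\nu_2)$, the two hooks of $p$ of common dimension $\binom{p-1}{x}$, occurs with multiplicity~$1$ in $\chi^\lambda\down_K$, yielding the theorem.

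The main obstacle is the combinatorial bookkeeping in computing the multiplicity of $\mu^{\times p}$ in $\chi^\lambda\down_L$: tracking how the deficit $j - \sum_i j_i \in \{0,\dotsc,p-1\}$ distributes across the $p-1$ iterated hook restrictions, and confirming that the resulting multiplicity matches the $p'$-contribution of exactly one of the two candidate hook characters $\cX(\mu;\nu_1), \cX(\mu;\nu_2)$. Distinguishing which of these two actually appears (a point on which the theorem statement is silent) would require an additional sign or parity invariant, possibly via Mackey's theorem (Theorem~\ref{thm:mackey}) applied to the natural $S_p$-action permuting the factors of $L$.
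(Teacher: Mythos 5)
This statement is quoted from the literature (\cite[Theorem 3.5]{GTT}); the paper offers no proof of it, so your attempt can only be judged on its own merits. Much of your setup is sound: the hook parametrisations of $\Irr_{p'}(S_{p^k})$ and $\Irr_{p'}(K)$, the two-term hook restriction rule and its iteration giving $\langle \chi^\lambda\down_L,\mu^{\times p}\rangle=\binom{p-1}{x}$ for $L=(S_{p^{k-1}})^{\times p}$, and the Clifford-theoretic identity $\langle\chi^\lambda\down_L,\mu^{\times p}\rangle=\sum_{\theta\vdash p}\theta(1)\,a_\theta$ with $a_\theta:=\langle\chi^\lambda\down_K,\cX(\mu;\theta)\rangle$, which also correctly forces any $p'$-constituent of $\chi^\lambda\down_K$ to have first coordinate exactly $\mu=(p^{k-1}-m,1^m)$.

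The gap is in the final deduction. Restriction to the base group only sees the weighted sum $\sum_\theta\theta(1)a_\theta=\binom{p-1}{x}$, and reducing it mod $p$ gives $\sum_y(-1)^y a_{(p-y,1^y)}\equiv(-1)^x$; but these two constraints do not determine the individual $a_\theta$, so they cannot yield ``a unique $p'$-constituent, with multiplicity $1$''. For instance $a_{(p)}=\binom{p-1}{x}$ and all other $a_\theta=0$ satisfies both the exact identity and the congruence (since $\binom{p-1}{x}\equiv(-1)^x \pmod p$), yet contradicts the conclusion whenever $0<x<p-1$. Pinning down the multiplicities requires finer input than the $\mu^{\times p}$-isotypic multiplicity in $\chi^\lambda\down_L$ — for example Frobenius reciprocity identifying $a_\theta=\langle s_\lambda,\,s_\theta[s_\mu]\rangle$ and explicit plethysm computations for hooks (in the spirit of \cite{PW,dBPW}), or an induction along the Sylow tower. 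A second, related gap concerns the bijectivity claim: the two-element candidate set $\{(p-x,1^x),(x+1,1^{p-1-x})\}$ attached to leg length $mp+x$ is the \emph{same} set as the one attached to leg length $mp+(p-1-x)$, so injectivity of $\chi\mapsto\chi^\ast$ genuinely requires deciding which member of the pair occurs for each $\lambda$ — precisely the point you defer at the end — and your proposed ``dimension count'' from $|\Irr_{p'}(S_{p^k})|=|\Irr_{p'}(K)|=p^k$ cannot substitute for it.
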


Given partitions $\lambda$ and $\mu$, we say that $\mu\subseteq \lambda$ if $[\mu]\subseteq [\lambda]$. 
By a skew shape $\gamma$ we mean a difference of Young diagrams $[\lambda\setminus\mu]:=[\lambda]\setminus[\mu]$ 
where $\mu\subseteq\lambda$, and $|\gamma|:=|\lambda|-|\mu|$. 

\begin{definition}\label{def:LR-rule}
	Let $\lambda=(\lambda_1,\dotsc,\lambda_k)\in\cP(n)$ and let $\cC=(c_1,\dotsc,c_n)$ be a sequence of positive integers. We say that $\cC$ has \emph{weight} $\lambda$ if $|\{i\in[n] \mid c_i=j\}| = \lambda_j$ for all $j\in[k]$. We say that an element $c_j$ of $\cC$ is \emph{good} if $c_j=1$ or if
	\[ |\{i\in [j-1] \mid c_i=c_j-1\}|>|\{i\in [j-1]\mid c_i=c_j\}|. \]
	Finally, we say that the sequence $\cC$ is \emph{good} if $c_j$ is good for every $j\in[n]$.
\end{definition}

\begin{theorem}[Littlewood\textendash Richardson Rule]\label{theorem:LR}
	Let $m,n\in\N$, let $\mu\vdash m$ and $\nu\vdash n$. Then
	\[ (\chi^\mu\times\chi^\nu)\up^{S_{m+n}}_{S_m\times S_n} = \sum_{\lambda\vdash m+n} c^\lambda_{\mu\nu}\cdot \chi^\lambda \]
	where $c^\lambda_{\mu\nu}$ equals the number of ways to replace the boxes of $[\lambda\setminus\mu]$ by natural numbers such that
	\begin{itemize}
		\item[(i)] the sequence obtained by reading the numbers from right to left, top to bottom is a good sequence of weight $\nu$,
		\item[(ii)] the numbers are weakly increasing along rows, and
		\item[(iii)] the numbers are strictly increasing down columns.
	\end{itemize}
\end{theorem}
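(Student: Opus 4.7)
The plan is to reduce the statement to an identity among Schur symmetric functions via the Frobenius characteristic map, and then establish that identity combinatorially. To do so, I would first invoke the Frobenius characteristic map $\mathrm{ch}\colon \bigoplus_{n\ge 0} R(S_n) \to \Lambda$, where $R(S_n)$ is the representation ring of $S_n$ and $\Lambda$ is the graded ring of symmetric functions. Equipping the domain with the product $\chi \cdot \psi := (\chi \times \psi)\up^{S_{m+n}}_{S_m \times S_n}$ for $\chi \in R(S_m)$ and $\psi \in R(S_n)$, this map is a graded ring isomorphism satisfying $\mathrm{ch}(\chi^\lambda) = s_\lambda$, the Schur function indexed by $\lambda$. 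Consequently the theorem becomes equivalent to the symmetric function identity
\[ s_\mu \cdot s_\nu \;=\; \sum_{\lambda \vdash m+n} c^\lambda_{\mu\nu}\, s_\lambda, \]
where $c^\lambda_{\mu\nu}$ is the count of fillings of $[\lambda\setminus\mu]$ satisfying conditions (i)--(iii).

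Next, I would prove this Schur identity by induction on $l(\nu)$. The base case $\nu=(r)$ is Pieri's rule: $s_\mu \cdot s_{(r)}$ equals the sum of $s_\lambda$ over $\lambda$ obtained from $\mu$ by adding a horizontal strip of size $r$, and a direct check shows this coincides with the LR count in this case. For the inductive step there are two natural routes. One is to combine Pieri's rule with the Jacobi--Trudi determinantal formula $s_\nu = \det(h_{\nu_i - i + j})$ and use a sign-reversing involution to cancel non-LR fillings from the resulting alternating sum. The other is to work directly with the combinatorial definition $s_\lambda = \sum_T x^T$ summed over semistandard Young tableaux of shape $\lambda$, and use the RSK correspondence to set up a weight-preserving bijection that identifies the coefficient of $s_\lambda$ in $s_\mu s_\nu$ with the number of LR fillings of $[\lambda\setminus\mu]$ having content $\nu$.

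The hard part, and the reason the rule is so subtle, is handling the good sequence (reverse lattice word) condition in (i). This condition is precisely what distinguishes LR-fillings from arbitrary semistandard skew tableaux, and via Sch\"{u}tzenberger's theory it corresponds to requiring that the filling rectifies under jeu de taquin to the unique superstandard tableau of shape $\nu$. To carry the induction or the RSK bijection through cleanly, one must show that this lattice word condition is preserved under the relevant combinatorial moves, which ultimately reduces to the invariance of jeu de taquin rectification (Sch\"{u}tzenberger's theorem). Once that invariance is established the LR coefficients fall out as claimed; a detailed exposition following this strategy can be found in \cite[Chapter 2]{JK}.
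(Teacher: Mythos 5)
The paper does not prove this statement at all: Theorem~\ref{theorem:LR} is quoted as the classical Littlewood--Richardson rule, a background result taken from the standard literature on symmetric group representations (cf.\ the references \cite{J,JK}), so there is no in-paper argument to compare yours against. What you have written is therefore being measured only on its own terms, as a proof of a classical theorem.

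On those terms, your strategy is a recognised and correct one \textemdash{} pass through the Frobenius characteristic to reduce to the Schur-function identity $s_\mu s_\nu=\sum_\lambda c^\lambda_{\mu\nu}s_\lambda$, start the induction with Pieri's rule, and control the general case either by a sign-reversing involution applied to the Jacobi--Trudi expansion or by a jeu-de-taquin/RSK bijection \textemdash{} but as written it is a roadmap rather than a proof. Every genuinely difficult step is named and then deferred: the sign-reversing involution that cancels the non-lattice fillings is not constructed, the weight-preserving bijection in the RSK route is not defined, and the preservation of the good-sequence (reverse lattice word) condition is reduced to Sch\"utzenberger's rectification-invariance theorem, which you invoke without proof. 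Since that invariance (or the involution replacing it) is precisely the content of the Littlewood--Richardson rule beyond Pieri, the proposal identifies the correct skeleton and the correct point of difficulty but does not actually close it; completing it would require carrying out one of the two cited constructions in detail, exactly as is done in the textbook treatments you point to.
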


\begin{definition}\label{def:LR-filling}
	Let $\nu$ be a partition and $\gamma$ be a skew shape such that $|\gamma|=|\nu|$. 
	\begin{itemize}
		\item[(i)] We call a way of replacing the boxes of $\gamma$ by numbers satisfying the conditions of Theorem~\ref{theorem:LR} a \emph{Littlewood--Richardson (LR) filling of $\gamma$ of weight $\nu$}.
		\item[(ii)] Let $\cLR(\gamma)$ denote the set of all possible weights of LR fillings of $\gamma$.
	\end{itemize}
\end{definition}

The \textit{Littlewood--Richardson coefficients} $c^\lambda_{\mu\nu}$ (sometimes notated as $c^\lambda_{\mu,\nu}$ for clarity) are symmetric, i.e.~$c^\lambda_{\mu\nu}=c^\lambda_{\nu\mu}$. Moreover, clearly $c^\lambda_{\mu\nu}>0$ implies that $\lambda_1\le \mu_1+\nu_1$ and $l(\lambda)\le l(\mu)+l(\nu)$.

We can also define \emph{iterated Littlewood--Richardson coefficients} $c^{\lambda}_{\mu^1,\dotsc,\mu^r}$ as follows. Let $r\in\N$ and $\mu^1,\dotsc,\mu^r$ be partitions, and let $\lambda\vdash n:= |\mu^1|+\cdots+|\mu^r|$. Then 
\[ c^{\lambda}_{\mu^1,\dotsc,\mu^r} := \langle \chi^\lambda, (\chi^{\mu^1}\times\cdots\times\chi^{\mu^r}) \up^{S_n}_{S_{|\mu^1|}\times\cdots\times S_{|\mu^r|}} \rangle. \]
When $r=2$, these are the usual Littlewood--Richardson coefficients as in Theorem~\ref{theorem:LR}, and letting $m=|\mu^1|+\cdots+|\mu^{r-1}|$ when $r\ge 2$, it is easy to see that
\[ c^{\lambda}_{\mu^1,\dotsc,\mu^r} = \sum_{\gamma\vdash m} c^{\gamma}_{\mu^1,\dotsc,\mu^{r-1}} \cdot c^\lambda_{\gamma,\mu^r}. \]
The iterated Littlewood--Richardson coefficients are also symmetric under any permutation of the partitions $\mu^1,\dotsc,\mu^r$.
An \emph{iterated Littlewood--Richardson (LR) filling of $[\lambda]$ by $\mu^1,\dotsc,\mu^r$} is a way of replacing the boxes of $[\lambda]$ by numbers defined recursively as follows: if $r=1$ then $[\lambda]=[\mu^1]$ has a unique LR filling, which is of weight $\mu^1$; if $r\ge 2$ then we mean an iterated LR filling of $[\gamma]$ by $\mu^1,\dotsc,\mu^{r-1}$ together with an LR filling of $[\lambda\setminus\gamma]$ of weight $\mu^r$ (for some $\gamma\subseteq\lambda$ such that this is possible).


\begin{lemma}[{\cite[Lemma 2.19]{GL2}}]\label{lem: iteratedLR}
	Let $a,b_1,\dotsc,b_a\in\N$. Let $\nu^1,\dotsc,\nu^a$ be partitions such that $b_i\ge |\nu^i|$ for all $i$ and let $c=|\nu^1|+\cdots+|\nu^a|$. 
	Let $\mu\vdash c$ and let $\lambda=(b_1+b_2+\cdots+b_a,\mu)$. Then the iterated Littlewood--Richardson coefficients $c^\lambda_{(b_1,\nu^1),\dotsc,(b_a,\nu^a)}$ and $c^\mu_{\nu^1,\dotsc,\nu^a}$ are equal.
\end{lemma}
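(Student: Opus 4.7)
The plan is to prove the identity by induction on $a$, using the combinatorial interpretation of (iterated) Littlewood--Richardson coefficients in terms of LR fillings. The base case $a=1$ is trivial: both sides equal $\delta_{\mu,\nu^1}$ since $\lambda=(b_1,\mu)$ is forced to equal $(b_1,\nu^1)$ for a nonzero coefficient.

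For the inductive step, I would unfold the iterated coefficient as
\[ c^\lambda_{(b_1,\nu^1),\dots,(b_a,\nu^a)} \;=\; \sum_{\gamma} c^\gamma_{(b_1,\nu^1),\dots,(b_{a-1},\nu^{a-1})} \cdot c^\lambda_{\gamma,(b_a,\nu^a)}, \]
and show that only $\gamma$ of the form $(b_1+\cdots+b_{a-1},\tilde\gamma)$ contribute. The upper bound $\gamma_1\le b_1+\cdots+b_{a-1}$ comes from iterating the standard LR inequality $\delta_1\le\alpha_1+\beta_1$ applied to $(b_i,\nu^i)_1=b_i$ (valid because $b_i\ge|\nu^i|\ge\nu^i_1$). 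For the matching lower bound, I would observe that in any LR filling of $[\lambda\setminus\gamma]$ of weight $(b_a,\nu^a)$, every entry in the first row must equal $1$: reading right-to-left top-to-bottom, the rightmost box in row $1$ is processed first and must be a $1$ by the good-sequence condition, and then weak monotonicity along the row forces the rest of the row to be $1$'s. Since the skew shape has $\lambda_1-\gamma_1$ boxes in the first row and the weight has exactly $b_a$ ones, we obtain $\lambda_1-\gamma_1\le b_a$, i.e.\ $\gamma_1\ge b_1+\cdots+b_{a-1}$. Equality of the two bounds forces $\gamma_1=b_1+\cdots+b_{a-1}$.

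With $\gamma=(b_1+\cdots+b_{a-1},\tilde\gamma)$ established, the inductive hypothesis gives $c^\gamma_{(b_1,\nu^1),\dots,(b_{a-1},\nu^{a-1})}=c^{\tilde\gamma}_{\nu^1,\dots,\nu^{a-1}}$, so it remains to verify the single-step identity $c^\lambda_{\gamma,(b_a,\nu^a)}=c^\mu_{\tilde\gamma,\nu^a}$. Here I would exhibit a bijection between LR fillings of $[\lambda\setminus\gamma]$ of weight $(b_a,\nu^a)$ and LR fillings of $[\mu\setminus\tilde\gamma]$ of weight $\nu^a$: by the observation above, row $1$ of $[\lambda\setminus\gamma]$ is uniquely filled with $b_a$ ones; deleting this row and decrementing every remaining entry by $1$ yields a filling of $[\mu\setminus\tilde\gamma]$. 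Row/column monotonicity is preserved, and for $v\ge 3$ the good-sequence inequality in rows $\ge 2$ is identical before and after relabeling. Summing over $\tilde\gamma$ and comparing with the analogous decomposition of $c^\mu_{\nu^1,\dots,\nu^a}$ closes the induction.

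The main obstacle, which is not deep but requires care, is the verification of the good-sequence condition in the bijection at the boundary $v=2$: one must check that the $b_a$ ones in row $1$ supply enough lead over the twos read later in rows $\ge 2$. This reduces to $b_a\ge\nu^a_1$, which is guaranteed by the hypothesis $b_a\ge|\nu^a|$; this is precisely where the size assumption on $b_i$ is used.
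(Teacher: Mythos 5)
Your proof is correct. Note that the paper itself gives no argument for this lemma — it is quoted from \cite[Lemma 2.19]{GL2} — so there is no in-paper proof to compare against; your induction on $a$ via the unfolding $c^\lambda_{(b_1,\nu^1),\dotsc,(b_a,\nu^a)}=\sum_\gamma c^\gamma_{(b_1,\nu^1),\dotsc,(b_{a-1},\nu^{a-1})}\,c^\lambda_{\gamma,(b_a,\nu^a)}$ is the natural route, and the key steps all check out: the good-sequence condition forces every box in the first row of $[\lambda\setminus\gamma]$ to carry a $1$, which combined with the bound $\gamma_1\le b_1+\cdots+b_{a-1}$ pins down $\gamma=(b_1+\cdots+b_{a-1},\tilde\gamma)$, and since all $b_a$ ones then sit in that row, the strip-and-decrement map is a genuine weight-preserving bijection, the only nontrivial verification being the $1$-versus-$2$ lattice inequality in the reverse direction, exactly where $b_a\ge|\nu^a|\ge\nu^a_1$ enters.
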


\begin{lemma}\label{lem:BK}\cite[Lemma 4.4]{BK}
	Let $\mu$ and $\gamma$ be partitions such that $[\gamma]\subsetneq[\mu]$. The following are equivalent:
	\begin{itemize}\setlength\itemsep{0.5em}
		\item[(i)] $|\cLR([\mu\setminus\gamma])|=1$;
		\item[(ii)] there is a unique Littlewood\textendash Richardson filling of $[\mu\setminus\gamma]$;
		\item[(iii)] $[\mu\setminus\gamma]\cong[\nu]$ or its $180^\circ$-rotation, for some partition $\nu\vdash|\mu|-|\gamma|$.
	\end{itemize}
\end{lemma}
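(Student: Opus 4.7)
The plan is to prove the three conditions are equivalent via the cycle (iii) $\Rightarrow$ (ii) $\Rightarrow$ (i) $\Rightarrow$ (iii). For (iii) $\Rightarrow$ (ii), assuming $[\mu\setminus\gamma]$ equals after translation a straight shape $[\nu]$, I would exhibit the candidate LR filling that places $i$ in every box of row $i$: this is weakly increasing along rows, strictly increasing down columns, has weight $\nu$, and its right-to-left, top-to-bottom reading word $(1^{\nu_1}, 2^{\nu_2}, \ldots)$ is visibly good. Uniqueness follows by induction: the first entry read (the top-right box) must equal $1$, whereupon the good-sequence condition forces row $1$ to consist of $1$'s; column-strictness then forces the leftmost entry of row $2$ to be $\ge 2$, the good-sequence condition precludes anything larger, and the argument continues row by row. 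The $180^\circ$-rotation case is handled by the mirror construction (fill the $i$-th row from the bottom with the entry $i$), with a parallel inductive uniqueness argument.

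The implication (ii) $\Rightarrow$ (i) is immediate from the definition of $\cLR([\mu\setminus\gamma])$, since one filling produces one weight.

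The main obstacle is (i) $\Rightarrow$ (iii). I would argue the contrapositive, first noting that $[\mu\setminus\gamma]$ is a translated straight shape precisely when $\gamma_r$ is constant over all rows $r$ with $\mu_r>\gamma_r$, and a translated $180^\circ$-rotated straight shape precisely when $\mu_r$ is constant over such rows. If (iii) fails, there exist rows $r_1<r_2$ with $\gamma_{r_1}>\gamma_{r_2}$ and rows $r_3<r_4$ with $\mu_{r_3}>\mu_{r_4}$, all four of which are non-empty in the skew shape. From this structural data I would produce two distinct weights in $\cLR([\mu\setminus\gamma])$. The cleanest route, which I would pursue, is via the Schur-function translation of Theorem~\ref{theorem:LR}: writing $s_{\mu/\gamma}=\sum_{\nu}c^{\mu}_{\gamma\nu}s_\nu$, condition (i) forces the sum to be supported on a single partition $\nu$, and since the lex-greatest weight appearing has coefficient exactly $1$ (coming from the unique dominant LR filling, obtained greedily row-by-row), the expansion collapses to $s_{\mu/\gamma}=s_\nu$; the classical characterization of skew Schur functions equal to a single Schur function then yields (iii). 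The alternative fully combinatorial route would use the rows $r_1,\ldots,r_4$ to explicitly construct one LR filling whose weight differs from the dominant one by transferring mass between two parts (e.g.\ promoting an entry $k$ to $k+1$ in a box made available by the `skewness' near rows $r_1,r_2$, and compensating in a box made available near rows $r_3,r_4$). That case analysis is where I expect the main technical difficulty to lie, which is why invoking the Schur-function identity is preferable.
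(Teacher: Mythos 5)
The paper gives no argument here: the lemma is imported verbatim from \cite[Lemma 4.4]{BK}, so your sketch must stand on its own, and it does not yet. The essential gap is the implication (i)$\Rightarrow$(iii), which is the whole content of the statement. Your preferred route first reduces, via uniqueness of the filling of lexicographically greatest weight, to the identity $s_{\mu/\gamma}=s_\nu$ (that reduction is plausible, though the multiplicity-one claim for the greedy filling itself needs a proof or a precise reference), and then invokes ``the classical characterization of skew Schur functions equal to a single Schur function''. But that characterization \emph{is} the lemma: the statement that $s_{\mu/\gamma}=s_\nu$ forces $[\mu\setminus\gamma]$ to be a translate of $[\nu]$ or its $180^\circ$ rotation is exactly what \cite[Lemma 4.4]{BK} is the standard citation for, so your main route is circular, or at best defers the entire difficulty to an unproved black box of the same depth. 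Your fallback combinatorial route --- the (correct) reformulation of (iii) as constancy of $\gamma_r$, respectively $\mu_r$, over the rows meeting the skew shape, followed by using rows $r_1<r_2$ with $\gamma_{r_1}>\gamma_{r_2}$ and $r_3<r_4$ with $\mu_{r_3}>\mu_{r_4}$ to manufacture a second weight --- is the right idea (it is the same kind of entry-replacement argument the paper runs in Lemmas~\ref{lem: LR tworow filling} and~\ref{lem: LR hook filling}), but you explicitly leave the case analysis undone, and that case analysis is where the proof actually lives.

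There is also a concrete error in (iii)$\Rightarrow$(ii). In the rotated case your ``mirror construction'' (constant entry $i$ along the $i$-th row from the bottom) is not a Littlewood--Richardson filling unless the rotated shape is a rectangle: for the rotation of $(3,1)$ it places a $2$ directly above a $1$, violating strict increase down columns. The unique filling of a rotated straight shape is instead the one in which each column is filled $1,2,3,\dotsc$ from top to bottom (its weight is again $\nu$), and it is this filling against which the uniqueness induction must be run; the induction then does go through much as in your straight-shape case, reading the word from the right and using the goodness condition to cap each new entry. This slip is repairable, but as written both the construction and the ``parallel'' uniqueness argument for the rotated case are incorrect, so at present only (ii)$\Rightarrow$(i) and the straight-shape half of (iii)$\Rightarrow$(ii) are actually established.
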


\begin{definition}\label{def:star operator}
	For $n,m\in\N$ and $A\subseteq\cP(n)$, $B\subseteq\cP(m)$, let 
	\[ A\star B:=\{\lambda\vdash n+m \mid \exists\ \mu\in A,\ \nu\in B\ \text{such that}\ c^{\lambda}_{\mu\nu}>0 \}. \]
\end{definition}

Clearly $\star$ is commutative and associative. Next, let $n\in\N$ and let $n = \sum_{i=1}^t a_ip^{n_i}$ be its $p$-adic expansion. 
Given $\phi\in\Lin(P_n)$, recall that we may write $\phi=\phi(\underline{\mathbf{s}})$ 
as in (\ref{eqn: index}).

\begin{lemma}[{\cite[Lemma 5.1]{GL2}}]\label{lem: omega star omega}
	Let $p$ be any prime. For all $n\in\N$ and $\phi(\underline{\mathbf{s}})\in\Lin(P_n)$, 
	\[ \Omega(\underline{\mathbf{s}})=\Omega(\mathbf{s}(1,1))\star\cdots\star\Omega(\mathbf{s}(i,j))\star\cdots\star\Omega(\mathbf{s}(t,a_t)).\]
\end{lemma}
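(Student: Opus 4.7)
The plan is to go through Frobenius reciprocity and induction in stages, turning the problem into an application of the iterated Littlewood--Richardson rule. Let $H:=S_{p^{n_1}}^{\times a_1}\times\cdots\times S_{p^{n_t}}^{\times a_t}$, which is a Young subgroup of $S_n$ containing $P_n$ (in fact each factor $P_{p^{n_i}}$ of $P_n$ is a Sylow $p$-subgroup of the corresponding $S_{p^{n_i}}$). By definition $\lambda\in\Omega(\underline{\mathbf{s}})$ iff $\langle\chi^\lambda\down_{P_n},\phi(\underline{\mathbf{s}})\rangle>0$, equivalently (by Frobenius reciprocity) iff $\chi^\lambda\mid\phi(\underline{\mathbf{s}})\up^{S_n}$.

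First I would induce from $P_n$ to $H$. Since induction commutes with direct products of characters of direct products of groups, and since $\phi(\underline{\mathbf{s}})$ factors across the components of $P_n\cong\prod_{(i,j)}P_{p^{n_i}}$, we get
\[ \phi(\underline{\mathbf{s}})\up^{H} \;=\; \prod_{(i,j)} \phi(\mathbf{s}(i,j))\up^{S_{p^{n_i}}} \;=\; \sum_{\underline{\mu}} \Big(\prod_{(i,j)} a^{(i,j)}_{\mu^{(i,j)}}\Big)\prod_{(i,j)}\chi^{\mu^{(i,j)}}, \]
where $a^{(i,j)}_{\mu^{(i,j)}}:=\langle\phi(\mathbf{s}(i,j))\up^{S_{p^{n_i}}},\chi^{\mu^{(i,j)}}\rangle\ge 0$ and $\underline{\mu}$ runs over tuples of partitions of the appropriate sizes. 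Crucially, $a^{(i,j)}_{\mu^{(i,j)}}>0$ if and only if $\mu^{(i,j)}\in\Omega(\mathbf{s}(i,j))$.

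Next I would induce from $H$ to $S_n$, which is exactly the induction from a Young subgroup governed by the (iterated) Littlewood--Richardson rule (Theorem~\ref{theorem:LR} applied iteratively). This gives
\[ \langle\phi(\underline{\mathbf{s}})\up^{S_n},\chi^\lambda\rangle \;=\; \sum_{\underline{\mu}} \Big(\prod_{(i,j)} a^{(i,j)}_{\mu^{(i,j)}}\Big)\cdot c^{\lambda}_{\mu^{(1,1)},\,\ldots,\,\mu^{(t,a_t)}}. \]
All summands are non-negative, so the left-hand side is positive precisely when there exists a tuple $\underline{\mu}$ with $\mu^{(i,j)}\in\Omega(\mathbf{s}(i,j))$ for every $(i,j)$ and with $c^{\lambda}_{\mu^{(1,1)},\,\ldots,\,\mu^{(t,a_t)}}>0$. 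By the definition of the $\star$ operator (Definition~\ref{def:star operator}) together with the associativity and commutativity of $\star$ (which follows from the corresponding properties of iterated Littlewood--Richardson coefficients), this is exactly the condition that $\lambda\in\Omega(\mathbf{s}(1,1))\star\cdots\star\Omega(\mathbf{s}(t,a_t))$, completing the proof.

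There is no real obstacle here: the argument is pure bookkeeping between Frobenius reciprocity, induction in stages, and the LR rule. The only point requiring mild care is to note that non-negativity of all the multiplicities and LR coefficients prevents cancellation, so that the positivity of the full sum is equivalent to the existence of a single positive summand --- this is what allows the passage from an additive decomposition to the set-theoretic $\star$-product.
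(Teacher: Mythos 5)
Your proof is correct and follows essentially the same route as the source of this lemma (the paper itself just cites \cite[Lemma 5.1]{GL2}): Frobenius reciprocity, passage through the Young subgroup $S_{p^{n_1}}^{\times a_1}\times\cdots\times S_{p^{n_t}}^{\times a_t}$, and the (iterated) Littlewood--Richardson rule, with non-negativity of all multiplicities converting positivity of the total sum into existence of a single positive summand. The only step you compress --- that positivity of an iterated LR coefficient for some choice of $\mu^{(i,j)}\in\Omega(\mathbf{s}(i,j))$ is equivalent to membership in the iterated $\star$-product --- is exactly the decomposition $c^{\lambda}_{\mu^1,\dotsc,\mu^r}=\sum_{\gamma}c^{\gamma}_{\mu^1,\dotsc,\mu^{r-1}}c^{\lambda}_{\gamma,\mu^r}$ recorded in the paper before Lemma~\ref{lem: iteratedLR}, so no gap remains.
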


Recall that $\cB_n(t)$ is the set of those partitions of $n$ whose Young diagrams fit inside an $t\times t$ square grid. This is clearly closed under conjugation of partitions, i.e.~$\cB_n(t)=\cB_n(t)^\circ$. The following key combinatorial proposition describes how to combine sets of this form using the operator $\star$, under suitable hypotheses on the parameters.
\begin{proposition}[{\cite[Proposition 3.2]{GL2}}]\label{prop: B star B}
	Let $n,n',t,t'\in\N$ with $\tfrac{n}{2}<t\le n$ and $\tfrac{n'}{2}<t'\le n'$. Then $\cB_n(t) \star\cB_{n'}(t') = \cB_{n+n'}(t+t')$.
\end{proposition}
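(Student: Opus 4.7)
The forward inclusion $\mathcal{B}_n(t)\star\mathcal{B}_{n'}(t')\subseteq\mathcal{B}_{n+n'}(t+t')$ is immediate: if $c^\lambda_{\mu\nu}>0$ with $\mu\in\mathcal{B}_n(t)$ and $\nu\in\mathcal{B}_{n'}(t')$, then $\lambda_1\le\mu_1+\nu_1\le t+t'$ and $l(\lambda)\le l(\mu)+l(\nu)\le t+t'$, so $\lambda\in\mathcal{B}_{n+n'}(t+t')$. The substance is the reverse inclusion: given $\lambda\in\mathcal{B}_{n+n'}(t+t')$, I need to exhibit $\mu\in\mathcal{B}_n(t)$ and $\nu\in\mathcal{B}_{n'}(t')$ with $c^\lambda_{\mu\nu}\ge 1$.

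My plan is an explicit combinatorial construction. Since $\mathcal{B}_n(t)$, $\mathcal{B}_{n'}(t')$ and $\mathcal{B}_{n+n'}(t+t')$ are all conjugation-closed and $c^\lambda_{\mu\nu}=c^{\lambda'}_{\mu'\nu'}$, I may replace $\lambda$ by $\lambda'$ and assume $l(\lambda)\ge\lambda_1$. Place $\lambda$ in the top-left of the ambient $(t+t')\times(t+t')$ square and define $\tilde\mu_i:=\min(\lambda_i,t)$ for $i\in[t]$, the largest sub-partition of $\lambda$ that fits in the top-left $t\times t$ subsquare. A short case analysis shows $|\tilde\mu|\ge n$: the sharpest case is $\lambda_1>t$ and $l(\lambda)>t$, where $\min(\lambda_1,t)=t$ combined with $\lambda_{t+1}\ge 1$ (forced by $l(\lambda)>t$) gives $\min(\lambda_i,t)\ge 1$ for all $i\in[t]$, whence $|\tilde\mu|\ge t+(t-1)=2t-1\ge n$ by the hypothesis $n<2t$; the remaining cases are similar, with the sub-case $\lambda_1\le t<l(\lambda)$ and $\lambda_t=1$ using $t'\le n'$ to bound the bottom overhang $\sum_{i>t}\lambda_i\le t'\lambda_t\le n'$.

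Given $|\tilde\mu|\ge n$, I peel $|\tilde\mu|-n$ boxes off the south-east corners of $\tilde\mu$ to get $\mu\subseteq\tilde\mu$ with $|\mu|=n$, arranging the peeling so that the skew shape $[\lambda\setminus\mu]$---consisting of the peeled boxes together with the overhang $\lambda\setminus\tilde\mu$ to the right of and below the $t\times t$ subsquare---is isomorphic as a cell complex to the $180^\circ$-rotation of a Young diagram $[\nu]$. Lemma~\ref{lem:BK} then provides a unique LR filling of $[\lambda\setminus\mu]$, of weight $\nu$, so $c^\lambda_{\mu\nu}=1>0$. The containment of $[\nu]$ in the bottom-right $t'\times t'$ subsquare of the ambient grid (enforced by the shape of the overhang together with $n'<2t'$) gives $\nu_1\le t'$ and $l(\nu)\le t'$, so $\nu\in\mathcal{B}_{n'}(t')$.

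The main obstacle is the peeling step: organising the trimming of $\tilde\mu$ so that $[\lambda\setminus\mu]$ takes exactly the shape of a rotated Young diagram. This requires a case split on how $\lambda$ overhangs the top-left $t\times t$ subsquare (only to the right, only below, in both directions, or in neither), together with a specific recipe in each sub-case; where a rotated-Young-diagram shape cannot be achieved by peeling alone, one constructs an explicit LR filling of $[\lambda\setminus\mu]$ by hand and directly checks that its weight $\nu$ lies in $\mathcal{B}_{n'}(t')$. The two numerical hypotheses enter symmetrically: $n<2t$ gives the flexibility needed to place $\mu$ inside the $t\times t$ box, and $n'<2t'$ gives enough room for the resulting $\nu$ inside $\mathcal{B}_{n'}(t')$.
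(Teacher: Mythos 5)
Your easy inclusion, the conjugation reduction, and the estimate $|\tilde\mu|\ge n$ (with the case analysis you sketch) are all correct; note that the present paper does not prove this statement at all but imports it from \cite[Proposition 3.2]{GL2}, so your argument has to stand on its own. The genuine gap is at the one step that carries the content of the reverse inclusion: producing, for every $\lambda\in\cB_{n+n'}(t+t')$, a choice of $\mu\in\cB_n(t)$ \emph{and} an LR filling of $[\lambda\setminus\mu]$ whose weight lies in $\cB_{n'}(t')$. Your proposed mechanism -- peel $\tilde\mu$ so that $[\lambda\setminus\mu]$ becomes a $180^\circ$-rotated Young diagram, apply Lemma~\ref{lem:BK}, and read off $\nu\in\cB_{n'}(t')$ from a containment in the bottom-right $t'\times t'$ subsquare -- fails in entirely typical configurations, because $[\lambda\setminus\mu]$ is usually disconnected and no amount of peeling can repair this. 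For instance take $n=n'=3$, $t=t'=2$, $\lambda=(3,1,1,1)\in\cB_6(4)$ (which satisfies your normalisation $l(\lambda)\ge\lambda_1$): then $\tilde\mu=(2,1)$ already has size $n$, so $\mu=(2,1)$ is forced, and $[\lambda\setminus\mu]$ consists of one box in row $1$ plus a vertical domino in rows $3$--$4$, which is not a rotation of any Young diagram; here $\cLR([\lambda\setminus\mu])=\{(2,1),(1^3)\}$ and one must actually argue that the admissible weight $(2,1)$, rather than $(1^3)$, can be selected. The geometric justification in the ``nice'' case is also unsound: the skew shape need not lie in the bottom-right $t'\times t'$ subsquare and can occupy more than $t'$ nonempty rows even when a suitable filling exists, so containment is not the right invariant; what must be controlled is the existence of some weight in $\cLR([\lambda\setminus\mu])$ with at most $t'$ parts, each of size at most $t'$, and this depends delicately on how you peel (for $\lambda=(3,3,1,1,1,1)$, $t=t'=3$, $n=n'=5$, the peeling $\mu=(3,2)$ leaves a column of four boxes and forces $l(\nu)\ge 4>t'$, whereas $\mu=(2,2,1)$ or $(3,1,1)$ succeeds).

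Deferring all such situations to ``a case split \dots\ together with a specific recipe in each sub-case'' and to fillings ``constructed by hand'' leaves precisely the hard half of the proposition unproved: the reverse inclusion \emph{is} the assertion that such a choice of $\mu$ and of an LR filling with weight in $\cB_{n'}(t')$ always exists, uniformly in $n,n',t,t'$. As written, your proposal establishes $\cB_n(t)\star\cB_{n'}(t')\subseteq\cB_{n+n'}(t+t')$ and the preliminary bound $|\tilde\mu|\ge n$, but not the substantive inclusion. To close the gap you would need an explicit, verifiable rule valid for all $\lambda$ -- for example an induction that strips from $\lambda$ a suitable strip of size controlled by $t'$ at each stage (which would itself need checking against the hypotheses $n<2t$, $n'<2t'$), or the constructive argument of \cite[Proposition 3.2]{GL2} -- rather than an appeal to unspecified case-by-case constructions.
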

We remark that the above cannot be extended to all $t\le n$ or $t'\le n'$: for example, 
$\cB_7(3)\star\cB_1(1)\ne\cB_8(4)$. 

\begin{definition}\label{def: Dset newOmegas}
	Let $q,m\in\N$ with $q\geq 2$ and let $\cB\subseteq\cP(m)$.
	Let $H=(S_{m})^{\times q}\leq S_{qm}$.
	Let $\cD(q, m, \cB)$ be the subset of $\cP(qm)$ consisting of all those partitions $\lambda\in\cP(qm)$ for which there exists $\mu_1,\mu_2,\ldots, \mu_{q}\in\cB$, not all equal, such that 
	\[ \chi^{\mu_1}\times\chi^{\mu_2}\times\cdots\times\chi^{\mu_q}\ \Big{|}\ \chi^\lambda\down_{H}. \]
	Note if $\cB^\circ=\cB$, then $\cD(q,m,\cB)^\circ = \cD(q,m,\cB)$.
\end{definition}


\begin{proposition}[{\cite[Proposition 3.7]{GL2}}]\label{prop: BinD}
	Let $m,t\in\N$ with $\tfrac{m}{2}+1<t\le m$. Let $q\in\N_{\ge 3}$. Then $\cB_{qm}(qt-1) \subseteq \cD\big(q,m,\cB_m(t)\big)$.
\end{proposition}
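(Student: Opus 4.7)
Plan.
The plan is to produce an iterated Littlewood--Richardson decomposition $c^\lambda_{\mu_1,\ldots,\mu_q}>0$ with the $\mu_i\in\cB_m(t)$ not all equal. The key observation is that the hypothesis $t>\tfrac{m}{2}+1$ gives both $t>\tfrac{m}{2}$ and $t-1>\tfrac{m}{2}$, so iterating Proposition~\ref{prop: B star B} produces
\[ \cB_m(t-1) \star \underbrace{\cB_m(t) \star \cdots \star \cB_m(t)}_{q-1\text{ copies}} \;=\; \cB_{qm}(qt-1). \]
Hence every $\lambda\in\cB_{qm}(qt-1)$ admits a decomposition $c^\lambda_{\mu_1,\mu_2,\ldots,\mu_q}>0$ with $\mu_1\in\cB_m(t-1)$ and $\mu_2,\ldots,\mu_q\in\cB_m(t)$.

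Next I would split into cases. If some $\mu_i$ with $i\ge 2$ satisfies $\mu_i(1)=t$ or $l(\mu_i)=t$ (so that $\mu_i\notin\cB_m(t-1)$), then $\mu_1\neq\mu_i$, the tuple is not all equal, and $\lambda\in\cD(q,m,\cB_m(t))$ as required. Otherwise every $\mu_j$ lies in $\cB_m(t-1)$; if the resulting tuple is non-uniform we are again done, so the only remaining case is $\mu_1=\cdots=\mu_q=\mu$ for some $\mu\in\cB_m(t-1)$. The basic LR inequalities $\lambda_1\le\sum_i\mu_i(1)$ and $l(\lambda)\le\sum_i l(\mu_i)$ then force $\lambda_1,\,l(\lambda)\le q(t-1)$, so $\lambda\in\cB_{qm}(q(t-1))$ and is confined to a strictly smaller box than before.

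In this uniform subcase my plan is to recurse, applying the same argument with $t$ replaced by $t-1$ and noting that $\cB_m(t-1)\subseteq\cB_m(t)$ so any decomposition produced remains in $\cB_m(t)^{\times q}$. The recursion strictly decreases $t$ and terminates precisely when the reduced parameter fails the hypothesis $t'>\tfrac{m}{2}+1$. The hard part of the argument is this base of the recursion, where $\lambda$ has been squeezed into a box of dimensions roughly $\tfrac{qm}{2}\times\tfrac{qm}{2}$ and a non-uniform decomposition must be constructed directly. My plan here is to appeal to Lemma~\ref{lem: iteratedLR}: after possibly conjugating so that $\lambda_1\ge l(\lambda)$, split $\lambda_1=b_1+\cdots+b_q$ with each $b_i\in[\lceil m/2\rceil,t]$ and not all $b_i$ equal (feasible precisely because $\lambda_1\le qt-1<qt$), set $\mu_i=(b_i,\nu^i)$ with $|\nu^i|=m-b_i$, and use Lemma~\ref{lem: iteratedLR} to reduce positivity of $c^\lambda_{\mu_1,\ldots,\mu_q}$ to positivity of $c^{\tilde\lambda}_{\nu^1,\ldots,\nu^q}$ on the tail $\tilde\lambda=(\lambda_2,\lambda_3,\ldots)$, which is then handled by a further application of Proposition~\ref{prop: B star B} at the appropriate scale together with the checks $\mu_i\in\cB_m(t)$. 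The lingering subcase where both $\lambda_1$ and $l(\lambda)$ are too small for this first-row splitting requires a separate explicit combinatorial construction, which I expect to be the most technically involved part of the argument.
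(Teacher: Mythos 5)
The paper does not actually prove this proposition (it is imported verbatim from \cite[Proposition 3.7]{GL2}), so your argument can only be judged on its own terms. Your opening move is sound: since $t>\tfrac{m}{2}+1$ one indeed has $\cB_m(t-1)\star\cB_m(t)^{\star(q-1)}=\cB_{qm}(qt-1)$ by iterating Proposition~\ref{prop: B star B}, and the non-uniform cases are then immediate. The first genuine problem is an off-by-one in your descent. In the uniform subcase you only conclude $\lambda\in\cB_{qm}(q(t-1))$, whereas ``the same argument with $t$ replaced by $t-1$'' starts from $\lambda\in\cB_{qm}(q(t-1)-1)$. Partitions with $\lambda_1$ or $l(\lambda)$ equal to $q(t-1)$ therefore escape the recursion at every level, and these are not vacuous: for $q=3$, $m=10$, $t=8$, the partition $\lambda=(21,9)$ lies in $\cB_{30}(23)$, admits the uniform decomposition $c^\lambda_{(7,3),(7,3),(7,3)}>0$, but does not lie in $\cB_{30}(20)$, so your induction never reaches it.

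The second and more serious problem is that the base case --- which you yourself flag as ``the most technically involved part'' --- is where the entire content of the proposition lives, and it is neither proved nor correctly set up. The splitting $\lambda_1=b_1+\cdots+b_q$ with $b_i\in[\lceil m/2\rceil,t]$ requires $\lambda_1\ge q\lceil m/2\rceil$, which fails for many partitions surviving to the base (e.g.\ $q=3$, $m=10$, $\lambda=(6^5)$, where conjugation does not help either); your ``$\lambda_1\le qt-1<qt$'' only rules out the all-equal-to-$t$ splitting, not infeasibility from below. Even when the splitting exists, Lemma~\ref{lem: iteratedLR} reduces you to $c^{\tilde\lambda}_{\nu^1,\dotsc,\nu^q}>0$ where each $\nu^i$ must have size exactly $m-b_i$, first part at most $b_i$ and length at most $t-1$; this is a decomposition into prescribed \emph{rectangles}, not squares, so Proposition~\ref{prop: B star B} does not apply directly, and in any case $\tilde\lambda_1=\lambda_2$ may exceed $\sum_i\min(b_i,t-1)$. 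The crux of the proposition is precisely to manufacture a non-uniform $q$-tuple when the obvious decompositions are uniform --- for instance by perturbing a given iterated Littlewood--Richardson filling, moving a box between two consecutive layers and using $\lambda_1\le qt-1$ (and its conjugate) to keep both modified components inside $\cB_m(t)$ --- and that construction is exactly what is missing from your write-up. As it stands the proposal is an outline with the hard step deferred, not a proof.
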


\medskip

\subsection{Further combinatorics of Littlewood--Richardson coefficients}\label{sec:LR}

Recall that $[\lambda]$ denotes the Young diagram of $\lambda$. Let $\gamma$ and $\delta$ be skew shapes (this includes Young diagrams of partitions). We write $\gamma\cong\delta$ if $\gamma$ is a translation of $\delta$ in the plane, and denote by $\gamma^c$ the $180^\circ$-rotation of $\gamma$ (up to translation). We also write $\gamma\in A$ if $\gamma\cong\alpha$ for some $\alpha\in A$, where $A$ is a set of skew shapes.

Since $\Omega(\triv_{P_n})$ has the form $\cB_n(n-2)\cup \{(n)\}^\circ$ for almost all $n=p^k$ when $p$ is an odd prime by Theorem~\ref{thm:GL1A}, we will need the following.

\begin{lemma}\label{lem: 10.3}
	Let $n,m,t\in\N$ and suppose that $\tfrac{m}{2}<t\le m$. If $n\ge 5$, then
	\[ \cB_m(t) \star (\cB_n(n-2)\cup \{(n)\}^\circ) = \cB_{m+n}(t+n). \]
	In particular, $\cP(m+n) = \cP(m)\star(\cP(n)\setminus\{(n-1,1)\}^\circ)$.
	If $n=3$, then 
	\[ \cB_m(t)\star \{(3),(1^3)\} = \begin{cases}
		\cB_{m+3}(t+3)\setminus \{(t+1,t+1)\}^\circ  & \mathrm{if}\ m=2t-1\\
		\cB_{m+3}(t+3) & \mathrm{otherwise}.
	\end{cases}\]
	In particular, $\cP(m+3)=\cP(m)\star\{(3),(1^3)\}$ unless $m=1$.
\end{lemma}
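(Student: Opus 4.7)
The plan is to verify the two set inclusions in each case. For the containment $\subseteq$ in both the $n\ge 5$ and $n=3$ settings, every $\nu$ in the prescribed set satisfies $\max(\nu_1,l(\nu))\le n$, so the standard bounds $\lambda_1\le\mu_1+\nu_1$ and $l(\lambda)\le l(\mu)+l(\nu)$ whenever $c^\lambda_{\mu,\nu}>0$ force any resulting $\lambda$ to lie in $\cB_{m+n}(t+n)$.

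For the reverse inclusion when $n\ge 5$, Proposition~\ref{prop: B star B} applies (since $n-2>n/2$) and yields $\cB_m(t)\star\cB_n(n-2)=\cB_{m+n}(t+n-2)$. It therefore suffices to show that every ``boundary'' partition $\lambda\in\cB_{m+n}(t+n)\setminus\cB_{m+n}(t+n-2)$ belongs to $\cB_m(t)\star\{(n),(1^n)\}$. Since this set is conjugation-invariant, we may assume $\lambda_1\ge l(\lambda)$, so $\lambda_1\in\{t+n-1,t+n\}$. When $\lambda_1=t+n$, the choice $\mu=(t,\lambda_2,\dotsc,\lambda_{l(\lambda)})$ with $\nu=(n)$ works: the bound $m\le 2t-1$ (from $t>m/2$) forces $\lambda_2\le t-1$ and $l(\lambda)\le t$, so $\mu\in\cB_m(t)$ and $\lambda/\mu$ is a horizontal $n$-strip in row~1. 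When $\lambda_1=t+n-1$, the analogous $\mu=(t-1,\lambda_2,\dotsc)$ with $\nu=(n)$ works except in the two degenerate configurations (both forcing $m=2t-1$) $\lambda=(t+n-1,t)$ and $\lambda=(t+n-1,1^t)$; these are handled by the alternative choices $\mu=(t,t-1)$ and $\mu=(t,1^{t-1})$ respectively, each still yielding a horizontal $n$-strip $\lambda/\mu$.

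For the reverse inclusion when $n=3$, the set $\cB_3(1)$ is empty and Proposition~\ref{prop: B star B} no longer helps, so we argue directly by case analysis on $\lambda_1$ (again WLOG $\lambda_1\ge l(\lambda)$). For $\lambda_1\in\{t+3,t+2\}$ the arguments parallel those above, using $\nu=(3)$ and the same two degenerate alternatives when they arise. A new phenomenon appears when $\lambda_1=t+1$: the constraint $\lambda_2\le t+1$ now allows equality, and when $m=2t-1$ this gives $\lambda=(t+1,t+1)$, the excluded partition. To confirm $(t+1,t+1)$ is genuinely unreachable, note that any $\mu\subseteq(t+1,t+1)$ of size $2t-1$ with $\mu_1\le t$ must equal $(t,t-1)$, yet then $\lambda/\mu$ has two cells in column $t+1$, ruling out $\nu=(3)$; and $l(\lambda)=2<3$ rules out $\nu=(1^3)$. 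By conjugation $(2^{t+1})$ is also excluded. Every other $\lambda$ with $\lambda_1=t+1$ is covered by a case-by-case choice of $\mu\in\cB_m(t)$ together with $\nu\in\{(3),(1^3)\}$, switching to $(1^3)$ exactly when the cells to be removed fail to fit in a horizontal strip.

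The main obstacle is the book-keeping for these boundary partitions: isolating the degenerate sub-configurations (typically $\lambda=(\lambda_1,t)$ and $\lambda=(\lambda_1,1^t)$ appearing when $m=2t-1$) where the naive $\mu=(\lambda_1-n,\lambda_2,\dotsc)$ is not a partition or falls outside $\cB_m(t)$, and verifying that the alternative choices of $\mu$ (or occasionally $\nu=(1^n)$ in place of $(n)$) succeed. For $n=3$, the additional subtlety is showing that $\{(t+1,t+1),(2^{t+1})\}$ is genuinely excluded from $\cB_m(t)\star\{(3),(1^3)\}$ exactly when $m=2t-1$.
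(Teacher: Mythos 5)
Your treatment of the case $n\ge 5$ is correct, and it is a genuinely different route from the paper's: the paper simply cites \cite[Lemma 3.3]{GL2} for that half, whereas you rederive it from Proposition~\ref{prop: B star B} (applicable since $n-2>\tfrac{n}{2}$) plus an explicit analysis of the boundary partitions with $\lambda_1\in\{t+n-1,t+n\}$, correctly isolating the two degenerate shapes $(t+n-1,t)$ and $(t+n-1,1^t)$ that only occur when $m=2t-1$ and fixing them with $\mu=(t,t-1)$, $\mu=(t,1^{t-1})$. This makes the statement self-contained, at the cost of redoing work the paper outsources; the ``in particular'' claims follow from both arguments by taking $t=m$. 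Your verification that $(t+1,t+1)$ is genuinely excluded when $m=2t-1$ is also sound, and is slightly more explicit than the paper, which merely says the finitely many possibilities can be checked directly.

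There is, however, a gap in your $n=3$ argument: after the reduction to $\lambda_1\ge l(\lambda)$ your case analysis covers only $\lambda_1\in\{t+1,t+2,t+3\}$, but since Proposition~\ref{prop: B star B} no longer supplies the bulk of the set (as you yourself note, $\cB_3(1)=\emptyset$), you must also prove that every $\lambda\in\cB_{m+3}(t)$, i.e.\ with $\lambda_1\le t$, lies in $\cB_m(t)\star\{(3),(1^3)\}$. This range is far from empty (take, say, $m=10$, $t=6$, $\lambda=(5,4,4)$), and there the naive choice $\mu=(\lambda_1-3,\lambda_2,\dotsc)$ need not be a partition, so some argument is required. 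The paper handles it in one line: assuming $t\ge 2$ (the case $t=m=1$ being trivial and treated separately) one has $\lambda_1\ge 3$, and then any $\mu\in\cLR([\lambda\setminus(3)])\ne\emptyset$ satisfies $c^\lambda_{\mu,(3)}>0$, which forces $\mu\subseteq\lambda$ and hence $\mu\in\cB_m(t)$ because $\lambda_1,l(\lambda)\le t$. You should add this (or an equivalent removal-of-a-3-strip argument), and likewise note the small degenerate case $t=1$. Finally, your statement that every remaining $\lambda$ with $\lambda_1=t+1$ ``is covered by a case-by-case choice'' is asserted rather than carried out; the paper actually lists the finitely many such $\lambda$ (those with $\lambda_2\ge t-1$ or $l(\lambda)\ge t+1$) and exhibits fillings, and your proof needs the analogous check to be complete.
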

\begin{proof}
	The case of $n\ge 5$ is \cite[Lemma 3.3]{GL2}. Now suppose $n=3$ and let $X:=\cB_m(t)\star\{(3),(1^3)\}$. Clearly $X\subseteq\cB_{m+3}(t+3)$. Let $\lambda\in\cB_{m+3}(t+3)$; we wish to show that $\lambda\in X$ except if $m=2t-1$ and $\lambda\in\{(t+1,t+1)\}^\circ$.
	
	The assertion is clear if $t=1$ as then $m=1$, so now assume $t\ge 2$. Without loss of generality we may assume $\lambda_1\ge l(\lambda)$. 
	Then $\lambda_1\ge 3$ (i.e.~$(3)\subseteq\lambda$), else $|\lambda|\le 4$ and $m=t=1$.
	If $\lambda\in\cB_{m+3}(t)$, then any $\mu\in\cLR(\lambda\setminus[(3)])\ne\emptyset$ satisfies $\mu\in\cB_m(t)$, whence $\lambda\in X$ since $\chi^\mu\times\chi^{(3)}\mid\chi^\lambda\down_{S_m\times S_3}$.
	
	Now suppose $\lambda\notin\cB_{m+3}(t)$, so $\lambda_1\in\{t+1,t+2,t+3\}$. If $\lambda_1=t+3$ then we must have $\lambda_2\le t$ and $l(\lambda)\le t$. 
	Hence $\mu=(\lambda_1-3,\lambda_2,\dotsc,\lambda_{l(\lambda)})\in\cB_m(t)$, and so $\lambda\in X$ since $\chi^\mu\times\chi^{(3)}\mid\chi^\lambda\down_{S_m\times S_3}$.
	
	Next suppose $\lambda_1=t+2$. If $\lambda_2\ge t$ then in fact $m=2t-1$ and $\lambda=(t+2,t)$, in which case $\chi^{(t,t-1)}\times\chi^{(3)}\mid\chi^\lambda\down_{S_m\times S_3}$ shows that $\lambda\in X$. Similarly if $l(\lambda)\ge t+1$ then $m=2t-1$ and $\lambda=(t+2,1^t)$, and $\lambda\in X$ by considering instead the partition $(t,1^{t-1})$. Otherwise, $l(\lambda)\le t$ and $\lambda_2\le t-1$. Hence $\mu=(\lambda_1-3,\lambda_2,\lambda_3\dotsc)$ is a partition and it belongs to $\cB_m(t)$, whence $\lambda\in X$.
	
	Finally suppose $\lambda_1=t+1$. If $\lambda_2\ge t-1$ then we can check the possibilities for $\lambda$ directly to see that $\lambda\in X$ except if $\lambda=(t+1,t+1)$. Indeed, $\lambda$ must be one of the following partitions: $(t+1,t+1)$, $(t+1,t,1)$, $(t+1,t)$, $(t+1,t-1)$, $(t+1,t-1,1)$, $(t+1,t-1,2)$ or $(t+1,t-1,1^2)$.
	If $l(\lambda)\ge t+1$ then we can again check directly that $\lambda\in X$, since $\lambda$ must be one of $(t+1,1^{t+1})$, $(t+1,1^t)$ or $(t+1,2,1^{t-1})$. Otherwise, $\lambda_2\le t-2$ and $l(\lambda)\le t$. Hence $\mu=(\lambda_1-3,\lambda_2,\dotsc)$ is a partition and it belongs to $\cB_m(t)$, whence $\lambda\in X$.
\end{proof}

\begin{lemma}\label{lem: P star 00}
	Let $t\in\N_{\ge 2}$. Then $\cP(t)\star\Omega(\triv_{P_9})=\cP(t+9)$.
\end{lemma}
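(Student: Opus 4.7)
The plan is to establish the non-trivial inclusion $\cP(t+9)\subseteq\cP(t)\star\Omega(\triv_{P_9})$ by reducing it to an elementary containment problem. Observe that if $\nu\in\Omega(\triv_{P_9})$ satisfies $\nu\subseteq\lambda$, then the non-empty skew shape $\lambda/\nu$ admits at least one standard Young tableau; the classical identity
\[ f^{\lambda/\nu}=\sum_{\mu\vdash t}c^\lambda_{\mu\nu}\chi^\mu(1), \]
where $f^{\lambda/\nu}$ denotes the number of such tableaux, then forces $c^\lambda_{\mu\nu}>0$ for some $\mu\vdash t$, placing $\lambda\in\nu\star\cP(t)\subseteq\Omega(\triv_{P_9})\star\cP(t)$. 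Thus it suffices to prove: for every $\lambda\vdash t+9$ with $t\ge 2$, some $\nu\in\Omega(\triv_{P_9})$ satisfies $\nu\subseteq\lambda$.

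By Table~\ref{table:m<27}, $\Omega(\triv_{P_9})=\cP(9)\setminus\{(8,1),(5,4),(4,3,2)\}^\circ$, which is closed under partition conjugation. If $\lambda_1\ge 9$ then $\nu=(9)$ works, and if $l(\lambda)\ge 9$ then $\nu=(1^9)$ works. The remaining case is $\lambda\in\cB_{t+9}(8)$; since $\Omega(\triv_{P_9})^\circ=\Omega(\triv_{P_9})$, we may assume after possibly replacing $\lambda$ by $\lambda'$ that $\lambda_1\ge l(\lambda)$. Then $\lambda_1^2\ge\lambda_1\cdot l(\lambda)\ge|\lambda|\ge 11$ forces $\lambda_1\in\{4,5,6,7,8\}$.

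A short case analysis on $\lambda_1$ produces a suitable $\nu\in\Omega(\triv_{P_9})$ with $\nu\subseteq\lambda$, drawn from shapes such as $(7,2),(7,1,1),(6,3),(6,2,1),(6,1^3),(5,3,1),(5,2,2),(4,4,1),(4,3,1,1)$, each readily verified to lie in $\Omega(\triv_{P_9})$. For instance, when $\lambda_1\ge 7$ we use $(7,2)$ if $\lambda_2\ge 2$ and $(7,1,1)$ otherwise (noting that $\lambda_2\le 1$ together with $|\lambda|\ge 11$ forces $l(\lambda)\ge 3$); analogous arguments settle $\lambda_1\in\{4,5,6\}$, using in each instance that the constraint $|\lambda|\ge 11$ combined with $\lambda_i\le\lambda_1$ leaves the secondary parts of $\lambda$ large enough for one of the listed shapes to embed. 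The principal task is simply verifying this case analysis is exhaustive; the hypothesis $t\ge 2$ (equivalently $|\lambda|\ge 11$) is precisely what makes the argument close, as the conclusion fails for $t\le 1$---witness $\lambda=(5,5)\vdash 10$, which contains no $\nu\in\Omega(\triv_{P_9})$.
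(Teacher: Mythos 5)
Your proof is correct, and it takes a genuinely different route from the paper's. The paper proves the lemma by passing through Sylow theory: it writes $\cP(t)\star\Omega(\triv_{P_9})\supseteq\Omega(\triv_{P_t})\star\Omega(\triv_{P_9})=\Omega(\triv_{P_{t+9}})$ via Lemma~\ref{lem: omega star omega}, then invokes Theorem~\ref{thm:GL1A} and patches the missing partitions $\{(3^k-1,1)\}^\circ$; this requires a case split on the $3$-adic expansion of $t$ (the identification $\Omega(\triv_{P_t})\star\Omega(\triv_{P_9})=\Omega(\triv_{P_{t+9}})$ only holds when $t+9$ is a genuine $3$-adic expansion, so the case $a_2=2$ needs a separate regrouping argument). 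You instead reduce everything to pure partition combinatorics: since $\nu\subseteq\lambda$ with $|\lambda/\nu|=t$ guarantees $\cLR(\lambda\setminus\nu)\ne\emptyset$ and hence $c^\lambda_{\mu\nu}>0$ for some $\mu\vdash t$, it suffices to embed some $\nu\in\Omega(\triv_{P_9})=\cP(9)\setminus\{(8,1),(5,4),(4,3,2)\}^\circ$ into every $\lambda\vdash t+9$. I checked your case analysis: for $\lambda\in\cB_{t+9}(8)$ with $\lambda_1\ge l(\lambda)$ the bound $\lambda_1^2\ge|\lambda|\ge 11$ gives $\lambda_1\in\{4,\dotsc,8\}$, and in each case the arithmetic ($|\lambda|\ge 11$, parts bounded by $\lambda_1$) does force one of your listed shapes, all of which avoid the excluded set $\{(8,1),(2,1^7),(5,4),(2^4,1),(4,3,2),(3,3,2,1)\}$. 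Your approach buys independence from Theorem~\ref{thm:GL1A} and from the structure of $P_t$ (no $3$-adic casework), at the cost of a finite containment check; the paper's approach buys brevity by reusing machinery already established, and generalises more readily to statements about $\Omega(\triv_{P_m})$ for other $m$. Your observation that $(5,5)$ witnesses failure at $t=1$ correctly locates where the hypothesis $t\ge 2$ enters.
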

\begin{proof}
	Let $\sum_{i\ge 0} a_i\cdot 3^i$ be the 3-adic expansion of $t$. First suppose $a_2\in\{0,1\}$. Then $\cP(t)\star\Omega(\triv_{P_9})\supseteq\Omega(\triv_{P_t})\star\Omega(\triv_{P_9})=\Omega(\triv_{P_{t+9}})$ by Lemma~\ref{lem: omega star omega}. Since $t\ge 2$, by Theorem~\ref{thm:GL1A}, $\Omega(\triv_{P_{t+9}})=\cP(t+9)$ unless $t+9=3^k$ for some $k\ge 3$, in which case $\Omega(\triv_{P_{t+9}})=\cP(t+9)\setminus\{(3^k-1,1)\}^\circ$. But then $(9)\subset(3^k-1,1)$ and $(9)\in\Omega(\triv_{P_9})$, so $(3^k-1,1)\in\cP(t)\star\Omega(\triv_{P_9})$. Since $\cP(t)\star\Omega(\triv_{P_9})$ is closed under conjugation, $\cP(t)\star\Omega(\triv_{P_9})=\cP(t+9)$.
	
	Otherwise $a_2=2$. Let $k=\min\{i\ge 3\mid a_i\ne 2\}$, so $t=\sum_{i\ge k} a_i\cdot 3^i + \sum_{i=2}^{k-1} 2\cdot 3^i + \sum_{i=0}^1 a_i\cdot 3^i$. We have that $\cP(18)\star\Omega(\triv_{P_9})=\cP(27)$, and clearly $\cP(2\cdot 3^i)\star\cP(3^i)=\cP(3^{i+1})$ for all $i$, so
	\[ \cP(t)\star\Omega(\triv_{P_9})\supseteq \Omega(\triv_{P_t})\star\Omega(\triv_{P_9}) = \left(\underset{\substack{i\ge k,\\i\in\{0,1\}}}{\star} \Omega(\triv_{P_{3^i}})^{\star a_i}\right) \star \cP(3^k) = \Omega(\triv_{P_u})\star\cP(3^k) \]
	by Lemma~\ref{lem: omega star omega}, where $u:=\sum_{i\ge k} a_i\cdot 3^i + \sum_{i=0}^1 a_i\cdot 3^i$. Notice $u\ne 9,10$ and $u+3^k=t+9$. If $u\notin\{4,6\}$ then $\Omega(\triv_{P_u})=\cP(u)$ or $\cP(u)\setminus\{(u-1,1)\}^\circ$ by Theorem~\ref{thm:GL1A}, so $\Omega(\triv_{P_u})\star\cP(3^k)=\cP(u+3^k)$ by Lemma~\ref{lem: 10.3}. If $u\in\{4,6\}$, then since $k\ge 3$ for any $\lambda\vdash u+3^k$ there exists $\gamma\vdash u$ such that $\gamma\subset\lambda$ and $\gamma\ne(2,2)$ if $u=4$, while $\gamma\ne(3,2,1)$ if $u=6$. Thus $\Omega(\triv_{P_u})\star\cP(3^k)=\cP(u+3^k)$.
\end{proof}


\begin{lemma}\label{lem: LR tworow filling}
	Let $\gamma$ be a skew shape. Suppose $\nu\in\cLR(\gamma)$ for some $\nu=(s,t)$ with $s\ge t\ge 2$. 
	\begin{itemize}
		\item[(a)] Then either $(s,t-1,1)\in\cLR(\gamma)$, or all $t$ 2s appear in the same row of $\gamma$ for every Littlewood--Richardson filling of $\gamma$ of weight $\nu$.
		\item[(b)] If $s=t+1$, then either $\gamma\in \{[(t+1,t)], [(t+1,t)]^c \}$, or $\cLR(\gamma)\cap \{ (t+1,t-1,1),(t+2,t-1),(t,t,1) \}\ne\emptyset$.
	\end{itemize}
	
\end{lemma}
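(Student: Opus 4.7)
For (a), my plan is to exhibit an LR filling of weight $(s, t-1, 1)$ from one of weight $\nu=(s,t)$ in which the twos do not all lie in a single row. Let $T$ be such a filling, let $i$ be the largest row index in which a $2$ appears in $T$, and let $(i,j)$ be the rightmost $2$ in row $i$; replace $T(i,j)=2$ by $3$. Since all entries of $T$ lie in $\{1,2\}$, the row-weakly-increasing condition forces $(i,j+1)\notin\gamma$ (else it would be a $2$ in row $i$ strictly to the right of $j$), and column strictness forces $(i+1,j)\notin\gamma$. For the lattice (reverse-reading) condition, the earliest prefix containing $(i,j)$ already holds $(i,j)$ together with some other $2$ in a row $i'<i$ (which exists by hypothesis), giving $\#2\ge 2$ in that prefix; after the modification, the prefix satisfies $\#2\ge 1=\#3$, and this persists in larger prefixes since $\#2$ is monotone.

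For (b), set $s=t+1$. By (a), either $(t+1,t-1,1)\in\cLR(\gamma)$ (done) or every LR filling $T$ of $\gamma$ of weight $(t+1,t)$ has all its twos in a single row $r$. Assume the latter and pick one such $T$. Since row $r$ has values from $\{1,2\}$ and is weakly increasing, it reads $1^{p}2^{t}$ for some $p\ge 0$; all other rows contain only ones, and column strictness prevents two such rows from sharing a column. The lattice condition, applied after the reverse-reading exits the $t$ twos of row $r$, forces $m\ge t$ where $m$ is the number of $1$s above row $r$. Writing $q$ for the number of $1$s below row $r$, we have $p+m+q=t+1$ and $m\ge t$, so $(p,q)\in\{(0,0),(0,1),(1,0)\}$; denote these sub-cases I, II, III.

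In each sub-case I would further split on $a_{r-1}$ (the leftmost column of row $r-1$) relative to $c$ (the leftmost column of row $r$). In the generic sub-sub-cases ($a_{r-1}>c$ in I and II, or $a_{r-1}\ge c+2$ in III), the leftmost $2$ of row $r$ has no $\gamma$-box directly above; flipping that $2$ to a $1$ preserves the row and column conditions, and the lattice check on the modified reverse-reading word (a variant of $1^{m}\,2^{t-1}\,1\cdots$) is immediate, yielding an LR filling of weight $(t+2, t-1)$. In the remaining tight sub-sub-cases, geometric rigidity fixes $\gamma$: sub-case I with $a_{r-1}=c$ and row $r-1$ of length $t+1$ forces $\gamma=[(t+1,t)]$, while sub-case III with $a_{r-1}=c+1$ forces $\gamma=[(t+1,t)]^{c}$, the two excluded shapes. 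The final remaining configurations are sub-case I with $a_{r-1}=c$ and row $r-1$ of length $t$ (which, since $m=t+1$, pushes the extra $1$ into an isolated box $(r'',c''')$ with $c'''\ge c+t$ and $r''<r-1$), and sub-case II with $a_{r-1}=c$ (which places the single $1$ below row $r$ at an isolated box $(r',c'')$ with $c''<c$). In both, an explicit LR filling of weight $(t,t,1)$ can be written down: in the former, take the isolated box to be $1$, row $r-1 = 1^{t-1}\,2$, row $r = 2^{t-1}\,3$; in the latter, take row $r-1 = 1^{t}$, row $r = 2^{t}$, and the isolated box to be $3$. In each case the reverse-reading word is computed directly and the lattice condition verified.

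The main obstacle is the case analysis in (b): the "flip a $2$ to a $1$" modification succeeds only when no $1$ of $T$ lies directly above the leftmost $2$ of row $r$, and when this fails one must either identify the resulting rigidity as pinning $\gamma$ to one of the two excluded shapes or construct a genuinely different LR filling that creates a $3$ at an isolated corner. Tracking the precise structural constraints imposed by each combination of $(p,q)$ and $a_{r-1}$, and then writing down the correct LR filling in each residual sub-case together with its lattice verification, is the delicate part of the argument.
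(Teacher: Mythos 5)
Your argument is correct and takes essentially the same route as the paper: part (a) is the identical ``replace the rightmost $2$ in the lowest row containing a $2$ by a $3$'' argument, and the paper disposes of (b) with only the remark that it follows from (a) and a similar replacement argument, which your case analysis carries out correctly (the two rigid configurations are exactly $[(t+1,t)]$ and its rotation, and your explicit fillings of weights $(t+2,t-1)$ and $(t,t,1)$ are valid LR fillings). The only omissions are harmless: sub-case III with $a_{r-1}=c$ cannot occur, since row $r-1$ would then need at least $t+1$ boxes while only $t$ boxes lie above row $r$, and if row $r-1$ is empty the generic flip still applies because no box of $\gamma$ sits directly above the leftmost $2$ of row $r$.
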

\begin{proof}
	\textbf{(a)} Let $\mathsf{F}$ be a Littlewood--Richardson (LR) filling of $\gamma$ of weight $\nu$. We show that if not all $t$ 2s appear in the same row, then $(s,t-1,1)\in\cLR(\gamma)$. Indeed, if there are at least two rows which contain 2s, then replace the rightmost 2 on the lowest such row by a 3 to produce a filling, call it $\mathsf{F}'$. Clearly $\mathsf{F'}$ satisfies conditions (ii) and (iii) of Theorem~\ref{theorem:LR}, and it satisfies condition (i) since there is a 2 in a strictly highest row (hence earlier in the reading order from right to left, top to bottom). Thus $\mathsf{F}'$ is an LR filling of $\gamma$ of weight $(s,t-1,1)$.
	
	\textbf{(b)} This follows from part (a) and a similar replacement argument.
\end{proof}

\begin{lemma}\label{lem: LR hook filling}
	Let $\gamma$ be a skew shape. Suppose $\nu\in\cLR(\gamma)$ for some $\nu=(s,1^t)$ with $s\ge 3$. 
	\begin{itemize}
		\item[(a)] Then either $(s-1,2,1^{t-1})\in\cLR(\gamma)$, or at least $s-1$ 1s appear in the same row of $\gamma$ for every Littlewood--Richardson filling of $\gamma$ of weight $\nu$.
		\item[(b)] If $s=t+1$, then either $\gamma\in \{[(t+1,1^t)],[(t+1,1^t)]^c \}$, or $\cLR(\gamma)\cap \{(t,2,1^{t-1}), (t+2,1^{t-1}), (t,1^{t+1}) \} \ne\emptyset$.
	\end{itemize}
\end{lemma}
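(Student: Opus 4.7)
My plan is to mimic the replacement-style argument used in Lemma~\ref{lem: LR tworow filling}.

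For part (a), let $\mathsf{F}$ be an LR filling of $\gamma$ of weight $\nu=(s,1^t)$ in which no row of $\gamma$ holds $s-1$ or more 1s, and let $R$ be the lowest row of $\gamma$ containing a 1, with $(R,c)$ the rightmost such 1. My candidate is to replace this 1 by a 2. The row condition is automatic, and because $s - k_R \geq 2$ by hypothesis, at least two 1s precede $(R,c)$ in the reading order, so the new 2 is ``good''. The only possible obstruction is that $(R+1,c) \in \gamma$ carries the unique 2 of $\mathsf{F}$. Writing $\gamma = [\lambda] \setminus [\mu]$, row-weakly-increasingness of row $R+1$ combined with the uniqueness of 2 then forces $\mu_{R+1} = c - 1$; combined with $\mu_R \geq \mu_{R+1}$ and $c = \mu_R + k_R$, this yields $k_R \leq 1$, so $k_R = 1$. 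The hypothesis then forces at least three rows of $\gamma$ to contain 1s, so I instead replace the rightmost 1 in the second-lowest row of $\gamma$ containing 1s; since the unique 2 sits in a row strictly below this new replacement, the column condition holds, and the good condition follows because at least one 1 precedes the replacement in an even higher row while no 2 does.

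For part (b), I invoke part (a) with $s = t + 1$. If $(t, 2, 1^{t-1}) \in \cLR(\gamma)$ we are done, so assume otherwise; then every LR filling of $\gamma$ of weight $\nu = (t+1, 1^t)$ places at least $t$ 1s in some row. Fix such a filling $\mathsf{F}$. If all $t+1$ 1s lie in one row $R$, the reading-good condition forces $\gamma$ to have no cells in rows above $R$; analyzing the placement of the remaining values $2, \dotsc, t+1$ in the strictly increasing rows and columns below $R$ then shows that either $\gamma$ is $[(t+1, 1^t)]$ or its $180^\circ$-rotation $[(t+1, 1^t)]^c$, or else one of two further modifications succeeds: replace the unique $t+1$ by a 1 to obtain an LR filling of weight $(t+2, 1^{t-1})$, or replace the rightmost 1 in row $R$ by a $t+2$ to obtain weight $(t, 1^{t+1})$. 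If instead the $t+1$ 1s split as $t$ in row $R$ and one in another row $R'$, then applying the part (a) replacement directly forces $R < R'$ (otherwise $(t, 2, 1^{t-1})$ would be in $\cLR(\gamma)$, since $k_R = t \geq 2$ precludes the obstruction case of part (a)). In the remaining sub-case $R < R'$ with the column obstruction at $(R'+1, c')$, an explicit construction based on the resulting constraint $\mu_{R'} = \mu_{R'+1} = c' - 1$ and the placement of $3, \dotsc, t+1$ yields an LR filling of weight $(t, 1^{t+1})$ by promoting the 1 at $(R', c')$ into a 2 and incrementing the values along column $c'$ accordingly.

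The main technical obstacle is the exhaustive case analysis in part (b), verifying that whenever $\gamma \notin \{[(t+1, 1^t)], [(t+1, 1^t)]^c\}$ at least one of the three candidate replacements produces a valid LR filling. As in the two-row analogue, each modification may individually fail due to column, row, or good-sequence conflicts, and careful structural analysis of $\gamma$ near the failing replacement is needed to rule out simultaneous failure of all three candidates.
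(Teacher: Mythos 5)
Part (a) of your proposal is essentially the paper's argument: the paper also proves (a) by replacing a suitable 1 by a 2, organizing the cases by the number $x$ of 1s preceding the unique 2 in the reading word, whereas you organize them by the lowest and second-lowest rows containing 1s. Your version works, up to one unstated (but easily verified) point: after inserting the new 2 you must also check that the \emph{original} 2 remains good, which holds because either it is read before the new 2, or it lies strictly below the lowest row of 1s and hence has all $s\ge 3$ original 1s before it.

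Part (b), however, has genuine gaps, and (b) is exactly the part the paper dismisses with ``a similar replacement argument'', so the case analysis is the whole content. In your Case 1 (all $t+1$ 1s in the top row $R$), the second modification can never produce an LR filling: every cell of row $R$ precedes the unique $t+1$ in the reading word (the $t+1$ lies strictly below row $R$), so a $t+2$ placed in row $R$ is never good. Moreover the claimed dichotomy fails outright: take $\gamma=[(t+2,1^t)\setminus(1)]$, i.e.\ a top row of $t+1$ cells and a disjoint vertical strip of $t$ cells hanging below-left, filled with 1s on top and $2,\dotsc,t+1$ down the strip. Here $\gamma$ is neither $[(t+1,1^t)]$ nor its rotation, replacing the $t+1$ by a 1 violates column-strictness (a $t$ sits directly above it), and the other modification fails as above; the conclusion of the lemma still holds, but only via a genuinely different filling (refill the strip as $1,2,\dotsc,t$ to get weight $(t+2,1^{t-1})$), so single-cell replacements do not suffice. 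In your Case 2, the reduction ``the lone 1 must lie below the $t$-row, else $(t,2,1^{t-1})\in\cLR(\gamma)$'' is unjustified: the goodness check in your part (a) needed at least two 1s before the new 2, which came from (a)'s hypothesis that no row holds $s-1$ ones --- precisely what fails here, since one row holds $t=s-1$ ones and only one 1 precedes the replacement, so if the unique 2 is read earlier the new 2 is not good. Indeed for $\gamma=[(4,4,2)\setminus(3,1,1)]$ (so $t=2$) one has the weight-$(3,1,1)$ filling with rows $1$; $1,1,2$; $3$, in which the lone 1 sits \emph{above} the two-1 row, while a direct enumeration shows $\cLR(\gamma)=\{(3,2),(3,1,1)\}$, so $(2,2,1)\notin\cLR(\gamma)$: your implication is false as stated, and for larger $t$ it is left unproven. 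The concluding sub-case of Case 2 is likewise only sketched. In short, (a) matches the paper, but the proof of (b) as proposed does not go through and would need a substantially more careful construction of alternative fillings.
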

\begin{proof}
	\textbf{(a)} Let $\mathsf{F}$ be an LR filling of $\gamma$ of weight $\nu$. We show that if no $s-1$ 1s appear in the same row, then $(s-1,2,1^{t-1})\in\cLR(\gamma)$. The entries of $\mathsf{F}$ are $s$ 1s, one 2, one 3 and so on, up to one $t+1$. By Theorem~\ref{theorem:LR}, the reading order (right to left, top to bottom) of $\mathsf{F}$ must contain the $2,3,\dotsc,t+1$ in that order, with the $s$ 1s dispersed throughout. Let $x$ be the number of 1s appearing before the unique 2 in the reading order of $\mathsf{F}$. 
	
	First suppose $x=1$. By assumption, there is a 1 in the reading order that is neither the first nor second of the $s$ 1s such that in $\gamma$ it is the rightmost 1 in its row. Moreover, it is not directly above the unique 2, since $x=1$. Thus we may replace this 1 by a 2 to produce a filling $\mathsf{F}'$ satisfying conditions (i)--(iii) of Theorem~\ref{theorem:LR}, whence $(s-1,2,1^{t-1})\in\cLR(\gamma)$.
	
	If $2\le x\le s-1$, then similarly replace a 1 by a 2, this time choosing the first 1 that appears later in the reading order than the unique 2 (it must be the rightmost 1 in its row in $\gamma$).
	
	If $x=s$, let $1\le l\le s$ be such that the $s$-th, $(s-1)$-th, $\dotsc$, $l$-th 1s in the reading order lie in the same row in $\gamma$, but the $(l-1)$-th 1 does not. Then we may replace the $l$-th 1 by a 2 (the assumptions imply that $l\ne 1$), unless $l=s$ \emph{and} the $s$-th 1 appears directly above the unique 2 in $\gamma$. In this case, the $s$-th 1 is the unique 1 in its connected component within $\gamma$; letting $j$ be such that the $(s-1)$-th, $\dotsc$, $j+1$-th, $j$-th 1 appear in the same row but the $(j-1)$-th 1 does not, we may replace the $j$-th 1 by a 2 since the assumptions imply that $j\ne 1$.
	
	\textbf{(b)} This follows from part (a) and a similar replacement argument.
\end{proof}

\bigskip

\section{The prime power case}\label{sec:prime-power}

The aim of this section is to prove our main results when $n$ is a power of 3, which will form the central stepping stone for arbitrary $n$. That is, we will prove Theorem~\ref{thm:a} in the case $n=3^k$ (this will be Theorem~\ref{thm:a-prime power}) which determines $\Omega(\phi)$ for all quasi-trivial $\phi$, and also Theorem~\ref{thm:b-primepower} which describes $m(\phi)$ for all linear $\phi$.

First, let $p$ be any odd prime and let $k\in\N$. Recall $\Lin(P_{p^k}) = \{\phi(s)\mid s\in[\overline{p}]^k \}$ with $\triv_{P_{p^k}}=\phi(0,\dotsc,0)$. Since $\Omega(\triv_{P_{p^k}})$ was determined in Theorem~\ref{thm:GL1A}, we consider $\phi(s)\in\Lin(P_{p^k})$ where $s=(\sfs_1,\dotsc,\sfs_k)\ne(0,\dotsc,0)$. In other words, $s\in U_k(z)$ with $z>0$, and $f(s)$ is well-defined. We encourage the reader to recall Definition~\ref{def:sequences} where we introduced notation to refer to the positions of certain leftmost non-zero positions in $s$, which will turn out to govern the shape of $\Omega(s)$ and $m(s)$, and to Remark~\ref{rem:binary} where we observed that it will suffice in fact to consider $s\in\{0,1\}^k$ rather than $s\in[\overline{p}]^k$. Furthermore, \emph{thin} partitions were introduced in Definition~\ref{def:thin}, and will play an important role in determining both the bound $m(s)$ (by investigating which thin partitions are or are not contained in $\Omega(s)$) and the shape of $\Omega(s)$ itself.

Let $s^-=(\sfs_1,\dotsc,\sfs_{k-1})$. The core of our strategy is to induct on $k$, computing $m(s)$ and $\Omega(s)$ in terms of $m(s^-)$ and $\Omega(s^-)$. This is the `inductive step'; we will also need to understand $m(s)$ and $\Omega(s)$ for appropriate `base cases' of $s$. Both the inductive steps and the collection of necessary base cases $s$ turn out to be far more involved when $p=3$ than when $p\ge 5$ (compare Figure~\ref{fig:5} for $p\ge 5$ with Figure~\ref{fig:3} for $p=3$ below), although there are several useful lemmas which hold for all odd primes which we will now recall, namely \cite[Lemmas 4.1, 4.3, 4.6]{GL2}.

\begin{lemma}\label{lem: DinOm}
	Let $p$ be an odd prime and $k\in\N_{\ge 2}$. Let $s=(\sfs_1,\dotsc,\sfs_k)\in[\overline{p}]^k$ and $s^-=(\sfs_1,\ldots, \sfs_{k-1})$. Then $\cD(p,p^{k-1},\Omega(s^-))\subseteq \Omega(s)$.
\end{lemma}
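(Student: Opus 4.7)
The plan is to unpack the definition of $\cD(p,p^{k-1},\Omega(s^-))$ and lift the given tensor product of irreducibles of $(S_{p^{k-1}})^{\times p}$ into an irreducible character of the intermediate wreath product $K:=S_{p^{k-1}}\wr C_p$, then restrict down to $P_{p^k}=P_{p^{k-1}}\wr C_p\le K$ and exploit the defining formula $\phi(s)=\cX(\phi(s^-);\phi_{\sfs_k})$.

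Let $\lambda\in\cD(p,p^{k-1},\Omega(s^-))$, so by definition there exist $\mu_1,\dotsc,\mu_p\in\Omega(s^-)$, \emph{not all equal}, such that $\chi^{\mu_1}\times\cdots\times\chi^{\mu_p}$ is an irreducible constituent of $\chi^\lambda\down_{(S_{p^{k-1}})^{\times p}}$. Because the $\mu_i$ are not all equal, the stabiliser in $C_p$ (acting on the $p$-tuple by cyclic permutation) is trivial; hence by Clifford theory---equivalently by case (a) of the description of $\Irr(G\wr C_p)$ recalled in Section~\ref{sec:wreath}---the induced character
\[ \psi \;:=\; (\chi^{\mu_1}\times\cdots\times\chi^{\mu_p})\up^{K}_{(S_{p^{k-1}})^{\times p}} \]
is an \emph{irreducible} character of $K$. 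Frobenius reciprocity then gives $\langle \chi^\lambda\down_K, \psi\rangle\ge 1$, so $\psi$ is a constituent of $\chi^\lambda\down_K$.

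Next I would restrict $\psi$ from $K$ to $P_{p^k}=P_{p^{k-1}}\wr C_p$. Since $C_p\le P_{p^{k-1}}\wr C_p$ supplies a common transversal to $(S_{p^{k-1}})^{\times p}$ in both $K$ and $P_{p^{k-1}}\wr C_p$, we have $(S_{p^{k-1}})^{\times p}\cdot(P_{p^{k-1}}\wr C_p)=K$ with $(S_{p^{k-1}})^{\times p}\cap(P_{p^{k-1}}\wr C_p)=(P_{p^{k-1}})^{\times p}$. The single-double-coset form of Mackey's theorem (Theorem~\ref{thm:mackey}) therefore yields
\[ \psi\down_{P_{p^k}} \;=\; \bigl((\chi^{\mu_1}\times\cdots\times\chi^{\mu_p})\down_{(P_{p^{k-1}})^{\times p}}\bigr)\up^{P_{p^k}}. \]
A further application of Frobenius reciprocity combined with the identity $\phi(s)\down_{(P_{p^{k-1}})^{\times p}}=\phi(s^-)^{\times p}$ (immediate from case (b) of Section~\ref{sec:wreath} applied to $\cX(\phi(s^-);\phi_{\sfs_k})$) gives
\[ \langle \psi\down_{P_{p^k}},\,\phi(s)\rangle \;=\; \prod_{i=1}^{p}\bigl\langle \chi^{\mu_i}\down_{P_{p^{k-1}}},\,\phi(s^-)\bigr\rangle, \]
which is a product of positive integers by the hypothesis $\mu_i\in\Omega(s^-)$.

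Chaining the inequalities, $\langle \chi^\lambda\down_{P_{p^k}},\phi(s)\rangle\ge\langle\psi\down_{P_{p^k}},\phi(s)\rangle>0$, so $\lambda\in\Omega(s)$ as required. I do not anticipate a genuine obstacle here; the proof is essentially a bookkeeping exercise, and the only points that require care are (i) verifying that ``not all equal'' is exactly the hypothesis needed for the induced character $\psi$ to remain irreducible, and (ii) checking the $HK=G$ condition so that Mackey collapses to a single double coset. It is precisely these two points which make the non-triviality condition in Definition~\ref{def: Dset newOmegas} the right one for the inductive step encoded in this lemma.
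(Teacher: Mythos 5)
Your argument is correct: the induced character $\psi$ is indeed irreducible (the stabiliser in $C_p$ is trivial or full, and fullness would force all $\mu_i$ equal), the condition $P_{p^k}\cdot (S_{p^{k-1}})^{\times p}=S_{p^{k-1}}\wr C_p$ does hold so Mackey collapses to one double coset, and the final inner product factors exactly as you state. The paper itself does not reprove this lemma (it is recalled from \cite[Lemma 4.1]{GL2}), and your induce-to-$S_{p^{k-1}}\wr C_p$, restrict-via-Mackey, and use $\phi(s)\down_{(P_{p^{k-1}})^{\times p}}=\phi(s^-)^{\times p}$ route is the standard argument behind it.
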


\begin{lemma}\label{lem: 001}
	Let $p$ be an odd prime and $k\in\N_0$. Suppose $s=(0,\dotsc,0,\sfx)\in [\overline{p}]^{k+1}$ where $\sfx\ne 0$. Then 
	$Z^{(p^{k+1}-1,1)}_{\phi(s)}=1$.
	Moreover, $\Omega(s)=\cB_{p^{k+1}}(p^{k+1}-1)$, except if $(p,k)=(3,1)$ in which case $\Omega(0,1)=\cB_9(8)\setminus\{(3^3)\}$.
\end{lemma}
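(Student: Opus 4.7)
The plan is three-pronged. \emph{First}, the upper bound $\Omega(s) \subseteq \cB_{p^{k+1}}(p^{k+1}-1)$ follows directly from Theorem~\ref{thm:M}(a): since $s \in U_{k+1}(z)$ with $z \ge 1$ and $f(s) = k+1$, we have $M(s) = p^{k+1} - p^{k+1-f(s)} = p^{k+1} - 1$. Next, I compute the hook multiplicity $Z^{(p^{k+1}-1, 1)}_{\phi(s)} = 1$ via a permutation-character argument. Since $\phi(s) = \phi_\sfx \circ \pi$ factors through the canonical projection $\pi\colon G := P_{p^{k+1}} = P_{p^k} \wr P_p \twoheadrightarrow P_p$, we have $\phi(s)\down_K = \triv_K$ for $K := \ker\pi = P_{p^k}^{\times p}$. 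The character $\chi^{(p^{k+1}-1, 1)}$ equals the natural permutation character of $S_{p^{k+1}}$ minus $\triv$; its restriction to $G$ is $\triv_{G_x}\up^G - \triv_G$, where $G_x$ is a point-stabilizer. Since $P_p$ acts transitively on the $p$ blocks of $[p^{k+1}]$, any element of $G \setminus K$ permutes blocks non-trivially and so fixes no point, whence $G_x \le K$. Frobenius reciprocity yields $\langle\phi(s), \triv_{G_x}\up^G\rangle = 1$ and $\langle\phi(s), \triv_G\rangle = 0$, giving $Z^{(p^{k+1}-1, 1)}_{\phi(s)} = 1$; the conjugate-closure of $\Omega(s)$ (valid since $P_n \le A_n$ for $p$ odd) yields the analogous statement for $(2, 1^{p^{k+1}-2})$.

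\emph{Second}, for the inclusion $\cB_{p^{k+1}}(p^{k+1}-1) \subseteq \Omega(s)$ outside the exceptional case, I would induct on $k$. The base case $k = 0$ is a direct Murnaghan--Nakayama computation: $\chi^\lambda$ vanishes on non-identity elements of $P_p$ unless $\lambda = (p-x, 1^x)$ is a hook, where it takes value $(-1)^x$; averaging against $\phi_\sfx$ with $\sfx \ne 0$ yields $Z^\lambda_{\phi_\sfx} = \chi^\lambda(1)/p > 0$ for non-hooks and $(\binom{p-1}{x} - (-1)^x)/p > 0$ for hooks with $1 \le x \le p-2$, thus covering exactly $\cB_p(p-1)$. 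For $k \ge 1$, Lemma~\ref{lem: DinOm} applied to $s^- = (0, \ldots, 0)$ gives $\cD(p, p^k, \Omega(\triv_{P_{p^k}})) \subseteq \Omega(s)$; combining Theorem~\ref{thm:GL1A}'s explicit description of $\Omega(\triv_{P_{p^k}})$ with Proposition~\ref{prop: BinD} applied to appropriate sub-rectangles $\cB_{p^k}(t) \subseteq \Omega(\triv_{P_{p^k}})$ absorbs the interior of $\cB_{p^{k+1}}(p^{k+1}-1)$ into $\Omega(s)$. The remaining boundary partitions, those with $\lambda_1$ or $l(\lambda)$ close to $p^{k+1}$, are handled individually by exhibiting iterated Littlewood--Richardson decompositions $c^\lambda_{\mu_1, \ldots, \mu_p} > 0$ with $\mu_i \in \Omega(\triv_{P_{p^k}})$ not all equal, typically via Lemma~\ref{lem: iteratedLR} to peel off the long first row. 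I expect this boundary analysis to be the main technical bottleneck, requiring a careful case split on the length of the first row of $\lambda$.

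\emph{Third}, for the exceptional case $(p, k) = (3, 1)$ with $s = (0, 1)$: here $G = P_9 = C_3 \wr C_3$ and $\Omega(\triv_{P_3}) = \{(3), (1^3)\}$ is too small for the $\cD$-argument alone to cover $\cB_9(8)$, so direct computation is necessary. Because $\phi(0, 1)$ factors through $\pi\colon P_9 \twoheadrightarrow P_3$, the inner product simplifies to $Z^\lambda_{\phi(0, 1)} = \tfrac{1}{81}(a_e(\lambda) - b(\lambda))$, where $a_e(\lambda) := \sum_{g \in K} \chi^\lambda(g)$ and $b(\lambda) := \sum_{g \in \pi^{-1}(h)} \chi^\lambda(g)$ for any $h \in P_3 \setminus \{e\}$ (this is independent of $h$ since $\chi^\lambda$ is integer-valued and $\pi^{-1}(h^{-1})$ consists of the inverses of $\pi^{-1}(h)$). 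Applying Murnaghan--Nakayama to $\chi^{(3, 3, 3)}$ on the cycle types $1^9, 3 \cdot 1^6, 3^2 \cdot 1^3, 3^3, 9$ occurring in $P_9$ gives character values $(42, -6, 0, 6, 0)$; counting elements of each type in each fiber of $\pi$ yields $a_e((3^3)) = 54 = b((3^3))$, whence $Z^{(3^3)}_{\phi(0, 1)} = 0$, establishing the exception. For the remaining partitions of $\cB_9(8) \setminus \{(3^3)\}$, the $\cD$-argument still covers most, and the four conjugate pairs it does not reach, namely $\{(7, 2), (2, 2, 1^5)\}$, $\{(6, 3), (2^3, 1^3)\}$, $\{(5, 4), (2^4, 1)\}$, and $\{(4, 3, 2), (3, 3, 2, 1)\}$, are verified by the same direct character-sum computation.
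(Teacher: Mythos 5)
First, a structural point: this paper does not actually prove the lemma — it is recalled verbatim from \cite[Lemma 4.3]{GL2} — so there is no in-paper argument to compare against line by line. Judged on its own terms, your architecture (trivial upper bound from $M(s)=p^{k+1}-1$; a direct computation of the hook multiplicity; an inductive lower bound via $\cD(p,p^k,\Omega(\triv_{P_{p^k}}))\subseteq\Omega(s)$ from Lemma~\ref{lem: DinOm}) is exactly the toolkit the paper deploys for the analogous statements it does prove, and two of your three prongs are complete and correct. The permutation-character computation of $Z^{(p^{k+1}-1,1)}_{\phi(s)}=1$ is clean: $\phi(s)$ is inflated from $P_{p^{k+1}}/K\cong P_p$, every point-stabiliser lies in $K$ because a non-trivial top-level element moves every block, and Frobenius reciprocity finishes it. Your base case $k=0$ and your treatment of $(p,k)=(3,1)$ also check out (e.g.\ $a_e((3^3))=42-36+0+48=54$ and $b((3^3))=9\cdot 6=54$, giving $Z^{(3^3)}_{\phi(0,1)}=0$).

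The gap is in the second prong. You explicitly defer the boundary analysis for partitions with $\lambda_1$ close to $p^{k+1}$, and that is where essentially all of the work in a lemma of this type lives (compare Step~3 of Lemma~\ref{lem:1000} or the case split in Lemma~\ref{lem: 10.3}); announcing that you ``expect'' it to go through is a plan, not a proof. Worse, the mechanism you propose for the interior — Proposition~\ref{prop: BinD} applied to a sub-rectangle $\cB_{p^k}(t)\subseteq\Omega(\triv_{P_{p^k}})$ — requires $t>\tfrac{p^k}{2}+1$, and no such rectangle exists when $p^k\in\{5,9\}$: for $p^k=5$ the largest rectangle avoiding $\{(4,1)\}^\circ$ is $\cB_5(3)$ with $3<\tfrac{5}{2}+1$, and for $p^k=9$ every $\cB_9(t)$ with $t\ge 4$ contains $(4,3,2)\notin\Omega(\triv_{P_9})$. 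So the cases $(p,k)\in\{(5,1),(3,2)\}$ need either direct computation or a replacement argument identifying $\cD(p,p^k,\Omega(\triv_{P_{p^k}}))$ with $\cD(p,p^k,\cV)$ for a larger $\cV$, as in Step~1 of Lemma~\ref{lem:1000-base}. Finally, note that $(p^{k+1}-1,1)$ itself can never be reached from the $\cD$-set, since every non-trivial constituent of $\chi^{(p^{k+1}-1,1)}\down_{(S_{p^k})^{\times p}}$ involves the excluded partition $(p^k-1,1)$; your first prong covers it, but the induction must explicitly route that one partition (and its conjugate) through that computation rather than through the Littlewood--Richardson machinery.
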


\begin{lemma}\label{lem: 9.8}
	Let $p$ be an odd prime. Let $z\le k\in\N$, and let $s\in U_k(z)$ with $f(s)<k$. Then
	\[ \Omega(s) \cap\{\lambda\vdash p^k \mid \lambda_1=M(s)\}^\circ = \{(M(s),\mu) \mid \mu\in\Omega(\sfs_{f(s)+1},\dotsc,\sfs_k) \}^\circ.\]
	In particular, if $z\ge 2$ then $\Omega(s) \cap\{\lambda\vdash p^k\ |\ \lambda_1=M(s)\}^\circ$ contains no thin partitions.
\end{lemma}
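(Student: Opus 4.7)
The plan is to prove the stronger identity
\[
Z^{(M,\mu)}_{\phi(s)}=Z^\mu_\beta\qquad\text{for every}\quad\mu\vdash p^{k-f},
\]
where $f:=f(s)$, $M:=M(s)=p^k-p^{k-f}$ (by Theorem~\ref{thm:M}), and, via the wreath decomposition $P_{p^k}\cong P_{p^f}\wr P_{p^{k-f}}$, one writes $\phi(s)=\cX(\alpha;\beta)$ with $\alpha:=\phi(0^{f-1},1)\in\Lin(P_{p^f})$ and $\beta:=\phi(\sfs_{f+1},\dotsc,\sfs_k)\in\Lin(P_{p^{k-f}})$. Since $\Omega(s)=\Omega(s)^\circ$ (as $P_{p^k}\le A_{p^k}$ for odd $p$) and $M>p^k/2$ precludes simultaneous $\lambda_1=l(\lambda)=M$, this identity immediately yields the full set equality. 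Setting $K:=S_{p^f}\wr S_{p^{k-f}}$ and applying Frobenius reciprocity,
\[
Z^{(M,\mu)}_{\phi(s)}=\sum_{\pi\in\Irr(K)}\langle\chi^{(M,\mu)},\pi\up^{S_{p^k}}_K\rangle\cdot\langle\pi\down_{P_{p^k}},\cX(\alpha;\beta)\rangle.
\]

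The crucial reduction is that the only $\pi\in\Irr(K)$ contributing to this sum is of the form $\cX(\chi^{(p^f-1,1)};\chi^\sigma)$ for $\sigma\vdash p^{k-f}$. Indeed, $\Omega(s)\subseteq\cB_{p^k}(M)$ by Theorem~\ref{thm:M}, so any $\pi$ with $\pi\up^{S_{p^k}}_K$ containing a constituent of first row $>M$ must satisfy $\langle\pi\down_{P_{p^k}},\cX(\alpha;\beta)\rangle=0$; via Theorem~\ref{thm:PW9.1} this rules out $\cX(\chi^{(p^f)};\chi^\sigma)$. For $\pi=\cX(\chi^\rho;\chi^\sigma)$ with $\rho_1\le p^f-2$, Theorem~\ref{thm:PW9.1} forces every constituent of $\pi\up^{S_{p^k}}_K$ to have first row $<M$, contributing nothing to $\chi^{(M,\mu)}$. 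For non-$\cX$ irreducibles $\pi$ (arising from non-trivial $S_{p^{k-f}}$-orbits on tuples $(\rho_1,\dotsc,\rho_{p^{k-f}})$): if some base component equals $(p^f)$, then every summand of $\pi\down_{(P_{p^f})^{p^{k-f}}}$ contains $\triv_{P_{p^f}}$ as a tensor factor, and pairing against $\alpha^{\otimes p^{k-f}}=\cX(\alpha;\beta)\down_{(P_{p^f})^{p^{k-f}}}$ yields $0$ (since $\alpha\ne\triv_{P_{p^f}}$); otherwise, no base component equals $(p^f)$ but the components are not all equal to $(p^f-1,1)$, so $\sum_i\rho_{i,1}<p^{k-f}(p^f-1)=M$, ruling out contributions to $\chi^{(M,\mu)}$.

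For the surviving $\pi=\cX(\chi^{(p^f-1,1)};\chi^\sigma)$, Lemma~\ref{lem: 001} gives $\langle\chi^{(p^f-1,1)}\down_{P_{p^f}},\alpha\rangle=1$; expanding $\chi^\sigma\down_{P_{p^{k-f}}}$ in its linear constituents and applying Lemma~\ref{lem: 9.6} term-by-term produces $\langle\pi\down_{P_{p^k}},\cX(\alpha;\beta)\rangle=Z^\sigma_\beta$. For the other factor, summing Lemma~\ref{lem: easy observation} (applied to $\chi^{(p^f-1,1)}$) against $\chi^{(M,\mu)}$ after inducing to $S_{p^k}$ yields
\[
\sum_\sigma(\dim\chi^\sigma)\langle\chi^{(M,\mu)},\cX(\chi^{(p^f-1,1)};\chi^\sigma)\up^{S_{p^k}}\rangle=c^{(M,\mu)}_{(p^f-1,1),\dotsc,(p^f-1,1)}=\dim\chi^\mu,
\]
where the last equality uses Lemma~\ref{lem: iteratedLR} with $b_i=p^f-1$ and $\nu^i=(1)$. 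Since Theorem~\ref{thm:PW9.1} already contributes $\dim\chi^\mu$ to this sum via the lex-greatest $\sigma=\mu$ term (multiplicity one), non-negativity forces $\langle\chi^{(M,\mu)},\cX(\chi^{(p^f-1,1)};\chi^\sigma)\up^{S_{p^k}}\rangle=\delta_{\sigma,\mu}$, assembling to $Z^{(M,\mu)}_{\phi(s)}=Z^\mu_\beta$. Finally, the ``in particular'' statement for $z\ge 2$ follows because then $\beta\ne\triv_{P_{p^{k-f}}}$, while thin partitions of the form $(M,\mu)^\circ$ reduce to $\mu\in\{(1^{p^{k-f}}),(p^{k-f})\}$ (using $M\ge p^2-p\ge 6>2$); both such $\mu$ correspond to characters of $S_{p^{k-f}}$ restricting to $\triv_{P_{p^{k-f}}}\ne\beta$ on the Sylow (as $P_{p^{k-f}}\le A_{p^{k-f}}$), so $\mu\notin\Omega(\beta)$. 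The main obstacle will be the case analysis on $\Irr(K)$, particularly the non-$\cX$ irreducibles with $(p^f)$-components; once the reduction to $\pi=\cX(\chi^{(p^f-1,1)};\chi^\sigma)$ is in hand, the remainder follows cleanly from the cited lemmas.
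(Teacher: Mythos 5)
Your proposal cannot be checked against an in-paper argument, because the paper does not prove Lemma~\ref{lem: 9.8}: it is imported verbatim from \cite[Lemma 4.6]{GL2} as a known result. Taken on its own terms, your derivation is essentially correct. The skeleton — restrict $\chi^{(M,\mu)}$ in stages through $K=S_{p^f}\wr S_{p^{k-f}}$, eliminate every $\pi\in\Irr(K)$ except $\cX(\chi^{(p^f-1,1)};\chi^\sigma)$, then evaluate the two surviving factors via Lemmas~\ref{lem: 001}, \ref{lem: 9.6}, \ref{lem: easy observation}, \ref{lem: iteratedLR} and Theorem~\ref{thm:PW9.1} — is sound, and the counting trick that extracts $\langle\chi^{(M,\mu)},\cX(\chi^{(p^f-1,1)};\chi^\sigma)\up^{S_{p^k}}_K\rangle=\delta_{\sigma,\mu}$ from the total $\dim\chi^\mu$ together with the multiplicity-one lex-greatest constituent of Theorem~\ref{thm:PW9.1} is an efficient way to avoid computing any other multiplicities.

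One justification is wrong as stated, although the case it is meant to cover is still handled by an argument you give elsewhere. You dismiss $\pi=\cX(\chi^{(p^f)};\chi^\sigma)$ on the grounds that $\pi\up^{S_{p^k}}_K$ contains a constituent of first row exceeding $M$; but the existence of \emph{one} such constituent does not force $\langle\pi\down_{P_{p^k}},\cX(\alpha;\beta)\rangle=0$ — for that you would need \emph{every} constituent of $\pi\up^{S_{p^k}}_K$ to lie outside $\Omega(s)$, which is not clear for $\cX(\chi^{(p^f)};\chi^\sigma)\up^{S_{p^k}}_K$, whose constituents come from a plethysm and occur at many first-row values. The correct disposal of this case is exactly the base-group computation you use two sentences later for the non-$\cX$ characters having a $(p^f)$-component: $\cX(\chi^{(p^f)};\chi^\sigma)\down_{(P_{p^f})^{\times p^{k-f}}}$ is a multiple of $\triv^{\times p^{k-f}}$, which is orthogonal to $\cX(\alpha;\beta)\down_{(P_{p^f})^{\times p^{k-f}}}=\alpha^{\times p^{k-f}}$ because $\alpha=\phi(0^{f-1},1)\ne\triv_{P_{p^f}}$. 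With that substitution (and reading ``linear constituents'' as ``irreducible constituents'' when expanding $\chi^\sigma\down_{P_{p^{k-f}}}$, the non-linear ones contributing $0$ through Lemma~\ref{lem: 9.6}), the proof goes through.
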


The two main technical ingredients for the inductive step in computing $m(s)$ when $p\ge 5$ were \cite[Lemma 4.7, Proposition 4.8]{GL2}. We will need extensions of these for our current purposes, and we prove these below in Lemma~\ref{lem:4.7} and Proposition~\ref{prop:4.8}.

\begin{lemma}\label{lem:4.7}
	Let $p$ be an odd prime and $k\in\N$. Suppose $s=(\sfs_1,\dotsc,\sfs_k)\in U_k(1)$ and $\sfx\in[\overline{p}]$. If $p=3$, then further assume $\sfs_1=0$ and $k\ge 2$. Define $f:=f(s)$ and $q:=p^{k+1}-p^{k+1-f}$.  Then 
	\begin{itemize}
		\item[(a)] 
		$\Omega(s,\sfx) = \cB_{p^{k+1}}(q-1)\sqcup \{(q,\mu) \mid \mu\in\Omega(\sfs_{f+1},\dotsc,\sfs_k,\sfx) \}^\circ$.
		\item[(b)] Moreover, if $\sfx\ne 0$ and 
		$\lambda\in\{\tworow{p^{k+1}}{q-1}, \hook{p^{k+1}}{q-1}\}^\circ$, then $Z^\lambda_{\phi(s,\sfx)}\ge 2$.
	\end{itemize}
\end{lemma}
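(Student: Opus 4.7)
The plan is an induction on $k$, with Lemma~\ref{lem: 001} (the case $f=k$) serving as base case. For part (a), the $\subseteq$ direction comes out of the picture quickly: Theorem~\ref{thm:M} gives $M((s,\sfx))=q$, so $\Omega(s,\sfx)\subseteq\cB_{p^{k+1}}(q)$, and Lemma~\ref{lem: 9.8} identifies exactly $\Omega(s,\sfx)\cap\{\lambda:\lambda_1=q\}^\circ=\{(q,\mu):\mu\in\Omega(\sfs_{f+1},\dotsc,\sfs_k,\sfx)\}^\circ$, whence the remainder sits in $\cB_{p^{k+1}}(q-1)$. The inclusion $\{(q,\mu):\mu\in\Omega(\sfs_{f+1},\dotsc,\sfs_k,\sfx)\}^\circ\subseteq\Omega(s,\sfx)$ is likewise immediate from Lemma~\ref{lem: 9.8}, so what remains is to prove $\cB_{p^{k+1}}(q-1)\subseteq\Omega(s,\sfx)$.

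For this I would combine Lemma~\ref{lem: DinOm} with Proposition~\ref{prop: BinD}. By induction (or, when $f=k$, by Lemma~\ref{lem: 001}) we have $\cB_{p^k}(M(s)-1)\subseteq\Omega(s)$, and Proposition~\ref{prop: BinD} with $t=M(s)-1$ yields
\[ \cB_{p^{k+1}}(q-p-1)\ =\ \cB_{p^{k+1}}\bigl(p(M(s)-1)-1\bigr)\ \subseteq\ \cD(p,p^k,\Omega(s))\ \subseteq\ \Omega(s,\sfx). \]
The numerical requirement $\tfrac{p^k}{2}+1<M(s)-1$ in Proposition~\ref{prop: BinD} is precisely what forces the hypotheses $\sfs_1=0$ (so $f\ge 2$) and $k\ge 2$ for $p=3$. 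Moreover, in the special case $f=k$ the identity $\Omega(s)=\cB_{p^k}(p^k-1)$ from Lemma~\ref{lem: 001} permits the use of $t=M(s)$ directly, so $\cB_{p^{k+1}}(q-1)\subseteq\Omega(s,\sfx)$ follows at once, closing that case.

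The main obstacle is therefore the ``boundary'' regime $f<k$: the partitions $\lambda\in\cB_{p^{k+1}}(q-1)\setminus\cB_{p^{k+1}}(q-p-1)$, namely those with $\max(\lambda_1,l(\lambda))\in\{q-p,\dotsc,q-1\}$, must still be placed in $\Omega(s,\sfx)$. These I would handle by an explicit iterated Littlewood--Richardson construction exploiting the finer inductive description $\Omega(s)=\cB_{p^k}(M(s)-1)\sqcup\{(M(s),\nu):\nu\in\Omega(\triv_{P_{p^{k-f}}})\}^\circ$. For $\lambda$ with (WLOG) $\lambda_1=r=p(M(s)-1)+j$, $j\in\{0,\dotsc,p-1\}$, I would choose $\gamma_i=(b_i,\nu^i)\in\Omega(s)$ for $i\in[p]$, with $b_1=\cdots=b_{p-j}=M(s)-1$ and $b_{p-j+1}=\cdots=b_p=M(s)$, and with $\nu^i$ of size $p^k-b_i$ satisfying either $\gamma_i\in\cB_{p^k}(M(s)-1)$ or $\nu^i\in\Omega(\triv_{P_{p^{k-f}}})$ as appropriate. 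Lemma~\ref{lem: iteratedLR} then reduces $c^\lambda_{\gamma_1,\dotsc,\gamma_p}$ to $c^\mu_{\nu^1,\dotsc,\nu^p}$ on $\mu=(\lambda_2,\lambda_3,\dotsc)$, and a judicious choice of the $\nu^i$ (e.g.\ loading most boxes into $\nu^1$ and leaving the remaining $\nu^i$ small) produces a positive coefficient. The few exceptional partitions forbidden in $\Omega(\triv_{P_{p^{k-f}}})$ by Theorem~\ref{thm:GL1A} and Table~\ref{table:m<27} force separate verification of a handful of subcases but no new ideas. Since the $\gamma_i$'s have distinct first-row lengths when $0<j<p$, they are automatically not all equal; for $j=0$ the $\nu^i$ can be chosen unequal because multiple partitions of size $p^{k-f}+1$ are available. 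This places $\lambda\in\cD(p,p^k,\Omega(s))\subseteq\Omega(s,\sfx)$ via Lemma~\ref{lem: DinOm}, completing (a).

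For part (b), each $\lambda\in\{\tworow{p^{k+1}}{q-1},\hook{p^{k+1}}{q-1}\}^\circ$ lies in $\cB_{p^{k+1}}(q-1)\subseteq\Omega(s,\sfx)$ by (a), so it remains to upgrade the multiplicity. I would invoke Lemma~\ref{lem: 9.5}: identify $\gamma\vdash p^k$ with $\langle\chi^\gamma\downarrow_{P_{p^k}},\phi(s)\rangle\ge 2$ and some $\sigma\vdash p$ with $\cX(\chi^\gamma;\chi^\sigma)\mid\chi^\lambda\downarrow_{S_{p^k}\wr S_p}$ and $\phi_\sfx\mid\chi^\sigma\downarrow_{P_p}$. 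The hypothesis $\sfx\ne 0$ is used to select $\sigma=(p-1,1)$, whose restriction to $C_p$ contains every nontrivial linear character. Once such a $\gamma$ is produced (its existence is guaranteed by the inductive description of $\Omega(s)$, with multiplicities tracked via Lemma~\ref{lem: 9.6} and Theorem~\ref{thm:PW9.1}), Lemma~\ref{lem: 9.5} delivers $Z^\lambda_{\phi(s,\sfx)}\ge 2$.
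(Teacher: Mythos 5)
Part (a) of your proposal is essentially the argument the paper uses (the paper inducts on $k-f$ and imports the details from \cite[Lemma 4.7]{GL2}, but the skeleton is the same: Theorem~\ref{thm:M} and Lemma~\ref{lem: 9.8} for the layer $\lambda_1=q$, Proposition~\ref{prop: BinD} together with Lemma~\ref{lem: DinOm} for $\cB_{p^{k+1}}(q-p-1)$, and explicit iterated Littlewood--Richardson fillings for the remaining boundary layers). One detail to repair: for $j=0$ you fix all $b_i=M(s)-1$, and then for $\lambda=(q-p,\,p^{k+1-f}+p)$ or its conjugate the partition $\mu=(\lambda_2,\lambda_3,\dotsc)$ is a single row (or column), so every iterated LR filling forces $\nu^1=\dotsb=\nu^p$; your claim that the $\nu^i$ ``can be chosen unequal'' fails exactly there. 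You must let the $b_i$ vary (say $M(s),M(s)-2,M(s)-1,\dotsc,M(s)-1$) to break the symmetry for these thin $\lambda$. This is a local fix, not a conceptual problem.

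Part (b), however, has a genuine gap. Lemma~\ref{lem: 9.5} needs a partition $\gamma\vdash p^k$ with $Z^\gamma_{\phi(s)}\ge 2$ such that $\cX(\gamma;\sigma)\mid\chi^\lambda\down_{S_{p^k}\wr S_p}$, which forces $c^\lambda_{\gamma,\dotsc,\gamma}>0$. For $\lambda=\tworow{p^{k+1}}{q-1}$ this requires $l(\gamma)\le 2$ and $p\gamma_1\ge q-1$, hence $\gamma_1\ge M(s)$; combined with $\Omega(s)\subseteq\cB_{p^k}(M(s))$ the only admissible candidate is $\gamma=(M(s),p^{k-f})$ (and only $\hook{p^k}{M(s)}$ in the hook case). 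Nothing in your induction supplies $Z^\gamma_{\phi(s)}\ge 2$ for this boundary partition: your inductive part (b) applies only when the appended entry is nonzero, whereas here $\phi(s)=\phi(s^-,0)$, and membership in $\Omega(s)$ gives only multiplicity $\ge 1$. Indeed such boundary multiplicities are typically exactly $1$ (compare $Z^\mu_{\phi(s,0)}=1$ in Proposition~\ref{prop:case2}), so the hypothesis of Lemma~\ref{lem: 9.5} is unavailable and your argument delivers only $Z^\lambda_{\phi(s,\sfx)}\ge 1$. The multiplicity $2$ has to be assembled from \emph{two different} constituents of $\chi^\lambda\down_{S_{p^k}\wr P_p}$, each contributing $1$: a non-diagonal constituent $(\alpha^{\times(p-1)}\times\beta)\up_{(S_{p^k})^{\times p}}^{S_{p^k}\wr P_p}$ with $\alpha\ne\beta$ both in $\Omega(s)$ (handled via Theorem~\ref{thm:mackey} and Lemma~\ref{lem: easy observation}), plus the diagonal constituent $\cX(\alpha;(p-1,1))$ handled with Lemma~\ref{lem: 9.6} rather than Lemma~\ref{lem: 9.5}; this is precisely the mechanism used in the second half of the proof of Proposition~\ref{prop:case2}, and it is what is needed here.
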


\begin{proof}
	See \cite[Lemma 4.7]{GL2} if $p\ge 5$. When $p=3$, the statements may be checked directly for $s=(0,1)$ so now we may assume $k\ge 3$. Then the same proof by induction on $k-f$ as in \cite[Lemma 4.7]{GL2} holds, with the following observations and additions:
	\begin{itemize}
		\item the various inequalities restricting small values of $p^k$ in order to apply previous results are still satisfied, since $k\ge 3$ when $p=3$ gives sufficiently large values of $p^k$;
		\item in one step of the proof of (a), given $\mu\vdash p^{k+1-f}+p$ we wish to find some partition $\nu_1\in\Omega(\sfs_{f+1},\dotsc,\sfs_k)$ such that $\nu_1\subseteq\mu$. If $p^{k-f}\ne 9$ then this follows from Theorem~\ref{thm:GL1A}, while if $p^{k-f}=9$ then such $\nu_1$ can be found directly since $|\mu|=30$;
		\item in the following step, given $r\in[p-1]$ and $\mu\vdash p^{k+1-f}+p-r$ we claim that there exists $\nu\vdash rp^{k-f}$ such that $\nu\subseteq\mu$, and furthermore if $r=1$ then it is possible to choose $\nu\in\Omega(\sfs_{f+1},\dotsc,\sfs_k)$ by Theorem~\ref{thm:GL1A}. Additionally if $p=3$, $r=2$ and $k-f=1$ then we want to choose $\nu\in\cP(6)\setminus\{3,2,1\}$, which is always possible.		
	\end{itemize}
\end{proof}

\begin{proposition}\label{prop:4.8}
	Let $p$ be an odd prime and $k\in\N$. Suppose that $s\in[\overline{p}]^k$ satisfies the following:
	\begin{itemize}
		\item[(i)] $\tfrac{p^k}{2}+1<m(s)\le p^k-3$ and $\Omega(s)\setminus\cB_{p^k}(m(s))$ contains no thin partitions, and 
		\item[(ii)] $Z^\nu_{\phi(s)}\ge 2$ for $\nu\in\{\tworow{p^k}{m(s)}, \hook{p^k}{m(s)}\}$. 
	\end{itemize}
	Then for all $\sfx\in[\overline{p}]$, $\Omega(s,\sfx)\setminus\cB_{p^{k+1}}(p\cdot m(s))$ contains no thin partitions,
	\[ m(s,\sfx)=p\cdot m(s), \]
	and $Z^\mu_{\phi(s,\sfx)}\ge 2$ for $\mu\in\{\tworow{p^{k+1}}{p\cdot m(s)}, \hook{p^{k+1}}{p\cdot m(s)}\}$.
\end{proposition}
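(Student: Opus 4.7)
The proposition packages three claims: (I) $\cB_{p^{k+1}}(p\cdot m(s)) \subseteq \Omega(s, \sfx)$, (II) $\Omega(s, \sfx) \setminus \cB_{p^{k+1}}(p\cdot m(s))$ contains no thin partitions, and (III) $Z^\lambda_{\phi(s, \sfx)} \ge 2$ for $\lambda \in \{\tworow{p^{k+1}}{p\cdot m(s)}, \hook{p^{k+1}}{p\cdot m(s)}\}$. Items (I) and (II) jointly force $m(s, \sfx) = p\cdot m(s)$, since the 2-row partition $(p\cdot m(s) + 1,\, p^{k+1} - p\cdot m(s) - 1)$ lies in $\cB_{p^{k+1}}(p\cdot m(s) + 1) \setminus \cB_{p^{k+1}}(p\cdot m(s))$ (using $m(s) > p^k/2$) and must be excluded from $\Omega(s, \sfx)$ by (II).

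For the bulk of (I), Lemma~\ref{lem: DinOm} combined with Proposition~\ref{prop: BinD} (whose hypothesis $\tfrac{p^k}{2}+1 < m(s)$ is exactly assumption~(i)) yields $\cB_{p^{k+1}}(p\cdot m(s) - 1) \subseteq \cD(p, p^k, \cB_{p^k}(m(s))) \subseteq \cD(p, p^k, \Omega(s)) \subseteq \Omega(s, \sfx)$. For the boundary $\lambda \in \cB_{p^{k+1}}(p\cdot m(s)) \setminus \cB_{p^{k+1}}(p\cdot m(s) - 1)$, conjugation symmetry of $\Omega(s, \sfx)$ (valid since $P_{p^{k+1}} \le A_{p^{k+1}}$) lets me write $\lambda = (p\cdot m(s), \nu)$ with $\nu \vdash p r$ where $r := p^k - m(s)$. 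For any tuple $(\mu_1, \ldots, \mu_p)$ with each $\mu_i \in \cB_{p^k}(m(s))$, the inequality $\lambda_1 = p\cdot m(s) \le \sum_i (\mu_i)_1$ forces each $(\mu_i)_1 = m(s)$, hence $\mu_i = (m(s), \nu^i)$ with $|\nu^i| = r$; Lemma~\ref{lem: iteratedLR} then reduces $c^\lambda_{\mu_1, \ldots, \mu_p}$ to $c^\nu_{\nu^1, \ldots, \nu^p}$. So any not-all-equal positive decomposition $c^\nu_{\nu^1, \ldots, \nu^p} > 0$ places $\lambda$ in $\Omega(s, \sfx)$ via Lemma~\ref{lem: DinOm}.

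The main obstacle is the combinatorial claim that only $\nu = (p r)$ and $\nu = (1^{p r})$ — equivalently $\lambda = \tworow{p^{k+1}}{p\cdot m(s)}$ or $\hook{p^{k+1}}{p\cdot m(s)}$ — admit exclusively all-equal LR decompositions into $p$ parts of size $r$. Establishing this in general requires perturbing an all-equal filling of $\nu$ in the spirit of Lemmas~\ref{lem: LR tworow filling}(a) and~\ref{lem: LR hook filling}(a), with the hypothesis $r \ge 3$ (from $m(s) \le p^k - 3$) ruling out small-$r$ anomalies such as $\nu = (2, 1^4)$. For the two exceptional boundary $\lambda$, I appeal to assumption~(ii): taking $\mu = \tworow{p^k}{m(s)}$ or $\hook{p^k}{m(s)}$ as appropriate, an iterated LR computation shows $c^\lambda_{\mu, \ldots, \mu} \ge 1$, while $Z^\mu_{\phi(s)} \ge 2$ combined with Lemma~\ref{lem: 9.5} gives $\langle \cX(\chi^\mu; \theta) \down_{P_{p^{k+1}}}, \phi(s, \sfx) \rangle \ge 2$ for every $\theta \in \Irr(P_p)$. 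Using the identity $\sum_\theta [\cX(\chi^\mu; \theta) : \chi^\lambda \down_{S_{p^k} \wr P_p}] = c^\lambda_{\mu, \ldots, \mu}$ coming from the classification of $\Irr(S_{p^k} \wr P_p)$ recalled in Section~\ref{sec:wreath}, summing the type-(b) contributions to $Z^\lambda_{\phi(s, \sfx)}$ yields at least $2 c^\lambda_{\mu, \ldots, \mu} \ge 2$, settling (III) and completing (I).

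For (II), let $\lambda$ be a thin partition of $p^{k+1}$ outside $\cB_{p^{k+1}}(p\cdot m(s))$; by conjugation assume $\lambda_1 \ge p\cdot m(s) + 1$, so $\lambda$ is a hook or a 2-row partition. Any $\mu \subseteq \lambda$ has the same thin type, and this applies to every $\mu$ or $\mu_i$ appearing in the decomposition of $\chi^\lambda \down_{S_{p^k} \wr P_p}$ into type-(a) and type-(b) irreducibles. If such a $\mu$ lies in $\Omega(s)$, then assumption~(i) forces $\mu \in \cB_{p^k}(m(s))$ and $\mu_1 \le m(s)$, producing the contradiction $\lambda_1 \le p\mu_1 \le p\cdot m(s)$ in the type-(b) case and $\lambda_1 \le \sum (\mu_i)_1 \le p\cdot m(s)$ in the type-(a) case. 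For type-(b) with $\mu \notin \Omega(s)$, the restriction $\cX(\chi^\mu; \theta) \down_{P_{p^{k+1}}} = \cX(\chi^\mu \down_{P_{p^k}}; \theta)$ cannot contain $\cX(\phi(s); \phi_\sfx) = \phi(s, \sfx)$, since $\chi^\mu \down_{P_{p^k}}$ omits $\phi(s)$. Every contribution to $Z^\lambda_{\phi(s, \sfx)}$ vanishes, so $\lambda \notin \Omega(s, \sfx)$.
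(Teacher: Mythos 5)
Your proposal is correct in substance and follows essentially the route the paper takes: by Remark~\ref{rem:4.8}(ii) the proposition is proved by the same argument as \cite[Proposition 4.8]{GL2}, namely Lemma~\ref{lem: DinOm} together with Proposition~\ref{prop: BinD} to get $\cB_{p^{k+1}}(p\cdot m(s)-1)\subseteq\Omega(s,\sfx)$, an iterated Littlewood--Richardson reduction (Lemma~\ref{lem: iteratedLR}) for the layer $\lambda_1=p\cdot m(s)$, hypothesis (ii) with Lemma~\ref{lem: 9.5} for the two exceptional thin partitions, and the thinness/pigeonhole argument beyond $\cB_{p^{k+1}}(p\cdot m(s))$. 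Three comments. First, what you call the main obstacle --- that every $\nu\vdash pr$, where $r:=p^k-m(s)$, other than $(pr)$ and $(1^{pr})$ admits $c^\nu_{\nu^1,\dotsc,\nu^p}>0$ with $\nu^1,\dotsc,\nu^p\vdash r$ not all equal --- needs no new perturbation argument: it is exactly Proposition~\ref{prop: BinD} applied with $q=p\ge 3$ and $m=t=r$, whose hypothesis $\tfrac{r}{2}+1<r$ is precisely what the assumption $m(s)\le p^k-3$ guarantees (this is the whole point of Remark~\ref{rem:4.8}(ii)); as written, your appeal to an argument in the spirit of Lemmas~\ref{lem: LR tworow filling}(a) and~\ref{lem: LR hook filling}(a) is the only step left unproved, and it should simply be replaced by that citation. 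Second, your treatment of the exceptional partitions via the identity $\sum_{\theta\in\Irr(P_p)}\langle\chi^\lambda\down_{S_{p^k}\wr P_p},\cX(\mu;\theta)\rangle=c^\lambda_{\mu,\dotsc,\mu}$ combined with Lemma~\ref{lem: 9.5} is valid, and slightly leaner than the paper's habit in neighbouring results (e.g.\ Proposition~\ref{prop:case2}) of identifying the relevant constituent through Theorems~\ref{thm:PW9.1} and~\ref{thm:GTT3.5}: since Lemma~\ref{lem: 9.5} is uniform in $\theta$ and $\beta$, positivity of $c^\lambda_{\mu,\dotsc,\mu}$, which Lemma~\ref{lem: iteratedLR} gives at once, is all that is needed. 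Third, in your argument that $\Omega(s,\sfx)\setminus\cB_{p^{k+1}}(p\cdot m(s))$ contains no thin partitions you only dispose of type-(b) constituents with $\mu\notin\Omega(s)$; add one line for type-(a) constituents $(\chi^{\mu_1}\times\cdots\times\chi^{\mu_p})\up^{S_{p^k}\wr P_p}$ with some $\mu_i\notin\Omega(s)$: by Theorem~\ref{thm:mackey} their contribution to $Z^\lambda_{\phi(s,\sfx)}$ equals $\prod_i Z^{\mu_i}_{\phi(s)}$, which vanishes. With these two small repairs the proof is complete and matches the paper's.
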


\begin{remark}\label{rem:4.8}
	\begin{itemize}
		\item[(i)] The primary purpose of Proposition~\ref{prop:4.8} is to show that if $\lambda$ is the \textit{longest thin} partition of $p^k$ for which $Z^\lambda_{\phi(s)}\ge 2$, then $m(s,\sfx)=p\cdot m(s)$ for all $\sfx\in [\overline{p}]$.
		
		\item[(ii)] When $p\ge 5$ we did not need to specify that $m(s)\le p^k-3$ (see \cite[Proposition 4.8]{GL2}): this is since from Theorem~\ref{thm:GL1A} $\Omega(\triv_{P_{p^k}})=\cP(p^k)\setminus\{(p^k-1,1)\}^\circ$, so $M(\triv_{P_{p^k}})=p^k$ and note $(p^k-1,1)$ is thin, and $m(s)\le M(s) = p^k-p^{f(s)}\le p^k-5$ whenever $s\ne(0,\dotsc,0)$. Now including $p=3$, if we specify $m(s)\le p^k-3$ then Proposition~\ref{prop:4.8} follows from an identical proof to \cite[Proposition 4.8]{GL2}. This is so that $t:=p^k-m(s)$ satisfies $t>\tfrac{t}{2}+1$, in order to apply Proposition~\ref{prop: BinD}.\hfill$\lozenge$
	\end{itemize}
\end{remark}

Now that we have introduced Lemma~\ref{lem:4.7} and Proposition~\ref{prop:4.8}, we can remind the reader that the inductive step for $p\ge 5$ can be visualised using Figure~\ref{fig:5} below. For comparison, the analogous diagram displaying the relevant results which make up the inductive step when $p=3$ is given by Figure~\ref{fig:3}. 

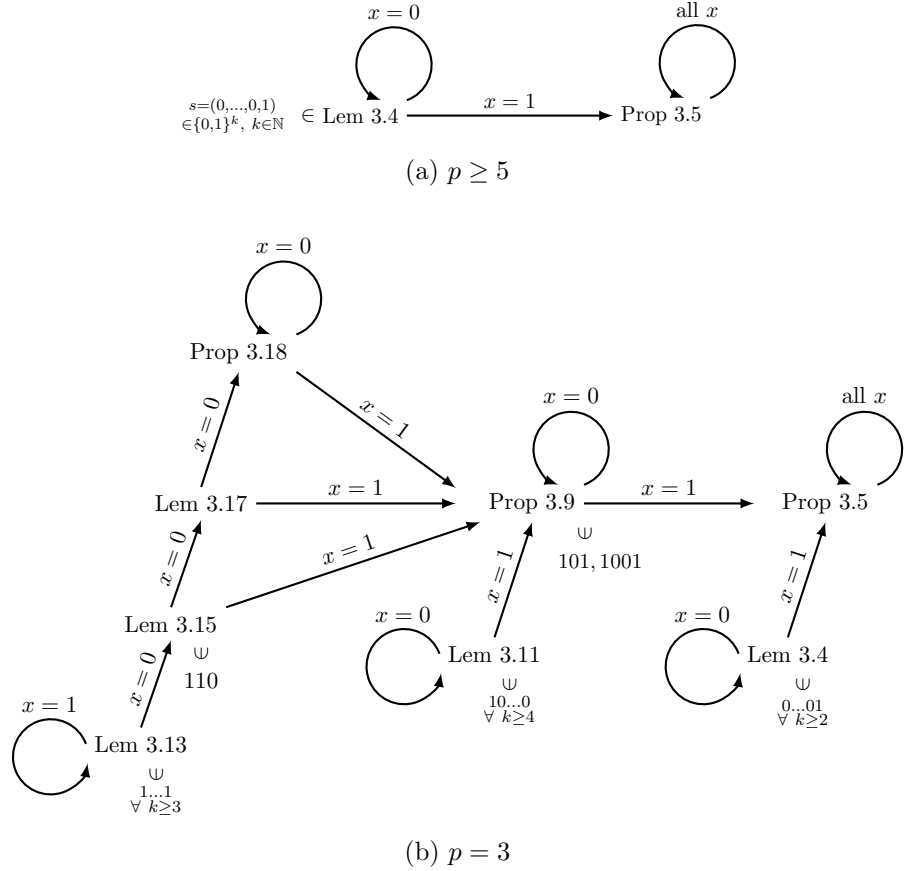
\begin{figure}[ht]
	\centering
	\begin{subfigure}{\textwidth}
		\centering
		\begin{tikzpicture}[scale=1, every node/.style={scale=0.75},thick]
			\draw (0,0) node[] (2) {Lem~\ref{lem:4.7}};
			\draw (4,0) node[] (1) {Prop~\ref{prop:4.8}};
			
			\draw [-latex] (2.north east) arc(-70:250:0.5) node[midway, above] {$x=0$};
			\draw [-latex] (1.north east) arc(-70:250:0.5) node[midway, above] {all $x$};
			
			\draw [-latex] (2.east) -- (1.west) node[midway, above] {$x=1$}; 
			
			\draw (-0.7,0) node[] {$\in$};
			\draw (-1.7,0) node[] {$\substack{s=(0,\dotsc, 0,1)\\ \in \{0,1\}^k,\ k\in\N}$};	
		\end{tikzpicture}
		\caption{$p\ge 5$}
		\label{fig:5}		
	\end{subfigure}

	\vspace{15pt}
	
	\begin{subfigure}{\textwidth}
		\centering
		\begin{tikzpicture}[scale=1, every node/.style={scale=0.8},thick]
			\draw (-3.5,2) node[] (3) {Prop~\ref{prop:case3}};
			\draw [-latex] (3.north east) arc(-70:250:0.5) node[midway, above] {$x=0$};
			\draw [-latex] (3.south east) -- (2.north west) node[midway,sloped, above] {$x=1$};
			
			\draw (-4,0) node[] (8) {Lem~\ref{lem:1100x}};
			\draw [-latex] (8.east) -- (2.west) node[midway,sloped,above] {$x=1$};
			\draw [-latex] (8.north) -- (3.south) node[midway,sloped,above] {$x=0$};
			
			\begin{scope}[xshift=4mm]
				\draw (0,0) node[] (2) {Prop~\ref{prop:case2}};
				\draw [-latex] (2.north east) arc(-70:250:0.5) node[midway, above] {$x=0$};
				\draw [-latex] (2.east) -- (1.west) node[midway, above] {$x=1$};
			\end{scope}
			
			\begin{scope}[xshift=-2mm]
				\draw (4.5,0) node[] (1) {Prop~\ref{prop:4.8}};
				\draw [-latex] (1.north east) arc(-70:250:0.5) node[midway, above] {all $x$};
			\end{scope}
			
			\draw (-4.4,-1.6) node[] (4) {Lem~\ref{lem:110x}};
			\draw [-latex] (4.north) -- (8.south) node[midway,sloped,above] {$x=0$};
			\draw [-latex] (4.north east) -- (2.south west) node[midway,sloped,above] {$x=1$};
			\draw (-4.2,-2) node[rotate=90,anchor=north] {$\in$};
			\draw (-4,-2.35) node[] {$110$};
			
			\draw (-4.8,-3.2) node[] (7) {Lem~\ref{lem:111}};
			\draw [-latex] (7.west) arc(20:350:0.5) node[midway, above right, yshift=0.65cm] {$x=1$};
			\draw [-latex] (7.north) -- (4.south) node[midway,sloped,above] {$x=0$};
			\draw (-4.8,-3.6) node[rotate=90,anchor=north] {$\in$};
			\draw (-4.6,-3.95) node[] {$\substack{1\dotsc1\\ \forall\ k\ge 3}$};
			
			\begin{scope}[xshift=4mm]
				\draw (-0.5,-2) node[] (5) {Lem~\ref{lem:1000}};
				\draw [-latex] (5.west) arc(20:350:0.5) node[midway, above right, yshift=0.65cm] {$x=0$};
				\draw [-latex] (5.north) -- (2.south) node[midway,sloped,above] {$x=1$};
				\draw (0.5,-0.4) node[rotate=90,anchor=north] {$\in$};
				\draw (0.9,-0.8) node[] {\small $101, 1001$};
				\draw (-0.5,-2.4) node[rotate=90,anchor=north] {$\in$};
				\draw (-0.3,-2.75) node[] {$\substack{10\dotsc 0\\ \forall\ k\ge 4}$};
			\end{scope}
			
			\begin{scope}[xshift=-2mm]
				\draw (4,-2) node[] (6) {Lem~\ref{lem:4.7}};
				\draw [-latex] (6.west) arc(20:350:0.5) node[midway, above right, yshift=0.65cm] {$x=0$};
				\draw [-latex] (6.north) -- (1.south) node[midway,sloped,above] {$x=1$};
				\draw (4,-2.4) node[rotate=90,anchor=north] {$\in$};
				\draw (4.2,-2.8) node[] {$\substack{0\dotsc 01\\ \forall\ k\ge 2}$};
			\end{scope}
		\end{tikzpicture}
		\caption{$p=3$}
		\label{fig:3}
	\end{subfigure}
	\caption{The inductive step of computing $m(s)$: $s\in A$ means $s$ satisfies the conditions of $A$, and $A\xrightarrow{\sfx=i} B$ means if $s$ satisfies the conditions of $A$ then $(s,i)$ satisfies the conditions of $B$.}
	\label{fig:flowcharts}
\end{figure}

\begin{remark}
	In both figures, we have omitted sequences of the form $s=(0,\dotsc,0)$ since such $\Omega(s)$ were determined in Theorem~\ref{thm:GL1A}. To briefly explain Figure~\ref{fig:5}: we may view Proposition~\ref{prop:4.8} as the main driver of the inductive step, since once a sequence $s$ satisfies the conditions of the proposition, then so does $(s,\sfx)$ for any $\sfx\in[\overline{p}]$, and the assertions of the proposition then govern the shape of $\Omega(s)$ and $m(s)$. 
	
	The sequences which form the base cases are of form $s=(0,\dotsc,0,1)$: however, Proposition~\ref{prop:4.8} cannot immediately be applied to such $s$ since condition (ii) on Sylow branching coefficients (SBCs) would not be not satisfied, by Lemma~\ref{lem: 001}. Nor can we apply the proposition to sequences of the form $s=(0,\dotsc,0,1,0,\dotsc,0)$, since then $\Omega(s)\setminus\cB_{p^k}(m(s))$ \textit{does} contain thin partitions; this is why we need Lemma~\ref{lem:4.7}. 
	
	In a similar vein -- because of the different combinations of SBCs, forms of $s$ and $\Omega(s)$ -- we will see in the remainder of this section that there are many more inductive steps and base cases when $p=3$ than for $p\ge 5$, and Figure~\ref{fig:3} illustrates how they all fit together. In the following example, we give a concrete example of how to use Figure~\ref{fig:3}. \hfill$\lozenge$
\end{remark}

\begin{example}\label{eg:m}
	Let $p=3$ and $s=(110001010)$, so $k=9$, $F(s)=2$, $G(s)=6$, $H(s)=8$ and $s\in U_k(z)$ where $z=4=F(s)+2$.
	\begin{itemize}
		\item[$\circ$] By Lemma~\ref{lem:110x} with $k=3$ and $s=(110)$, we can describe $\Omega(1100)$.
		
		\item[$\circ$] Hence by Lemma~\ref{lem:1100x} we can describe $\Omega(11000)$. In particular, $k=5$ and $s=(11000)$ then satisfy the conditions of Proposition~\ref{prop:case3} with $m=135$. 
				
		\item[$\circ$] Applying this, we can describe $\Omega(110001)$ and find $m(110001)=3m-1=3\cdot 135-1=404$.
		
		In particular, $(110001)$ satisfies the conditions of Proposition~\ref{prop:case2}.
		
		\item[$\circ$] Hence we can describe $\Omega(1100010)$ and find $m(1100010)=3^2\cdot 135-3=1212$.
		
		In particular, $(1100010)$ satisfies the conditions of Proposition~\ref{prop:case2}.
		
		\item[$\circ$] Hence we can describe $\Omega(11000101)$ and find $m(11000101)=3^3\cdot135-3^2-1=3635$.
		
		In particular, $(11000101)$ satisfies the conditions of Proposition~\ref{prop:4.8}.
		
		\item[$\circ$] Hence we can describe $\Omega(110001010)$, and finally find that $m(110001010) = 3^4\cdot135-3^3-1=10905$.
	\end{itemize}
	In particular, this tells us that $\cB_{3^9}(10905)\subseteq\Omega(s)$, so clearly almost all partitions of $3^9$ belong to $\cB_{3^9}(10905)$ (cf.~Theorem~\ref{thm:c} and Remark~\ref{rem:2/9}). 
	\hfill$\lozenge$
\end{example}

With the above remark in mind, we now proceed to state and prove the remaining results which appear in Figure~\ref{fig:3}. 

Proposition~\ref{prop:case2} below deals with the opposite situation to Proposition~\ref{prop:4.8} in terms of Sylow branching coefficients, when they are 1, rather than at least 2.

\begin{proposition}\label{prop:case2}
	Let $p$ be an odd prime and $k\in\N$. Suppose that $s\in[\overline{p}]^k$ satisfies the following:
	\begin{itemize}
		\item[(i)] $\tfrac{p^k}{2}+1<m(s)\le p^k-3$ and $\Omega(s)\setminus\cB_{p^k}(m(s))$ contains no thin partitions, and
		\item[(ii)] $Z^\nu_{\phi(s)}=1$ for $\nu\in\{\tworow{p^k}{m(s)}, \hook{p^k}{m(s)}\}$.
	\end{itemize}
	Then:
	\begin{itemize}
		\item $m(s,0)=p\cdot m(s)$, 
		\item $\Omega(s,0)\setminus\cB_{p^{k+1}}(p\cdot m(s))$ contains no thin partitions, and
		\item $Z^\mu_{\phi(s,0)}=1$ for $\mu\in\{\tworow{p^{k+1}}{p\cdot m(s)},\hook{p^{k+1}}{p\cdot m(s)}\}$. 
	\end{itemize}
	If $\sfx\in [p-1]$, then:
	\begin{itemize}
		\item $m(s,\sfx)=p\cdot m(s)-1$, 
		\item $\Omega(s,\sfx)\setminus\cB_{p^{k+1}}(p\cdot m(s))$ contains no thin partitions, and
		\item $Z^\mu_{\phi(s,\sfx)}\ge 2$ for $\mu\in\{\tworow{p^{k+1}}{p\cdot m(s)-1},\hook{p^{k+1}}{p\cdot m(s)-1}\}$.
	\end{itemize}
\end{proposition}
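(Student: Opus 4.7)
The strategy parallels Proposition~\ref{prop:4.8}, but hypothesis \emph{(ii)} now gives the key Sylow branching coefficients equal to $1$ rather than $\ge 2$; accordingly, we invoke the Kronecker-delta conclusion of Lemma~\ref{lem: 9.6} in place of Lemma~\ref{lem: 9.5}. This precision is exactly what separates the behaviour of $\sfx=0$ from $\sfx\in[p-1]$.

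First I would establish $\cB_{p^{k+1}}(p\cdot m(s)-1)\subseteq\Omega(s,\sfx)$ for every $\sfx\in[\overline{p}]$: hypothesis \emph{(i)} meets the conditions of Proposition~\ref{prop: BinD} with $q=p$, $m=p^k$, $t=m(s)$, so $\cB_{p^{k+1}}(p\cdot m(s)-1)\subseteq\cD(p,p^k,\cB_{p^k}(m(s)))\subseteq\cD(p,p^k,\Omega(s))\subseteq\Omega(s,\sfx)$ by Lemma~\ref{lem: DinOm}. This yields $m(s,\sfx)\ge p\cdot m(s)-1$, and an inspection of the $\cD$-construction --- specifically, the existence of multiple distinct cyclic-orbit $p$-tuples $(\mu^1,\ldots,\mu^p)$ in $\cB_{p^k}(m(s))$ witnessing each $\lambda\in\{\tworow{p^{k+1}}{p\cdot m(s)-1},\hook{p^{k+1}}{p\cdot m(s)-1}\}^\circ$ --- upgrades this to $Z^\lambda_{\phi(s,\sfx)}\ge 2$.

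Second, I would pin down $Z^{\lambda^{(i)}}_{\phi(s,\sfx)}$ exactly for the thin partitions $\lambda^{(1)}:=\tworow{p^{k+1}}{p\cdot m(s)}$ and $\lambda^{(2)}:=\hook{p^{k+1}}{p\cdot m(s)}$. By Lemma~\ref{lem: iteratedLR} and \emph{(i)}, every tuple $(\mu^1,\ldots,\mu^p)$ contributing to $\chi^{\lambda^{(i)}}\down_{(S_{p^k})^{\times p}}$ with all $\mu^j\in\Omega(s)$ must consist of $p$ equal copies of $\mu$, where $\mu:=\tworow{p^k}{m(s)}$ or $\hook{p^k}{m(s)}$ respectively; hence only Clifford-type-(b) constituents $\cX(\chi^\mu;\theta)$ survive, and the iterated count via Lemma~\ref{lem: iteratedLR} gives that the multiplicity of $\phi(s)^{\times p}$ in $\chi^{\lambda^{(i)}}\down_{(P_{p^k})^{\times p}}$ is exactly $1$. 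Theorem~\ref{thm:PW9.1} identifies the unique surviving $\theta$ as $\chi^{(p)}$, so Lemma~\ref{lem: 9.6} together with \emph{(ii)} gives
\[ Z^{\lambda^{(i)}}_{\phi(s,\sfx)} = \langle\chi^{(p)}\down_{P_p},\phi_\sfx\rangle = \delta_{\sfx,0}. \]
Thus $Z^{\lambda^{(i)}}_{\phi(s,0)}=1$ while $\lambda^{(i)}\notin\Omega(s,\sfx)$ for $\sfx\ne 0$, yielding $m(s,\sfx)\le p\cdot m(s)-1$ in the latter case.

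Third, I would rule out thin partitions outside $\cB_{p^{k+1}}(p\cdot m(s))$ from every $\Omega(s,\sfx)$: for such thin $\lambda$ with $\lambda_1>p\cdot m(s)$ (conjugation handles $l(\lambda)>p\cdot m(s)$), Lemmas~\ref{lem: LR tworow filling} and~\ref{lem: LR hook filling} applied at each level of the iterated restriction force every contributing $\mu^j$ to be thin with $\mu^j_1>m(s)$, which \emph{(i)} rules out of $\Omega(s)$, so $Z^\lambda_{\phi(s,\sfx)}=0$. The remaining inclusion $\cB_{p^{k+1}}(p\cdot m(s))\subseteq\Omega(s,0)$ (upgrading step one on non-thin edge partitions) follows by the same iterated Littlewood--Richardson analysis, with type-(a) contributions surviving the Kronecker-delta evaluation at $\sfx=0$. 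Combining the three steps gives $m(s,0)=p\cdot m(s)$ and $m(s,\sfx)=p\cdot m(s)-1$ for $\sfx\ne 0$, together with the claimed Sylow branching coefficient values. The main obstacle is the second step: verifying that only the constituent $\cX(\chi^\mu;\chi^{(p)})$ contributes, and then leveraging the Kronecker delta of Lemma~\ref{lem: 9.6} --- the technical strengthening over the inequality of Lemma~\ref{lem: 9.5} used in Proposition~\ref{prop:4.8} --- to produce the exact vanishing at $\sfx\ne 0$ that delivers the case split.
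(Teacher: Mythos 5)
Your overall architecture tracks the paper's proof closely: the containment $\cB_{p^{k+1}}(p\,m(s)-1)\subseteq\Omega(s,\sfx)$ via Proposition~\ref{prop: BinD} and Lemma~\ref{lem: DinOm}, the exact evaluation $Z^{\lambda^{(i)}}_{\phi(s,\sfx)}=\delta_{0,\sfx}$ at $\lambda^{(i)}\in\{\tworow{p^{k+1}}{pm},\hook{p^{k+1}}{pm}\}$ via the uniqueness of the constituent lying over $\mu^{\times p}$ together with Lemma~\ref{lem: 9.6}, and the exclusion of thin partitions outside $\cB_{p^{k+1}}(pm)$. Two smaller repairs: for the hook $\hook{p^{k+1}}{pm}$ the relevant constituent $\cX(\hook{p^k}{m};\nu)$ with $\nu\in\{(p),(1^p)\}$ is supplied by Theorem~\ref{thm:GTT3.5}, not Theorem~\ref{thm:PW9.1} (the latter only locates the lexicographically greatest constituent and hence only handles the two-row case); and the type-(a) contributions you invoke at the end are independent of $\sfx$ by Lemma~\ref{lem: easy observation}, so the phrase about them "surviving the Kronecker-delta evaluation at $\sfx=0$" conflates the two mechanisms.

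The genuine gap is your justification of $Z^\lambda_{\phi(s,\sfx)}\ge 2$ for $\sfx\ne 0$ and $\lambda\in\{\tworow{p^{k+1}}{pm-1},\hook{p^{k+1}}{pm-1}\}$ by "the existence of multiple distinct cyclic-orbit $p$-tuples in $\cB_{p^k}(m(s))$". There is only one such orbit. For $\lambda=\tworow{p^{k+1}}{pm-1}$, any tuple with $c^\lambda_{\mu^1,\dotsc,\mu^p}>0$ has every $\mu^j\subseteq\lambda$, hence $l(\mu^j)\le 2$, hence each $\mu^j$ is thin and by hypothesis (i) lies in $\cB_{p^k}(m)$, so $\mu^j_1\le m$; since $\sum_j\mu^j_1\ge pm-1$, the only non-constant tuple up to cyclic permutation is $\big(\tworow{p^k}{m}^{\times(p-1)},\tworow{p^k}{m-1}\big)$, and it contributes only $c^\lambda_{\alpha^{\times(p-1)},\beta}\cdot\prod_jZ^{\mu^j}_{\phi(s)}$, which you cannot bound below by $2$ (indeed $c^\lambda_{\alpha^{\times(p-1)},\beta}=1$ by Lemma~\ref{lem: iteratedLR}, and hypothesis (ii) gives $Z^\alpha_{\phi(s)}=1$). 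The same count applies to the hook. Note also that your mechanism never uses $\sfx\ne 0$, so if it worked it would give $\ge2$ at $\sfx=0$ as well, which is a warning sign. The missing second unit of multiplicity comes from the \emph{constant} tuple $\alpha^{\times p}$: one shows $\cX(\alpha;(p-1,1))\mid\chi^\lambda\down_{S_{p^k}\wr S_p}$ (by \cite[Theorem 1.5]{dBPW} in the two-row case and Theorem~\ref{thm:GTT3.5} in the hook case), and since $Z^\alpha_{\phi(s)}=1$, Lemma~\ref{lem: 9.6} together with $\chi^{(p-1,1)}\down_{P_p}=\sum_{\sfy=1}^{p-1}\phi_\sfy$ yields an extra contribution equal to $1$ precisely when $\sfx\ne 0$. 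This is exactly where the case split between $\sfx=0$ and $\sfx\in[p-1]$ is generated, and without it the inequality $Z^\mu_{\phi(s,\sfx)}\ge 2$ — which downstream results such as Proposition~\ref{prop:4.8} rely on — is not established.
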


\begin{proof}
	Let $m=m(s)$. As in the proof of \cite[Proposition 4.8]{GL2}, we find for all $\sfx\in[\overline{p}]$ that $\cB_{p^{k+1}}(pm-1)\subseteq\Omega(s,\sfx)$, and $(pm,\alpha)\in\Omega(s,\sfx)$ for all $\alpha\in\cB_{p^{k+1}-pm}(p^{k+1}-pm-1)$.
	
	Let $\lambda=\tworow{p^{k+1}}{pm}$. By Theorem~\ref{thm:PW9.1}, $\langle\cX\big(\tworow{p^k}{m};(p)\big), \chi^\lambda\down_{S_{p^k}\wr S_p}\rangle = 1$. In fact, 
	\[ \chi^\lambda\down_{S_{p^k}\wr S_p} = \cX\big(\tworow{p^k}{m};(p)\big) + \Delta\]
	where $\Delta\in\Char(S_{p^k}\wr S_p)$ and $\langle \Delta\down_{P_{p^{k+1}}}, \phi(s,x)\rangle = 0$, since $\langle \Delta\down_{(P_{p^k})^{\times p}}, \phi(s)^{\times p}\rangle =0$. This follows from observing that $\chi^\lambda\down_{(S_{p^k})^{\times p}}=\big(\chi^{\tworow{p^{k}}{m}}\big)^{\times p}+\theta$ where $\theta\in\Char((S_{p^k})^{\times p})$ and for each irreducible constituent $\chi^{\mu_1}\times\cdots\times\chi^{\mu_p}$ of $\theta$ there exists $j\in [p]$ such that $(\mu_j)_1>m$. 
	Thus, for any $\sfx\in[\overline{p}]$, Lemma~\ref{lem: 9.6} implies that
	\[ Z^\lambda_{\phi(s,\sfx)} 
	= \langle\cX\big(\tworow{p^k}{m};(p)\big)\down_{P_{p^{k+1}}}, \phi(s,\sfx)\rangle = \langle\cX\big(\tworow{p^k}{m}\down_{P_{p^k}};\phi_0\big), \phi(s,\sfx)\rangle = \delta_{0,\sfx}. \]	
	An analogous argument holds for $\lambda=\hook{p^{k+1}}{pm}$ since $\langle\cX(\hook{p^k}{m};\nu), \chi^\lambda\down_{S_{p^k}\wr S_p}\rangle = 1$ for some $\nu\in\{(p),(1^p)\}$ by Theorem~\ref{thm:GTT3.5}. 
	
	Moreover, $\Omega(s,0)\setminus\cB_{p^{k+1}}(pm)$ and $\Omega(s,\sfx)\setminus\cB_{p^{k+1}}(pm-1)$ for $\sfx\ne 0$ contain no thin partitions by a similar argument to that in the proof of \cite[Proposition 4.8]{GL2}. To conclude, it remains to show for any $\sfx\neq 0$ that $Z^\lambda_{\phi(s,\sfx)}\ge 2$ for $\lambda\in\{\tworow{p^{k+1}}{pm-1},\hook{p^{k+1}}{pm-1}\}$. 
	
	First, consider $\lambda=\tworow{p^{k+1}}{pm-1}$. Let $\alpha=\tworow{p^k}{m}$ and $\beta=\tworow{p^k}{m-1}$, indeed partitions as $m>\tfrac{p^k}{2}+1$. By Theorem~\ref{theorem:LR}, $(\chi^\alpha)^{\times(p-1)}\times \chi^\beta \mid \chi^\lambda\down_{(S_{p^k})^{\times p}}$, and so (identifying $\alpha$ with $\chi^\alpha$)
	\[ (\alpha^{\times (p-1)}\times\beta)\ \up_{(S_{p^k})^{\times p}}^{S_{p^k}\wr P_p}\ \Big|\ \chi^\lambda\down_{S_{p^k}\wr P_p} \]
	by the characterisation of $\Irr(S_{p^k}\wr P_p)$ (see Section~\ref{sec:wreath}). Then
	\[ (\alpha^{\times (p-1)}\times\beta)\ \up_{(S_{p^k})^{\times p}}^{S_{p^k}\wr P_p} \down_{P_{p^k}\wr P_p} = (\alpha^{\times (p-1)}\times\beta)\down_{(P_{p^k})^{\times p}} \up^{P_{p^k}\wr P_p} \]
	by Theorem~\ref{thm:mackey}, which contains $\phi(s)^{\times p}\up^{P_{p^k}\wr P_p}=\sum_{\sfy=0}^{p-1}\phi(s,\sfy)$ as a direct summand since $\alpha,\beta\in\cB_{p^k}(m)\subseteq\Omega(s)$.
	Also, $\cX\big(\alpha;(p-1,1)\big)\mid\chi^\lambda\down_{S_{p^k}\wr S_p}$ by \cite[Theorem 1.5]{dBPW}, and observe that
	\[ \cX\big(\alpha;(p-1,1)\big)\down_{P_{p^k}\wr P_p} = \cX\big(\alpha\down_{P_{p^k}}; (p-1,1)\down_{P_p}\big) = \sum_{y=1}^{p-1} \cX(\alpha\down_{P_{p^k}}; \phi_{\sfy}), \]
	and that $\langle \cX(\alpha\down_{P_{p^k}}; \phi_\sfy), \phi(s,\sfx)\rangle = \delta_{\sfy,\sfx}$ by Lemma~\ref{lem: 9.6}.
	Thus since $\sfx\ne 0$ we have that
	\[ Z^\lambda_{\phi(s,\sfx)} 
	\ge \langle (\alpha^{\times (p-1)}\times\beta)\ \up_{(S_{p^k})^{\times p}}^{S_{p^k}\wr P_p}\down_{P_{p^k}\wr P_p}, \phi(s,\sfx)\rangle + \sum_{\sfy=1}^{p-1} \langle \cX(\alpha\down_{P_{p^k}}; \phi_\sfy), \phi(s,\sfx)\rangle \ge 2. \]
	For $\lambda=\hook{p^{k+1}}{pm-1}$, a similar argument holds with $\alpha=\hook{p^k}{m}$ and $\beta=\hook{p^k}{m-1}$ instead, noting that $\cX(\alpha;\omega)\mid \chi^\lambda\down_{S_{p^k}\wr S_p}$ for some $\omega\in\{(p-1,1)\}^\circ$ by Theorem~\ref{thm:GTT3.5}.	
\end{proof}

For the rest of the article, we focus our attention on $p=3$.
Fix $p=3$ and let $k\in\N$ and $\phi\in\Lin(P_{3^k})$. Suppose $\phi=\phi(s)$ for some $s\in\{0,1\}^k$.
We begin by recording the sets $\Omega(s)$ for small $k$, which can be obtained by direct computation. 

\begin{footnotesize}
	\begin{multicols}{2}
		
		\underline{$k=1$}:
		
		$\Omega(0) = \{(3),(1^3)\}$,
		
		$\Omega(1) = \{(2,1)\}$;
		
		\columnbreak
		
		\underline{$k=2$}:
		
		$\Omega(00) = \cP(9)\setminus\{(8,1),(5,4),(4,3,2)\}^\circ$,
		
		$\Omega(01) = \cB_9(8)\setminus\{(3^3)\}$,
		
		$\Omega(10) = \cB_9(6)\setminus\{(5,4),(5,1^4),(6,2,1) \}^\circ$,
		
		$\Omega(11) = \cB_9(6)\setminus\{(3^3),(6,3),(6,1^3)\}^\circ$;
		
	\end{multicols}	
	
	\vspace{-25pt}
	
	\underline{$k=3$}:
	
	$\Omega(000) = \cP(27) \setminus\{(26,1)\}^\circ$,
	
	$\Omega(001) = \cB_{27}(26)$,
	
	$\Omega(010) = \cB_{27}(24)\setminus\{(24,2,1)\}^\circ$,
	
	$\Omega(011) = \cB_{27}(24) \setminus \{(24,3),(24,1^3)\}^\circ$,
	
	$\Omega(100) = \cB_{27}(17)\setminus\{(17,10),(17,1^{10}),(14,13)\}^\circ \sqcup \{(18,\mu) \mid \mu\in\Omega(00)\}^\circ$,
	
	$\Omega(101) = \cB_{27}(18)\setminus\{(18,9),(18,3^3),(18,1^9)\}^\circ$,
	
	$\Omega(110) = \cB_{27}(16)\setminus\{(16,11),(16,1^{11}),(14,13),(14,1^{13}) \}^\circ \sqcup \{(17,\nu) \mid \nu\in\cB_{10}(8) \}^\circ \sqcup \{(18,\mu) \mid \mu\in\Omega(10)\}^\circ$,
	
	$\Omega(111) = \cB_{27}(16)\setminus\{(16,11), (16,1^{11}), (15,12),(15,1^{12})\}^\circ \sqcup \{(17,\nu) \mid \nu\in\cB_{10}(8) \}^\circ \sqcup \{(18,\mu) \mid \mu\in\Omega(11)\}^\circ$.\\
	
\end{footnotesize}

After describing the remaining results appearing in Figure~\ref{fig:3}, we conclude this section by proving Theorem~\ref{thm:b-primepower}, and Theorem~\ref{thm:a} in the case of $n$ a power of 3 (Theorem~\ref{thm:a-prime power}).

\begin{lemma}\label{lem:1000-base}
	We have that
	\begin{itemize}
		\item[(i)] $\Omega(1000)=\cB_{81}(54)\setminus\{(54,26,1),(54,2,1^{25}),(53,28),(53,1^{28})\}^\circ$,
		\item[(ii)] $Z^{(54,27)}_{\phi(1000)}=Z^{(54,1^{27})}_{\phi(1000)}=1$,
		\item[(iii)] $\Omega(1001)=\cB_{81}(54)\setminus\{(54,27),(54,1^{27})\}^\circ$, and
		\item[(iv)] $Z^{(53,28)}_{\phi(1001)}=Z^{(53,1^{28})}_{\phi(1001)}=1$.
	\end{itemize}
\end{lemma}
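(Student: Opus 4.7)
The plan is to establish parts (i)--(iv) together in three stages, leveraging the explicit $\Omega(100)$ stated just before the lemma.

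\emph{Stage 1 (long first-row strata).} Applying Lemma~\ref{lem: 9.8} to $s=(1,0,0,\sfx)$ for $\sfx\in\{0,1\}$ (where $M(s)=54$ and $(\sfs_{f(s)+1},\sfs_{f(s)+2},\sfs_k)=(0,0,\sfx)$) yields
\[
\Omega(s)\cap\{\lambda\vdash 81:\lambda_1=54\}^\circ=\{(54,\mu):\mu\in\Omega(00\sfx)\}^\circ.
\]
Substituting the listed $\Omega(000)=\cP(27)\setminus\{(26,1)\}^\circ$ and $\Omega(001)=\cB_{27}(26)$ determines exactly which first-row-$54$ partitions belong to $\Omega(1000)$ and $\Omega(1001)$; in particular the exclusions $\{(54,26,1),(54,2,1^{25})\}^\circ$ from $\Omega(1000)$ and $\{(54,27),(54,1^{27})\}^\circ$ from $\Omega(1001)$ follow immediately.

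\emph{Stage 2 (bulk containment in $\cB_{81}(53)$).} Lemma~\ref{lem: DinOm} gives $\cD(3,27,\Omega(100))\subseteq\Omega(100,\sfx)$ for each $\sfx$, and Proposition~\ref{prop: BinD} applied with $m=27, t=17$ gives $\cB_{81}(50)\subseteq\cD(3,27,\cB_{27}(17))$. Since $\cB_{27}(17)\setminus\Omega(100)$ consists only of the three excluded partitions $\{(17,10),(17,1^{10}),(14,13)\}^\circ$, and since $\cB_{27}(13)\cup\{(18,\nu):\nu\in\Omega(00)\}^\circ\subseteq\Omega(100)$ furnishes abundant alternative triple decompositions, this upgrades to $\cB_{81}(50)\subseteq\cD(3,27,\Omega(100))$. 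For the boundary layer $\lambda\in\cB_{81}(53)\setminus\cB_{81}(50)$, one produces explicit triple decompositions $\lambda=\mu_1+\mu_2+\mu_3$ using $\mu_i\in\Omega(100)$ (typically of the form $(18,\nu)$), the sole exceptions being the four thin partitions in $\{(53,28),(53,1^{28})\}^\circ$, ruled out by the next stage.

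\emph{Stage 3 (explicit Sylow branching coefficients).} Using Lemma~\ref{lem: easy observation} with $G=P_{27}, H=P_3$, together with the $N_{S_{81}}(P_{81})$-conjugacy of $\phi(1001)$ and $\phi(1002)$, I derive
\[
Z^\lambda_{\phi(1000)}+2Z^\lambda_{\phi(1001)}=\sum_{\mu_1,\mu_2,\mu_3\vdash 27}c^\lambda_{\mu_1,\mu_2,\mu_3}\prod_{i=1}^{3} Z^{\mu_i}_{\phi(100)}.
\]
For $\lambda\in\{(54,27),(53,28)\}$, the condition $l(\lambda)=2$ forces $l(\mu_i)\le 2$ in the iterated Littlewood--Richardson sum, restricting nonzero contributions to $\mu_i\in\{(15,12),(16,11),(18,9)\}\subseteq\Omega(100)$. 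A direct LR filling analysis, routed through an intermediate $\gamma_2\vdash 54$, shows that only the diagonal triple $((18,9),(18,9),(18,9))$ contributes, with $c^{(54,27)}_{(18,9),(18,9),(18,9)}=1$ and $c^{(53,28)}_{(18,9),(18,9),(18,9)}=2$. Since $Z^{(18,9)}_{\phi(100)}=1$ (obtained by the same argument applied recursively to $\phi(10)$, with $(6,3)\in\Omega(10)$), the identity yields RHS $=1$ for $(54,27)$ and $=2$ for $(53,28)$. Combined with Stages 1--2, this forces $(Z^{(54,27)}_{\phi(1000)},Z^{(54,27)}_{\phi(1001)})=(1,0)$ and $(Z^{(53,28)}_{\phi(1000)},Z^{(53,28)}_{\phi(1001)})=(0,1)$, giving the two-row cases of (ii) and (iv). The hook analogues $(54,1^{27})$ and $(53,1^{28})$ follow by a parallel argument, using Theorem~\ref{thm:GTT3.5} to identify the unique $3'$-constituent $\cX((18,1^9);\nu)$ of $\chi^\lambda\down_{S_{27}\wr S_3}$ (with multiplicity one) together with Lemmas~\ref{lem: 9.5} and~\ref{lem: 9.6}.

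The main obstacle is the delicate separation in Stage 3 for $(53,28)$: ruling out any $\phi(1000)$-contribution to the total multiplicity $2$ requires showing that every diagonal $\cX((18,9);\nu)$ constituent of $\chi^{(53,28)}\down_{S_{27}\wr S_3}$ has $\nu=(2,1)$, so that it pairs trivially with $\phi(1000)=\cX(\phi(100);\triv_{P_3})$ via Lemma~\ref{lem: 9.6}, and that no non-diagonal $(\mu_1\times\mu_2\times\mu_3)\up$-type constituent contributes either---both consequences of the vanishing of all off-diagonal iterated Littlewood--Richardson coefficients established in the filling analysis.
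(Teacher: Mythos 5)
Your Stages 1--2 follow the paper's route in substance (Lemma~\ref{lem: 9.8} for the $\lambda_1=54$ stratum, then $\cD(3,27,-)$ together with Proposition~\ref{prop: BinD} and Lemma~\ref{lem: DinOm} for the bulk, with the boundary layer handled by explicit decompositions); the ``upgrade'' from $\cD(3,27,\cB_{27}(17))$ to $\cD(3,27,\Omega(100))$ is only asserted, but it is a finite check of the same nature as the paper's own direct calculation, so I do not regard it as the problem. Your Stage 3 identity $Z^\lambda_{\phi(1000)}+2Z^\lambda_{\phi(1001)}=\sum c^\lambda_{\mu_1,\mu_2,\mu_3}\prod_i Z^{\mu_i}_{\phi(100)}$ is correct, and your LR analysis (only the triple $(18,9)^{\times 3}$ survives, with coefficients $1$ and $2$) does settle $(54,27)$, since $Z+2Z'=1$ forces $(1,0)$; the hook cases are also fine via Theorem~\ref{thm:GTT3.5}.

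The genuine gap is the case $\lambda=(53,28)$, which is exactly the point of the lemma (it decides both $(53,28)\notin\Omega(1000)$ in (i) and $Z^{(53,28)}_{\phi(1001)}=1$ in (iv)). Your identity only gives $Z^{(53,28)}_{\phi(1000)}+2Z^{(53,28)}_{\phi(1001)}=2$, and Stages 1--2 provide no exclusion, so both $(2,0)$ and $(0,1)$ remain possible. Writing $m_i=\langle\chi^{(53,28)}\down_{S_{27}\wr P_3},\cX((18,9);\phi_i)\rangle$, one has $m_0+m_1+m_2=2$ and $m_1=m_2$, so the question is whether the multiplicity-two occurrence of $(18,9)^{\times 3}$ in $\chi^{(53,28)}\down_{(S_{27})^{\times 3}}$ is carried by $\cX\big((18,9);(2,1)\big)$ (giving $(0,1)$) or by constituents $\cX\big((18,9);\nu\big)$ with $\nu\in\{(3),(1^3)\}$ (giving $(2,0)$). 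This is a wreath-product (plethysm-type) multiplicity, not an iterated Littlewood--Richardson coefficient, so it is \emph{not} a consequence of ``the vanishing of all off-diagonal iterated LR coefficients'' as your closing paragraph claims: that vanishing only shows no other $Y$-orbits contribute, it cannot distribute the total $2$ among $\phi_0,\phi_1,\phi_2$. The paper supplies the missing input by citing \cite[Theorem 1.5]{dBPW} to get $\cX\big((18,9);(2,1)\big)\mid\chi^{(53,28)}\down_{S_{27}\wr S_3}$, which then accounts for the full multiplicity and forces $m_0=0$; you need this result (or an equivalent computation showing $\langle\chi^{(53,28)}\down_{S_{27}\wr S_3},\cX((18,9);(3))\rangle=\langle\chi^{(53,28)}\down_{S_{27}\wr S_3},\cX((18,9);(1^3))\rangle=0$) to close the argument. (Likewise, your recursive claim that $Z^{(18,9)}_{\phi(100)}=1$ needs the analogous input one level down, or a direct $n=27$ computation as in the paper.)
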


\begin{proof}
	Let $s=(1,0,0)\in\{0,1\}^3$. 
	
	Note $\Omega(s)\subset\cV:=\cB_{27}(18)\setminus\{(18,8,1),(18,2^{17}),(17,10),(17,1^{10})\}^\circ$.
	
	\noindent\emph{Step 1.} We first show $\cB_{81}(47)\subseteq\Omega(s,\sfx)$ for $\sfx\in\{0,1\}$.
	
	By direct calculation, $\cD(2,27,\Omega(s))=\cB_{54}(36)\setminus\cW^\circ$ where
	\[ \cW := \{(36,18),(36,17,1),(36,9^2),(36,2^9),(36,2,1^{16}), (36,1^{18}),(35,19),(35,1^{19}),(27,27) \}. \]
	It is then easy to check that $\cD(2,27,\Omega(s))=\cD(2,27,\cV)$, from which $\cD(3,27,\Omega(s))=\cD(3,27,\cV)$ then follows. 
	Therefore $\cB_{81}(47)\subseteq\cD(3,27,\cB_{27}(16))\subseteq\cD(3,27,\cV)=\cD(3,27,\Omega(s))\subseteq\Omega(s,\sfx)$ by Proposition~\ref{prop: BinD} and Lemma~\ref{lem: DinOm}.
		
	\noindent\emph{Step 2.} Next, we show $\cB_{81}(52)\subseteq\Omega(s,\sfx)$ for $\sfx\in\{0,1\}$.
	
	Suppose $\lambda\vdash 81$ with $48\le\lambda_1\le 52$. There exists $\gamma\vdash 9$ such that $\mu:=(18,\gamma)$ satisfies $\mu\subset\lambda$ and $\mu\in\cV$. By considering an LR filling of $[(\lambda_2,\lambda_3,\dotsc)\setminus\gamma]$, replacing each number $i$ in the filling by $i+1$, and filling $[(\lambda_1)\setminus(18)]$ with 1s, we see that $c^\lambda_{\mu,\delta}>0$ for some $\delta\in\cB_{54}(34)$ with $\delta_1\ge 30$. Hence $\delta\in\cD(2,27,\Omega(s))$, and so $\lambda\in\cD(3,27,\cV)=\cD(3,27,\Omega(s))\subseteq\Omega(s,\sfx)$. Then $\cB_{81}(52)\subseteq\Omega(s,\sfx)$ follows since $\Omega(s,\sfx)=\Omega(s,\sfx)^\circ$.
	
	\noindent\emph{Step 3.} By direct calculation we know that if $\lambda\vdash 81$ with $\lambda_1=53$, then $\lambda\in\cD(3,27,\cV)$ if and only if $\lambda\notin\{(53,28),(53,1^{28})\}$. By Theorem~\ref{thm:M}, $M(1000)=M(1001)=54$, and we also know $\Omega(1001)\cap\{\lambda\vdash 81\mid\lambda_1=54\}^\circ=\{(54,\mu)\mid\mu\in\Omega(001)\}^\circ=\{(54,\mu)\mid \mu\in\cB_{27}(26)\}^\circ$, and $\Omega(1000)\cap\{\lambda\vdash 81\mid\lambda_1=54\}^\circ=\{(54,\mu)\mid\mu\in\Omega(000)\}^\circ=\{(54,\mu)\mid \mu\in\cP(27)\setminus\{(26,1)\}^\circ\}^\circ$ by Lemma~\ref{lem: 9.8}. 
	Thus it remains to compute the multiplicities $Z^{(54,27)}_{\phi(1000)}$, $Z^{(54,1^{27})}_{\phi(1000)}$, $Z^{(53,28)}_{\phi(1000)}$, $Z^{(53,1^{28})}_{\phi(1000)}$, $Z^{(53,28)}_{\phi(1001)}$ and $Z^{(53,1^{28})}_{\phi(1001)}$ to complete the proof of the lemma.
	
	\noindent\emph{Step 4.} We compute the multiplicities in question. Let $S=S_{81}$, $V=S_{27}\wr S_3$, $W=S_{27}\wr P_3$, $Y=S_{27}^{\times 3}$, $P=P_{81}=P_{27}\wr P_3$ and $B=B_{27}^{\times 3}$.
	
	Let $\varphi=\phi(100\sfx)$ with $\sfx\in\{0,1\}$, and first let $\lambda\in\{(54,27),(53,28)\}$. The only constituent $\psi$ of $\chi^\lambda\down_Y$ such that $\psi\in\Irr(W\mid\phi(s)^{\times 3})$ is $\psi=(18,9)^{\times 3}$, with multiplicity 1 if $\lambda=(54,27)$ and 2 if $\lambda=(53,28)$. Since $\varphi\down_{P_{27}^{\times 3}}=\phi(s)^{\times 3}$ and $\Irr(W\mid(18,9)^{\times 3})=\{\cX\big((18,9);\phi_i\big)\mid i\in[\overline{3}]\}$, then 
	\[ \langle\chi^\lambda\down_P,\varphi\rangle = \sum_{i=0}^2 \langle \chi^\lambda\down_W, \cX\big((18,9);\phi_i\big)\rangle \cdot \langle \cX\big((18,9);\phi_i\big)\down_P,\varphi\rangle. \]
	Since $\langle(18,9)\down^{S_{27}}_{P_{27}},\phi(100)\rangle=1$, we have $\langle \cX\big((18,9);\phi_i\big)\down_P,\varphi\rangle = \delta_{i,\sfx}$ by Lemma~\ref{lem: 9.6}. On the other hand, $\cX\big((18,9);(3)\big)\mid\chi^{(54,27)}\down^S_V$ by Theorem~\ref{thm:PW9.1} and $\cX\big((18,9);(2,1)\big)\mid\chi^{(53,28)}\down^S_V$ by \cite[Theorem 1.5]{dBPW}. Since $(3)\down^{S_3}_{P_3}=\phi_0$ and $(2,1)\down^{S_3}_{P_3}=\phi_1+\phi_2$, then $\langle \chi^\lambda\down_W, \cX\big((18,9);\phi_i\big)\rangle$ equals 1 if $\lambda=(54,27)$ and $i=0$ or if $\lambda=(53,28)$ and $i\ne 0$, and equals 0 otherwise.
	
	Thus $Z^{(54,27)}_{\phi(1000)}=1$, $Z^{(53,28)}_{\phi(1000)}=0$ and $Z^{(53,28)}_{\phi(1001)}=1$.
	The remaining multiplicities follow similarly, noting that $\langle(18,1^9)\down^{S_{27}}_{P_{27}},\phi(100)\rangle=1$, and $\cX\big((18,1^9);\alpha\big)\mid\chi^{(54,1^{27})}\down^S_V$ (where $\alpha\in\{(3),(1^3)\}$) and $\cX\big((18,1^9);(2,1)\big)\mid\chi^{(53,1^{28})}\down^S_V$ by Theorem~\ref{thm:GTT3.5}.
\end{proof}

\begin{lemma}\label{lem:1000}
	Let $k\ge 4$ and $a=3^{k-1}$. Then
	\begin{itemize}
		\item[(i)] $\Omega(10^{k-1}) = \cB_{3^k}(2a)\setminus \{ (2a,a-1,1), (2a,2,1^{a-2}), (2a-1,a+1), (2a-1,1^{a+1}) \}^\circ$, 
		\item[(ii)] $Z^{(2a,a)}_{\phi(10^{k-1})} = Z^{(2a,1^a)}_{\phi(10^{k-1})}=1$, 
		\item[(iii)] $\Omega(10^{k-2}1) = \cB_{3^k}(2a)\setminus \{ (2a,a), (2a,1^a) \}^\circ$, and 
		\item[(iv)] $Z^{(2a-1,a+1)}_{\phi(10^{k-2}1)} = Z^{(2a-1,1^{a+1})}_{\phi(10^{k-2}1)}=1$. 
	\end{itemize}
\end{lemma}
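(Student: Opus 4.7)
The natural approach is to induct on $k \ge 4$, with the base case $k = 4$ supplied by Lemma \ref{lem:1000-base}. For the inductive step I would assume that (i)--(iv) hold at level $k$ and write $s = (10^{k-1})$, $a = 3^{k-1}$, $A = 3^k = 3a$; the goal is to establish (i) and (ii) for $(s, 0) = 10^k$ and (iii) and (iv) for $(s, 1) = 10^{k-1}1$ at level $k+1$. The proof then parallels that of Lemma \ref{lem:1000-base}, with $k$ replaced by $k+1$. Note that direct application of Proposition \ref{prop:case2} to $s$ is unavailable, because $\Omega(s) \setminus \cB_{3^k}(m(s))$ contains the thin partitions $(2a, a)^\circ$ and $(2a, 1^a)^\circ$, so a custom argument is needed.

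First I would establish the bulk containment $\cB_{3A}(2A - 3) \subseteq \Omega(s, x)$ for each $x \in \{0, 1\}$. Lemma \ref{lem: DinOm} gives $\cD(3, A, \Omega(s)) \subseteq \Omega(s, x)$, and by the inductive hypothesis $\cB_A(2a - 2) \subseteq \Omega(s)$ with $\tfrac{A}{2} + 1 < 2a - 2$ whenever $k \ge 4$, so Proposition \ref{prop: BinD} gives $\cB_{3A}(6a - 7) \subseteq \cD(3, A, \cB_A(2a - 2))$. To extend up to $\cB_{3A}(2A - 3)$, I would identify a convenient superset $\cV$ of $\Omega(s)$ --- obtained by adjoining back a small number of the forbidden partitions $(2a, a-1, 1)^\circ, (2a, 2, 1^{a-2})^\circ, (2a-1, a+1)^\circ, (2a-1, 1^{a+1})^\circ$ --- such that $\cD(3, A, \cV) = \cD(3, A, \Omega(s))$, and then exhibit explicit iterated Littlewood--Richardson fillings (using Lemma \ref{lem: iteratedLR}) for each remaining $\lambda$ with $\lambda_1 \in \{2A - 7, \dots, 2A - 3\}$.

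Next I would pin down the top slice $\Omega(s, x) \cap \{\lambda \mid \lambda_1 = 2A\}^\circ$ via Lemma \ref{lem: 9.8}, which equals $\{(2A, \mu) \mid \mu \in \Omega(0^{k-1}, x)\}^\circ$. For $x = 0$, Theorem \ref{thm:GL1A} gives $\Omega(0^k) = \cP(A) \setminus \{(A-1, 1)\}^\circ$ (as $A = 3^k \ge 27$), producing the exclusions $(2A, A-1, 1)^\circ$ and $(2A, 2, 1^{A-2})^\circ$ of (i). For $x = 1$, Lemma \ref{lem: 001} gives $\Omega(0^{k-1}, 1) = \cB_A(A - 1)$, producing the exclusions $(2A, A)^\circ$ and $(2A, 1^A)^\circ$ of (iii). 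Combined with a direct analysis of the slices $\lambda_1 \in \{2A - 2, 2A - 1\}$ using the same $\cV$, this verifies that the two partitions $(2A-1, A+1)^\circ$ and $(2A-1, 1^{A+1})^\circ$ are absent from $\Omega(s, 0)$, while all other partitions of $\cB_{3A}(2A)$ not on the excluded list do belong.

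Finally I would handle the four multiplicity computations in (ii) and (iv) by mirroring Step 4 of Lemma \ref{lem:1000-base}. For each boundary $\lambda \in \{(2A, A), (2A, 1^A), (2A - 1, A+1), (2A - 1, 1^{A+1})\}$, I would identify the unique constituent $\cX(\alpha; \nu) \mid \chi^\lambda \downarrow_{S_A \wr S_3}$ with $\alpha \in \{(2a, a), (2a, 1^a)\}$ and $\nu \in \{(3), (2,1), (1^3)\}$ using Theorem \ref{thm:PW9.1} (for the two-row partitions), \cite[Theorem 1.5]{dBPW} (for the perturbed two-row cases), or Theorem \ref{thm:GTT3.5} (for the hooks). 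Then the inductive hypothesis (ii) gives $\langle \alpha \downarrow_{P_A}, \phi(s) \rangle = 1$, so Lemma \ref{lem: 9.6} reduces the SBC to $\langle \nu \downarrow_{P_3}, \phi_\sfx \rangle$, which is $\delta_{\sfx, 0}$ for $\nu \in \{(3), (1^3)\}$ and equals $1$ for $\nu = (2, 1)$ when $\sfx \ne 0$. Reading off the relevant cases yields all four multiplicities of~$1$. The hardest part will be the combinatorial bookkeeping in the middle step --- choosing $\cV$ correctly and checking, case by case, that $(2A-1, A+1)^\circ$ and $(2A-1, 1^{A+1})^\circ$ do not admit any iterated Littlewood--Richardson filling by three not-all-equal partitions of $\Omega(s)$, which is precisely what rules them out of $\Omega(s, 0)$.
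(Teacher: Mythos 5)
Your outline follows the paper's own route (induction on $k$ with Lemma~\ref{lem:1000-base} as base case, bulk containment via Lemma~\ref{lem: DinOm} and Proposition~\ref{prop: BinD}, the top slice via Lemma~\ref{lem: 9.8}, and boundary multiplicities via Theorem~\ref{thm:PW9.1}, \cite[Theorem 1.5]{dBPW}, Theorem~\ref{thm:GTT3.5} and Lemma~\ref{lem: 9.6}), but your stated mechanism for the exclusions in (i) is wrong. You claim that $(2A-1,A+1)^\circ$ and $(2A-1,1^{A+1})^\circ$ are ruled out of $\Omega(s,0)$ by checking they admit no iterated Littlewood--Richardson filling by three \emph{not-all-equal} partitions of $\Omega(s)$. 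That nonexistence only shows $\lambda\notin\cD(3,3^k,\Omega(s))$, which is a proper subset of $\Omega(s,0)$; it cannot by itself exclude $\lambda$ from $\Omega(s,0)$. In fact these partitions \emph{do} admit iterated LR fillings by members of $\Omega(s)$: for $\lambda=(6a-1,3a+1)$ the unique constituent of $\chi^\lambda\down_{(S_{3^k})^{\times 3}}$ lying in $\Omega(s)^{\times 3}$ is the all-equal triple $(2a,a)^{\times 3}$, with multiplicity $2$. The exclusion therefore has to come from the all-equal analysis: $\cX\big((2a,a);(2,1)\big)\mid\chi^\lambda\down_{S_{3^k}\wr S_3}$ accounts for all of that multiplicity, and since $Z^{(2a,a)}_{\phi(s)}=1$, Lemma~\ref{lem: 9.6} gives $Z^\lambda_{\phi(s,\sfx)}=\delta_{1,\sfx}+\delta_{2,\sfx}$, which is $0$ for $\sfx=0$ (the exclusion in (i)) and $1$ for $\sfx=1$ (statement (iv)); the hook case runs the same way via Theorem~\ref{thm:GTT3.5}. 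You already perform this wreath-product computation in your last paragraph, so the fix is to let it carry the exclusions as well, rather than the LR-filling nonexistence check; as written, someone following your plan literally would stop at $\lambda\notin\cD(3,3^k,\Omega(s))$ and leave membership in $\Omega(s,0)$ open.

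A secondary point: for the intermediate slices $6a-6\le\lambda_1\le 6a-1$ the paper does not reuse the superset-$\cV$ device of the base case, because at $k=4$ the identity $\cD(2,27,\Omega(s))=\cD(2,27,\cV)$ was verified by direct computation, which is unavailable for general $k$. Instead it writes the tail $\mu$ of $\lambda=(\lambda_1,\mu)$ as a star product of explicitly known sets, e.g.\ $(\cP(a)\setminus\{(a-1,1)\}^\circ)\star\cP(a+2)\star\cP(a+2)$, using Lemma~\ref{lem: 10.3}, Proposition~\ref{prop: B star B} and Lemma~\ref{lem: iteratedLR}, with ad hoc fillings for the two-row and hook shapes in each slice. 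Your $\cV$-based plan is not doomed, but you would need a genuine replacement argument (in the spirit of Lemmas~\ref{lem: LR tworow filling} and~\ref{lem: LR hook filling}) to prove $\cD(3,3^k,\cV)=\cD(3,3^k,\Omega(s))$ for all $k\ge 4$, which is extra work the paper's Step~3 avoids.
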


\begin{proof}
	We proceed by induction on $k$: the base case $k=4$ follows from Lemma~\ref{lem:1000-base}. Now assume $s=(1,0,\dotsc,0)\in\{0,1\}^k$ with $k\ge 4$ and that
	\begin{itemize}
		\item[$\circ$] $\Omega(s)=\cB_{3^k}(2a)\setminus\{(2a,a-1,a),(2a,2,1^{a-2}),\tworow{3^k}{2a-1},\hook{3^k}{2a-1}\}^\circ$ and
		\item[$\circ$] $Z^{(2a,a)}_{\phi(s)} = Z^{(2a,1^a)}_{\phi(s)} = 1$, 
	\end{itemize}
	where $a=3^{k-1}$. We claim that
	\begin{itemize}
		\item[(a)] $\Omega(s,0)=\cB_{3^{k+1}}(6a)\setminus\{(6a,3a-1,1), (6a,2,1^{3a-2}),\tworow{3^{k+1}}{6a-1},\hook{3^{k+1}}{6a-1}\}^\circ$,
		\item[(b)] $Z^{(6a,3a)}_{\phi(s,0)} = Z^{(6a,1^{3a})}_{\phi(s,0)} = 1$, 
		\item[(c)] $\Omega(s,1)=\cB_{3^{k+1}}(6a)\setminus\{(6a,3a),(6a,1^{3a})\}^\circ$, and
		\item[(d)] $Z^{(6a-1,3a+1)}_{\phi(s,1)} = Z^{(6a-1,1^{3a+1})}_{\phi(s,1)} = 1$, 
	\end{itemize}
	from which (i) -- (iv) follow. We proceed in steps: (a), (c) and (d) follow after Step 4, and (b) after Step 5.
	
	\smallskip
	
	\noindent\emph{Step 1.} By Theorem~\ref{thm:M}, $M(s,\sfx)=3^{k+1}-3^k=6a$ for all $\sfx\in\{0,1\}$. Thus $\Omega(s,\sfx)\cap \{\lambda\vdash 3^{k+1} \mid \lambda_1=6a \}^\circ = \{(6a,\mu) \mid \mu\in\Omega(0^{k-1},x) \}^\circ$ by Lemma~\ref{lem: 9.8}. Moreover, $\Omega(0^k)=\cP(3^k)\setminus\{(3^k-1,1)\}^\circ$ by Theorem~\ref{thm:GL1A} and $\Omega(0^{k-1}1)=\cB_{3^k}(3^k-1)$ by Lemma~\ref{lem: 001} since $k\ge 4$.
	
	\noindent\emph{Step 2.} Since $k\ge 4$, we may apply Proposition~\ref{prop: BinD} with $q=3$, $m=3^k$ and $t=2a-2$ to deduce that $\cB_{3^{k+1}}(3\cdot(2a-2)-1)\subseteq \cD\big(3,3^k,\cB_{3^k}(2a-2)\big)$. Hence by Lemma~\ref{lem: DinOm}, for all $\sfx\in\{0,1\}$,
	\[ \cB_{3^{k+1}}(6a-7) \subseteq \cD(3,3^k,\Omega(s)) \subseteq\Omega(s,\sfx). \]
	
	\noindent\emph{Step 3.} Since $\Omega(s,\sfx)^\circ=\Omega(s,\sfx)$, it suffices to consider $\lambda\vdash 3^{k+1}$ such that $6a-6\le \lambda_1\le 6a-1$. Let $\lambda=(\lambda_1,\mu)$ for some $\mu\vdash 3^{k+1}-\lambda_1$. We wish to show $\lambda\in\Omega(s,\sfx)$ unless $\lambda$ is an exception as listed in (a) or (c).
	
	\noindent$\bullet$ If $\lambda_1=6a-6$: 
	let $b=a+2$. First suppose $\mu\in\cB_{3b}(3b-1)$. Since $\cB_{3b}(3b-1)\subseteq\cD(3,b,\cP(b))$ by Proposition~\ref{prop: BinD}, there exist $\nu_1,\nu_2,\nu_3\vdash a+2$ not all equal such that $c^\mu_{\nu_1,\nu_2,\nu_3}>0$. Hence $c^\lambda_{(2a-2,\nu_1),(2a-2,\nu_2),(2a-2,\nu_3)}>0$ by Lemma~\ref{lem: iteratedLR}. Since 
	$(2a-2,\nu_i)\in\Omega(s)$, then 
	$\lambda\in\cD(3,3^k,\Omega(s))\subseteq\Omega(s,\sfx)$ for all $\sfx$. Otherwise $\mu\in\{(3b)\}^\circ$, i.e.~$\lambda\in\{\tworow{3^{k+1}}{6a-6}, \hook{3^{k+1}}{6a-6} \}$. If $\lambda=\tworow{3^{k+1}}{6a-6}$, then 
	$c^\lambda_{\mu_1,\mu_2,\mu_3}>0$ where $\mu_1=\mu_2=\tworow{3^k}{2a}$ and $\mu_3=\tworow{3^k}{2a-6}$, so $\lambda\in\cD(3,3^k,\Omega(s))$ also. A similar argument holds for $\lambda=\hook{3^{k+1}}{6a-6}$.
	
	\noindent$\bullet$ If $\lambda_1=6a-5$: if $\mu\in\cB_{3a+5}(3a+4)$ then $c^\mu_{\nu_1,\nu_2,\nu_3}>0$ for some $\nu_1\in\cB_{a+1}(a)$ and $\nu_2,\nu_3\in\cP(a+2)$ by Proposition~\ref{prop: B star B}. Hence $c^\lambda_{(2a-1,\nu_1),(2a-2,\nu_2),(2a-2,\nu_3)}>0$ by Lemma~\ref{lem: iteratedLR}. By assumption, $(2a-1,\nu_1)$, $(2a-2,\nu_2)$, $(2a-2,\nu_3)\in\Omega(s)$, so $\lambda\in\cD(3,3^k,\Omega(s))\subseteq\Omega(s,\sfx)$ for all $\sfx\in\{0,1\}$.
	Moreover, $\tworow{3^{k+1}}{6a-5}\in\Omega(s,\sfx)$ follows similarly by considering $\mu_1=\mu_2=\tworow{3^k}{2a}$ and $\mu_3=\tworow{3^k}{2a-5}$ (as in the case $\lambda_1=6a-6$), as does $\hook{3^{k+1}}{6a-5}\in\Omega(s,\sfx)$.
	
	\noindent$\bullet$ If $\lambda_1=6a-4$: note $\cP(3a+4)=(\cP(a)\setminus \{(a-1,1)\}^\circ)\star\cP(a+2)\star\cP(a+2)$ by Lemma~\ref{lem: 10.3} and Proposition~\ref{prop: B star B}, so similarly for all $\mu\in\cP(3a+4)$ we have $\lambda\in\Omega(s,\sfx)$ for all $\sfx$.
	
	\noindent$\bullet$ If $\lambda_1=6a-3$:	note $\cB_{3a+3}(3a+2) = (\cP(a)\setminus\{(a-1,1)\}^\circ)\star\cB_{a+1}(a)\star\cP(a+2)$ by Lemma~\ref{lem: 10.3} and Proposition~\ref{prop: B star B}, so for all $\mu\in\cB_{3a+3}(3a+2)$ we have $\lambda\in\Omega(s,\sfx)$ for all $\sfx$. Moreover, it is similarly easy to observe that $\{\tworow{3^{k+1}}{6a-3},\hook{3^{k+1}}{6a-3} \}\subseteq\Omega(s,\sfx)$ for all $\sfx$.
		
	\noindent$\bullet$ If $\lambda_1=6a-2$: note $\cP(3a+2)=(\cP(a)\setminus\{(a-1,1)\}^\circ)\star(\cP(a)\setminus\{(a-1,1)\}^\circ)\star\cP(a+2)$ by Lemma~\ref{lem: 10.3}, so for all $\mu\in\cP(3a+2)$ we have $\lambda\in\Omega(s,\sfx)$ for all $\sfx\in\{0,1\}$.
	
	\noindent$\bullet$ If $\lambda_1=6a-1$: note $\cB_{a+1}(a)\star(\cP(a) \setminus\{(a-1,1)\}^\circ)\star(\cP(a)\setminus \{(a-1,1)\}^\circ)=\cB_{3a+1}(3a)$ by Lemma~\ref{lem: 10.3}, so for all $\mu\in\cB_{3a+1}(3a)$ we have $\lambda\in\Omega(s,\sfx)$ for all $\sfx$.
	
	\noindent\emph{Step 4.} Now consider $\lambda=\tworow{3^{k+1}}{6a-1}$. By Theorem~\ref{theorem:LR}, the only irreducible constituent(s) $\mu_1\times\mu_2\times\mu_3$ of $\chi^\lambda\down_{(S_{3^k})^3}$ such that $\mu_i\in\Omega(s)$ for all $i$ is $(2a,a)^{\times 3}$ with multiplicity 2, since $(2a-1,a+1)\notin\Omega(s)$. Moreover, $\cX\big( (2a,a); (2,1)\big) \mid \chi^\lambda\down_{S_{3^k}\wr S_3}$ by \cite[Theorem 1.5]{dBPW}, and 
	\[ \cX\big( (2a,a); (2,1)\big)\down_{(S_{3^k})^3} = (2a,a)^{\times 3}\cdot \chi^{(2,1)}(1) = 2\cdot (2a,a)^{\times 3}. \]
	Since $\chi^{(2,1)}\down_{P_3}=\phi_1+\phi_2$, we have that
	\begin{align*}
		Z^\lambda_{\phi(s,\sfx)} 
		&= \langle \cX\big( (2a,a); (2,1)\big)\down^{S_{3^k}\wr S_3}_{P_{3^{k+1}}}, \phi(s,\sfx) \rangle\\
		&= \langle \cX\big( (2a,a); \phi_1)\big)\down^{S_{3^k}\wr P_3}_{P_{3^{k+1}}}, \phi(s,\sfx) \rangle + \langle \cX\big( (2a,a); \phi_2\big)\down^{S_{3^k}\wr P_3}_{P_{3^{k+1}}}, \phi(s,\sfx) \rangle = \delta_{1,\sfx} + \delta_{2,\sfx}
	\end{align*}
	where the last equality follows from Lemma~\ref{lem: 9.6} since $Z^{(2a,a)}_{\phi(s)}=1$. 
	If $\lambda=\hook{3^{k+1}}{6a-1}$, then a similar argument shows that $Z^\lambda_{\phi(s,\sfx)} 
	=\delta_{1,\sfx}+\delta_{2,\sfx}$ since $\cX\big((2a,1^a); (2,1) \big) \mid \chi^\lambda\down_{S_{3^k}\wr S_3}$ by Theorem~\ref{thm:GTT3.5}. Thus we have proved (a), (c) and (d).
	
	\noindent\emph{Step 5.} Finally, let $\lambda=\tworow{3^{k+1}}{6a}$. The only irreducible constituent(s) $\mu_1\times\mu_2\times\mu_3$ of $\chi^\lambda\down_{(S_{3^k})^3}$ such that $\mu_i\in\Omega(s)$ for all $i$ is $(2a,a)^{\times 3}$ with multiplicity 1. Moreover, $\cX\big( (2a,a); (3)\big) \mid \chi^\lambda\down_{S_{3^k}\wr S_3}$ by Theorem~\ref{thm:PW9.1}. Since $\chi^{(3)}\down_{P_3}=\phi_0$, we have that $Z^{\lambda}_{\phi(s,\sfx)} 
	=\delta_{0,\sfx}$, using Lemma~\ref{lem: 9.6} as above since $Z^{(2a,a)}_{\phi(s)}=1$.
	If $\lambda=\hook{3^{k+1}}{6a}$ then we also have $Z^\lambda_{\phi(s,\sfx)} 
	=\delta_{0,\sfx}$, since $\cX\big((2a,1^a);\nu \big) \mid \chi^\lambda\down_{S_{3^k}\wr S_3}$ for some $\nu\in\{(3)\}^\circ$ by Theorem~\ref{thm:GTT3.5}. Thus we have proven (b).
\end{proof}

\begin{lemma}\label{lem: LRa}
	Let $k\ge 3$. Suppose $s\in\{0,1\}^k$ satisfies $\cB_{3^k}(r)\subseteq\Omega(s)$ where $r=\tfrac{3^k+1}{2}$, and that $Z^\mu_{\phi(s)}\ge 2$ 
	for all $\mu\in\cB_{3^k}(r)\setminus\{\tworow{3^k}{r},\hook{3^k}{r}\}^\circ$.
	Then 
	\[ Z^\lambda_{\phi(s,\sfx)}\ge 2\quad \forall\ \sfx\in\{0,1\},\ \forall\ \lambda\in\cB_{3^{k+1}}(3r-1)\setminus\{\tworow{3^{k+1}}{3r-1},\hook{3^{k+1}}{3r-1} \}^\circ. \]
\end{lemma}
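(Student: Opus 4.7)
The plan is to reduce $Z^\lambda_{\phi(s,\sfx)}\ge 2$ to a combinatorial positivity statement about Littlewood--Richardson coefficients, and then verify it by a short case analysis keyed to the two excluded shapes. Fix $\lambda\in\cB_{3^{k+1}}(3r-1)\setminus\{\tworow{3^{k+1}}{3r-1},\hook{3^{k+1}}{3r-1}\}^\circ$ throughout, and write $\mathcal{E}:=\{\tworow{3^k}{r},\hook{3^k}{r}\}^\circ\subset\cB_{3^k}(r)$, so that by hypothesis $Z^\mu_{\phi(s)}\ge 2$ for every $\mu\in\cB_{3^k}(r)\setminus\mathcal{E}$, while $Z^\mu_{\phi(s)}\ge 1$ for every $\mu\in\cB_{3^k}(r)$.

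The reduction is achieved by exhibiting inside $\chi^\lambda\down_{S_{3^k}\wr P_3}$ an irreducible constituent of one of the following two forms, each of which forces $Z^\lambda_{\phi(s,\sfx)}\ge 2$ for both $\sfx\in\{0,1\}$. The first is a type-(b) constituent $\cX(\mu;\phi_i)$ with $\mu\in\cB_{3^k}(r)\setminus\mathcal{E}$; such a character appears for some $i\in\{0,1,2\}$ exactly when $c^\lambda_{\mu,\mu,\mu}\ge 1$ (by Clifford theory applied to $(S_{3^k})^{\times 3}\trianglelefteq S_{3^k}\wr P_3$), and the restriction formula of Section~\ref{sec:wreath} combined with Lemma~\ref{lem: 9.5} (applied to $\Delta=\mu\down_{P_{3^k}}$ and $\alpha=\phi(s)$) gives $\langle\cX(\mu;\phi_i)\down_{P_{3^k}\wr P_3},\phi(s,\sfx)\rangle\ge 2$. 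The second is a type-(a) constituent $\psi=(\chi^{\mu_1}\times\chi^{\mu_2}\times\chi^{\mu_3})\up^{S_{3^k}\wr P_3}_{(S_{3^k})^{\times 3}}$ with $\mu_1,\mu_2,\mu_3\in\cB_{3^k}(r)$ not all equal and at least one outside $\mathcal{E}$; such a $\psi$ is irreducible by Section~\ref{sec:wreath}, appears with multiplicity $c^\lambda_{\mu_1,\mu_2,\mu_3}$, and a Mackey plus Frobenius calculation (as in Step~4 of Lemma~\ref{lem:1000}) yields $\langle\psi\down_{P_{3^k}\wr P_3},\phi(s,\sfx)\rangle=\prod_i Z^{\mu_i}_{\phi(s)}\ge 2$, independent of $\sfx$.

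The problem therefore reduces to the combinatorial claim that for every admissible $\lambda$ one can find $\mu\in\cB_{3^k}(r)\setminus\mathcal{E}$ with $c^\lambda_{\mu,\mu,\mu}\ge 1$, or alternatively $\mu_1,\mu_2,\mu_3\in\cB_{3^k}(r)$ with $c^\lambda_{\mu_1,\mu_2,\mu_3}\ge 1$ that are not all equal and include at least one element outside $\mathcal{E}$. The existence of at least \emph{some} triple in $\cB_{3^k}(r)^3$ with positive LR coefficient is immediate from two applications of Proposition~\ref{prop: B star B}: since $r=\tfrac{3^k+1}{2}>\tfrac{3^k}{2}$ and $2r=3^k+1>3^k$, we have $\cB_{3^k}(r)\star\cB_{3^k}(r)\star\cB_{3^k}(r)=\cB_{3^{k+1}}(3r)\supseteq\cB_{3^{k+1}}(3r-1)$.

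The main obstacle is then excluding the ``fully exceptional'' pigeonhole in which every triple with $c^\lambda_{\mu_1,\mu_2,\mu_3}\ge 1$ is contained in $\mathcal{E}^3$. Since $\mathcal{E}$ consists of just three explicit partitions---namely $\big(\tfrac{3^k+1}{2},\tfrac{3^k-1}{2}\big)$, $\big(\tfrac{3^k+1}{2},1^{(3^k-1)/2}\big)$ and $\big(2^{(3^k-1)/2},1\big)$---the set of $\lambda$ admitting only exceptional decompositions is very restricted, and I would enumerate the few unordered triples in $\mathcal{E}^3$ and compute $\mathcal{E}\star\mathcal{E}\star\mathcal{E}$ directly via the Littlewood--Richardson rule, using Lemmas~\ref{lem: LR tworow filling} and~\ref{lem: LR hook filling} to control which two-row and hook shapes appear in the iterated product. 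For each candidate $\lambda$ thereby produced, I would either perform a surgery---replacing one $\mu_i\in\mathcal{E}$ by a nearby $\mu_i'\in\cB_{3^k}(r)\setminus\mathcal{E}$ of the same size and verifying positivity of the resulting iterated LR coefficient via Pieri-type manipulations---or verify that $\lambda$ itself lies in $\{\tworow{3^{k+1}}{3r-1},\hook{3^{k+1}}{3r-1}\}^\circ$, contradicting our standing assumption. The assumption $k\ge 3$ ensures that $\cB_{3^k}(r)\setminus\mathcal{E}$ is sufficiently rich to supply surgery candidates.
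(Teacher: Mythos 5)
Your representation-theoretic reduction is correct and is exactly the paper's opening move: the dichotomy between a type-(b) constituent $\cX(\mu;\phi_i)$ with $Z^\mu_{\phi(s)}\ge 2$ (handled by Lemma~\ref{lem: 9.5}) and a type-(a) constituent induced from a not-all-equal triple having at least one factor with multiplicity $\ge 2$ (handled by Mackey/Frobenius as in Lemma~\ref{lem: easy observation}), together with the existence of \emph{some} triple in $\cB_{3^k}(r)^{\times 3}$ with positive iterated Littlewood--Richardson coefficient via Proposition~\ref{prop: B star B}, is precisely the ``Claim'' with which the paper's proof begins.

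The gap is that the remaining case --- when the triple one finds lies entirely in $\mathcal{E}=\{\tworow{3^k}{r},\hook{3^k}{r}\}^\circ$ --- is not actually proved, and this is where essentially all of the difficulty sits. Two concrete problems with your plan. First, ``compute $\mathcal{E}\star\mathcal{E}\star\mathcal{E}$ directly'' is not a small or finite computation: already $c^\lambda_{\tworow{3^k}{r},\tworow{3^k}{r},\tworow{3^k}{r}}>0$ for a family of partitions $\lambda$ whose size grows with $k$, so the set of candidate $\lambda$ is not ``very restricted'' a priori; what is restricted is the set of $\lambda$ admitting \emph{no} non-exceptional triple, and that is exactly the statement to be proved, not an input you may assume. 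Second, the surgery is not automatic: Lemmas~\ref{lem: LR tworow filling} and~\ref{lem: LR hook filling} only let you perturb a filling when it is not rigid (all 2s in one row, respectively almost all 1s in one row), and rigid fillings genuinely occur. In the paper's argument these rigid configurations force the intermediate shape $\alpha$ (with $c^\alpha_{\mu_1,\mu_2}>0$ and $c^\lambda_{\alpha,\mu_3}>0$) into short explicit lists --- for instance $\alpha\in\{(2r,2r-2),(2r-1,2r-1)\}$ in the two-row case and $\alpha=(2r,1^{2r-2})$ in the all-hook case --- after which a second round of filling analysis on $[\lambda\setminus\mu_3]$ is required, producing explicit residual families such as $(2r+1,2r+1,2^{r-3},1)$, $(3r-2,3r-3,2)$ and $(3r-1,2^b,1^{3r-2-2b})$, each of which must be handed a non-exceptional triple by an ad hoc construction; the mixed case $\mu_1=\hook{3^k}{r}$, $\mu_2\in\{\tworow{3^k}{r}\}^\circ$ also needs its own argument. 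Your proposal names the right tools, but it does not carry out (nor even delimit the shape of) this case analysis, so as written the proof is incomplete at its combinatorial core, even though the overall strategy coincides with the paper's.
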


\begin{proof}
	Let $\lambda\in\cB_{3^{k+1}}(3r-1)\setminus\{\tworow{3^{k+1}}{3r-1},\hook{3^{k+1}}{3r-1} \}^\circ.$ Since $r>\tfrac{3^k}{2}$, 
	$$\lambda\in\cB_{3^{k+1}}(3r) = \cB_{3^k}(r)\star\cB_{3^k}(r)\star\cB_{3^k}(r)$$
	by Proposition~\ref{prop: B star B}, so there exist $\mu_1,\mu_2,\mu_3\in\cB_{3^k}(r)$ such that $c^\lambda_{\mu_1,\mu_2,\mu_3}>0$. 
	Let $\cC=\cB_{3^k}(r)\setminus\{\tworow{3^k}{r},\hook{3^k}{r}\}^\circ$. First we prove the following: \textit{Claim:} if $\mu_i\in\cC$ for some $i$, then $Z^\lambda_{\phi(s,\sfx)}\ge 2$ for all $\sfx\in\{0,1\}$.
	
	To see that this claim holds: observe that if $\mu_i$ are not all equal, then by Theorem~\ref{thm:mackey} $\chi^\lambda\down_{P_{3^{k+1}}}$ contains $\prod_{i=1}^3 (\mu_i\down_{P_{3^k}})\up^{P_{3^{k+1}}}_{(P_{3^k})^{\times 3}}$ as a summand, and hence 
	\[ Z^\lambda_{\phi(s,\sfx)} \ge \prod_{i=1}^3 Z^{\mu_i}_{\phi(s)} \cdot \langle\phi(s)^{\times 3}\up^{P_{3^{k+1}}}, \phi(s,\sfx)\rangle \ge 2 \sum_{\sfy=0}^{p-1} \langle\phi(s,\sfy), \phi(s,\sfx)\rangle = 2,\]
	by Lemma~\ref{lem: easy observation}. If $\mu_i=\mu$ for all $i$, then $\cX(\mu;\phi_i) \mid \chi^\lambda\down_{S_{3^k}\wr P_3}$ for some $i\in\{0,1,2\}$, and so 
	\[ Z^\lambda_{\phi(s,x)} \ge \langle \cX(\mu;\phi_i)\down_{P_{3^{k+1}}}, \phi(s,\sfx)\rangle \ge 2 \]
	by Lemma~\ref{lem: 9.5} since $Z^\mu_{\phi(s)}\ge 2$. 
	Thus the claim holds.
	
	So now we may assume $\mu_i\in\{\tworow{3^k}{r},\hook{3^k}{r}\}^\circ$ for all $i\in[3]$. Recall that for any permutation $\sigma\in S_3$ we have $c^\lambda_{\mu_1,\mu_2,\mu_3}=c^\lambda_{\mu_{\sigma(1)},\mu_{\sigma(2)},\mu_{\sigma(3)}}$.
	
	Since $c^\lambda_{\mu_1,\mu_2,\mu_3}>0$, there exists $\alpha\vdash 2\cdot 3^k$ such that $\alpha\subset\lambda$, $c^\lambda_{\alpha,\mu_3}>0$ and $c^{\alpha}_{\mu_1,\mu_2}>0$. Fix an LR filling $\mathsf{F}$ of $[\alpha\setminus\mu_1]$ of weight $\mu_2$. By Lemma~\ref{lem: LR tworow filling} (a), if $\mu_2=t_{3^k}[r]$ then we may assume that all the 2s of $\mathsf{F}$ appear in the same row of $[\alpha\setminus\mu_1]$, or else there exists $\nu\in\cC\cap\cLR([\alpha\setminus\mu_1])$, whence $c^\lambda_{\mu_1,\nu,\mu_3}>0$ and we are done by the Claim.
	
	\noindent\emph{Case 1: $\mu_1=\hook{3^k}{r}$ and $\mu_2=\tworow{3^k}{r}$.} Since $\mu_1=\hook{3^k}{r}$, then either $[\alpha\setminus\mu_1]$ has a connected component being the single box in position $(r,1)$ or $(1,r)$, in which case $(r-1,r-1,1)\in\cC\cap\cLR([\alpha\setminus\mu_1])$, or $\alpha\in\{(2r,r,1^{r-2}),(2r-1,r+1,1^{r-2})\}$, in which case $c^\alpha_{\nu_1,\nu_2}>0$ where $\nu_1=(r,2,1^{r-3})\in\cC$ and $\nu_2=(r,r-2,1)\in\cC$. Thus $c^\lambda_{\nu_1,\nu_2,\nu_3}>0$ for some $\nu_i\in\cB_{3^k}(r)$ and moreover at least one $\nu_i$ belongs to $\cC$ for some $i$. So we are done by the Claim.
	
	\noindent\emph{Case 2: $\mu_1=\hook{3^k}{r}$ and $\mu_2=\tworow{3^k}{r}'$.} Since $c^\lambda_{\mu_1,\mu_2,\mu_3}=c^{\lambda'}_{\mu_1',\mu_2',\mu_3'}$ and $Z^\lambda_{\phi(s,\sfx)} = Z^{\lambda'}_{\phi(s,\sfx)}$, 
	we are done by Case 1.
	
	Hence we may now assume that either $\mu_1=\mu_2=\mu_3=\hook{3^k}{r}$, or that no $\mu_i$ equals $\hook{3^k}{r}$, since $\hook{3^k}{r}=\hook{3^k}{r}'$.
	
	\noindent\emph{Case 3: no $\mu_i$ equals $\hook{3^k}{r}$.} We may without loss of generality assume that $\mu_1=\mu_2=\tworow{3^k}{r}$. Since all the 2s of $\mathsf{F}$ in the same row of $[\alpha\setminus\mu_1]$, then one of the following holds: $\alpha\in\{(2r-1,2r-2,1),(2r-1,r,r-1),(r+1,(r-1)^3),(r,r,r-1,r-1)\}$, in which case $c^\alpha_{\nu_1,\nu_2}>0$ for $\nu_1=\mu_1=\hook{3^k}{r}$ and $\nu_2=(r-1,r-1,1)\in\cC$; or $\alpha=(2r,r-1,r-1)$, in which case we instead take $\nu_1=\nu_2=(r,r-2,1)\in\cC$; or $\alpha=(2r-2,r,r)$, in which case $\nu_1=\nu_2=(r-1,r-1,1)\in\cC$; or $\alpha\in\{(2r,2r-2),(2r-1,2r-1)\}$.
	
	For these two remaining possibilities for $\alpha$, we consider cases for $\mu\in\{\tworow{3^k}{r}\}^\circ$, deduce the possible values of $\lambda$, and exhibit $\nu_1,\nu_2,\nu_3\in\cB_{3^k}(r)$ such that $c^\lambda_{\nu_1,\nu_2,\nu_3}>0$ and at least one $\nu_i$ belongs to $\cC$, from where we would be done by the Claim.
	Since $c^\lambda_{\alpha,\mu_3}=c^\lambda_{\mu_3,\alpha}>0$, fix an LR filling $\mathsf{G}$ of $[\lambda\setminus\mu_3]$ of weight $\alpha$. If not all 2s of $\mathsf{G}$ lie in the same row, then by Lemma~\ref{lem: LR tworow filling} (a) we have that $c^\lambda_{\mu_3,\beta}>0$ where $\beta=(2r,2r-3,1)$ or $(2r-1,2r-2,1)$ respectively. But $c^\beta_{\nu_1,\nu_2}>0$ where $\nu_1=\tworow{3^k}{r}$, and $\nu_2=(r,r-2,1)\in\cC$ or $\nu_2=(r-1,r-1,1)\in\cC$ respectively.
	
	Thus we may assume all 2s in $\mathsf{G}$ lie in the same row. Hence if $\mu_3=\tworow{3^k}{r}$ and $\alpha=(2r-1,2r-1)$, then $\lambda=(3r-1,3r-2)$, a contradiction. If $\mu_3=\tworow{3^k}{r}'$ and $\alpha=(2r-1,2r-1)$, then $\lambda=(2r+1,2r+1,2^{r-3},1)$ and we take $\nu_1=(r-1,r-1,1)'\in\cC$, $\nu_2=(r-1,r-1,1)\in\cC$ and $\nu_3=\tworow{3^k}{r}$. 
	If $\mu_3=\tworow{3^k}{r}$ and $\alpha=(2r,2r-2)$, then $\lambda\in\{(3r,3r-3),(3r-1,3r-2)\}$, a contradiction, or $\lambda\in\{(3r-1,3r-3,1),(3r-2,3r-2,1)\}$, in which case take $\nu_1=\nu_2=\tworow{3^k}{r}$ and $\nu_3=(r-1,r-1,1)\in\cC$, or $\lambda=(3r-2,3r-3,2)$, in which case take $\nu_1=\tworow{3^k}{r}$ and $\nu_2=\nu_3=(r-1,r-1,1)\in\cC$.
	If $\mu_3=\tworow{3^k}{r}'$ and $\alpha=(2r,2r-2)$, then either $\lambda=(2r+2,2r,2^{r-3},1)$, in which case take $\nu_1=(r-1,r-1,1)'\in\cC$, $\nu_2=\tworow{3^k}{r}$ and $\nu_3=(r-1,r-1,1)\in\cC$, or $\lambda\in\{(2r+1,2r,2^{r-2}),(2r+1,2r,2^{r-3},1^2),(2r,2r,2^{r-2},1)\}$, in which case take $\nu_1=(r-1,r-1,1)'\in\cC$ and $\nu_2=\nu_3=(r-1,r-1,1)\in\cC$.
	
	\noindent\emph{Case 4; $\mu_i=\hook{3^k}{r}$ for all $i\in[3]$.} By Lemma~\ref{lem: LR hook filling} (a), we are done by the Claim unless there are at least $r-1$ 1s in the filling $\mathsf{F}$ that appear in the same row of $[\alpha\setminus\mu_1]$. Since $\mu_1=\hook{3^k}{r}$, then one of the following holds: 
	\begin{multline*}
		\alpha\in\{(2r,2^b,1^{2r-2-2b})\}_{b=1}^{r-1} \cup \{(2r-1,2^b,1^{2r-1-2b})\}_{b=1}^{r-1} \\ \cup \{ (r+1,r,2^b,1^{2r-3-2b})\}_{b=0}^{r-2} \cup \{ (r,r,2^b,1^{2r-2-2b})\}_{b=0}^{r-1},
	\end{multline*}
	in which case $c^\alpha_{\nu_1,\nu_2}>0$ with $\nu_1=(r,2,1^{r-3})\in\cC$ and $\nu_2=\hook{3^k}{r}$; or $\alpha\in\{(2r,1^{2r-2})\}^\circ$. 
	
	Thus it suffices to consider the case when $\alpha=(2r,1^{2r-2})$. Since $c^\lambda_{\alpha,\mu_3}=c^\lambda_{\mu_3,\alpha}>0$, fix an LR filling $\mathsf{G}$ of $[\lambda\setminus\mu_3]$ of weight $\alpha$. By Lemma~\ref{lem: LR hook filling} (a), we are done unless at least $2r-1$ 1s appear in the same row of $[\lambda\setminus\mu_3]$. Since $\mu_3=\hook{3^k}{r}$, then one of the following holds (noting that $\lambda_1<3r$): $\lambda=(3r-1,2^b,1^{3r-2-2b})$ for some $b\in[r]$ (note $b\ne 0$ as $\lambda\ne \hook{3^{k+1}}{3r-1}$), or $\lambda=(3r-1,3,2^b,1^{3r-5-2b})$ for some $0\le b\le r-2$. Then $c^\lambda_{\nu_1,\nu_2,\nu_3}>0$ with $\nu_1=(r,2,1^{r-3})\in\cC$ and $\nu_2=\nu_3=\hook{3^k}{r}$.
\end{proof}

\begin{lemma}\label{lem:111}
	Let $k\ge 3$ and $r=\tfrac{3^k+1}{2}$. Then
	\begin{itemize}
		\item[(i)] $m(1^k)=r$ and  $\Omega(1^k)\setminus\cB_{3^k}(r)$ contains no thin partitions,
		\item[(ii)] $Z^{\tworow{3^k}{r}}_{\phi(1^k)} = Z^{\hook{3^k}{r}}_{\phi(1^k)} = 1$, 
		\item[(iii)] $\cQ\subseteq\Omega(1^{k-1}0)$ and $\Omega(1^{k-1}0)\setminus\cQ$ contains no thin partitions, where 
		\[ \cQ:=\cB_{3^k}(r+1)\setminus \{ \tworow{3^k}{r}, \hook{3^k}{r} \}^\circ, \]
		\item[(iv)] $Z^{\tworow{3^k}{r+1}}_{\phi(1^{k-1}0)} = Z^{\hook{3^k}{r+1}}_{\phi(1^{k-1}0)} = 1$. 
	\end{itemize}
\end{lemma}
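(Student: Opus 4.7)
The plan is induction on $k\ge 3$, parallel in spirit to Lemma~\ref{lem:1000} but catered to the special arithmetic $3r_k-1=r_{k+1}$, $3r_k=r_{k+1}+1$ arising from $r_k=(3^k+1)/2$. For the base case $k=3$, all four parts follow from the explicit descriptions of $\Omega(111)$ and $\Omega(110)$ recorded in Section~\ref{sec:prime-power}: inspection gives $\cB_{27}(14)\subseteq\Omega(111)$, $(15,12)\notin\Omega(111)$, and $\cQ_3\subseteq\Omega(110)$, while no further thin partitions occur. The four boundary multiplicities $Z^{(14,13)}_{\phi(111)}$, $Z^{(14,1^{13})}_{\phi(111)}$, $Z^{(15,12)}_{\phi(110)}$, $Z^{(15,1^{12})}_{\phi(110)}$ are each verified to equal $1$ via the branching chain $S_{27}\wr S_3\to S_{27}\wr P_3\to P_{81}$, in the style of Lemma~\ref{lem:1000-base}. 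At the same time, the auxiliary claim $(\ast)$ that $Z^\mu_{\phi(111)}\ge 2$ for all $\mu\in\cB_{27}(14)\setminus\{(14,13),(14,1^{13})\}^\circ$ is checked directly; it will be needed to drive the induction.

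For the inductive step, assume (i)--(iv) and $(\ast)$ hold at level $k$. Since $3r_k-1=r_{k+1}$, Lemma~\ref{lem: LRa} applies to $s=1^k$ to give
\[ Z^\lambda_{\phi(1^k,\sfx)}\ge 2\qquad\forall\,\sfx\in\{0,1\},\ \forall\,\lambda\in\cB_{3^{k+1}}(r_{k+1})\setminus\{\tworow{3^{k+1}}{r_{k+1}},\hook{3^{k+1}}{r_{k+1}}\}^\circ, \]
which yields most of the containments needed for (i) and (iii) at level $k+1$ and simultaneously propagates $(\ast)$ to $\phi(1^{k+1})$. The crux is then the four boundary multiplicity computations. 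For $\lambda=\tworow{3^{k+1}}{r_{k+1}}$, \cite[Theorem 1.5]{dBPW} gives $\cX(\tworow{3^k}{r_k};(2,1))\mid\chi^\lambda\down_{S_{3^k}\wr S_3}$, and $\chi^{(2,1)}\down_{P_3}=\phi_1+\phi_2$ combined with Lemma~\ref{lem: 9.6} and the inductive $Z^{\tworow{3^k}{r_k}}_{\phi(1^k)}=1$ yields $Z^\lambda_{\phi(1^{k+1})}=1$ and $Z^\lambda_{\phi(1^k,0)}=0$; Theorem~\ref{thm:GTT3.5} handles the hook analogue. For $\lambda\in\{\tworow{3^{k+1}}{r_{k+1}+1},\hook{3^{k+1}}{r_{k+1}+1}\}$, Theorem~\ref{thm:PW9.1} and Theorem~\ref{thm:GTT3.5} identify the constituents $\cX(\cdot;(3))$ and $\cX(\cdot;(1^3))$, and using $\chi^{(3)}\down_{P_3}=\phi_0=\chi^{(1^3)}\down_{P_3}$ one obtains $Z^\lambda_{\phi(1^k,0)}=1$ and $Z^\lambda_{\phi(1^{k+1})}=0$. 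These four calculations confirm (ii) and (iv), pin down $m(1^{k+1})=r_{k+1}$, and determine exactly which thin partitions sit on the boundary of each $\Omega$.

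The remaining non-thin partitions with $\lambda_1=r_{k+1}+1$ required for (iii) are placed in $\Omega(1^k,0)$ using Proposition~\ref{prop: B star B}, which realises each $\lambda\in\cB_{3^{k+1}}(3r_k)$ as $c^\lambda_{\mu_1,\mu_2,\mu_3}>0$ with $\mu_i\in\cB_{3^k}(r_k)\subseteq\Omega(1^k)$; when the $\mu_i$ are not all equal, Lemma~\ref{lem: DinOm} concludes, and when they coincide an appeal to Lemma~\ref{lem: 9.5} (via $(\ast)$) resolves the non-boundary cases while the boundary branching argument above resolves the thin cases. The absence of further thin partitions in $\Omega(1^{k+1})\setminus\cB_{3^{k+1}}(r_{k+1})$ and in $\Omega(1^k,0)\setminus\cQ_{k+1}$ is verified in the manner of Proposition~\ref{prop:case2}, using Lemma~\ref{lem: 9.8} at $\lambda_1=M(1^{k+1})=M(1^k,0)=2\cdot 3^k$ together with the inductive hypotheses applied to the shorter sequences $1^{k-1}$ and $1^{k-1}0$ to rule out each thin candidate. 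The main obstacle is the delicate bookkeeping in the four boundary multiplicity computations, since everything hinges on tracking which constituent of $\chi^\lambda\down_{S_{3^k}\wr S_3}$ detects $\phi(1^k)$ with multiplicity exactly one and how the corresponding $S_3$-representation restricts to $\phi_0$ versus $\phi_1+\phi_2$ on $P_3$.
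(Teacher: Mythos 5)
Your overall strategy coincides with the paper's: induction on $k$ with base case $k=3$ checked directly (including the auxiliary claim $(\ast)$ that $Z^\mu_{\phi(111)}\ge 2$ off the boundary), Lemma~\ref{lem: LRa} supplying both the bulk containment and the propagation of $(\ast)$, and the four boundary multiplicities computed via \cite[Theorem 1.5]{dBPW}, Theorems~\ref{thm:PW9.1} and~\ref{thm:GTT3.5} together with Lemma~\ref{lem: 9.6}. The genuine gap is in your treatment of the partitions $\lambda$ with $\lambda_1=r_{k+1}+1=3r$ that must be placed in $\Omega(1^k,0)$ for (iii). You realise $\lambda$ via Proposition~\ref{prop: B star B} as $c^\lambda_{\mu_1,\mu_2,\mu_3}>0$ with $\mu_i\in\cB_{3^k}(r)$ and then split cases, but the case where the $\mu_i$ all coincide with $\tworow{3^k}{r}$ or $\hook{3^k}{r}$ (or a conjugate) while $\lambda$ itself is \emph{not} thin is not covered: Lemma~\ref{lem: 9.5} is unavailable there because $Z^{\mu}_{\phi(1^k)}=1$, and your ``boundary branching argument'' only computes $Z^\lambda$ for the two-row and hook $\lambda$ themselves. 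Since $\lambda_1=3r$ forces every realisation to have $(\mu_i)_1=r$, you cannot take for granted that a non-coinciding triple exists; it has to be produced. The paper does this by noting that $\mu:=(\lambda_2,\lambda_3,\dotsc)\vdash 3(r-1)$ lies in $\cB_{3r-3}(3r-4)$ whenever $\lambda$ is not two-row or hook, applying Proposition~\ref{prop: BinD} with $q=3$ and $m=t=r-1$ to obtain $\nu_1,\nu_2,\nu_3\vdash r-1$ not all equal with $c^\mu_{\nu_1,\nu_2,\nu_3}>0$, and lifting via Lemma~\ref{lem: iteratedLR} to $c^\lambda_{(r,\nu_1),(r,\nu_2),(r,\nu_3)}>0$, after which Lemma~\ref{lem: DinOm} concludes. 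Without this (or an equivalent) step your argument for (iii) at the top layer is incomplete.

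A secondary imprecision concerns the exclusion of thin partitions from $\Omega(1^{k+1})\setminus\cB_{3^{k+1}}(r_{k+1})$ and $\Omega(1^k,0)\setminus\cQ$: Lemma~\ref{lem: 9.8} only controls the layer $\lambda_1=M=2\cdot 3^k$, whereas thin partitions with $3r<\lambda_1<2\cdot 3^k$ must be ruled out by the restriction-and-pigeonhole argument (restrict $\chi^\lambda$ to $(S_{3^k})^{\times 3}$, observe each factor of a detected constituent is thin and at least one has first part or length exceeding $r$, contradicting the inductive description of $\Omega(1^k)$). The paper states this explicitly as its Step 2; your appeal to ``the manner of Proposition~\ref{prop:case2}'' should be replaced or supplemented by that argument.
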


\begin{proof}
	We proceed by induction on $k$: the base case $k=3$ may be checked by direct computation. Also $Z^\mu_{\phi(111)}\ge 2$ 
	for all $\mu\in\cB_{27}(14)\setminus\{(14,13),(14,1^{13})\}^\circ$.
	Now assume $s=(1,\dotsc,1)\in\{0,1\}^k$ with $k\ge 3$ and that
	\begin{itemize}
		\item[$\circ$] $m(s)=r$ and $\Omega(s)\setminus\cB_{3^k}(r)$ contains no thin partitions,
		\item[$\circ$] $Z^{\tworow{3^k}{r}}_{\phi(s)} = Z^{\hook{3^k}{r}}_{\phi(s)} = 1$, and 
		\item[$\circ$] $Z^\mu_{\phi(s)}\ge 2$ 
		for all $\mu\in\cB_{3^k}(r)\setminus\{\tworow{3^k}{r},\hook{3^k}{r}\}^\circ$, 
	\end{itemize}
	where $r=\tfrac{3^k+1}{2}$. We claim that
	\begin{itemize}
		\item[(a)] $\cR\subseteq\Omega(s,0)$ and $\Omega(s,0)\setminus\cR$ contains no thin partitions, where
		\[ \cR := \cB_{3^{k+1}}(3r)\setminus\{\tworow{3^{k+1}}{3r-1}, \hook{3^{k+1}}{3r-1} \}^\circ, \]
		\item[(b)] $Z^{\tworow{3^{k+1}}{3r}}_{\phi(s,0)} = Z^{\hook{3^{k+1}}{3r}}_{\phi(s,0)} = 1$, 
		\item[(c)] $m(s,1)=3r-1$ and $\Omega(s,1)\setminus\cB_{3^{k+1}}(3r-1)$ contains no thin partitions,
		\item[(d)] $Z^{\tworow{3^{k+1}}{3r-1}}_{\phi(s,1)} = Z^{\hook{3^{k+1}}{3r-1}}_{\phi(s,1)} = 1$, and 
		\item[(e)] $Z^\lambda_{\phi(s,\sfx)} \ge 2$ 
		for all $\lambda\in\cB_{3^{k+1}}(3r-1)\setminus\{\tworow{3^{k+1}}{3r-1},\hook{3^{k+1}}{3r-1} \}^\circ$ and all $\sfx\in\{0,1\}$,
	\end{itemize}
	from which (i) -- (iv) follow. We proceed in steps. 
	Note that $\Omega(s,\sfx)^\circ=\Omega(s,\sfx)$, so below it suffices to consider $\lambda\vdash 3^{k+1}$ such that $\lambda_1\ge l(\lambda)$.
	
	\noindent\emph{Step 1.} Note (e) holds by Lemma~\ref{lem: LRa}. In particular, this implies for all $\sfx\in\{0,1\}$ that
	\[ \cB_{3^{k+1}}(3r-1)\setminus\{\tworow{3^{k+1}}{3r-1},\hook{3^{k+1}}{3r-1} \}^\circ \subseteq\Omega(s,\sfx).  \]
	
	\noindent\emph{Step 2.} Let $\sfx\in\{0,1\}$ and suppose $\lambda\in\Omega(s,\sfx)\setminus\cB_{3^{k+1}}(3r)$ is thin. Since $\lambda\in\Omega(s,\sfx)$, then $\phi(s)^{\times 3}=\phi(s,\sfx)\down_{(P_{3^k})^{\times 3}} \mid \lambda\down_{(P_{3^k})^{\times 3}}$. Hence there exist $\mu_1,\mu_2,\mu_3\in\Omega(s)$ such that $\mu_1\times\mu_2\times\mu_3 \mid \lambda\down_{(S_{3^k})^{\times 3}}$. Since $\lambda$ is thin, then so are $\mu_i\subseteq\lambda$. Moreover, either $\lambda_1>3r$ or $l(\lambda)>3r$, so there exists $i\in[3]$ such that $(\mu_i)_1>r$ or $l(\mu_i)>r$. But then $\mu_i\notin\Omega(s)$, a contradiction. Hence $\Omega(s,\sfx)\setminus\cB_{3^{k+1}}(3r)$ contains no thin partitions.
	
	\noindent\emph{Step 3.} By the same argument as Step 4 in the proof of Lemma~\ref{lem:1000}, we find that $Z^\lambda_{\phi(s,\sfx)}=\delta_{1,\sfx}+\delta_{2,\sfx}$ 
	for $\lambda\in\{\tworow{3^{k+1}}{3r-1}, \hook{3^{k+1}}{3r-1} \}$.
	In particular, this proves (d).
	
	\noindent\emph{Step 4.} Let $\lambda=(3r,\mu)$ for some $\mu\vdash 3^{k+1}-3r$. Suppose $\mu\in\cB_{3r-3}(3r-4)$. Since $k\ge 3$, we may apply Proposition~\ref{prop: BinD} with $m=t=r-1$ to deduce that 
	\[ \cB_{3r-3}(3r-4)\subseteq\cD\big(3,r-1,\cP(r-1)\big). \]
	Hence there exist $\nu_1,\nu_2,\nu_3\vdash r-1$ not all equal such that $c^\mu_{\nu_1,\nu_2,\nu_3}>0$, so $c^\lambda_{(r,\nu_1),(r,\nu_2),(r,\nu_3)}>0$ by Lemma~\ref{lem: iteratedLR}. Then $(r,\nu_i)\in\cB_{3^k}(r)\subseteq\Omega(s)$, so $\lambda\in\cD(3,3^k,\Omega(s))\subseteq\Omega(s,\sfx)$ for all $\sfx$ by Lemma~\ref{lem: DinOm}.
	
	\noindent\emph{Step 5.} By the same argument as Step 5 in the proof of Lemma~\ref{lem:1000}, we find that $Z^\lambda_{\phi(s,\sfx)} = \delta_{0,\sfx}$ 
	for $\lambda\in\{\tworow{3^{k+1}}{3r}, \hook{3^{k+1}}{3r} \}$.
	Thus we have proven (a), (b) and (c).
\end{proof}

\begin{lemma}\label{lem: LRb}
	Let $k\ge 3$ and $s=(1^{k-1}0)$. Let $r=\tfrac{3^k+1}{2}$.
	Then for all $\sfx\in\{0,1\}$,
	\[ \cB_{3^{k+1}}(3r-1)\setminus\{\tworow{3^{k+1}}{3r-1}, \hook{3^{k+1}}{3r-1}\}^\circ\subseteq\Omega(s,\sfx). \]
\end{lemma}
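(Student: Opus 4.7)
The plan is to use Lemma~\ref{lem: DinOm}, which gives $\cD(3, 3^k, \Omega(s)) \subseteq \Omega(s, \sfx)$, together with $\cQ := \cB_{3^k}(r+1) \setminus \{\tworow{3^k}{r}, \hook{3^k}{r}\}^\circ \subseteq \Omega(s)$ from Lemma~\ref{lem:111}(iii). It therefore suffices to show that every $\lambda$ in the target set $\cB_{3^{k+1}}(3r-1) \setminus \{\tworow{3^{k+1}}{3r-1}, \hook{3^{k+1}}{3r-1}\}^\circ$ admits a decomposition $c^\lambda_{\mu_1, \mu_2, \mu_3} > 0$ with $\mu_1, \mu_2, \mu_3 \in \cQ$ not all equal, for then $\lambda \in \cD(3, 3^k, \Omega(s)) \subseteq \Omega(s, \sfx)$; in particular the conclusion is uniform in $\sfx \in \{0, 1\}$.

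Since $r+1 > 3^k/2$, applying Proposition~\ref{prop: B star B} twice yields $\cB_{3^k}(r+1)^{\star 3} = \cB_{3^{k+1}}(3r+3) \supseteq \cB_{3^{k+1}}(3r-1)$. Hence for each such $\lambda$, an initial triple $\mu_1, \mu_2, \mu_3 \in \cB_{3^k}(r+1)$ with $c^\lambda_{\mu_1, \mu_2, \mu_3} > 0$ exists. If all three already lie in $\cQ$ and are not all equal, we are done. Otherwise, we must modify the $\mu_i$ to avoid the three bad partitions $\{\tworow{3^k}{r}, \hook{3^k}{r}\}^\circ = \{(r, r-1), (2^{r-1}, 1), (r, 1^{r-1})\}$, and also to break any coincidence forcing $\mu_1 = \mu_2 = \mu_3$.

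The modification step follows the strategy of Lemma~\ref{lem: LRa}, using Lemmas~\ref{lem: LR tworow filling} and~\ref{lem: LR hook filling} to perform targeted local replacements within Littlewood--Richardson fillings. Whenever some $\mu_j$ equals one of the three bad partitions, we fix an LR filling witnessing $c^\lambda_{\alpha, \mu_j} > 0$ (where $\alpha \vdash 2 \cdot 3^k$ is the intermediate partition produced from the other two $\mu_i$'s) and invoke the relevant filling lemma to either produce a modified $\mu_j' \in \cQ$ with $c^\lambda_{\alpha, \mu_j'} > 0$, or deduce that the only consistent $\lambda$ lies in the excluded extremal set $\{\tworow{3^{k+1}}{3r-1}, \hook{3^{k+1}}{3r-1}\}^\circ$. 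An important advantage over Lemma~\ref{lem: LRa} is the extra unit of size flexibility: partitions in $\cB_{3^k}(r+1) \setminus \cB_{3^k}(r)$ such as $\tworow{3^k}{r+1} = (r+1, r-2)$ and $\hook{3^k}{r+1} = (r+1, 1^{r-2})$ themselves lie in $\cQ$ and become available as admissible replacement targets.

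The main obstacle is that, in contrast to Lemma~\ref{lem: LRa}, whose Claim only requires a single $\mu_i$ to lie in the ``good'' set $\cC$, here we need all three $\mu_i$ simultaneously in $\cQ$. This forces us to iterate the modification procedure when two or even three of the $\mu_i$'s initially land in the bad set, verifying at each step that the replacement $\mu_j'$ remains compatible with the partial LR structure already assembled from the other two components. The most delicate subcases arise when $\alpha$ itself is close to the extremal partitions $\{\tworow{2 \cdot 3^k}{2r}, \hook{2 \cdot 3^k}{2r}\}^\circ$ of $2 \cdot 3^k$, because then the LR fillings of $[\lambda \setminus \alpha]$ and of $[\alpha \setminus \mu_j]$ are both highly constrained. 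These cases parallel Cases~1--4 of Lemma~\ref{lem: LRa} but must be reworked with the stronger ``all in $\cQ$'' requirement in mind.
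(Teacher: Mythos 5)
There is a genuine gap, and it sits exactly where your sketch is vaguest: the ``all equal'' configuration. Membership in $\cD(3,3^k,\Omega(s))$ requires a triple $\mu_1,\mu_2,\mu_3\in\Omega(s)$ that are \emph{not all equal}, but your modification procedure is only triggered when some $\mu_j$ lies in the bad set $\{\tworow{3^k}{r},\hook{3^k}{r}\}^\circ$. If the decomposition you start from (or end up with after replacements) is $c^\lambda_{\mu,\mu,\mu}>0$ with $\mu\in\cQ$ a ``good'' partition, Lemmas~\ref{lem: LR tworow filling} and~\ref{lem: LR hook filling} give you no reason to modify anything, and nothing in your argument produces a second, distinct member of the triple; you mention the need to ``break any coincidence forcing $\mu_1=\mu_2=\mu_3$'' but supply no mechanism for it. This is not a cosmetic omission: the paper's own proof does \emph{not} handle this case combinatorially. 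After reducing to $\alpha,\beta,\gamma\in\cB$ with $c^\lambda_{\alpha,\beta,\gamma}>0$ (its Steps 1--2, which match your use of Proposition~\ref{prop: B star B}), it treats $\alpha=\beta=\gamma$ by a representation-theoretic route: $\cX(\alpha;\phi_i)\mid\chi^\lambda\down_{S_{3^k}\wr P_3}$ for some unknown $i$, and since one cannot control $i$, it needs the multiplicity statement $Z^\alpha_{\phi(s)}\ge 2$ (established in its Step 3 for all $\alpha\in\cB_{3^k}(r+1)$ away from the four extremal shapes) together with Lemma~\ref{lem: 9.5}; only for the two shapes $\{\tworow{3^k}{r+1},\hook{3^k}{r+1}\}^\circ$, where the Sylow branching coefficient equals $1$, does it fall back on a bespoke combinatorial replacement (Step 4). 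Your plan therefore rests on a strictly stronger combinatorial claim --- that \emph{every} $\lambda$ in the target set admits a not-all-equal triple inside $\cQ$ --- which you neither prove nor reduce to the quoted filling lemmas, and which the paper deliberately avoids having to prove.

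Separately, even for the cases your procedure does address (some $\mu_j$ in the bad set), the proposal stops at a plan: the iteration ``reworked with the stronger all-in-$\cQ$ requirement'' is precisely the delicate case analysis (rigid skew shapes via Lemma~\ref{lem:BK}, intermediate $\alpha$ near $\{\tworow{2\cdot 3^k}{2r},\hook{2\cdot 3^k}{2r}\}^\circ$, compatibility of a replacement with the already-fixed filling of $[\lambda\setminus\alpha]$) that constitutes the content of the lemma, and it is not carried out. To repair the argument you would either have to complete that analysis \emph{and} close the all-equal loophole combinatorially, or follow the paper and import the wreath-product multiplicity argument (Mackey, Gallagher, Lemmas~\ref{lem: 9.5} and~\ref{lem: 9.6}) for the case $\alpha=\beta=\gamma$, which cannot be extracted from Littlewood--Richardson positivity alone.
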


\begin{proof}
	\noindent\emph{Step 1.} First we show that $\cB_{2\cdot 3^k}(2r+2) = \cB\star\cB$, 
	where $\cB := \cB_{3^k}(r+1)\setminus\{\tworow{3^k}{r},\hook{3^k}{r}\}^\circ$. Clearly $\cB\star\cB\subseteq\cB_{2\cdot 3^k}(2r+2)$.
	Let $\lambda\in\cB_{2\cdot 3^k}(2r+2)$. Then $c^\lambda_{\mu,\nu}>0$ for some $\mu,\nu\in\cB_{3^k}(r+1)$ by Proposition~\ref{prop: B star B}. 
	
	Suppose $\nu=\tworow{3^k}{r}$ (the case if $\nu=\hook{3^k}{r}$ is similar, and the case if $\nu=\tworow{3^k}{r}'$ is obtained by conjugation). By Lemma~\ref{lem: LR tworow filling} (b) either $c^\lambda_{\mu,\omega}>0$ for some $\omega\in\cB$, or $[\lambda\setminus\mu]\in\{ [\tworow{3^k}{r}], [\tworow{3^k}{r}]^c \}$. 
	
	First suppose $[\lambda\setminus\mu]\cong [\tworow{3^k}{r}]$. Let $\mathsf{c}$ be the top left box of $[\lambda\setminus\mu]$; clearly it is addable for $\mu$. Let $\mathsf{b}$ be any removable box of $\mu$. Let $\alpha\vdash 3^k$ be defined by $[\alpha]=([\mu]\setminus\mathsf{b})\cup\mathsf{c}$. 
	Then $\cLR([\lambda\setminus\alpha])\cap\cB\ne\emptyset$ by Lemma~\ref{lem: LR tworow filling} (b), and $\alpha\in\cB$ unless $\mu=(r-1,r-1,1)$ (and $\mathsf{b}$ was in position (3,1)). In this case, $c^\lambda_{\delta,\varepsilon}>0$ where $\delta=(r,r-2,1)\in\cB$ and $\varepsilon=(r+2,r-1)\in\cB$.
	
	A similar argument applies if $[\lambda\setminus\mu]\cong [\tworow{3^k}{r}]^c$, where we instead take $\mathsf{c}$ to be the leftmost box in the top row of $[\lambda\setminus\mu]$ and $\mathsf{b}$ to be any removable box of $\mu$ not immediately to the left of $\mathsf{c}$ (such $\mathsf{b}$ always exists unless $\lambda=(3^k,3^k)$, in which case $c^\lambda_{\tworow{3^k}{r+1},\tworow{3^k}{r+1}}>0$).
	
	Finally, suppose $c^\lambda_{\mu,\omega}>0$ where $\omega\in\cB$. Since $c^\lambda_{\mu,\omega}=c^\lambda_{\omega,\mu}$, the above argument shows that we can replace $\mu$ to obtain $\rho\in\cB$ such that $c^\lambda_{\omega,\rho}>0$ (or else directly exhibit partitions verifying $\lambda\in\cB\star\cB$).
	
	\noindent\emph{Step 2.} By a similar argument to Step 1, we find that $\cB_{3^{k+1}}(3r+3) = \cB_{2\cdot 3^k}(2r+2)\star\cB$, observing that $\cB_{3^{k+1}}(3r+3)=\cB_{2\cdot 3^k}(2r+2)\star\cB_{3^k}(r+1)$ by Proposition~\ref{prop: B star B}.
	
	Let $\lambda\in \cB_{3^{k+1}}(3r-1)\setminus\{\tworow{3^{k+1}}{3r-1}, \hook{3^{k+1}}{3r-1}\}^\circ$. Thus $\lambda\in\cB^{\star 3}$, so there exist $\alpha,\beta,\gamma\in\cB$ such that $c^\lambda_{\alpha,\beta,\gamma}>0$. If $\alpha,\beta,\gamma$ are not all equal, then by Theorem~\ref{thm:mackey} $\chi^\lambda\down_{P_{3^{k+1}}}$ contains $(\alpha\down\cdot\beta\down\cdot\gamma\down)\up^{P_{3^{k+1}}}_{(P_{3^k})^{\times 3}}$ as a summand where $\down$ denotes $\down_{P_{3^k}}$, and hence 
	\[ Z^\lambda_{\phi(s,\sfx)} \ge Z^\alpha_{\phi(s)}\cdot Z^\beta_{\phi(s)} \cdot Z^\gamma_{\phi(s)} \cdot \langle \phi(s)^{\times 3}\up^{P_{3^{k+1}}}, \phi(s,\sfx)\rangle \ge 1\cdot \sum_{\sfy=0}^{p-1} \langle\phi(s,\sfy), \phi(s,\sfx)\rangle = 1 \]
	for all $\sfx\in\{0,1\}$, by Lemma~\ref{lem: easy observation}.
	Otherwise $\alpha=\beta=\gamma$, so $\cX(\alpha;\phi_i) \mid \chi^\lambda\down_{S_{3^k}\wr P_3}$ for some $i\in\{0,1,2\}$. \underline{\textit{If}} $Z^\alpha_{\phi(s)} \ge 2$, then
	$Z^\lambda_{\phi(s,\sfx)} \ge \langle \cX(\alpha;\phi_i)\down_{P_{3^{k+1}}},\phi(s,\sfx)\rangle \ge 2$
	for all $\sfx$ by Lemma~\ref{lem: 9.5}.
	
	\noindent\emph{Step 3.} We show that $Z^\alpha_{\phi(s)} \ge 2$ 
	for all 
	$\alpha\in\cB_{3^k}(r+1)\setminus\{\tworow{3^k}{r+1},\hook{3^k}{r+1},\tworow{3^k}{r},\hook{3^k}{r} \}^\circ$.
	By (e) in the proof of Lemma~\ref{lem:111}, $Z^\alpha_{\phi(s)}\ge 2$ 
	for all $\alpha\in\cB_{3^{k}}(r)\setminus\{\tworow{3^k}{r},\hook{3^k}{r}\}^\circ$.
	Since $\chi^\rho\down_{P_n}=\chi^{\rho'}\down_{P_n}$ for all $\rho\vdash n$, without loss of generality it remains to consider $\alpha\vdash 3^k$ such that $\alpha_1=r+1=\tfrac{3^k+3}{2}$; let $s:=\tfrac{3^{k-1}+1}{2}$ so $\alpha_1=3s$. 
	
	In Step 4 of the proof of Lemma~\ref{lem:111}, we showed that if $\alpha=(3s,\varepsilon)$ where $\varepsilon\in\cB_{3s-3}(3s-4)$ (in other words $\alpha\notin\{\tworow{3^k}{r+1},\hook{3^k}{r+1} \}$) then there exist $\nu_1,\nu_2,\nu_3\vdash s-1$ not all equal such that $c^\varepsilon_{\nu_1,\nu_2,\nu_3}>0$. Furthermore,
	$$c^\alpha_{(s,\nu_1),(s,\nu_2),(s,\nu_3)}=c^\varepsilon_{\nu_1,\nu_2,\nu_3}>0$$
	and $(s,\nu_i)\in\cB_{3^{k-1}}(s)$.	
	If $Z^{(s,\nu_i)}_{\phi(s^-)}\ge 2$, 
	then by the same argument as the Claim 
	in the proof of Lemma~\ref{lem: LRa}, $Z^\alpha_{\phi(s)}\ge 2$. 
	By (e) in the proof of Lemma~\ref{lem:111}, it remains to consider the case where $(s,\nu_i)\in\{\tworow{3^{k-1}}{s},\hook{3^{k-1}}{s}\}$ for all $i$. Since they are not all equal, we may assume $(s,\nu_1)=\hook{3^{k-1}}{s}$ and $(s,\nu_2)=\tworow{3^{k-1}}{s}$, but then the same replacement argument as in Case 1 of the proof of Lemma~\ref{lem: LRa} shows that we can replace at least one of $(s,\nu_1),(s,\nu_2)$ by a partition in $\cB_{3^{k-1}}(s)\setminus\{\tworow{3^{k-1}}{s},\hook{3^{k-1}}{s}\}^\circ$. Hence $Z^\alpha_{\phi(s)}\ge 2$ 
	for all $\alpha\in\cB_{3^k}(r+1)\setminus\{\tworow{3^k}{r+1},\hook{3^k}{r+1},\tworow{3^k}{r},\hook{3^k}{r} \}^\circ$.
	
	\noindent\emph{Step 4.} Combining the conclusion of Step 2 with Step 3, it remains to consider
	$$\lambda\in\cB_{3^{k+1}}(3r-1)\setminus\{\tworow{3^{k+1}}{3r-1}, \hook{3^{k+1}}{3r-1}\}^\circ$$
	such that $c^\lambda_{\alpha,\alpha,\alpha}>0$ where $\alpha\in\{\tworow{3^k}{r+1},\hook{3^k}{r+1}\}^\circ= \{(r+1,r-2),(r+1,1^{r-2})\}^\circ$. Our aim is to replace the $\alpha$ component(s) to give $c^\lambda_{A,B,C}>0$ for some $A,B,C\in\cB$ which are not equal and apply Step 2.
	
	First suppose $\alpha=(r+1,r-2)$. (For clarity, we use also $\beta$ and $\gamma$ and remind the reader that $\alpha=\beta=\gamma$.) 
	Let $\mu\vdash 2\cdot 3^k$ be such that $\mu\subset\lambda$, $c^\mu_{\alpha,\beta}>0$ and $c^\lambda_{\mu,\gamma}>0$. Fix an LR filling $\mathsf{F}$ of $[\lambda\setminus\mu]$ of weight $\gamma$. If $[\lambda\setminus\mu]$ has boxes in at least three rows, then $[\lambda\setminus\mu]\not\cong[\alpha]$ or $[\alpha]^c$ by Lemma~\ref{lem:BK}. So we may replace a 1 or 2 by a 3 (or a 1 by a 2 then another 2 by a 3) to produce a filling $\mathsf{F}'$ of weight $\omega\in\cB$ such that $\omega\ne\alpha=\beta$. 
	
	So now suppose $[\lambda\setminus\mu]$ occupies only two rows. By the same argument, we may assume $[\mu\setminus\alpha]$ also only occupies two rows. In particular, $\alpha=\beta=(r+1,r-2)$ implies 
	$$\mu\in\{\tworow{2\cdot 3^k}{i} \mid 2r-1\le i\le 2r+2 \} \cup \{ (2r+2,r-2,r-2) \}.$$
	If $\mu$ is one of the thin partitions just listed, then since $\lambda$ is not thin we may replace $\gamma=(r+1,r-2)$ by $\delta=(r,r-2,1)\ne\alpha=\beta$ with $c^\lambda_{\alpha,\beta,\delta}\ge c^\lambda_{\mu,\delta}>0$. If $\mu=(2r+2,r-2,r-2)$ then since $\gamma=(r+1,r-2)$, we have that 
	\[ \lambda\in\{ (2r+2,2r-1,2r-4), (2r+2,2r-1,r-2,r-2) \} \cup \{ (3r-1-i, 2r+i,r-2) \}_{i\ge 0} \]
	(where the last term only runs over values of $i$ giving genuine partitions), 
	which can all be seen to satisfy $c^\lambda_{A,B,C}>0$ for some $A,B,C\in\cB$ not all equal.
	
	Finally suppose $\alpha=(r+1,1^{r-2})$. Since $\lambda\in\cB_{3^{k+1}}(3r-1)$ and $\lambda\ne\hook{3^{k+1}}{3r-1}$, then there exists $\mu\vdash 2\cdot 3^k$ such that $c^\mu_{\alpha,\alpha}>0$, $c^\lambda_{\mu,\alpha}>0$ and $\mu$ is not a hook. But then $c^\mu_{\omega,\nu}>0$ where $\omega=(r+1,2,1^{r-4})\in\cB$ for some $\nu\in\cB$, so $c^\lambda_{\omega,\nu,\alpha}>0$ with $\omega\ne\alpha$ as desired.
\end{proof}

\begin{lemma}\label{lem:110x}
	Let $k\ge 3$ and $s=(1^{k-1}0)$. Let $r=\tfrac{3^k+1}{2}$. Then
	\begin{itemize}
		\item[(i)] $\cQ\subseteq\Omega(s,0)$ and $\Omega(s,0)\setminus\cQ$ contains no thin partitions, where
		\[ \cQ := \cB_{3^{k+1}}(3r+3)\setminus \{ \tworow{3^{k+1}}{3r+2}, \hook{3^{k+1}}{3r+2}, \tworow{3^{k+1}}{3r-1} \}^\circ, \]
		\item[(ii)] $Z^{\tworow{3^{k+1}}{3r+3}}_{\phi(s,0)} = Z^{\hook{3^{k+1}}{3r+3}}_{\phi(s,0)} = 1$, 
		\item[(iii)] $m(s,1)=3r+2$ and $\Omega(s,1)\setminus\cB_{3^{k+1}}(3r+2)$ contains no thin partitions, and
		\item[(iv)] $Z^{\tworow{3^{k+1}}{3r+2}}_{\phi(s,1)} = Z^{\hook{3^{k+1}}{3r+2}}_{\phi(s,1)} = 1$. 
	\end{itemize}
\end{lemma}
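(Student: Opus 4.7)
The plan is to mirror the inductive structure of Lemmas~\ref{lem:1000} and~\ref{lem:111}: start from the strong base inclusion provided by Lemma~\ref{lem: LRb}, extend layer by layer to larger values of $\lambda_1$ using Lemma~\ref{lem: DinOm}, and finally compute the boundary multiplicities via the standard wreath-product restriction machinery. Concretely, Lemma~\ref{lem: LRb} supplies
\[ \cB_{3^{k+1}}(3r-1)\setminus\{\tworow{3^{k+1}}{3r-1},\hook{3^{k+1}}{3r-1}\}^\circ \subseteq \Omega(s,\sfx) \]
for both $\sfx\in\{0,1\}$. By conjugation symmetry it then suffices to consider $\lambda\vdash 3^{k+1}$ with $\lambda_1\ge l(\lambda)$ and $\lambda_1\in\{3r-1,3r,3r+1,3r+2,3r+3\}$, together with the self-conjugate case $\lambda=\hook{3^{k+1}}{3r-1}$ which is excluded by Lemma~\ref{lem: LRb} but belongs to $\cQ$.

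For the layer-by-layer extension, I would write $\lambda=(\lambda_1,\mu)$ and exhibit $\mu_1,\mu_2,\mu_3\in\Omega(s)$, not all equal, with $c^\lambda_{\mu_1,\mu_2,\mu_3}>0$ by splitting $\lambda_1=a_1+a_2+a_3$ with each $a_i\in\{r,r+1\}$ and applying Lemma~\ref{lem: iteratedLR} to reduce to finding $\nu_i\vdash 3^k-a_i$ satisfying $c^\mu_{\nu_1,\nu_2,\nu_3}>0$. The inductively known inclusion $\cB_{3^k}(r+1)\setminus\{\tworow{3^k}{r},\hook{3^k}{r}\}^\circ\subseteq\Omega(s)$ from Lemma~\ref{lem:111}(iii), combined with the mult-$1$ facts at $(r+1,r-2)$ and $(r+1,1^{r-2})$ from Lemma~\ref{lem:111}(iv), then ensures $(a_i,\nu_i)\in\Omega(s)$ provided we avoid $\nu_i\in\{(r-1),(1^{r-1})\}$ whenever $a_i=r$. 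Proposition~\ref{prop: BinD} (applied with $q=3$, $m=r-2$ or $r-1$, for the $\lambda_1=3r+3$ and $3r+2$ layers respectively) together with Proposition~\ref{prop: B star B} (for the other layers) guarantees the existence of such triples $(\nu_1,\nu_2,\nu_3)$ for all but the thin residual shapes $\mu\in\{(|\mu|),(1^{|\mu|})\}^\circ$, and these correspond precisely to the boundary partitions $\tworow{3^{k+1}}{\lambda_1},\hook{3^{k+1}}{\lambda_1}$. For the case $\lambda_1=3r+3$ all surviving partitions land in $\Omega(s,\sfx)$ for any $\sfx$; for $\lambda_1=3r+2$ the exclusions occur at $\tworow{3^{k+1}}{3r+2}$ and $\hook{3^{k+1}}{3r+2}$, to be handled separately.

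Next, for the four boundary partitions $\lambda\in\{\tworow{3^{k+1}}{3r+3},\hook{3^{k+1}}{3r+3},\tworow{3^{k+1}}{3r+2},\hook{3^{k+1}}{3r+2}\}$ I would compute $Z^\lambda_{\phi(s,\sfx)}$ following Steps~4--5 of Lemma~\ref{lem:1000}. Theorem~\ref{thm:PW9.1} identifies $\cX\big((r+1,r-2);(3)\big)$ as a multiplicity-one constituent of $\chi^{\tworow{3^{k+1}}{3r+3}}\down_{S_{3^k}\wr S_3}$, Theorem~\ref{thm:GTT3.5} yields the analogous statement for $\hook{3^{k+1}}{3r+3}$, and \cite[Theorem 1.5]{dBPW} (respectively Theorem~\ref{thm:GTT3.5}) gives the $\cX\big((r+1,r-2);(2,1)\big)$ constituent for $\tworow{3^{k+1}}{3r+2}$ (respectively the hook analogue). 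A check on $\chi^\lambda\down_{(S_{3^k})^{\times 3}}$ shows that every other irreducible constituent $\mu_1\times\mu_2\times\mu_3$ has some $\mu_i$ with $(\mu_i)_1>r+1$ or $l(\mu_i)>r+1$, hence outside $\Omega(s)$ by Lemma~\ref{lem:111}(iii). Applying Lemma~\ref{lem: 9.6} and the mult-$1$ values $Z^{(r+1,r-2)}_{\phi(s)}=Z^{(r+1,1^{r-2})}_{\phi(s)}=1$ then gives $Z^\lambda_{\phi(s,\sfx)}=\delta_{0,\sfx}$ at $\lambda_1=3r+3$ and $Z^\lambda_{\phi(s,\sfx)}=\delta_{1,\sfx}+\delta_{2,\sfx}$ at $\lambda_1=3r+2$. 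This simultaneously establishes (ii), (iv), and the exclusions in $\cQ$; the remaining case $\hook{3^{k+1}}{3r-1}\in\cQ$ is handled by exhibiting an explicit decomposition $(r+1,1^{r-2})^{\times 2},(r-3,1^{r+2})$ using the above $\Omega(s)$-information. Finally, the no-thin-partitions claim in (i) and (iii) is obtained by the same argument as Step~2 of Lemma~\ref{lem:111}: a thin $\lambda\in\Omega(s,\sfx)$ with $\lambda_1>3r+3$ (resp.\ $>3r+2$) forces a factor $\mu_i\in\Omega(s)$ that is itself thin with first row or length exceeding $r+1$, contradicting Lemma~\ref{lem:111}(iii).

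The main technical obstacle is the $\lambda_1=3r+2$ layer for $\sfx=1$, where the natural split $(r,\nu_1)+(r+1,\nu_2)+(r+1,\nu_3)$ forces $(r,\nu_1)$ into an exception of $\Omega(s)$ precisely when $\nu_1\in\{(r-1),(1^{r-1})\}$. These configurations must be rescued either by an alternative splitting leveraging Lemma~\ref{lem: 9.8} together with the $M(s)=2\cdot 3^{k-1}$ component of $\Omega(s)$, or by a direct replacement on the LR filling via Lemmas~\ref{lem: LR tworow filling} and~\ref{lem: LR hook filling}; a careful case analysis of the shape of $\mu$ is needed to close this gap and should be carried out explicitly.
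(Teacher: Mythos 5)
Your overall strategy coincides with the paper's: base inclusion from Lemma~\ref{lem: LRb}, layer-by-layer extension via iterated Littlewood--Richardson coefficients and $\cD(3,3^k,\Omega(s))$, boundary multiplicities via Theorems~\ref{thm:PW9.1} and~\ref{thm:GTT3.5} together with Lemma~\ref{lem: 9.6}, and the pigeonhole argument for excluding thin partitions beyond $\cB_{3^{k+1}}(3r+3)$. However, there is a genuine gap at the intermediate boundary layers $\lambda_1\in\{3r-1,3r,3r+1\}$. The only two-row partition of $3^k$ lying in $\Omega(s)$ is $\tworow{3^k}{r+1}=(r+1,r-2)$ (since $(r,r-1)=\tworow{3^k}{r}\notin\Omega(s)$), so for $\lambda\in\{\tworow{3^{k+1}}{3r-1},\tworow{3^{k+1}}{3r},\tworow{3^{k+1}}{3r+1}\}$ the only constituent of $\chi^\lambda\down_{(S_{3^k})^{\times 3}}$ with all three factors in $\Omega(s)$ is $(r+1,r-2)^{\times 3}$; your ``not all equal'' splitting is therefore unavailable, and one is forced into the wreath-product computation of $\langle\cX\big((r+1,r-2);\eta\big),\chi^\lambda\down_{S_{3^k}\wr S_3}\rangle$ for $\eta\vdash 3$ (the paper does this via \cite[Theorem 1.2]{dBPW}, comparing with $\chi^{(5,4)}, \chi^{(6,3)}, \chi^{(7,2)}$ restricted to $S_3\wr S_3$). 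This computation is not optional: it is precisely what shows $Z^{\tworow{3^{k+1}}{3r-1}}_{\phi(s,\sfx)}=\delta_{1,\sfx}+\delta_{2,\sfx}$, i.e.\ that $\tworow{3^{k+1}}{3r-1}\notin\Omega(s,0)$ (needed for the ``no other thin partitions'' claim in (i), since this partition is excluded from $\cQ$) \emph{and} that $\tworow{3^{k+1}}{3r-1}\in\Omega(s,1)$ (needed for $m(s,1)=3r+2$ in (iii)). Your proposal never addresses the status of this partition, nor of $\tworow{3^{k+1}}{3r}$ and $\tworow{3^{k+1}}{3r+1}$, so parts (i) and (iii) are not established as written.

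Two smaller points. First, your explicit decomposition for $\hook{3^{k+1}}{3r-1}$ uses the factor $(r-3,1^{r+2})$, which has length $r+3>r+1$ and is thin, hence lies outside $\Omega(s)$ by Lemma~\ref{lem:111}(iii); the correct third factor is $\hook{3^k}{r-1}=(r-1,1^{r})$, the conjugate of $\hook{3^k}{r+1}$, which does lie in $\Omega(s)$. Second, the obstacle you flag at the $\lambda_1=3r+2$ layer is real but closes routinely: restrict the $a_i=r$ factor to $\nu_1\in\cB_{r-1}(r-2)$ and use $\cB_{r-1}(r-2)\star\cP(r-2)\star\cP(r-2)=\cB_{3r-5}(3r-6)$ (Proposition~\ref{prop: B star B}, supplemented by Lemma~\ref{lem: 10.3}-type identities for $\mu$ with $\mu_1$ just below the maximum), exactly as in Step~3 of the proof of Lemma~\ref{lem:1000}; this should be written out rather than left as a conditional.
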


\begin{proof}
	\noindent\emph{Step 1.} We have that $\cB_{3^{k+1}}(3r-1)\setminus\{\tworow{3^{k+1}}{3r-1}, \hook{3^{k+1}}{3r-1}\}^\circ\subseteq\Omega(s,\sfx)$ for all $\sfx\in\{0,1\}$ from Lemma~\ref{lem: LRb}.
	
	\noindent\emph{Step 2.} For $\lambda=\tworow{3^{k+1}}{3r-1}$, the only irreducible constituent(s) $\mu_1\times\mu_2\times\mu_3$ of $\chi^\lambda\down_{(S_{3^k})^{\times 3}}$ such that $\mu_i\in\Omega(s)$ for all $i$ is $\tworow{3^k}{r+1}^{\times 3}$ with multiplicity 2. Moreover, for $\eta\vdash 3$ we have
	\[ \langle\cX\big(\tworow{3^k}{r+1};\eta), \chi^\lambda\down_{S_{3^k}\wr S_3}\rangle = \langle\cX\big((3);\eta), \chi^{(5,4)}\down_{S_3\wr S_3}\rangle = \begin{cases}
		1 & \text{if}\ \eta=(2,1),\\
		0 & \text{if}\ \eta\in\{(3),(1^3)\}
	\end{cases}\]
	by \cite[Theorem 1.2]{dBPW}. Hence, by a similar argument to Step 4 in the proof of Lemma~\ref{lem:1000}, $Z^\lambda_{\phi(s,\sfx)}=\delta_{1,\sfx}+\delta_{2,\sfx}$  
	by Lemma~\ref{lem: 9.6} since $Z^{\tworow{3^k}{r}}_{\phi(s)} = 1$. 
	
	On the other hand, $\hook{3^{k+1}}{3r-1}\down_{(S_{3^k})^{\times 3}}$ contains $\hook{3^k}{r+1}^{\times 2}\times\hook{3^k}{r-1}\in\Omega(s)^{\times 3}$ as a constituent. So $\phi(s)^{\times 3}\up^{P_{3^{k+1}}} = \sum_{i=0}^2 \phi(s,i)\up^{P_{3^{k+1}}}$ is a constituent of $\hook{3^{k+1}}{3r-1}\down_{P_{3^{k+1}}}$ by Theorem~\ref{thm:mackey}. That is, $\hook{3^{k+1}}{3r-1}\in\Omega(s,\sfx)$ for all $\sfx$.
	
	\noindent\emph{Step 3.} By a similar argument to Step 3 in the proof of Lemma~\ref{lem:1000}, we find that $\lambda\in\Omega(s,\sfx)$ for all $\sfx$, for all $\lambda\vdash 3^{k+1}$ such that $\lambda_1\in\{3r,3r+1\}$ and $\lambda\notin\{\tworow{3^{k+1}}{3r}, \tworow{3^{k+1}}{3r+1} \}$. 
	
	For $\lambda=\tworow{3^{k+1}}{3r}$, the only irreducible constituent(s) $\mu_1\times\mu_2\times\mu_3$ of $\chi^\lambda\down_{(S_{3^k})^{\times 3}}$ such that $\mu_i\in\Omega(s)$ for all $i$ is $\tworow{3^k}{r+1}^{\times 3}$ with multiplicity 4. Moreover, for all $\eta\vdash 3$ we have
	\[ \langle\cX\big(\tworow{3^k}{r+1};\eta), \chi^\lambda\down_{S_{3^k}\wr S_3}\rangle = \langle\cX\big((3);\eta), \chi^{(6,3)}\down_{S_3\wr S_3}\rangle = 1 \]
	by \cite[Theorem 1.2]{dBPW}. Hence $Z^\lambda_{\phi(s,\sfx)} 
	= 2\delta_{0,\sfx}+\delta_{1,\sfx}+\delta_{2,\sfx}$ by Lemma~\ref{lem: 9.6} since $Z^{\tworow{3^k}{r}}_{\phi(s)} = 1$.  
	For $\lambda=\tworow{3^{k+1}}{3r+1}$, the only such irreducible constituent(s) is $\tworow{3^k}{r+1}^{\times 3}$ with multiplicity 3. Moreover, for $\eta\vdash 3$ we have
	\[ \langle\cX\big(\tworow{3^k}{r+1};\eta), \chi^\lambda\down_{S_{3^k}\wr S_3}\rangle = \langle\cX\big((3);\eta), \chi^{(7,2)}\down_{S_3\wr S_3}\rangle = \begin{cases}
		1 & \text{if}\ \eta\in\{(2,1),(3)\},\\
		0 & \text{if}\ \eta=(1^3)
	\end{cases}\]
	by \cite[Theorem 1.2]{dBPW}.
	Hence $Z^\lambda_{\phi(s,\sfx)} 
	= \delta_{0,\sfx}+\delta_{1,\sfx}+\delta_{2,\sfx}$.
	
	\noindent\emph{Step 4.} By similar arguments to Step 3 in the proof of Lemma~\ref{lem:1000}, we find that $\lambda\in\Omega(s,\sfx)$ for all $\sfx$, for all $\lambda\vdash 3^{k+1}$ such that $\lambda_1\in\{3r+2,3r+3\}$ and $\lambda$ is not thin. Moreover, $Z^{\tworow{3^{k+1}}{3r+2}}_{\phi(s,\sfx)} = Z^{\hook{3^{k+1}}{3r+2}}_{\phi(s,\sfx)} = \delta_{1,\sfx}+\delta_{2,\sfx}$ and $Z^{\tworow{3^{k+1}}{3r+3}}_{\phi(s,\sfx)} = Z^{\hook{3^{k+1}}{3r+3}}_{\phi(s,\sfx)} = \delta_{0,\sfx}$.
	
	\noindent\emph{Step 5.} Finally, by the same argument as Step 2 in the proof of Lemma~\ref{lem:111}, we find that $\Omega(s,\sfx)\setminus\cB_{3^{k+1}}(3r+3)$ contains no thin partitions for all $\sfx$, completing the proof.
\end{proof}

\begin{lemma}\label{lem: LRc}
	Let $k\ge 3$ and $s=(1^{k-1}0)$. Let $r=\tfrac{3^k+1}{2}$. 
	Further, let 
	\[ \cR = \cB_{3^{k+1}}(3r+3)\setminus \{ \tworow{3^{k+1}}{3r+2}, \hook{3^{k+1}}{3r+2}, \tworow{3^{k+1}}{3r-1} \}^\circ.\]
	Then
	\[ \cD(3,3^{k+1}, \cR) = \cD,\quad\text{where}\ \ \cD:= \cD\big(3,3^{k+1},\cR\cup\{\tworow{3^{k+1}}{3r-1}\}^\circ\big). \]
\end{lemma}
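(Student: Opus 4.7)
The inclusion $\cD(3, 3^{k+1}, \cR) \subseteq \cD$ is immediate from Definition~\ref{def: Dset newOmegas}, since enlarging the pool of allowed summands can only enlarge the $\cD$-set. The content lies in the reverse inclusion, and the plan is to show that whenever $c^\lambda_{\mu_1,\mu_2,\mu_3}>0$ for a not-all-equal triple in $\cR \cup \{\tworow{3^{k+1}}{3r-1}\}^\circ$ with at least one $\mu_i$ lying in the extra set $\{\tworow{3^{k+1}}{3r-1}\}^\circ = \{(3r-1, 3r-2), (2^{3r-2}, 1)\}$, the ``bad'' summand can be swapped for a partition in $\cR$ while preserving positivity of the iterated Littlewood--Richardson coefficient and the not-all-equal condition.

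First I would reduce to the case $\mu_3 = (3r-1, 3r-2) = \tworow{3^{k+1}}{3r-1}$: since $\cR$ and hence $\cD(3, 3^{k+1}, \cR)$ are closed under conjugation of partitions, any occurrence of the conjugate $(2^{3r-2}, 1)$ can be dispatched by conjugating $\lambda$ and all three $\mu_i$ simultaneously. The key combinatorial input will be Lemma~\ref{lem: LR tworow filling}(b) applied with $\nu = (3r-1, 3r-2)$; its three candidate replacement weights $(3r-1, 3r-3, 1)$, $(3r, 3r-3)$ and $(3r-2, 3r-2, 1)$ all lie in $\cR$, as each fits inside a $(3r+3)\times(3r+3)$ square and none coincides with any element of $\{\tworow{3^{k+1}}{3r+2}, \hook{3^{k+1}}{3r+2}, \tworow{3^{k+1}}{3r-1}\}^\circ$.

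The central step is then: choose $\alpha \vdash 2 \cdot 3^{k+1}$ with $c^\alpha_{\mu_1, \mu_2} > 0$ and $c^\lambda_{\alpha, \mu_3} > 0$, and apply Lemma~\ref{lem: LR tworow filling}(b) to the skew shape $[\lambda \setminus \alpha]$. In the generic case some $\nu_3 \in \{(3r-1, 3r-3, 1), (3r, 3r-3), (3r-2, 3r-2, 1)\}$ lies in $\cLR([\lambda \setminus \alpha])$, yielding $c^\lambda_{\mu_1, \mu_2, \nu_3} > 0$ with $\nu_3 \in \cR$. Should the triple $(\mu_1, \mu_2, \nu_3)$ accidentally become all-equal, the existence of three candidate $\nu_3$ (at most one of which can equal $\mu_1$) permits an alternative choice.

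The hard part will be the rigid case, where Lemma~\ref{lem:BK} forces $[\lambda \setminus \alpha]$ to be a translate of $[(3r-1, 3r-2)]$ or its $180^\circ$ rotation, so no direct replacement is possible for this particular $\alpha$. Here I would invoke the symmetry of iterated LR coefficients, rewriting $c^\lambda_{\mu_1,\mu_2,\mu_3} = c^\lambda_{\mu_2, \mu_3, \mu_1}$, and retry with $\alpha' \vdash 2 \cdot 3^{k+1}$ satisfying $c^{\alpha'}_{\mu_2, \mu_3} > 0$ and $c^\lambda_{\alpha', \mu_1} > 0$, applying Lemma~\ref{lem: LR tworow filling}(b) instead to $[\alpha' \setminus \mu_2]$ with weight $\mu_3$. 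If the rigidity persists across every ordering, the combined structural constraints pin $\lambda$ down to a short explicit list of shapes, which can be dispatched by direct inspection in the spirit of Case 3 of the proof of Lemma~\ref{lem: LRa}. Finally, if more than one $\mu_i$ lies in $\{\tworow{3^{k+1}}{3r-1}\}^\circ$, the above replacement procedure is iterated one bad component at a time.
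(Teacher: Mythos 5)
Your overall strategy matches the paper's: reduce by conjugation to the case where the offending component is $\tworow{3^{k+1}}{3r-1}=(3r-1,3r-2)$, and use Lemma~\ref{lem: LR tworow filling}(b) to swap it for one of $(3r,3r-3)$, $(3r-1,3r-3,1)$, $(3r-2,3r-2,1)$, all of which indeed lie in $\cR$. However, there is a genuine gap in your treatment of the case where the replacement makes the triple all-equal. You write that "the existence of three candidate $\nu_3$ (at most one of which can equal $\mu_1$) permits an alternative choice," but Lemma~\ref{lem: LR tworow filling}(b) only guarantees that $\cLR([\lambda\setminus\alpha])$ meets the three-element set, i.e.\ that \emph{at least one} of the candidates is an achievable weight. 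If the unique achievable candidate happens to equal $\mu_1=\mu_2$, you have no alternative $\nu_3$, and the triple $(\mu_1,\mu_2,\nu_3)$ violates the "not all equal" requirement of Definition~\ref{def: Dset newOmegas}. The paper's Step~2 handles exactly this situation by changing tack: rather than seeking yet another weight for the third component, it fixes $\nu=\mu_1=\mu_2$, enumerates every intermediate partition $\mu\vdash 2\cdot 3^{k+1}$ with $c^\mu_{\nu,\nu}>0$ compatible with an LR filling of $[\lambda\setminus\mu]$ of weight $\nu$ (the long explicit lists for $\nu=(3r-1,3r-3,1)$ and $\nu=(3r-2,3r-2,1)$, plus the short list for $\nu=(3r,3r-3)$ imported from Lemma~\ref{lem: LRb}), and verifies that each such $\mu$ satisfies $c^\mu_{A,B}>0$ for some $A\ne B\in\cR$. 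This enumeration is the bulk of the actual proof and is absent from your proposal.

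Your handling of the rigid case, where $[\lambda\setminus\mu]$ is a translate of $[(3r-1,3r-2)]$ or its rotation, also diverges from the paper and is left unsubstantiated. The paper does not permute the three components and hope rigidity fails in some ordering; it modifies the intermediate partition $\mu$ directly by a box-swap (remove a removable box $\mathsf{b}$ of $\mu$ and adjoin the top-left, resp.\ leftmost-top, box $\mathsf{c}$ of the skew shape, as in Step~1 of Lemma~\ref{lem: LRb}), so that the new skew shape $[\lambda\setminus\hat\mu]$ is no longer rigid and again admits a weight in $\cR$; one then falls back on the Step~2 analysis. Your claim that persistent rigidity across all orderings "pins $\lambda$ down to a short explicit list" is plausible but not demonstrated, and it is not clear that rigidity in one factorisation constrains the others strongly enough for this to go through.
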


\begin{proof}
	\noindent\emph{Step 1.} Let $\lambda\in\cD$, so $c^\lambda_{\alpha,\beta,\gamma}>0$ for some $\alpha,\beta,\gamma\in \cR\cup\{\tworow{3^{k+1}}{3r-1}\}^\circ$, not all equal. We are done if $\alpha,\beta,\gamma\notin \{\tworow{3^{k+1}}{3r-1}\}^\circ$, so without loss of generality assume $\gamma=\tworow{3^{k+1}}{3r-1}$ (since $c^\lambda_{\alpha,\beta,\gamma}$ is symmetric under permutation of $\alpha,\beta,\gamma$, and since $\cD$ is closed under conjugation). Let $\mu\vdash 2\cdot 3^{k+1}$ such that $\mu\subset\lambda$, $c^\lambda_{\mu,\gamma}>0$ and $c^\mu_{\alpha,\beta}>0$.
	
	By Lemma~\ref{lem: LR tworow filling} (b), either (i) $[\lambda\setminus\mu]\in\{[\tworow{3^{k+1}}{3r-1}], [\tworow{3^{k+1}}{3r-1}]^c \}$, or (ii) there is an LR filling of $[\lambda\setminus\mu]$ of weight $\nu\in\{ (3r,3r-3), (3r-1,3r-3,1),(3r-2,3r-2,1)\}$ which can be used to replace $\gamma$, that is, $c^\lambda_{\alpha,\beta,\nu}>0$.
	
	\noindent\emph{Step 2.} If (ii) holds, we must exhibit a different (iterated) LR filling of $\lambda$ by partitions in $\cR$ if $\alpha=\beta=\nu$. By the same argument as Step 4 in the proof of Lemma~\ref{lem: LRb}, if $\nu=(3r,3r-3)$, then $\mu\in\{(6r-i,6r-6+i)\}_{i=0}^3\cup\{(6r,3r-3,3r-3) \}$, whence $c^\lambda_{A,B,C}>0$ for some $A,B,C\in\cR$ not all equal. A similar argument holds if $\nu\in\{(3r-1,3r-3,1),(3r-2,3r-2,1)\}$: we list all $\mu\vdash 2\cdot 3^k=12r-6$ such that $c^\mu_{\alpha,\beta}>0$ (and assume $c^\lambda_{\mu,\nu}>0$). If particular, if $[\mu\setminus\alpha]$ occupies at least four rows then any LR filling of $[\mu\setminus\alpha]$ of weight $\beta$ may be modified to a weight $\omega\in\cR$ with $l(\omega)\ge 4$, so $\omega\ne\beta=\alpha$. This mean we need only consider $l(\mu)\le 4$, giving the lists of possible $\mu$ as
	\begin{align*}
		\mu &\in \{ (6r-2-i,3r-3+b,3r-1-b+i)\mid \lceil\tfrac{i}{2}\rceil+1\le b\le 3r+\min(1-i,i-3) \}_{i=0}^4\\
		&\quad\cup \{(6r-2-i,6r-6+i,1^2) \}_{i=0}^2
		\cup  \{(3r-1+a,3r-3,3r-3,3r+1-a)\}_{a=4}^{3r-1}\\
		&\quad\cup \{(3r-1,3r-1,3r-1-i,3r-3+i)\}_{i=0}^1
	\end{align*}
	when $\nu=(3r-1,3r-3,1)$, and
	\begin{multline*}
		\mu\in \{ (6r-4,3r-2+b,3r-b) \}_{b=1}^{3r-2}
		\cup \{ (6r-4,6r-4,1,1) \}\\
		\cup \{ (3r-2+a,3r-2,3r-2,3r-a) \}_{a=2}^{3r-2}
		\cup \{ \big((3r-2)^4,2\big) \}
	\end{multline*}
	when $\nu=(3r-2,3r-2,1)$. These $\mu$ can all be seen to satisfy $c^\mu_{A,B}>0$ for some $A\ne B\in\cR$, whence $\lambda\in\cD(3,3^{k+1},\cR)$ as $c^\lambda_{A,B,\nu}>0$.
	
	\noindent\emph{Step 3.} Now suppose (i) holds. If $[\lambda\setminus\mu]\cong [\tworow{3^{k+1}}{3r-1}]$, we may assume $\mu\in\cB_{2\cdot 3^{k+1}}(6r+2)$, since $\cB_{2\cdot 3^{k+1}}=\cB_{3^{k+1}}(r+1)^{\star 2}$. Let $\mathsf{c}$ be the top left box of $[\lambda\setminus\mu]$ and $\mathsf{b}$ be any removable box of $\mu$. Set $\hat{\mu}\vdash 2\cdot 3^{k+1}$ to be such that $[\hat{\mu}]=([\mu]\setminus\mathsf{b})\cup\mathsf{c}$. By a similar argument to Step 1 in the proof of Lemma~\ref{lem: LRb}, either $\hat{\mu}\in\cB_{2\cdot 3^{k+1}}(6r+2)$ and $\cLR([\lambda\setminus\hat{\mu}])\cap \cR\ne\emptyset$ by Lemma~\ref{lem: LR tworow filling} (b) (or $\lambda\in\cR^{\star 3}$ directly), from which we deduce $\lambda\in\cD(3,3^{k+1},\cR)$ by the argument in Step 2.	
	The case if $[\lambda\setminus\mu]\cong [\tworow{3^{k+1}}{3r-1}]^c$ also follows similarly (using Step 1 in the proof of Lemma~\ref{lem: LRb}). 
\end{proof}

\begin{lemma}\label{lem:1100x}
	Let $k\ge 3$ and $s=(1^{k-1}0)$. Let $r=\tfrac{3^k+1}{2}$. Then
	\begin{itemize}\setlength\itemsep{0.5em}
		\item[(i)] $\cQ\subseteq\Omega(s,0,0)$ and $\Omega(s,0,0)\setminus\cQ$ contains no thin partitions, where
		\[ \cQ:=\cB_{3^{k+2}}(9r+9)\setminus \{ \tworow{3^{k+2}}{9r+8}, \hook{3^{k+2}}{9r+8} \}^\circ, \]
		\item[(ii)] $Z^{\tworow{3^{k+2}}{9r+9}}_{\phi(s,0,0)} = Z^{\hook{3^{k+2}}{9r+9}}_{\phi(s,0,0)} = 1$, 
		\item[(iii)] $m(s,0,1)=9r+8$ and $\Omega(s,0,1)\setminus\cB_{3^{k+2}}(9r+8)$ contains no thin partitions, and
		\item[(iv)] $Z^{\tworow{3^{k+2}}{9r+8}}_{\phi(s,0,1)} = Z^{\hook{3^{k+2}}{9r+8}}_{\phi(s,0,1)} = 1$. 
	\end{itemize}
\end{lemma}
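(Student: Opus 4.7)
The plan is to follow the same blueprint as Propositions~\ref{prop:4.8} and~\ref{prop:case2}, but with $\Omega(s,0)$ playing the role of the base set; since $\Omega(s,0)$ differs from $\cB_{3^{k+1}}(3r+3)$ only by the excluded thin partitions listed in Lemma~\ref{lem:110x}, Lemma~\ref{lem: LRc} provides the combinatorial tool needed to absorb these into the induction. The four boundary thin partitions in $\{\tworow{3^{k+2}}{9r+9},\hook{3^{k+2}}{9r+9},\tworow{3^{k+2}}{9r+8},\hook{3^{k+2}}{9r+8}\}^\circ$ are the decisive cases that separate $\phi(s,0,0)$ from $\phi(s,0,1)$, and their Sylow branching coefficients are forced by Lemma~\ref{lem: 9.6} together with the multiplicity-one values from Lemma~\ref{lem:110x}(ii).

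First I would establish the bulk inclusion $\cB_{3^{k+2}}(9r+8)\setminus\{\tworow{3^{k+2}}{9r+8},\hook{3^{k+2}}{9r+8}\}^\circ\subseteq\Omega(s,0,\sfx)$ for both $\sfx\in\{0,1\}$. Applying Proposition~\ref{prop: BinD} with $q=3$, $m=3^{k+1}$, $t=3r+3$ (the hypothesis $\tfrac{m}{2}+1<t$ holds because $3r+3=\tfrac{3^{k+1}+9}{2}$), one obtains
\[ \cB_{3^{k+2}}(9r+8)\subseteq\cD(3,3^{k+1},\cB_{3^{k+1}}(3r+3)). \]
Lemma~\ref{lem: LRc} absorbs $\{\tworow{3^{k+1}}{3r-1}\}^\circ$, and an analogous LR--replacement argument, using Lemmas~\ref{lem: LR tworow filling}(b) and~\ref{lem: LR hook filling}(b) to shift a box in an iterated filling, absorbs $\{\tworow{3^{k+1}}{3r+2},\hook{3^{k+1}}{3r+2}\}^\circ$ as well. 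This yields the corresponding inclusion inside $\cD(3,3^{k+1},\Omega(s,0))$, and Lemma~\ref{lem: DinOm} pushes it into $\Omega(s,0,\sfx)$. Non-thin $\lambda$ with $\lambda_1=9r+9$ or $l(\lambda)=9r+9$ are then added case by case by direct Littlewood--Richardson computations (mirroring Step~3 of Lemma~\ref{lem:1000}), exploiting both the two-row/hook and the non-thin partitions of $\Omega(s,0)$ that lie outside $\cB_{3^{k+1}}(3r+3)$.

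Next I would compute the Sylow branching coefficients at the four boundary thin partitions, following Step~4 of Lemma~\ref{lem:1000} and Proposition~\ref{prop:case2}. For such thin $\lambda$, any constituent $\mu_1\times\mu_2\times\mu_3\mid\chi^\lambda\down_{(S_{3^{k+1}})^{\times 3}}$ with each $\mu_i\in\Omega(s,0)$ must have each $\mu_i$ thin and contained in $\lambda$, hence by Lemma~\ref{lem:110x}(i) each $\mu_i$ has first row and length at most $3r+3$; a size count then forces $\mu_i\in\{\tworow{3^{k+1}}{3r+3}\}^\circ$ in the two-row cases and $\mu_i\in\{\hook{3^{k+1}}{3r+3}\}^\circ$ in the hook cases. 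Theorem~\ref{thm:PW9.1}, \cite[Theorem 1.5]{dBPW} and Theorem~\ref{thm:GTT3.5} identify the unique relevant constituent $\cX(\alpha;\nu)$ of $\chi^\lambda\down_{S_{3^{k+1}}\wr S_3}$, with $\nu=(3)$ when $\lambda_1=9r+9$ and $\nu=(2,1)$ when $\lambda_1=9r+8$ (respectively the hook analogues via Theorem~\ref{thm:GTT3.5}). Combined with Lemma~\ref{lem: 9.6} and $Z^{\tworow{3^{k+1}}{3r+3}}_{\phi(s,0)}=Z^{\hook{3^{k+1}}{3r+3}}_{\phi(s,0)}=1$ from Lemma~\ref{lem:110x}(ii), this yields
\[ Z^\lambda_{\phi(s,0,\sfx)}=\delta_{0,\sfx}\ \text{ when }\ \lambda_1=9r+9,\qquad Z^\lambda_{\phi(s,0,\sfx)}=\delta_{1,\sfx}+\delta_{2,\sfx}\ \text{ when }\ \lambda_1=9r+8, \]
giving (ii), (iv), the exclusions in (i), and $m(s,0,1)=9r+8$ in (iii). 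The ``no other thin partitions'' assertions in (i) and (iii) then follow verbatim from the argument in Step~2 of the proof of Lemma~\ref{lem:111}: a thin $\lambda$ outside the stated $\cB$--set would, via Mackey and the LR rule, produce some thin $\mu_i\subseteq\lambda$ in $\Omega(s,0)$ with first row or length exceeding $3r+3$, contradicting Lemma~\ref{lem:110x}(i).

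The main obstacle is the extension of Lemma~\ref{lem: LRc} required in the first step, namely the replacement argument that absorbs $\{\tworow{3^{k+1}}{3r+2},\hook{3^{k+1}}{3r+2}\}^\circ$ into $\cR$ inside $\cD(3,3^{k+1},\cdot)$. Unlike $\tworow{3^{k+1}}{3r-1}$, these exceptional partitions have first row only one below the $\cR$--ceiling of $3r+3$, so the ``shift a single box'' manoeuvre used in Lemma~\ref{lem: LRc} has much less flexibility; one must enumerate carefully the possible $\mu\vdash 2\cdot 3^{k+1}$ with $c^\lambda_{\mu,\gamma}>0$ when $\gamma$ is one of the forbidden partitions, and exhibit alternative triples from $\cR$ realising the same $\lambda$. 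This is a tighter version of the case analysis carried out in Steps~2--3 of Lemma~\ref{lem: LRc}.
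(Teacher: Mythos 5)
Your overall architecture (BinD $+$ DinOm for the bulk, explicit wreath-product computations with Theorem~\ref{thm:PW9.1}, \cite[Theorem 1.5]{dBPW}, Theorem~\ref{thm:GTT3.5} and Lemma~\ref{lem: 9.6} at the four boundary thin partitions, and the pigeonhole argument of Step~2 of Lemma~\ref{lem:111} for the remaining thin exclusions) is the right one, and those parts agree with the paper. The genuine gap is exactly the step you flag and then defer: the ``analogous LR--replacement argument'' absorbing $\{\tworow{3^{k+1}}{3r+2},\hook{3^{k+1}}{3r+2}\}^\circ$. This cannot be an unconditional analogue of Lemma~\ref{lem: LRc}: for example $\tworow{3^{k+2}}{9r+8}$ lies in $\cD\big(3,3^{k+1},\cB_{3^{k+1}}(3r+3)\big)$ via the triple $\big(\tworow{3^{k+1}}{3r+3},\tworow{3^{k+1}}{3r+3},\tworow{3^{k+1}}{3r+2}\big)$ (the first and second rows add up exactly), and the only way to avoid the forbidden third factor is to take all three equal to $\tworow{3^{k+1}}{3r+3}$, which is disallowed in $\cD$; yet $\tworow{3^{k+2}}{9r+8}$ must \emph{not} end up in $\Omega(s,0,0)$, since by (i)--(ii) its Sylow branching coefficient for $\phi(s,0,0)$ is $0$. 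So the replacement necessarily fails precisely at the boundary thin $\lambda$, and a correct version of your Step~1 must be $\lambda$-dependent, identifying and excluding exactly those cases. That delicate analysis is the crux of the lemma and is not carried out in your proposal; asserting it as ``a tighter version of Steps 2--3 of Lemma~\ref{lem: LRc}'' leaves the proof incomplete (and, as stated, the blanket inclusion $\cB_{3^{k+2}}(9r+8)\subseteq\cD(3,3^{k+1},\Omega(s,0))$ it would yield is false).

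The paper avoids this obstacle altogether: it applies Proposition~\ref{prop: BinD} with the smaller radius $t=3r+1$, so that $\cB_{3^{k+1}}(3r+1)\subseteq\cR\cup\{\tworow{3^{k+1}}{3r-1}\}^\circ$ automatically, and then only Lemma~\ref{lem: LRc} (absorbing $\tworow{3^{k+1}}{3r-1}$) is needed, giving $\cB_{3^{k+2}}(9r+2)\subseteq\cD(3,3^{k+1},\Omega(s,0))\subseteq\Omega(s,0,\sfx)$. The rows $9r+3\le\lambda_1\le 9r+9$ are then treated by the explicit star-product and wreath-product arguments of Steps 3--5 of Lemma~\ref{lem:1000}, which simultaneously produce the multiplicities $\delta_{0,\sfx}$ and $\delta_{1,\sfx}+\delta_{2,\sfx}$ at the four boundary thin partitions (your computation of these multiplicities, given $Z^{\tworow{3^{k+1}}{3r+3}}_{\phi(s,0)}=Z^{\hook{3^{k+1}}{3r+3}}_{\phi(s,0)}=1$ from Lemma~\ref{lem:110x}, is correct). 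If you want to keep your route, you must either prove the $\lambda$-dependent absorption statement for non-thin $\lambda$ with $\lambda_1\le 9r+8$, or switch to the paper's smaller-radius reduction plus a direct treatment of the top seven row-lengths.
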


\begin{proof}
	\noindent\emph{Step 1.} By the same argument as Step 2 in the proof of Lemma~\ref{lem:111}, $\Omega(s,0,x)\setminus\cB_{3^{k+2}}(9r+9)$ contains no thin partitions.
	
	\noindent\emph{Step 2.} Let $\cR=\cB_{3^{k+1}}(3r+3)\setminus \{ \tworow{3^{k+1}}{3r+2}, \hook{3^{k+1}}{3r+2}, \tworow{3^{k+1}}{3r-1} \}^\circ$. 
	By Proposition~\ref{prop: BinD}, Lemma~\ref{lem: LRc} and Lemma~\ref{lem: DinOm}, we have for all $\sfx\in\{0,1\}$ that
	\begin{multline*}
		\cB_{3^{k+1}}\big(3\cdot(3r+1)-1\big)\subseteq\cD\big( 3,3^{k+1},\cB_{3^{k+1}}(3r+1)\big) \subseteq\cD(3,3^{k+1},\cR\cup\{\tworow{3^{k+1}}{3r-1} \}^\circ)\\
		=\cD(3,3^{k+1},\cR) \subseteq\cD(3,3^{k+1},\Omega(s,0)) \subseteq\Omega(s,0,\sfx).
	\end{multline*}
	
	\noindent\emph{Step 3.} By the same arguments as in Steps 3--5 in the proof of Lemma~\ref{lem:1000}, $\lambda\in\Omega(s,0,\sfx)$ for all $\sfx$, for all $\lambda\vdash 3^{k+2}$ such that $9r+3\le\lambda_1\le 9r+9$ 
	and 
	\[ \lambda\notin\{\tworow{3^{k+2}}{9r+8}, \hook{3^{k+2}}{9r+8}, \tworow{3^{k+2}}{9r+9}, \hook{3^{k+2}}{9r+9} \}. \]
	Moreover, 
	\[ Z^\lambda_{\phi(s,0,\sfx)} = \begin{cases}
		\delta_{0,\sfx} & \text{if}\ \lambda\in\{ \tworow{3^{k+2}}{9r+9}, \hook{3^{k+2}}{9r+9}\},\\
		\delta_{1,\sfx}+\delta_{2,\sfx} & \text{if}\ \lambda\in\{ \tworow{3^{k+2}}{9r+8}, \hook{3^{k+2}}{9r+8}\}.
	\end{cases} \]
	Since $\Omega(s,\sfx)^\circ=\Omega(s,\sfx)$, this concludes the proof.
\end{proof}

\begin{proposition}\label{prop:case3}
	Let $k\ge 4$ and $\tfrac{3^k+9}{2}\le m \le 3^k-3$. Suppose $s\in\{0,1\}^k$ satisfies $\cQ\subseteq \Omega(s)$ and $\Omega(s)\setminus\cQ$ contains no thin partitions where 
	\[ \cQ:=\cB_{3^k}(m)\setminus\{ \tworow{3^{k}}{m-1}, \hook{3^{k}}{m-1} \}^\circ,\]
	and $Z^{\tworow{3^k}{m}}_{\phi(s)} = Z^{\hook{3^k}{m}}_{\phi(s)} = 1$. Then
	\begin{itemize}
		\item[(i)] $\cR\subseteq\Omega(s,0)$ and $\Omega(s,0)\setminus\cR$ contains no thin partitions, where
		\[ \cR:=\cB_{3^{k+1}}(3m)\setminus\{ \tworow{3^{k+1}}{3m-1}, \hook{3^{k+1}}{3m-1} \}^\circ,\]
		\item[(ii)] $Z^{\tworow{3^{k+1}}{3m}}_{\phi(s,0)} = Z^{\hook{3^{k+1}}{3m}}_{\phi(s,0)} = 1$,
		\item[(iii)] $m(s,1)=3m-1$ and $\Omega(s,1)\setminus \cB_{3^{k+1}}(3m-1)$ contains no thin partitions, and
		\item[(iv)] $Z^{\tworow{3^{k+1}}{3m-1}}_{\phi(s,1)} = Z^{\hook{3^{k+1}}{3m-1}}_{\phi(s,1)} = 1$.
	\end{itemize}
\end{proposition}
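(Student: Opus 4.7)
The plan is to follow the template of Proposition~\ref{prop:case2} and Lemma~\ref{lem:1100x}, with the novelty that the excluded thin partitions in $\cQ$ have first part $m-1$ rather than $m$. I would first handle the exterior: by the argument of Step~2 of the proof of Lemma~\ref{lem:111}, any thin $\lambda\in\Omega(s,\sfx)$ with $\lambda_1>3m$ forces via Theorem~\ref{theorem:LR} a thin $\mu\in\Omega(s)$ with $\mu_1>m$, contradicting $\Omega(s)\cap\{\text{thin}\}\subseteq\cQ\subseteq\cB_{3^k}(m)$. Next I would establish $\cB_{3^{k+1}}(3m-1)\subseteq\Omega(s,\sfx)$: since $m-2>\tfrac{3^k}{2}+1$, Proposition~\ref{prop: BinD} with $t=m-2$ yields $\cB_{3^{k+1}}(3m-7)\subseteq\cD(3,3^k,\cB_{3^k}(m-2))\subseteq\Omega(s,\sfx)$ via Lemma~\ref{lem: DinOm} (using $\cB_{3^k}(m-2)\subseteq\cQ\subseteq\Omega(s)$), and for the remaining layer $3m-6\le\lambda_1\le 3m-1$ I construct explicit triples $\mu_1,\mu_2,\mu_3\in\Omega(s)$ not all equal with $c^\lambda_{\mu_1,\mu_2,\mu_3}>0$, in the style of Step~3 of the proof of Lemma~\ref{lem:1000}: factor the tail of $\lambda$ via Proposition~\ref{prop: B star B} and Lemma~\ref{lem: 10.3}, keeping each $\mu_i$ in $\{\tworow{3^k}{m},\hook{3^k}{m}\}\cup\cB_{3^k}(m-2)\subseteq\cQ$ and avoiding the excluded $\tworow{3^k}{m-1},\hook{3^k}{m-1}$; the hypothesis $m\le 3^k-3$ provides the needed slack.

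For the boundary multiplicities, consider $\lambda\in\{\tworow{3^{k+1}}{3m},\hook{3^{k+1}}{3m}\}$: Theorems~\ref{thm:PW9.1} and~\ref{thm:GTT3.5} give $\cX(\mu;\nu)\mid\chi^\lambda\down_{S_{3^k}\wr S_3}$ with multiplicity $1$, for $\mu\in\{\tworow{3^k}{m},\hook{3^k}{m}\}$ and some $\nu\in\{(3)\}^\circ$. Writing $\chi^\lambda\down=\cX(\mu;\nu)+\Delta$, any further constituent $\psi$ of $\Delta$ with $\phi(s)^{\times 3}\mid\psi\down_{(P_{3^k})^{\times 3}}$ would, by the Clifford-theoretic description of $\Irr(S_{3^k}\wr S_3)$ recalled in Section~\ref{sec:wreath} together with LR row-length bounds (using $\Omega(s)\cap\{\text{thin}\}\subseteq\cB_{3^k}(m)$), be of the form $\cX(\mu;\rho)$ with $\rho\ne\nu$; but a lexicographic comparison against $\lambda$ of the lex-greatest constituents of $\cX(\mu;\rho)\up$ rules this out. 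Lemma~\ref{lem: 9.6} with $Z^\mu_{\phi(s)}=1$ then gives $Z^\lambda_{\phi(s,\sfx)}=\delta_{0,\sfx}$. For $\lambda\in\{\tworow{3^{k+1}}{3m-1},\hook{3^{k+1}}{3m-1}\}$ the analogous argument (using \cite[Theorem 1.5]{dBPW} to locate the multiplicity-$1$ constituent $\cX(\mu;(2,1))$ in the two-row case, and Theorem~\ref{thm:GTT3.5} in the hook case) yields $Z^\lambda_{\phi(s,\sfx)}=\delta_{1,\sfx}+\delta_{2,\sfx}$. Crucially, the $(\alpha\times\beta\times\gamma)\up$-type boost to $\ge 2$ that appears in Proposition~\ref{prop:case2} does not occur here, since such a triple would require $\tworow{3^k}{m-1}$ or $\hook{3^k}{m-1}$ to lie in $\Omega(s)$, contradicting the hypothesis.

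Finally, to cover the non-thin $\lambda=(3m,\mu)\in\cR$, I apply Lemma~\ref{lem: iteratedLR}: $c^\lambda_{(m,\nu_1),(m,\nu_2),(m,\nu_3)}=c^\mu_{\nu_1,\nu_2,\nu_3}$ for $\nu_i\vdash 3^k-m$, and iterated Proposition~\ref{prop: B star B} shows $\cP(3(3^k-m))=\cP(3^k-m)^{\star 3}$, so such $\nu_i$ exist and the non-thinness of $\lambda$ allows them to be chosen not all equal. Each $(m,\nu_i)$ lies in $\cQ$ since $|\nu_i|=3^k-m<m$ forces $l((m,\nu_i))\le m$ and the first row $m$ distinguishes $(m,\nu_i)$ from both $\tworow{3^k}{m-1},\hook{3^k}{m-1}$ and their conjugates (given $m\ge\tfrac{3^k+9}{2}$), so Lemma~\ref{lem: DinOm} places $\lambda\in\Omega(s,\sfx)$. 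Combining: the first two steps and this argument give $\cR\subseteq\Omega(s,0)$ and $\cB_{3^{k+1}}(3m-1)\subseteq\Omega(s,1)$ with no thin partitions outside; the boundary multiplicity calculation forces $\tworow{3^{k+1}}{3m}\notin\Omega(s,1)$ (hence $m(s,1)=3m-1$) and $\tworow{3^{k+1}}{3m-1}\notin\Omega(s,0)$, yielding all four conclusions. The main technical obstacle is the explicit LR case analysis in the range $3m-6\le\lambda_1\le 3m-1$, where the two hypotheses $\tfrac{3^k+9}{2}\le m\le 3^k-3$ are used precisely to supply enough slack to navigate simultaneously around the forbidden source partitions $\tworow{3^k}{m-1},\hook{3^k}{m-1}$ and the forbidden target partitions $\tworow{3^{k+1}}{3m-1},\hook{3^{k+1}}{3m-1}$.
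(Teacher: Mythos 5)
Your proposal follows the paper's route almost step for step (exterior thin partitions via the pigeonhole/LR argument of Lemma~\ref{lem:111}, the bulk via Proposition~\ref{prop: BinD} and Lemma~\ref{lem: DinOm} with $t=m-2$, the intermediate layer via explicit iterated LR fillings as in Lemma~\ref{lem:1000}, and the boundary via wreath-product restrictions and Lemma~\ref{lem: 9.6}), but the exclusion mechanism in your boundary multiplicity computation has a genuine gap. You rule out extra constituents $\cX(\mu;\rho)$, $\rho\ne\nu$, of $\chi^\lambda\down_{S_{3^k}\wr S_3}$ by ``a lexicographic comparison against $\lambda$ of the lex-greatest constituents of $\cX(\mu;\rho)\up$''. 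That comparison only yields a contradiction when $\lambda$ is lexicographically \emph{greater} than the maximal constituent supplied by Theorem~\ref{thm:PW9.1}, and this happens in exactly one of your four cases, namely $\lambda=\tworow{3^{k+1}}{3m}$ with $\mu=\tworow{3^k}{m}$. For $\lambda=\hook{3^{k+1}}{3m}$ the lex-greatest constituent of $\cX\big(\hook{3^k}{m};(2,1)\big)\up^{S_{3^{k+1}}}$ is $(3m,3,\dotsc,3,2,1)$, which is lex-greater than $\lambda=(3m,1^{3^{k+1}-3m})$, so no contradiction arises; similarly at level $3m-1$ the maximal constituents of $\cX(\mu;(3))\up$ and $\cX(\mu;(1^3))\up$ are lex-greater than $\tworow{3^{k+1}}{3m-1}$ and $\hook{3^{k+1}}{3m-1}$. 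So in three of the four boundary cases the unwanted $\rho$'s are not excluded by your argument, and without that exclusion you cannot conclude $Z^\lambda_{\phi(s,\sfx)}=\delta_{0,\sfx}$ resp.\ $\delta_{1,\sfx}+\delta_{2,\sfx}$ (in particular $\tworow{3^{k+1}}{3m}\notin\Omega(s,1)$, which you need for $m(s,1)=3m-1$, is not yet proved).

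The fix is the multiplicity count that the paper's Steps 4--5 of Lemma~\ref{lem:1000} rely on: by the Littlewood--Richardson rule and the hypothesis $\tworow{3^k}{m-1},\hook{3^k}{m-1}\notin\Omega(s)$, the only constituent $\mu_1\times\mu_2\times\mu_3$ of $\chi^\lambda\down_{(S_{3^k})^{\times 3}}$ with all $\mu_i\in\Omega(s)$ is $\mu^{\times 3}$, with multiplicity exactly $1$ when $\lambda_1=3m$ and $2$ when $\lambda_1=3m-1$; since the single constituent located by Theorem~\ref{thm:PW9.1}, \cite[Theorem 1.5]{dBPW} or Theorem~\ref{thm:GTT3.5} already restricts to $1\cdot\mu^{\times 3}$ resp.\ $2\cdot\mu^{\times 3}$, no other $\cX(\mu;\rho)$ can occur, and Lemma~\ref{lem: 9.6} then gives the stated values. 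A smaller point of the same kind: in your treatment of $\lambda=(3m,\mu)$, the identity $\cP(3^k-m)^{\star 3}=\cP(3(3^k-m))$ gives existence of $\nu_1,\nu_2,\nu_3$ but not the ``not all equal'' condition required by Definition~\ref{def: Dset newOmegas}; you should invoke Proposition~\ref{prop: BinD} with $q=3$ and both parameters equal to $3^k-m\ge 3$ (valid since $m\le 3^k-3$), which is exactly what the paper does and handles all $\mu\notin\{(3^{k+1}-3m)\}^\circ$.
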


\begin{proof}
	Since $\Omega(s,\sfx)^\circ=\Omega(s,\sfx)$, it suffices to consider $\lambda\vdash 3^{k+1}$ such that $\lambda_1\ge l(\lambda)$.
	
	\noindent\emph{Step 1.} By the same argument as Step 2 in the proof of Lemma~\ref{lem:111}, $\Omega(s,\sfx)\setminus\cB_{3^{k+1}}(3m)$ contains no thin partitions for all $\sfx\in\{0,1\}$.
	
	\noindent\emph{Step 2.} Since $m\ge\tfrac{3^k+9}{2}$, then $m-2>\tfrac{3^k}{2}+1$. So by Proposition~\ref{prop: BinD} and Lemma~\ref{lem: DinOm},
	\[ \cB_{3^{k+1}}(3m-7) \subseteq\cD\big(3,3^k,\cB_{3^k}(m-2)\big) \subseteq\cD(3,3^k,\Omega(s))\subseteq\Omega(s,\sfx) \]
	for all $\sfx\in\{0,1\}$.
	 
	\noindent\emph{Step 3.} By the same argument as Step 3 in the proof of Lemma~\ref{lem:1000}, if $\lambda\vdash 3^{k+1}$ and $3m-6\le \lambda_1\le 3m-2$, or $\lambda=(3m-1,\mu)$ where $\mu\in\cB_{3^{k+1}-3m+1}(3^{k+1}-3m)$, then $\lambda\in\cD(3,3^k,\Omega(s))\subseteq\Omega(s,\sfx)$ for all $\sfx$.
	
	\noindent\emph{Step 4.} By the same argument as Step 4 in the proof of Lemma~\ref{lem:1000}, $Z^\lambda_{\phi(s,\sfx)}=\delta_{1,\sfx}+\delta_{2,\sfx}$ 
	for $\lambda\in\{\tworow{3^{k+1}}{3m-1}, \hook{3^{k+1}}{3m-1} \}$.
	
	\noindent\emph{Step 5.} Let $\lambda=(3m,\mu)$ for some $\mu\vdash 3^{k+1}-3m$. Since $\cB_{3^{k+1}-3m}(3^{k+1}-3m-1)\subseteq\cD\big(3,3^k-m,\cP(3^k-m)\big)$ by Proposition~\ref{prop: BinD}, we have that $\lambda\in\Omega(s,\sfx)$ for all $\sfx$ if $\mu\notin\{(3^{k+1}-3m) \}^\circ$, by the same argument as Step 4 in the proof of Lemma~\ref{lem:111}.
	
	\noindent\emph{Step 6.} By the same argument as Step 5 in the proof of Lemma~\ref{lem:1000}, $Z^\lambda_{\phi(s,\sfx)}=\delta_{0,\sfx}$ 
	for $\lambda\in\{ \tworow{3^{k+1}}{3m}, \hook{3^{k+1}}{3m}\}$.
\end{proof}

We can now deduce Theorem~\ref{thm:b-primepower}.

\begin{proof}[Proof of Theorem~\ref{thm:b-primepower}]
	The claims are clear if $k\le 3$ so we may assume $k\ge 4$. The value of $M(s)$ follows from Theorem~\ref{thm:M}. It remains to verify the values of $m(s)$. If $s\in U_k(0)$ then $\Omega(s)$ and hence $m(s)$ was determined in Theorem~\ref{thm:GL1A}. Otherwise $m(s)$ may be calculated using the results in Section~\ref{sec:prime-power} according to Figure~\ref{fig:3} and the forms of the leading subsequences of $s$.
	
	To illustrate one example, suppose $s=(\sfs_1,\sfs_2,\dotsc,\sfs_k) = (1,0,0,\sfs_4,\dotsc,\sfs_k)$. Then $F(s)=f(s)=1$, and $G(s)=g(s)$ if $s$ contains a second 1. 
	
	\noindent$\bullet$ If $s\in U_k(1)$ (so $z=F(s)$), or $s\in U_k(2)$ and $s_k=1$ (so $z=F(s)+1$ and $G(s)=k$), then the result follows from Lemma~\ref{lem:1000}.
		
	\noindent$\bullet$ If $s\in U_k(2)$ but $s_k=0$ (so $z=F(s)+1$), then $m(t)=2\cdot 3^{G(s)-1}-1$ where $t=(\sfs_1,\dotsc,\sfs_{G(s)})$ by Lemma~\ref{lem:1000}. Moreover, $t$ satisfies the conditions of Proposition~\ref{prop:case2}, so $m(t,0)=3m(t)$. Further, $(t,0)$ again satisfies the conditions of Proposition~\ref{prop:case2}. By repeated application of the proposition, we find that $m(s)=3^{k-G(s)}\cdot m(t)=2\cdot 3^{k-1}-3^{k-G(s)}= \tfrac{3^k+3^{k-F(s)}}{2} - 3^{k-G(s)}$.
		
	\noindent$\bullet$ Finally, if $s\in U_k(z)$ with $z\ge 3$ (so $z\ge F(s)+2$), then $m(t)=2\cdot 3^{H(s)-1}-3^{H(s)-G(s)}$ where $t=(\sfs_1,\dotsc,\sfs_{H(s)-1})$, and $t$ satisfies the conditions of Proposition~\ref{prop:case2}. Thus $m(t,1)=3m(t)-1$, and $(t,1)$ satisfies the conditions of Proposition~\ref{prop:4.8}. By repeated application 
	we deduce $m(s)=3^{k-H(s)}\cdot m(t,1)=2\cdot 3^{k-1}-3^{k-G(s)}-3^{k-H(s)}=\tfrac{3^k+3^{k-F(s)}}{2} - 3^{k-G(s)} - 3^{k-H(s)}$.
\end{proof}
	
We collect in the following statement the results obtained so far on $\Omega(s)$ when $\phi(s)$ is quasi-trivial, from which we immediately deduce Theorem~\ref{thm:a} in the case that $n$ is a power of 3. Recall that $\Omega(0^k)$ is known from Theorem~\ref{thm:GL1A}: it is $\cP(3^k)\setminus\{(3^k-1,1)\}^\circ$ unless $k=2$, in which case $\Omega(0,1)=\cP(9)\setminus\{(8,1),(5,4),(4,3,2)\}^\circ$.

\begin{theorem}\label{thm:a-prime power}
	Let $k\in\N$ and $\phi(s)\in\Lin(P_{3^k})\setminus\{\triv_{P_{3^k}} \}$ be quasi-trivial, that is, $s\in U_k(1)$. Let $a=3^{k-1}$ and $f=f(s)$. Then
	\begin{small}
		\[ \Omega(s) = \begin{cases}
			\cB_{3^k}(3^k-1) & \text{if}\ f(s)=k\ne 2,\\
			\cB_9(8)\setminus\{(3^3)\} & \text{if}\ s = (0,1),\\
			\cB_{3^k}(2a)\setminus \{ (2a,a-1,1), (2a,2,1^{a-2}), \tworow{3^k}{2a-1}, \hook{3^k}{2a-1} \}^\circ & \text{if}\ f(s)=1\ \text{and}\ k\notin\{1,3\},\\
			\cB_{27}(17)\setminus\{(17,10),(17,1^{10}),(14,13)\}^\circ \sqcup \{(18,\mu) \mid \mu\in\Omega(0,0)\}^\circ & \text{if}\ s = (1,0,0),\\
			\cB_{3^k}(3^k-3^{k-f}-1)\sqcup \{ (3^k-3^{k-f},\mu) \mid \mu\in \Omega(0^{k-f}) \}^\circ  & \text{otherwise}.
		\end{cases} \]
	\end{small}
\end{theorem}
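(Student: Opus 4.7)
The strategy is to assemble the results already established in this section by casework on the position $f=f(s)$ of the unique $1$ in $s$, noting that quasi-triviality together with $s\neq(0,\dots,0)$ forces $s\in U_k(1)$, so $f$ is well defined.

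I would first dispose of the cases $k\in\{1,2,3\}$ and the extremal cases $f=k$ or $f=1$ by direct citation. If $k=1$, then $s=(1)$ and the formula $\Omega(1)=\{(2,1)\}=\cB_3(2)$ matches the list of $\Omega(s)$ for $k=1$ given in the text. If $k=2$, the two quasi-trivial characters are $\phi(0,1)$ and $\phi(1,0)$ which fall into the second and third cases of the theorem respectively, and the values can be read off from the direct $k=2$ table. The case $s=(1,0,0)$ (i.e.\ $k=3$, $f=1$) is read directly from the $k=3$ list. The remaining $k=3$ quasi-trivial sequences $s\in\{(0,0,1),(0,1,0)\}$ fall into the first and "otherwise" cases respectively and can likewise be checked against the $k=3$ list; this also verifies that the formula in the "otherwise" case specialises correctly when $k-f=1$ (using $\Omega(0)=\{(3),(1^3)\}$).

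With these small cases out of the way, I would treat the three generic regimes separately. \textbf{(i)} If $f(s)=k\geq 3$, then $s=(0,\dots,0,1)$, and Lemma~\ref{lem: 001} applied with the appropriate parameters gives $\Omega(s)=\cB_{3^k}(3^k-1)$, matching the first line. \textbf{(ii)} If $f(s)=1$ and $k\geq 4$, then $s=(1,0^{k-1})$ and Lemma~\ref{lem:1000} directly supplies the claimed description of $\Omega(s)$ (written with $a=3^{k-1}$); this covers the third line. \textbf{(iii)} For the remaining "otherwise" case we have $1<f(s)<k$. Here I would write $s=(s^-,0)$ where $s^-\in U_{k-1}(1)$ has its unique $1$ at position $f=f(s^-)$, observe that $\sfs_1^-=0$ and $k-1\geq 2$, and invoke Lemma~\ref{lem:4.7}(a) with the input sequence $s^-$ and $\sfx=0$. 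The lemma gives
\[
\Omega(s)=\Omega(s^-,0)=\cB_{3^k}(q-1)\sqcup\{(q,\mu)\mid \mu\in\Omega(\sfs^-_{f+1},\dots,\sfs^-_{k-1},0)\}^\circ,
\]
where $q=3^k-3^{k-f}$. Since the unique $1$ of $s^-$ sits at position $f$, the truncated sequence $(\sfs^-_{f+1},\dots,\sfs^-_{k-1},0)$ equals $0^{k-f}$, so the set on the right becomes $\{(3^k-3^{k-f},\mu)\mid \mu\in\Omega(0^{k-f})\}^\circ$, which is exactly the formula in the last line of the theorem.

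There is essentially no new content to prove; the only thing to check carefully is that the case split in the theorem statement matches the hypotheses of Lemmas~\ref{lem: 001}, \ref{lem:1000}, and~\ref{lem:4.7}, and that the small-$k$ exceptions (in particular the two "special" sequences $s=(0,1)$ and $s=(1,0,0)$, and the small-$k$ clause $k\notin\{1,3\}$ in the third line) line up exactly with the direct computations listed after the statement of Proposition~\ref{prop:case2}. The most delicate verification is that in case (iii) the tail $0^{k-f}$ has the right length when $k-f=1$, so that $\Omega(0^{k-f})=\Omega(0)=\{(3),(1^3)\}$ is the correct input; once this is observed, no further argument is needed.
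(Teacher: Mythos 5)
Your proposal is correct and follows essentially the same route as the paper, which proves this theorem by exactly the citations you give: direct computation for small $k$, Lemma~\ref{lem:1000} when $f(s)=1$ and $k\ge 4$, Lemma~\ref{lem: 001} when $f(s)=k\ge 3$, and Lemma~\ref{lem:4.7} (applied as you do, to the truncation $s^-$ with $\sfx=0$) in the remaining case $1<f(s)<k$. Your additional verifications of the hypotheses and of the $k-f=1$ specialisation are accurate.
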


\begin{proof}
	The statement follows from direct computation for small $k$, Lemma~\ref{lem:1000} if $f(s)=1$ and $k\ge 4$, Lemma~\ref{lem: 001} if $f(s)=k\ge 3$, and Lemma~\ref{lem:4.7} otherwise.
\end{proof}

\bigskip

\section{Proofs of the main results}\label{sec:all-n}

Let $k\in\N_{\ge 3}$ and $s\in\{0,1\}^k$.
As a helpful summary for the reader, we collect all of the relevant properties of $\Omega(s)$ and $m(s)$ that we have proved in Section~\ref{sec:prime-power} (along with Theorem~\ref{thm:GL1A} for $\Omega(\triv_{P_n})$) in Table~\ref{table:summary} below, emphasising those rows corresponding to quasi-trivial $s$ in bold. Note we have abbreviated $\tworow{3^k}{t}$ and $\hook{3^k}{t}$ to simply $\tworow{}{t}$ and $\hook{}{t}$, for various $t$. 
These properties of $\Omega(s)$ and $m(s)$ served to motivate the definitions and naming of the types $\sigma(s)$ introduced in Definition~\ref{def: typephi3}. 

\begin{table}[h]
	\centering
	\begin{small}
		\[ \def\arraystretch{1.2}
		\begin{array}{|c|cl|c|}
			\hline
			\sigma(s) & \Omega(s) & m:=m(s) & N:=N(s)\\
			\hline
			\hline
			\bm{1} & \bm{\cP(3^k)\setminus\{(3^k-1,1)\}^\circ} & \bm{3^k-2} & \bm{3^k}\\
			10 & \supseteq\cB_{3^k}(m+2)\setminus\{\tworow{}{m+1},\hook{}{m+1} \}^\circ\ \text{a.c.n.o.t.p} & \tfrac{3^k+3^{k-F(s)}}{2}-2 & m(s)+2\\ 
			11 & \supseteq\cB_{3^k}(m+2)\setminus\{\tworow{}{m+1},\hook{}{m+1} \}^\circ\ \text{a.c.n.o.t.p} & \tfrac{3^k-1}{2} & m(s)+2\\
			\bm{2} & \bm{\cB_{3^k}(3^k-1)} & \bm{3^k-1} & \bm{3^k-1}\\
			21 & \supseteq\cB_{3^k}(m(s))\ \text{a.c.n.o.t.p} & \tfrac{3^k+1}{2} & m(s)\\
			\bm{3} & \bm{\cB_{3^k}(N-1)\sqcup\{(N,\mu) : \mu\in\Omega(\triv_{P_{3^k-N}}) \}^\circ} & \bm{3^k-3^{k-f(s)}-1} & \bm{\scriptstyle{3^k-3^{k-f(s)}\ (k-f(s)\ge 3)}}\\
			\bm{30} & \bm{\cB_{3^k}(3^k-10)\sqcup\{(3^k-9,\mu) : \mu\in\Omega(\triv_{P_9})\}^\circ} & \bm{3^k-10} & \bm{3^k-9}\\
			\bm{31} & \bm{\cB_{3^k}(3^k-4)\sqcup\{(3^k-3),(3^k-3,1^3) \}^\circ} & \bm{3^k-4} & \bm{3^k-3}\\
			\bm{5} & \bm{\cC} & \bm{2\cdot3^{k-1}-2} & \bm{2\cdot 3^{k-1}}\\
			\bm{6} & \bm{\Omega(1,0,0)} & & \bm{18}\\
			7 & \cB_{3^k}(m+5)\setminus\{\tworow{}{m+4},\hook{}{m+4},\tworow{}{m+1} \}^\circ & \tfrac{3^k-1}{2} & m(s)+5\\
			\hdashline
			22 & \supseteq\cB_{3^k}(m(s))\ \text{a.c.n.o.t.p}  & \text{(see Remark~\ref{rem: 22})} & m(s)\\
			\hline
		\end{array}\]
	\end{small}
	\caption{Summary of $\sigma(s)$, $\Omega(s)$, $m(s)$ and $N(s)$. Here `and contains no other thin partitions' is abbreviated to \emph{a.c.n.o.t.p}. Moreover, $\cC:=\cB_{3^k}(2a)\setminus\{(2a,a-1,1),(2a,2,1^{a-2}),(2a-1,a+1),(2a-1,1^{a+1}) \}^\circ$ where $a=3^{k-1}$ and $k\ge 4$, and $\Omega(1,0,0)$ was explicitly calculated in Section~\ref{sec:prime-power}.}\label{table:summary}
\end{table}

\begin{remark}\label{rem: 22}
	For $\sigma(s)=22$ in Table~\ref{table:summary}, $m(s)$ is one of the following values by Theorem~\ref{thm:b-primepower}: 
	\[ 3^k-3^{k-f(s)}-3^{k-g(s)}\ ,\qquad\tfrac{3^k+3^{k-F(s)}}{2}-3^{k-G(s)}\ ,\qquad \tfrac{3^k+3^{k-F(s)}}{2}-3^{k-G(s)}-3^{k-H(s)}.\]
	We remark that $\tfrac{3^k+3}{2} \le m(s)=N(s) \le 3^k-2$ in all of these cases.
	\hfill$\lozenge$
\end{remark}

\medskip

\subsection{Quasi-trivial characters}\label{sec:a-full}
The main aim of this section is to prove Theorem~\ref{thm:a}. 

\begin{notation}\label{not:qt}
	Throughout Section~\ref{sec:a-full}, we fix the following notation (recall Definition~\ref{def: all p=3}): let $n\in\N$ be divisible by 27 and $l\in\{0,1,\dotsc,26\}$. Let $\phi\times\psi\in\Lin(P_{n+l})$ be quasi-trivial, where $\phi\in\Lin(P_n)$ and $\psi\in\Lin(P_l)$. Suppose $\phi$ corresponds to $\{s_1,\dotsc,s_R\}$, so $R\ge 1$ and $\sigma(s_i)\in\{1,2,3,30,31,5,6\}$ for all $i\in[R]$. Let $\sigma(\phi)=(y_1,y_2,y_3,y_{30},y_{31},y_5,y_6)$ where $y_i:=\#\{j\in[R] \mid \sigma(s_j)=i \}$.
\end{notation}

We proceed by substituting the various forms of $\Omega(s_i)$ 
into Lemma~\ref{lem: omega star omega}, in order to first compute $\Omega(\phi)$, then $\Omega(\phi\times\psi)$.
The structure of the proof is as follows (recalling that the case of $R=1$ and $l=0$ was completely determined in Section~\ref{sec:prime-power}):

\begin{itemize}
	\item We first find $\Omega(\phi)$ if $y_i=0$ for all $i\notin\{1,2\}$ (Lemma~\ref{lem: 12}).
	\item We then find $\Omega(\phi)$ if $y_i=0$ for all $i\notin\{3,30,31\}$ (Lemma~\ref{lem: 33031}).
	\item Combining, we find $\Omega(\phi)$ whenever $y_i=0$ for all $i\notin\{1,2,3,30,31\}$ (Lemma~\ref{lem: 1233031}).
	\item Then we find $\Omega(\phi)$ if $y_i=0$ for all $i\notin\{5,6\}$ (Lemma~\ref{lem: 56}).
	\item Combining, we find $\Omega(\phi)$ for all $\phi\in\Lin(P_n)$ (Proposition~\ref{prop:qt}).
	\item Finally, we find $\Omega(\phi\times\psi)=\Omega(\phi)\star\Omega(\psi)$ 
	(Theorem~\ref{thm:pre-A'}), and hence deduce Theorem~\ref{thm:a}.
\end{itemize}

\begin{lemma}\label{lem: 12}
	Suppose $\sigma(\phi)=(y_1,y_2,0,0,0,0,0)$. Then
	\[ \Omega(\phi) = \begin{cases}
		\cP(n)\setminus\{(n-1,1)\}^\circ & \text{if}\ R=1\ \text{and}\ y_1=1,\\
		\cB_n(N(\phi)) & \text{otherwise}.
	\end{cases} \]	
\end{lemma}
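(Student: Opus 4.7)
The plan is to combine Lemma~\ref{lem: omega star omega}, which gives $\Omega(\phi)=\Omega(s_1)\star\cdots\star\Omega(s_R)$, with the explicit shapes of $\Omega(s_i)$ recorded in Table~\ref{table:summary}: when $\sigma(s_i)=1$ we have $\Omega(s_i)=\cP(3^{k_i})\setminus\{(3^{k_i}-1,1)\}^\circ=\cB_{3^{k_i}}(3^{k_i}-2)\cup\{(3^{k_i})\}^\circ$ with $N(s_i)=3^{k_i}$, and when $\sigma(s_i)=2$ we have $\Omega(s_i)=\cB_{3^{k_i}}(3^{k_i}-1)$ with $N(s_i)=3^{k_i}-1$. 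In both cases $\Omega(s_i)\subseteq\cB_{3^{k_i}}(N(s_i))$, so the inclusion $\Omega(\phi)\subseteq\cB_n(N(\phi))$ follows at once from the definition of $\star$ and $N(\phi)=\sum_i N(s_i)$. The base case $R=1$ is precisely the tabulated value of $\Omega(s_1)$, matching the two branches of the claim.

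For the inductive step, assume $R\ge 2$. If $y_2=0$ then $\phi=\triv_{P_n}$; since each $k_i\ge 3$ forces $27\mid n$, while $R\ge 2$ means the $3$-adic expansion of $n$ has at least two non-zero summands, $n$ is not a prime power, and Theorem~\ref{thm:GL1A} yields $\Omega(\phi)=\cP(n)=\cB_n(n)=\cB_n(N(\phi))$.

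If instead $y_2\ge 1$, reorder so that $\sigma(s_R)=2$, set $n':=n-3^{k_R}$, and apply the induction hypothesis to $\phi'$ corresponding to $\{s_1,\dotsc,s_{R-1}\}$. Either $\Omega(\phi')=\cP(n')\setminus\{(n'-1,1)\}^\circ$ (precisely when $R=2$ and $\sigma(s_1)=1$) or $\Omega(\phi')=\cB_{n'}(N(\phi'))$. In the first case, using the identification $\cP(n')\setminus\{(n'-1,1)\}^\circ=\cB_{n'}(n'-2)\cup\{(n')\}^\circ$, Lemma~\ref{lem: 10.3} applied with $m=3^{k_R}$, $t=3^{k_R}-1$ and $n=n'\ge 27$ yields $\Omega(\phi)=\cB_n(n'+3^{k_R}-1)=\cB_n(N(\phi))$. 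In the second case, Proposition~\ref{prop: B star B} gives $\Omega(\phi)=\cB_n(N(\phi')+3^{k_R}-1)=\cB_n(N(\phi))$, provided $N(\phi')>n'/2$ and $3^{k_R}-1>3^{k_R}/2$; the latter is clear, and the former follows from $N(s_i)\ge\tfrac{26}{27}\cdot 3^{k_i}$ for each $i$, summed to give $N(\phi')\ge\tfrac{26}{27}n'>\tfrac{n'}{2}$ (as $n'\ge 27$). No individual step is genuinely subtle; the only real work is the bookkeeping needed to verify the hypotheses of these two combinatorial combination lemmas in each sub-case.
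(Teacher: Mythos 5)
Your proof is correct and follows essentially the same route as the paper, whose own proof is a one-line citation of exactly the ingredients you use (Theorem~\ref{thm:GL1A}, Proposition~\ref{prop: B star B}, Lemma~\ref{lem: 10.3}, and the determination of $\Omega(s_i)$ for the type-$1$ and type-$2$ sequences). Your version merely makes explicit the induction on $R$, the identification $\cP(n')\setminus\{(n'-1,1)\}^\circ=\cB_{n'}(n'-2)\cup\{(n')\}^\circ$, and the verification of the hypotheses $N(\phi')>n'/2$ and $n'\ge 5$, all of which check out.
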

\begin{proof}
	This follows from Theorem~\ref{thm:GL1A}, Proposition~\ref{prop: B star B}, Lemma~\ref{lem: 10.3} and Lemma~\ref{lem:4.7}.
\end{proof}

\begin{lemma}\label{lem: 33031}
	Suppose $\sigma(\phi)=(0,0,y_3,y_{30},y_{31},0,0)$. Then
	\[ \Omega(\phi) = \begin{cases}
		\cB_n(N(\phi)-1)\sqcup\{(N(\phi),\mu) \mid \mu\in\Omega(\triv_{P_{n-N(\phi)}}) \}^\circ & \text{if}\ R=1,\ \text{or}\ R=2\ \text{and}\ y_{31}=2,\\
		\cB_n(N(\phi)) & \text{otherwise}.
	\end{cases} \]
\end{lemma}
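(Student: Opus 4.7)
The case $R = 1$ will follow immediately from Theorem~\ref{thm:a-prime power}. For $R \geq 2$, the plan is to apply Lemma~\ref{lem: omega star omega} to write $\Omega(\phi) = \Omega(s_1) \star \cdots \star \Omega(s_R)$ and then expand each factor using the uniform description
\[ \Omega(s_i) = \cB_{3^{k_i}}(N_i - 1) \sqcup E_i,\qquad E_i := \{(N_i, \mu) : \mu \in T_i\}^\circ, \]
read from Table~\ref{table:summary}, where $T_i := \Omega(\triv_{P_{3^{k_i}-N_i}})$ and $3^{k_i} - N_i \in \{3, 9\} \cup \{3^j : j \geq 3\}$. Since $n$ is divisible by $27$, every $3^{k_i} \geq 27$, and one checks directly from the formulas in Table~\ref{table:summary} that $N_i - 1 > 3^{k_i}/2$ in each of types $3$, $30$, $31$. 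Together with the equality $N(s_i) = M(s_i)$ for these types, this gives the upper bound $\Omega(\phi) \subseteq \cB_n(N(\phi))$ via Theorem~\ref{thm:M}(b), while iterating Proposition~\ref{prop: B star B} supplies $\cB_n(N(\phi) - R) \subseteq \Omega(\phi)$. The task then reduces to identifying which boundary partitions (those with $\lambda_1 > N(\phi) - R$ or $l(\lambda) > N(\phi) - R$) actually appear.

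The heart of the argument will be the exceptional case $R = 2$, $y_{31} = 2$, where $T_1 = T_2 = \Omega(\triv_{P_3}) = \{(3), (1^3)\}$. I would perform a direct Littlewood--Richardson computation to show
\[ \{(3),(1^3)\} \star \{(3),(1^3)\} = \Omega(\triv_{P_6}) = \cP(6) \setminus \{(3,2,1)\}^\circ, \]
by enumerating all $\lambda \vdash 6$ and checking which admit a valid LR filling for one of the four combinations of $(3)$ and $(1^3)$. After this, the mixed terms $\cB_{3^{k_1}}(N_1-1) \star E_2$ and $E_1 \star \cB_{3^{k_2}}(N_2-1)$, combined with the main $\cB \star \cB$ piece at level $N(\phi) - 2$, will fill $\cB_n(N(\phi)-1)$ by a direct LR argument constructing explicit $\mu_1, \mu_2$ witnessing each boundary $\lambda$. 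Finally, the top-level term $E_1 \star E_2$ contributes exactly $\{(N(\phi), \mu) : \mu \in \Omega(\triv_{P_6})\}^\circ$, matching the exceptional formula.

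For the remaining cases with $R \geq 2$, I would prove $\Omega(\phi) = \cB_n(N(\phi))$ by splitting into two subcases. If some $s_j$ has type $3$ or $30$, then $3^{k_j} - N_j \in \{9\} \cup \{3^i : i \geq 3\}$ and Lemma~\ref{lem: 10.3} (applied with $n \geq 5$) lifts the star product with $\cB_{3^{k_j}}(N_j - 1)$ all the way up to a full $\cB$-set at the boundary; combining with the other factors then fills $\cB_n(N(\phi))$. If instead $R \geq 3$ and every $s_i$ is type $31$, I would invoke the exceptional-case computation just proved on two of the factors, obtaining a product containing $\{(N_1+N_2, \mu) : \mu \in \Omega(\triv_{P_6})\}^\circ$, and then star with the third type-$31$ factor using Lemma~\ref{lem: 10.3} with $n = 3$, checking that the exception $m = 2t - 1$ does not arise for these parameters so that the missing $(3,2,1)$ is recovered by the third factor's contribution.

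The hard part will be tracking the small exceptional partitions (for instance $(3,2,1) \notin \Omega(\triv_{P_6})$) across the various sub-cases and verifying that, outside the single exception in the statement, they are always filled back in either by an additional type-$31$ factor or by the larger $T_j$ of a type-$3$ or type-$30$ factor. This bookkeeping is combinatorial rather than deep, but constitutes the chief technical burden of the proof.
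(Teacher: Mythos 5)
Your skeleton is essentially the paper's: $R=1$ from the prime-power results, then $\Omega(\phi)=\Omega(s_1)\star\cdots\star\Omega(s_R)$, ball bounds from Theorem~\ref{thm:M} and Proposition~\ref{prop: B star B}, and the key observation (via Lemma~\ref{lem: iteratedLR}) that the top layer $\{\lambda:\lambda_1=N(\phi)\}^\circ$ of the product is exactly $\{(N(\phi),\mu'):\mu'\in T_1\star\cdots\star T_R\}^\circ$ with $T_i=\Omega(\triv_{P_{3^{k_i}-N_i}})$; your treatment of the exceptional case $R=2$, $y_{31}=2$ (including $\{(3),(1^3)\}^{\star 2}=\cP(6)\setminus\{(3,2,1)\}^\circ$ and the filling of the layer $N(\phi)-1$) is correct. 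The gap is in how you evaluate $T_1\star\cdots\star T_R$ in the non-exceptional cases: Lemma~\ref{lem: 10.3} requires one factor to be exactly $\cB_{n'}(n'-2)\cup\{(n')\}^\circ$ with $n'\ge 5$ (or exactly $\{(3),(1^3)\}$ in the $n'=3$ case) and the \emph{other} factor to be a genuine ball $\cB_m(t)$ with $t>m/2$, and several of the sets you need to multiply have neither shape. Concretely, the type-$30$ set $\Omega(\triv_{P_9})=\cP(9)\setminus\{(8,1),(5,4),(4,3,2)\}^\circ$ contains no ball $\cB_9(t)$ with $t>9/2$ (because $(5,4)$ is excluded) and is not of the form $\cB_9(7)\cup\{(9)\}^\circ$, so products such as $\Omega(\triv_{P_9})\star\{(3),(1^3)\}$, $\Omega(\triv_{P_9})\star\Omega(\triv_{P_9})$, or $\Omega(\triv_{P_9})\star\Omega(\triv_{P_{3^i}})$ cannot be reached by Lemma~\ref{lem: 10.3} at all. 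Likewise, in your all-type-$31$, $R\ge 3$ step, $\Omega(\triv_{P_6})$ contains no ball $\cB_6(t)$ with $t>3$ (every such ball contains $(3,2,1)$), so ``Lemma~\ref{lem: 10.3} with $n=3$'' simply does not apply to $\Omega(\triv_{P_6})\star\{(3),(1^3)\}$; the issue is not the $m=2t-1$ exception but the absence of any admissible ball.

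The facts you need are nevertheless true, and the repair is the device the paper actually uses and which your outline never invokes: except when all summands equal $3$ or when a $6$ meets a $3$, the number $n-N(\phi)=\sum_i(3^{k_i}-N_i)$ written as $\sum_i 3^{k_i-f(s_i)}$ is a $3$-adic expansion, so Lemma~\ref{lem: omega star omega} gives $T_1\star\cdots\star T_R=\Omega(\triv_{P_{n-N(\phi)}})$, which equals $\cP(n-N(\phi))$ by Theorem~\ref{thm:GL1A} in every case except $n-N(\phi)\in\{3,6\}$ with $R\le 2$ type-$31$ factors (this is precisely where the exceptional branch of the statement comes from). The one product this does not cover, $\Omega(\triv_{P_6})\star\Omega(\triv_{P_3})=\cP(9)$ (needed when $R=3$ and all factors have type $31$, since $6+3$ is not a $3$-adic expansion), must be verified by a short direct Littlewood--Richardson check, which is exactly what the paper does. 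Finally, for $R\ge 3$ your phrase ``combining with the other factors'' should be made into an induction on $R$ as in the paper: peel off one exceptional-form factor against the full ball $\cB_{n-3^k}(N(\tilde\phi))$ supplied by the inductive hypothesis, so that the multi-layer boundary analysis reduces to the two-factor computation you already set up.
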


\begin{proof}
	The case of $R=1$ follows from Lemma~\ref{lem:4.7}, so now we may assume $R\ge 2$ and proceed by induction on $R$.
	First suppose $R=2$. If $y_3=2$, then the statement follows from the same argument as the $(\tau(s_1),\tau(s_2))=(2,2)$ case in the proof of \cite[Theorem 5.4]{GL2}. If $y_3=y_{30}=1$, then a similar argument holds, observing that $n-N(\phi)=3^{k_i-f(s_i)}+9$ for some $i$, and $\Omega(\triv_{P_{3^{k_i-f(s_i)}}})\star\Omega(\triv_{P_9})=\Omega(\triv_{P_{n-N(\phi)}})$ by Lemma~\ref{lem: omega star omega} since $k_i-f(s_i)\ge 3$ (i.e.~$3^{k_i-f(s_i)} + 9$ is the 3-adic expansion of $n-N(\phi)$). Finally, $\Omega(\triv_{P_{n-N(\phi)}}) = \cP(n-N(\phi))$ by Theorem~\ref{thm:GL1A}. The cases of $y_3=y_{31}=1$, $y_{30}=2$ and $y_{30}=y_{31}=1$ follow similarly. However, notice when $y_{31}=2$ then $\Omega(\triv_{P_{n-N(\phi)}})=\Omega(\triv_{P_6})\ne\cP(6)$.
	
	Now suppose $R\ge 3$. If $R=3$ and $y_3=31$, then the statement follows from the same argument as above, by observing that $\Omega(\triv_{P_6})\star \Omega(\triv_{P_{3^t}})=\cP(3^t+6)$ for all $t\in\N$ by direct calculation if $t=1$, while if $t\ge 2$ then noticing that $3^t+6$ is a 3-adic expansion and hence $\Omega(\triv_{P_6})\star \Omega(\triv_{P_{3^t}})=\Omega(\triv_{P_{{3^t}+6}})=\cP(3^t+6)$.
	
	Otherwise, we may assume that $\phi=\phi(s)\times\tilde{\phi}$ where $s\in\{0,1\}^k$ for some $k$ and $\sigma(s)\in\{3,30,31\}$, and $\sigma(\tilde{\phi})=(0,0,\tilde{y}_3,\tilde{y}_{30},\tilde{y}_{31},0,0)\ne(0,0,0,0,2,0,0)$. By the inductive hypothesis, we have $\Omega(\tilde{\phi}) = \cB_{n-3^k}(N(\tilde{\phi}))$. The statement of the present lemma then follows by applying the same argument as in the $(\tau(s_1),\tau(s_2))=(2,3)$ case in the proof of \cite[Theorem 5.4]{GL2}.
\end{proof}

Recall the definition of the set $E$ from Definition~\ref{def: all p=3} (iii).
\begin{lemma}\label{lem: 1233031}
	Suppose $\sigma(\phi)=(y_1,y_2,y_3,y_{30},y_{31},0,0)$. Then
	\[ \Omega(\phi) = \begin{cases}
		\cP(n)\setminus\{(n-1,1)\}^\circ & \text{if}\ R=1\ \text{and}\ y_1=1,\\
		\cB_n(N(\phi)-1)\sqcup\{(N(\phi),\mu) \mid \mu\in\Omega(\triv_{P_{n-N(\phi)}}) \}^\circ & \text{if}\ \sigma(\phi)\in E,\\
		\cB_n(N(\phi)) & \text{otherwise}.
	\end{cases} \]
\end{lemma}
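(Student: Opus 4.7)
The plan is to decompose $\phi = \phi_A \times \phi_B$, where $\phi_A$ collects the type-$1$ and type-$2$ components of $\phi$ (so $\sigma(\phi_A) = (y_1, y_2, 0, 0, 0, 0, 0)$) and $\phi_B$ the type-$3$, type-$30$, type-$31$ ones; let $R_A = y_1 + y_2$, $R_B = y_3 + y_{30} + y_{31}$, with sizes satisfying $n_A + n_B = n$ and $N(\phi_A) + N(\phi_B) = N(\phi)$. By Lemma~\ref{lem: omega star omega}, $\Omega(\phi) = \Omega(\phi_A) \star \Omega(\phi_B)$, and the extreme cases $R_A = 0$ or $R_B = 0$ follow directly from Lemmas~\ref{lem: 33031} and~\ref{lem: 12}; in particular the first formula case ($R = 1$ with $y_1 = 1$) is the $R_B = 0$, $\phi = \triv_{P_{3^k}}$ subcase of Lemma~\ref{lem: 12}. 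I therefore assume $R_A, R_B \geq 1$.

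First I dispose of the case $\Omega(\phi_B) = \cB_{n_B}(N(\phi_B))$, which by Lemma~\ref{lem: 33031} is equivalent to $R_B \geq 3$ or ($R_B = 2$ with $y_{31} \leq 1$); a check against the enumeration of $E$ confirms $\sigma(\phi) \notin E$ in either situation. Here $\Omega(\phi_A) \in \{\cB_{n_A}(N(\phi_A)),\ \cP(n_A),\ \cP(n_A) \setminus \{(n_A-1,1)\}^\circ\}$ according to $y_2$ and $R_A$. Proposition~\ref{prop: B star B} (after checking $N(\phi_A), N(\phi_B)$ exceed half their respective sizes, which follows because each type-$3, 30, 31$ contribution to $n_A - N(\phi_A)$ or $n_B - N(\phi_B)$ is small compared to the underlying $3^k \geq 27$) resolves the first two forms, and Lemma~\ref{lem: 10.3} handles $\phi_A = \triv_{P_{3^k}}$ with $R_A = 1$ via the identification $\cP(3^k) \setminus \{(3^k-1,1)\}^\circ = \cB_{3^k}(3^k-2) \cup \{(3^k)\}^\circ$. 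All branches yield $\Omega(\phi) = \cB_n(N(\phi))$, matching the formula.

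The remaining scenario is the complex form $\Omega(\phi_B) = \cB_{n_B}(N(\phi_B)-1) \sqcup \{(N(\phi_B), \mu) : \mu \in \Omega(\triv_{P_{n_B-N(\phi_B)}})\}^\circ$ (so $R_B = 1$ or $R_B = 2$ with $y_{31} = 2$). Splitting $\Omega(\phi_A) \star \Omega(\phi_B)$ along this decomposition, the ``bulk'' $\Omega(\phi_A) \star \cB_{n_B}(N(\phi_B)-1)$ contributes $\cB_n(N(\phi)-1)$ by the same two combinatorial lemmas. For the ``top-row'' part $\Omega(\phi_A) \star \{(N(\phi_B), \mu) : \mu \in \Omega(\triv_{P_{n_B-N(\phi_B)}})\}^\circ$, the maximum attainable first coordinate is $N(\phi_A) + N(\phi_B) = N(\phi)$, achieved only by pairing $\rho = (N(\phi_A), \mu_A) \in \Omega(\phi_A)$ with $\nu = (N(\phi_B), \mu)$; Lemma~\ref{lem: iteratedLR} then identifies the resulting partitions with $\{(N(\phi), \delta) : \delta \in \cP(y_2) \star \Omega(\triv_{P_{n_B - N(\phi_B)}})\}$, where $\mu_A$ varies over $\cP(y_2)$ (and is simply empty when $\phi_A$ is trivial).

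The main obstacle is evaluating this inner star product and matching its two possible values against the $E$/non-$E$ dichotomy. When $\sigma(\phi) \in E$, the enumeration of $E$ (with $y_5 = y_6 = 0$) forces $(y_2,\, n_B - N(\phi_B)) \in \{(0, \ast), (1, 9), (1, 3)\}$ and correspondingly $n - N(\phi) \in \{3, 4, 6, 9, 10\} \cup \{3^k : k \geq 3\}$, and I verify case-by-case that $\cP(y_2) \star \Omega(\triv_{P_{n_B - N(\phi_B)}}) = \Omega(\triv_{P_{n - N(\phi)}})$; the non-trivial identities $\cP(1) \star \Omega(\triv_{P_9}) = \Omega(\triv_{P_{10}})$ and $\cP(1) \star \Omega(\triv_{P_3}) = \Omega(\triv_{P_4})$ are checked by inspecting which near-rectangular partitions of $10$ or $4$ are reachable by adding one box to an element of the respective $\Omega(\triv)$. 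When $\sigma(\phi) \notin E$, Lemma~\ref{lem: 10.3} (for type $3$ and type $31$), Lemma~\ref{lem: P star 00} (for type $30$), and direct verification for the $R_B = 2$, $y_{31} = 2$ case (where $\Omega(\triv_{P_6}) = \cP(6) \setminus \{(3,2,1)\}$) combine to give $\cP(y_2) \star \Omega(\triv_{P_{n_B - N(\phi_B)}}) = \cP(n - N(\phi))$, so that bulk and top-row contributions merge into $\cB_n(N(\phi))$. The conjugation-closure of $\Omega(\phi)$, which holds since $p = 3$ is odd, handles the ${}^\circ$ notation throughout.
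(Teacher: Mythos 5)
Your proposal is correct and follows essentially the same route as the paper's proof: the same splitting $\phi=\phi_A\times\phi_B$ into the type-$\{1,2\}$ and type-$\{3,30,31\}$ parts, the same reduction of the remaining work to the product in \eqref{eqn: not ball} with the trichotomy $n_A-N(\phi_A)=y_2\in\{0\},\{1\},\{\ge 2\}$, and the same matching against the enumeration of $E$ (including the two genuinely exceptional identities at $q-N\in\{3,9\}$ and the use of Lemmas~\ref{lem: 10.3} and~\ref{lem: P star 00} in the non-$E$ cases). The only difference is cosmetic: you make explicit the verifications $\cP(1)\star\Omega(\triv_{P_3})=\Omega(\triv_{P_4})$ and $\cP(1)\star\Omega(\triv_{P_9})=\Omega(\triv_{P_{10}})$ that the paper leaves implicit.
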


\begin{proof}
	As usual, we may assume $R\ge 2$. Let $\phi=\pi\times\rho$ where $\rho\in\Lin(P_{n'})$, $\pi\in\Lin(P_{n-n'})$,  $\sigma(\pi)=(y_1,y_2,0,0,0,0,0)$ and $\sigma(\rho)=(0,0,y_3,y_{30},y_{31},0,0)$. By Lemmas~\ref{lem: 12} and~\ref{lem: 33031}, we are done if either $\phi=\pi$ or $\phi=\rho$. Observe that $\Omega(\phi)=\Omega(\pi)\star\Omega(\rho)$ by Lemma~\ref{lem: omega star omega}. 
	
	By Proposition~\ref{prop: B star B} and Lemma~\ref{lem: 10.3}, the statement follows if $\Omega(\rho)=\cB_{n'}(N(\rho))$, 
	since $N(\rho)>\tfrac{n'}{2}$ (see Definition~\ref{def: typephi3} and Table~\ref{table:summary}). So now we may assume 
	\[ \Omega(\rho)=\cB_{n'}(N(\rho)-1)\sqcup\{(N(\rho),\nu) \mid \nu\in\Omega(\triv_{P_{n'-N(\rho)}}) \}^\circ, \]
	in other words $\sigma(\rho)\in\{(0,0,1,0,0,0,0),(0,0,0,1,0,0,0),(0,0,0,0,1,0,0),(0,0,0,0,2,0,0)\}$.
	In particular, $n'-N(\rho)\in\{3^s \mid s\in\N \}\cup\{6\}$ by Lemma~\ref{lem: 33031}.
	
	Looking at the statements of Lemmas~\ref{lem: 12} and~\ref{lem: 33031}, the key is to describe the following, for natural numbers $t>\tfrac{m}{2}$ and $N-1>\tfrac{q}{2}$:
	\begin{equation}\label{eqn: not ball}
		X:= \cB_m(t) \star \big( \cB_{q}(N-1)\sqcup \{(N,\nu) \mid \nu\in\Omega(\triv_{P_{q-N}}) \}^\circ \big).
	\end{equation}
	By Proposition~\ref{prop: B star B}, we have that $\cB_{m+q}(t+N-1)\subseteq X\subseteq\cB_{m+q}(t+N)$. Observe $X = X^\circ$, so it suffices to consider $\lambda=(t+N,\mu)$ where $\mu\vdash m-t+q-N$. 
	Indeed, $\lambda\in X$ if and only if there exist $\alpha\vdash m-t$ and $\beta\in\Omega(\triv_{P_{q-N}})$ such that $c^\mu_{\alpha,\beta}>0$, whence $c^\lambda_{(t,\alpha),(N,\beta)}>0$ by Lemma~\ref{lem: iteratedLR} (notice that $(t,\alpha)\in\cB_m(t)$ since $\alpha_1\le t$ and $l((t,\alpha))<t$ because $t>\tfrac{m}{2}$).
	
	\begin{enumerate}
		\item[(1)] If $m=t$, then this simplifies to $\lambda\in X$ if and only if $\lambda=(t+N,\mu)$ such that $\mu\in\Omega(\triv_{P_{q-N}})$.
		\item[(2)] If $m-t=1$, then $\lambda\in X$ if and only if $\mu\in\cP(1)\star\Omega(\triv_{P_{q-N}})$.
		\item[(3)] If $m-t\ge 2$, then $\lambda\in X$ if and only if $\mu\in\cP(m-t)\star\Omega(\triv_{P_{q-N}})$ also.\footnote{In fact, (1) 
		dealing with the case $m-t=0$ is similar to \cite[Lemma 5.3]{GL2}, while (2) and (3) dealing with $m-t\ge 1$ is similar to the $(\tau(s_1),\tau(s_2))=(2,3)$ case in the proof of \cite[Theorem 5.4]{GL2}. 
		There was no difference between $m-t=1$ and $m-t\ge 2$ when $p\ge 5$ because $\cP(1)\star\Omega(\triv_n)=\cP(n+1)$ for \underline{all} $n\in\N$ when $p\ge 5$ (either $\Omega(\triv_{P_n})=\cP(n)$ if $n\ne p^k$ and use Proposition~\ref{prop: B star B}, or $n=p^k$ and $\Omega(\triv_{P_{p^k}})=\cP(p^k)\setminus\{(p^k-1,1)\}^\circ$, but then $\cP(1)\star\Omega(\triv_{P_{p^k}})=\cP(p^k+1)$ directly). However, this is not true when $p=3$, which is why (2) and (3) must be differentiated here.}
	\end{enumerate}
	First suppose $\Omega(\pi)=\cB_{n-n'}(N(\pi))$. We apply (1)--(3) with $m=n-n'$, $t=N(\pi)$, $q=n'$ and $N=N(\rho)$ (observing that $t>m/2$ and $N-1>q/2$ are satisfied by Definition~\ref{def: typephi3}).
	\begin{itemize}
		\item[$\circ$] If $N(\pi)=n-n'$: this occurs exactly when $\pi=\triv_{P_{n-n'}}$, i.e.~$\sigma(\pi)=(y_1,0,0,0,0,0,0)$. That is, $\sigma(\phi)=\sigma(\pi)+\sigma(\rho)$ is exactly one of the first four elements of $E$. Moreover, by (1) we have that
		$\Omega(\phi)=\cB_n(N(\phi)-1)\sqcup\{(N(\phi),\mu) \mid \mu\in\Omega(\triv_{P_{n-N(\phi)}}) \}^\circ$.
		
		\item[$\circ$] If $n-n'-N(\pi)=1$: this occurs exactly when $\sigma(\pi)=(y_1,1,0,0,0,0,0)$. Moreover, $\Omega(\phi)=\cB_n(N(\phi))$ by (2) \emph{unless} $\mu\in\cP(1)\star\Omega(\triv_{P_{q-N}})\ne\cP(1+q-N)$. Since $q-N=n'-N(\rho)\in\{3^s\mid s\in\N\}\cup\{6\}$, we have by Theorem~\ref{thm:GL1A} that $\mu\in\cP(1)\star\Omega(\triv_{P_{q-N}})\ne\cP(1+q-N)$ when $n'-N(\rho)\in\{3,9\}$, i.e.~when $\sigma(\rho)\in \{(0,0,0,1,0,0,0),(0,0,0,0,1,0,0) \}$.
		That is, $\sigma(\phi)=\sigma(\pi)+\sigma(\rho)$ is exactly the fifth or sixth element of $E$.
		
		\item[$\circ$] Otherwise, $n-n'-N(\pi)\ge 2$: notice $\sigma(\pi)\notin E$ and so $\sigma(\phi)\notin E$. In this case $\lambda\in X$ for all $\mu$ 
		by (3) and Lemma~\ref{lem: P star 00}, so $\Omega(\phi)=\cB_n(N(\phi))$.
	\end{itemize}
	(Note that since $y_5=y_6=0$, it is not possible for $\sigma(\phi)$ to be the seventh, eighth or ninth element of $E$.)
	Finally, the case of $\Omega(\pi)=\cP(3^k)\setminus\{(3^k-1,1)\}^\circ$ for some $k\ge 3$ follows similarly to (1) above: $\sigma(\phi)$ is exactly one of the first four elements of $E$ (with $R=2$).
\end{proof}

\begin{lemma}\label{lem: 56}
	Suppose $R\ge 2$ and $\sigma(\phi)=(0,0,0,0,0,y_5,y_6)$. Then $\Omega(\phi)=\cB_n(N(\phi))$.
\end{lemma}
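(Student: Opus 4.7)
By Lemma~\ref{lem: omega star omega} we have $\Omega(\phi)=\Omega(s_1)\star\cdots\star\Omega(s_R)$, where each $\sigma(s_i)\in\{5,6\}$ by hypothesis. The inclusion $\Omega(\phi)\subseteq\cB_n(N(\phi))$ is immediate from Theorem~\ref{thm:a-prime power} (as summarised in Table~\ref{table:summary}), which gives $\Omega(s_i)\subseteq\cB_{3^{k_i}}(N(s_i))$ for every $i$, together with the elementary bound $c^\lambda_{\mu,\nu}>0\Rightarrow\lambda_1\le\mu_1+\nu_1$ and $l(\lambda)\le l(\mu)+l(\nu)$ iterated across all $R$ factors.

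For the reverse inclusion $\cB_n(N(\phi))\subseteq\Omega(\phi)$, I would argue by induction on $R\ge 2$. For the inductive step $R\ge 3$, write $\phi=\phi(s_R)\times\phi'$ where $\phi'$ corresponds to $\{s_1,\ldots,s_{R-1}\}$; as $\phi'$ still has at least two components, each of type $5$ or $6$, the induction hypothesis yields $\Omega(\phi')=\cB_{n-3^{k_R}}(N(\phi'))$. Since $N(\phi')=\sum_{i<R}2\cdot 3^{k_i-1}>\tfrac{1}{2}(n-3^{k_R})$, the problem reduces to the one-sided claim that $\Omega(s_R)\star\cB_{n-3^{k_R}}(N(\phi'))=\cB_n(N(\phi))$, which is a more flexible version of the base case with one factor now a full ball.

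The base case $R=2$ is the combinatorial heart. Given $\lambda\in\cB_n(N(\phi))$, Proposition~\ref{prop: B star B} (applicable because $N(s_i)=2\cdot 3^{k_i-1}>3^{k_i}/2$ for $i=1,2$) produces $\alpha_i\in\cB_{3^{k_i}}(N(s_i))$ with $c^\lambda_{\alpha_1,\alpha_2}>0$. If both $\alpha_i\in\Omega(s_i)$ we are done; otherwise at least one lies in the small set $\cB_{3^{k_i}}(N(s_i))\setminus\Omega(s_i)$ explicitly listed in Theorem~\ref{thm:a-prime power}, namely for type $5$ the conjugates of $(N(s_i),a_i-1,1)$, $(N(s_i),2,1^{a_i-2})$, $(N(s_i)-1,a_i+1)$, $(N(s_i)-1,1^{a_i+1})$ (where $a_i=3^{k_i-1}$), and for type $6$ the conjugates of $\{(17,10),(17,1^{10}),(14,13)\}$ together with the partitions $(18,\mu)^\circ$ for $\mu\in\{(8,1),(5,4),(4,3,2)\}^\circ$. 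Each such bad partition sits on the boundary of its ball (first row or column of near-maximal length), and I would perform Littlewood--Richardson surgery in the style of Lemmas~\ref{lem: LR tworow filling}--\ref{lem: LR hook filling} and the box-replacement arguments in the proofs of Lemmas~\ref{lem: LRa} and~\ref{lem: LRb} to exchange $\alpha_i$ for some $\alpha_i^*\in\Omega(s_i)$, routing the difference into the other factor through the generous sub-ball $\cB_{3^{k_i}}(N(s_i)-2)\subseteq\Omega(s_i)$ (type 5) or through $\cB_{27}(13)\sqcup\{(18,\mu):\mu\in\Omega(\triv_{P_9})\}^\circ\subseteq\Omega(s_i)$ (type 6).

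The main obstacle is exactly this case-by-case surgery: although the lists of bad partitions are short, one must verify that no $\lambda\in\cB_n(N(\phi))$ forces $\alpha_1$ \emph{and} $\alpha_2$ to be simultaneously bad with no admissible swap, which requires tracking how the extremal row/column of $\lambda$ is apportioned between the two factors in a given LR filling. Type~$6$ is the more delicate case, because its largest sub-ball $\cB_{27}(13)$ narrowly fails the hypothesis $t>m/2$ of Proposition~\ref{prop: B star B} (as $13<27/2$), so the ``tall'' family $\{(18,\mu):\mu\in\Omega(\triv_{P_9})\}^\circ$ must be invoked explicitly alongside the sub-ball. Once the base is settled, the inductive step goes through with essentially the same surgery, simpler because one of the factors then ranges over a full ball and therefore permits unrestricted adjustment of the LR filling.
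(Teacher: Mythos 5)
Your skeleton is the same as the paper's: upper bound from the containments $\Omega(s_i)\subseteq\cB_{3^{k_i}}(N(s_i))$, lower bound by induction on $R$ with the base case $R=2$ treated via Lemma~\ref{lem: omega star omega}, Proposition~\ref{prop: B star B} and Littlewood--Richardson replacement arguments, and you correctly identify the exceptional sets $\cB_{3^{k_i}}(N(s_i))\setminus\Omega(s_i)$ for types $5$ and $6$. However, the decisive step is left as a promissory note: you say you ``would perform LR surgery'' and yourself flag that one must check that no $\lambda$ forces both factors to be bad with no admissible swap, but this verification is exactly the content of the paper's proof and is not carried out. Moreover, for the hardest partitions the surgery framing cannot even get started, because there is nothing to exchange: e.g.\ for $\lambda=(2a+2b,a+b)$ (type $5\times 5$, $a=3^{k_1-1}$, $b=3^{k_2-1}$) the constraints $\alpha_1\in\cB_{3^{k_1}}(2a)$, $\alpha_2\in\cB_{3^{k_2}}(2b)$, $c^\lambda_{\alpha_1,\alpha_2}>0$ force $\alpha_1=(2a,a)$, $\alpha_2=(2b,b)$ by Lemma~\ref{lem: iteratedLR}, so one must verify directly that this forced pair is good rather than ``route the difference into a sub-ball''; the same phenomenon occurs for all thin $\lambda$ with $\lambda_1\in\{N(\phi)-1,N(\phi)\}$, which is why the paper exhibits explicit constituents such as $c^\lambda_{(2a,a),(2b,b)}>0$ or $c^\lambda_{(2a,1^a),(2b,1^b)}>0$ there.

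For comparison, the paper's execution of the base case is: the $6\times 6$ case is settled by the direct computation $\Omega(1,0,0)\star\Omega(1,0,0)=\cB_{54}(36)$; the $5\times 5$ case handles $\lambda_1\in\{2a+2b-3,\dotsc,2a+2b\}$ using the identities of Lemma~\ref{lem: 10.3} (e.g.\ $(\cP(a)\setminus\{(a-1,1)\}^\circ)\star\cP(b+i)=\cP(a+b+i)$) together with Lemma~\ref{lem: iteratedLR}, plus the explicit pairs above for the residual thin $\mu$; and the $5\times 6$ case uses Lemma~\ref{lem: LR tworow filling}~(b) only to replace the single exception $\{(14,13)\}^\circ$, reducing to $\Omega(s)\star\cQ$ with $\cQ$ as in \eqref{eqn:Q}. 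So your plan is the right kind of argument and close in spirit to the paper's, but as written it asserts rather than proves the case analysis, and in the extremal forced cases it needs to be replaced by direct membership checks. Filling in those checks (and either computing the $6\times 6$ case directly or proving it by the same method) would complete the proof.
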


\begin{proof}
	We induct on $R$, beginning with $R=2$. Let $\phi=\phi(s)\times\phi(t)$ with $s\in\{0,1\}^k$ and $t\in\{0,1\}^h$.
	If $\sigma(s)=\sigma(t)=6$, then $\Omega(100)\star\Omega(100)=\cB_{54}(36)$ by direct computation. 
	
	Next suppose $\sigma(s)=\sigma(t)=5$. Recall we determined $\Omega(s)$ and $\Omega(t)$ in Lemma~\ref{lem:1000} (i) (also listed in Table~\ref{table:summary}). By Proposition~\ref{prop: B star B} and Lemma~\ref{lem: omega star omega}, 
	\[ \cB_{n}(2a-2+2b-2)\subseteq\Omega(\phi)=\Omega(s)\star\Omega(t)\subseteq\cB_n(2a+2b), \]
	where $a=2\cdot 3^{k-1}$ and $b=2\cdot 3^{h-1}$. Moreover, $N(s)=2a$ and $N(t)=2b$.
	Since $\Omega(\phi)^\circ=\Omega(\phi)$, it suffices to consider $\lambda=(\lambda_1,\mu)$ with $2a+2b-3\le\lambda_1\le 2a+2b$ and $\mu\vdash 3a+3b-\lambda_1$.
	
	If $\lambda_1=2a+2b-i$ with $i\in\{2,3\}$, then $(\cP(a)\setminus\{(a-1,1)\}^\circ) \star \cP(b+i)=\cP(a+b+i)$ by Lemma~\ref{lem: 10.3} then implies $c^\mu_{\alpha,\beta}>0$ for some $\alpha\in \cP(a)\setminus\{(a-1,1)\}^\circ$ and $\beta\vdash b+i$. Hence $c^\lambda_{(2a,\alpha),(2b-i,\beta)}>0$ by Lemma~\ref{lem: iteratedLR}, where $(2a,\alpha)\in\Omega(s)$ and $(2b-i,\beta)\in\Omega(t)$.
	
	If $\lambda_1=2a+2b-1$, a similar argument shows that $\lambda\in\Omega(\phi)$ whenever $\mu\in\cB_{a+b+1}(a+b)=(\cP(a)\setminus\{(a-1,1)\}^\circ)\star \cB_{b+1}(b)$. But if $\mu=(a+b+1)$, we see that $c^\lambda_{(2a,a),(2b,b)}>0$ (and if $\mu=(1^{a+b+1})$ then $c^\lambda_{(2a,1^a),(2b,1^b)}>0$), so $\lambda\in\Omega(\phi)$ also. 
	
	If $\lambda_1=2a+2b$, then $\lambda\in\Omega(\phi)$ if and only if $\mu\in(\cP(a)\setminus\{(a-1,1)\}^\circ) \star(\cP(b)\setminus\{(b-1,1)\}^\circ)$. Clearly this contains $(\cP(a)\setminus\{(a-1,1)\}^\circ) \star\cB_b(b-2)=\cB_{a+b}(a+b-2)$ where the equality follows from Lemma~\ref{lem: 10.3}. For $\mu\in\cP(a+b)\setminus\cB_{a+b}(a+b-2)$, we see that either $c^\lambda_{(2a,a),(2b,b)}>0$ or $c^\lambda_{(2a,1^a),(2b,1^b)}>0$, and so $\lambda\in\Omega(\phi)$.
	
	Finally suppose $\sigma(s)=5$ and $\sigma(t)=6$. Note that if $\lambda\in\Omega(\phi)$, then $c^\lambda_{\mu,\nu}>0$ for some $\mu\in\Omega(s)$ and $\nu\in\Omega(100)$. By 
	Lemma~\ref{lem: LR tworow filling} (b), 
	if $\nu\in\{(14,13)\}^\circ$ then we may replace $\nu$ by some 
	\begin{equation}\label{eqn:Q}
		\omega\in\cQ:=\cB_{27}(17)\setminus\{(17,10),(17,1^{10})\}^\circ\sqcup\{(18,\gamma) \mid \gamma\in\Omega(\triv_{P_9}) \}^\circ,
	\end{equation}
	or $[\lambda\setminus\mu]\in\{[(14,13)],[(14,13)]^c\}$. In the latter case since $\mu\in\Omega(s)$ was determined in Lemma~\ref{lem:1000} we can directly see that $\lambda\in\Omega(s)\star\cQ$ via $c^\lambda_{\alpha,\beta}>0$ for some $\alpha\in\Omega(s)$ and $\beta\in\cQ$ obtained by interchanging boxes of $[\mu]$ and $[\lambda\setminus\mu]$. That $\Omega(\phi)=\cB_n(N(\phi))$ then follows from a similar argument to the above.
	
	If $R\ge 3$, we may assume $\phi=\pi\times\phi(s)$ where $s\in\{0,1\}^k$, $\sigma(s)\in\{5,6\}$, $\pi\in\Lin(P_{n-3^k})$ and $\Omega(\pi)=\cB_{n-3^k}(N(\pi))$ by the inductive hypothesis. 
	That $\Omega(\phi)=\cB_n(N(\phi))$ follows similarly.
\end{proof}

\begin{proposition}\label{prop:qt}
	Suppose $R\ge 2$ and $\sigma(\phi)=(y_1,y_2,y_3,y_{30},y_{31},y_5,y_6)$. Then
	\[ \Omega(\phi)=\begin{cases}
		\cB_n(N(\phi)-1)\sqcup\{(N(\phi),\mu) \mid \mu\in\Omega(\triv_{P_{n-N(\phi)}}) \}^\circ & \text{if}\ \sigma(\phi)\in E,\\
		\cB_n(N(\phi)) & \text{otherwise}.
	\end{cases} \]
\end{proposition}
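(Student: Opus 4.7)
My plan is to decompose $\phi=\pi\times\rho\in\Lin(P_{n_0}\times P_{n'})$, where $\pi$ collects the components of types in $\{1,2,3,30,31\}$ and $\rho$ those of types in $\{5,6\}$. Then $\Omega(\phi)=\Omega(\pi)\star\Omega(\rho)$ by Lemma~\ref{lem: omega star omega}, with $\Omega(\pi)$ given by Lemma~\ref{lem: 1233031} and $\Omega(\rho)$ given by Lemma~\ref{lem: 56} (yielding $\cB_{n'}(N(\rho))$ if $y_5+y_6\ge 2$) or by Theorem~\ref{thm:a-prime power} (with explicit exclusions if $y_5+y_6=1$). The case $y_5+y_6=0$ is Lemma~\ref{lem: 1233031}, so I may assume $y_5+y_6\ge 1$; since $M(\phi)=N(\phi)$ for all quasi-trivial types (Table~\ref{table:summary}), $\Omega(\phi)\subseteq\cB_n(N(\phi))$ throughout.

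I would first handle $y_5+y_6\ge 2$, where inspection of $E$ shows $\sigma(\phi)\notin E$ (every entry of $E$ has $y_5+y_6\le 1$), so the target is $\Omega(\phi)=\cB_n(N(\phi))$. Applying Proposition~\ref{prop: B star B} to $\cB_{n_0}(N(\pi)-1)\subseteq\Omega(\pi)$ and $\Omega(\rho)=\cB_{n'}(N(\rho))$ yields $\cB_n(N(\phi)-1)\subseteq\Omega(\phi)$. For $\lambda=(N(\phi),\gamma)$, any decomposition $c^\lambda_{\mu,\beta}>0$ with $\mu\in\Omega(\pi)$, $\beta\in\Omega(\rho)$ is forced to have $\mu=(N(\pi),\nu)$ and $\beta=(N(\rho),\omega)$; Lemma~\ref{lem: iteratedLR} gives $c^\lambda_{\mu,\beta}=c^\gamma_{\nu,\omega}$, reducing the question to whether $\gamma\in\Omega(\triv_{P_{n_0-N(\pi)}})\star\cP(n'-N(\rho))$. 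Since $n'-N(\rho)\ge 18$ (each type-5 or type-6 factor contributes at least $9$), arguments modelled on Lemma~\ref{lem: P star 00} together with Lemma~\ref{lem: 10.3} and Theorem~\ref{thm:GL1A} show this equals $\cP(n-N(\phi))$, so every $\gamma$ is admissible.

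The harder case is $y_5+y_6=1$. Here $\sigma(\phi)\in E$ precisely in three scenarios: element $7$ ($\pi=\triv_{P_{n_0}}$, $\rho$ a type-5 character, $n-N(\phi)=3^{k-1}$), element $8$ ($\pi$ has $R-2$ trivial factors and one type-2 factor, $\rho=\phi(1,0,0)$, $n-N(\phi)=10$), and element $9$ ($\pi=\triv_{P_{n_0}}$, $\rho=\phi(1,0,0)$, $n-N(\phi)=9$). The inclusion $\cB_n(N(\phi)-1)\subseteq\Omega(\phi)$ follows from Proposition~\ref{prop: B star B} combined with Littlewood--Richardson swap arguments (cf.\ Lemmas~\ref{lem: LR tworow filling} and~\ref{lem: LR hook filling}) to circumvent the small number of thin exclusions in $\Omega(\rho)$. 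For $\lambda=(N(\phi),\gamma)$ the same forced decomposition and Lemma~\ref{lem: iteratedLR} reduce the question to an LR coefficient in the tails; a direct case check against Theorem~\ref{thm:GL1A} and Table~\ref{table:m<27} identifies the admissible $\gamma$-set with $\Omega(\triv_{P_{n-N(\phi)}})$ when $\sigma(\phi)\in E$, and with $\cP(n-N(\phi))$ when $\sigma(\phi)\notin E$.

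The principal obstacle is element $8$, where $\Omega(\rho)=\cB_{27}(17)\setminus\{(17,10),(17,1^{10}),(14,13)\}^\circ\sqcup\{(18,\mu):\mu\in\Omega(\triv_{P_9})\}^\circ$ is particularly intricate. For $\lambda=(N(\phi),\gamma)$ the forced decomposition has $\mu=(n_0-1,1)\in\Omega(\pi)$ and $\beta=(18,\omega)$ with $\omega\in\Omega(\triv_{P_9})$, so $c^\lambda_{\mu,\beta}=c^\gamma_{(1),\omega}$ by Lemma~\ref{lem: iteratedLR} and this is positive iff $\gamma$ arises from some $\omega\in\Omega(\triv_{P_9})$ by adding one box. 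A finite combinatorial check confirms that the resulting set of $\gamma\vdash 10$ equals $\cP(10)\setminus\{(5,5)\}^\circ=\Omega(\triv_{P_{10}})$, the only $\gamma$ admitting no valid removal being $(5,5)$ and its conjugate $(2^5)$.
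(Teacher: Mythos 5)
Your proposal is correct and follows essentially the same route as the paper's own proof: the same splitting $\phi=\pi\times\rho$ into the $\{1,2,3,30,31\}$-part and the $\{5,6\}$-part, the same use of Lemma~\ref{lem: omega star omega}, Lemmas~\ref{lem: 1233031} and~\ref{lem: 56}, Proposition~\ref{prop: B star B} for the bulk $\cB_n(N(\phi)-1)$, the forced top-layer decomposition reduced via Lemma~\ref{lem: iteratedLR} to star products of the tails, and then Lemma~\ref{lem: P star 00}, Lemma~\ref{lem: 10.3}, Theorem~\ref{thm:GL1A} and finite checks (your identification of the exceptional elements $7$, $8$, $9$ of $E$ and the verification that only $(5,5)^\circ$ fails the one-box-addition test for element $8$ match the paper's cases (i)--(iii) $\star$ (I)--(II)). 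The only difference is organisational, plus a lighter treatment of the intermediate layers $\lambda_1\in\{N(\phi)-3,\dots,N(\phi)-1\}$, which the paper settles with the same kind of explicit swap constructions you cite.
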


\begin{proof}
	Let $\phi=\pi\times\rho$ where $\pi\in\Lin(P_{n'})$, $\rho\in\Lin(P_{n-n'})$, $\sigma(\pi)=(y_1,y_2,y_3,y_{30},y_{31},0,0)$ and $\sigma(\rho)=(0,0,0,0,0,y_5,y_6)$. If $y_5+y_6\ge 2$, then by Lemmas~\ref{lem: 1233031} and~\ref{lem: 56}, and the same argument as in the proof of Lemma~\ref{lem: 1233031}, we find $\Omega(\phi)=\Omega(\pi)\star\cB_{n-n'}(N(\rho))=\cB_n(N(\phi))$. Clearly the statement also holds if $y_5=y_6=0$ by Lemma~\ref{lem: 1233031}. 
	So it remains to consider $y_5+y_6=1$. 
	
	In particular, there are three forms of $\Omega(\pi)$ (given by Lemma~\ref{lem: 1233031}) and two forms of $\Omega(\rho)$ (letting $\rho=\phi(s)$, either $\sigma(s)=5$ or $\sigma(s)=6$) to consider. Namely, $\Omega(\pi)$ is
	\begin{small}
		\[ \text{(i)}\ \cP(3^k)\setminus\{(3^k-1,1)\}^\circ\ (k\ge 3),\text{ (ii) }\cB_{n'}(N(\pi)),\text{ or (iii)}\ \cB_{n'}(N(\pi)-1)\sqcup \{(N(\pi),\mu) \mid \mu\in\Omega(\triv_{P_{n'-N(\pi)}}) \}^\circ, \]
	\end{small}
	while (I) $\sigma(s)=5$ or (II) $\sigma(s)=6$. Moreover, if (I) $\sigma(s)=5$, then we have
	\[ \Omega(s)=\cB_{3^h}(2a)\setminus\{(2a,a-1,1),(2a,2,1^{a-2}),(2a-1,a+1),(2a-1,1^{a+1}) \}^\circ \]
	for some $h\ge 4$ and $a:=3^{h-1}$ from Lemma~\ref{lem:1000} (or Table~\ref{table:summary}), while if (II) $\sigma(s)=6$, then
	\[ \Omega(s)=\Omega(100)=\cB_{27}(17)\setminus\{(17,10),(17,1^{10}),(14,13)\}^\circ \sqcup\{ (18,\mu)\mid \mu\in\Omega(\triv_{P_9})\}^\circ. \]
	If (II) holds, we may apply the same replacement argument as in the proof of Lemma~\ref{lem: 56} to see that $\Omega(\phi)=\Omega(\pi)\star\Omega(s)=\Omega(\pi)\star\cQ$ where $\cQ:=\cB_{27}(17)\setminus\{(17,10),(17,1^{10})\}^\circ\sqcup\{(18,\gamma) \mid \gamma\in\Omega(\triv_{P_9}) \}^\circ$ (see \eqref{eqn:Q}).
	To complete the proof, we now consider cases (i)--(iii) in turn.
	
	\noindent\emph{Case (i): $\sigma(\pi)=(1,0,0,0,0,0,0)$ and $\Omega(\pi)=\cP(3^k)\setminus\{(3^k-1,1)\}^\circ$ for some $k\ge 3$.}
	\begin{itemize}
		\item[$\circ$] If $\sigma(s)=5$ (i.e.~to calculate $\Omega(\phi)$ we want to compute ``(i) $\star$ (I)"): in this case $\sigma(\phi)=(1,0,0,0,0,1,0)\in E$.
		By Proposition~\ref{prop: B star B} and Lemma~\ref{lem: omega star omega}, $$\cB_{3^k+3^h}(3^k-2+2a-2)\subseteq\Omega(\phi)=\Omega(\pi)\star\Omega(s)\subseteq\cB_{3^k+3^h}(3^k+2a).$$
		Since $\Omega(\phi)^\circ=\Omega(\phi)$, it remains to consider $\lambda=(\lambda_1,\mu)$ where $3^k+2a-3\le\lambda_1\le 3^k+2a$ and $\mu\vdash 3^k+3^h-\lambda_1$. 
		If $\lambda_1=3^k+2a-i$ where $i\in\{2,3\}$, or if $i=1$ and $\mu\in\cB_{a+1}(a)$, then $c^\lambda_{(3^k),\alpha}>0$ where $\alpha=(2a-i,\mu)\in\Omega(s)$, so $\lambda\in\Omega(\phi)$. 
		If $i=1$ and $\mu\in\{(a+1)\}^\circ$, then observe either $c^\lambda_{(3^k),(2a,a)}>0$ or $c^\lambda_{(3^k),(2a,1^a)}$, so $\lambda\in\Omega(\phi)$ also. 
		However, if $i=0$ then $\lambda\in\Omega(\phi)$ if and only if $\mu\in\cP(a)\setminus\{(a-1,1)\}^\circ=\Omega(\triv_{P_{3^{h-1}}})$.
		Hence $\Omega(\phi)=\cB_n(N(\phi)-1)\sqcup\{(N(\phi),\mu) \mid \mu\in\Omega(\triv_{P_{n-N(\phi)}}) \}^\circ$.
		
		\item[$\circ$] If $\sigma(s)=6$ (i.e.~``(i) $\star$ (II)"): in this case $\sigma(\phi)=(1,0,0,0,0,0,1)\in E$.		
		This follows from the same argument as in the above case, 
		setting $a=9$ and observing that $(18,9),(18,1^9)\in\cQ$.
	\end{itemize}

	\noindent\emph{Case (ii): $\sigma(\pi)\notin E$ and $\Omega(\pi)=\cB_{n'}(N(\pi))$.}
	\begin{itemize}
		\item[$\circ$] If $\sigma(s)=5$ (i.e.~``(ii) $\star$ (I)"): by a similar argument to the proof of Lemma~\ref{lem: 56}, we find $\cB_n(N(\pi)+2a-1)\subseteq\Omega(\phi)\subseteq\cB_n(N(\pi)+2a)$. Let $\lambda=(N(\pi)+2a,\mu)\vdash n$. If $N(\pi)<n'$, then $\lambda\in\Omega(\phi)$ if and only if $\mu\in\cP(n'-N(\pi))\star\cP(a)\setminus\{(a-1,1)\}^\circ$, but this equals $\cP(n-N(\pi)+a)$ by Lemma~\ref{lem: 10.3}, 
		and so $\Omega(\phi)=\cB_n(N(\phi))$. Otherwise $N(\pi)=n'$, i.e.~$\sigma(\pi)=(y_1,0,0,0,0,0,0)$, in which case $\lambda\in\Omega(\phi)$ if and only if $\mu\in\Omega(\triv_{P_a})$. This is exactly $\sigma(\phi)=(R-1,0,0,0,0,1,0)\in E$, and so $\Omega(\phi)=\cB_n(N(\phi)-1)\sqcup\{(N(\phi),\mu) \mid \mu\in\Omega(\triv_{P_{n-N(\phi)}}) \}^\circ$ since $n-N(\phi)=3^h-N(s)=a$.
		
		\item[$\circ$] If $\sigma(s)=6$ (i.e.~``(ii) $\star$ (II)"): by a similar argument, 
		$\cB_n(N(\pi)+17)\subseteq\Omega(\phi)\subseteq\cB_n(N(\pi)+18)$, and now $\lambda=(N(\pi)+18,\mu)\vdash n$ belongs to $\Omega(\phi)$ if and only if $\mu\in\cP(n'-N(\pi))\star\Omega(00)$. Therefore $\Omega(\phi)=\cB_n(N(\phi))$ if $n'-N(\pi)\ge 2$, by Lemma~\ref{lem: P star 00}. If $n'-N(\pi)=1$, i.e.~$\sigma(\pi)=(y_1,1,0,0,0,0,0)$, then $\Omega(\phi)=\cB_n(N(\phi)-1)\sqcup\{(N(\phi),\mu) \mid \mu\in\Omega(\triv_{P_{n-N(\phi)}}) \}^\circ$. This is exactly $\sigma(\phi)=(R-2,1,0,0,0,0,1)\in E$. If $N(\pi)=n'$ then $\Omega(\phi)$ also takes this form, and $\sigma(\phi)=(R-1,0,0,0,0,0,1)\in E$.
	\end{itemize}
	
	\noindent\emph{Case (iii): $\sigma(\pi)=(y_1,y_2,y_3,y_{30},y_{31},0,0)\in E$ and $\Omega(\pi)=\cB_{n'}(N(\pi)-1)\sqcup \{(N(\pi),\mu) \mid \mu\in\Omega(\triv_{P_{n'-N(\pi)}}) \}^\circ$. Moreover, note that $n'-N(\pi)\in\{3,4,6,9,10\}\cup\{3^s \mid s\ge 3 \}$.}
	\begin{itemize}
		\item[$\circ$] If $\sigma(s)=5$ (i.e.~``(iii) $\star$ (I)"): by Proposition~\ref{prop: B star B}, $\cB_n(N(\pi)-1+2a-2)\subseteq\Omega(\phi)\subseteq\cB_n(N(\pi)+2a)$. Since $\Omega(\phi)^\circ=\Omega(\phi)$, it remains to consider $\lambda=(\lambda_1,\mu)\vdash n$ such that $N(\pi)+2a-2\le\lambda_1\le N(\pi)+2a$.
		
		If $\lambda_1=N(\pi)+2a-2$, first suppose $\mu\in\cB_{n-\lambda_1}(n-1-\lambda_1)$. Then $\mu\in\cP(n'-N(\pi)+1)\star\cB_{a+1}(a)$ by Proposition~\ref{prop: B star B}, so $c^\lambda_{(N(\pi)-1,\alpha),(2a-1,\beta)}=c^\mu_{\alpha,\beta}>0$ for some $\alpha\vdash n'-N(\pi)+1$ and $\beta\in\cB_{a+1}(a)$. Thus $\lambda\in\Omega(\phi)$ since $(N(\pi)-1,\alpha)\in\cB_{n'}(N(\pi)-1)\subseteq\Omega(\pi)$ and $(2a-1,\beta)\in\Omega(s)$. Otherwise $\mu=(|\mu|)$ or $\mu=(1^{|\mu|})$, in which case notice that $c^\lambda_{(N(\pi),n'-N(\pi)),(2a-2,a+2)}>0$ or $c^\lambda_{(N(\pi),1^{n'-N(\pi)}), (2a-2,1^{a+2})}>0$, and so $\lambda\in\Omega(\phi)$ also.
		
		If $\lambda_1=(N(\pi)-1)+2a$, then $\lambda\in\Omega(\phi)$ if and only if $\mu\in\cP(n'-N(\pi)+1)\star(\cP(a)\setminus\{(a-1,1)\}^\circ)$. But $n'-N(\pi)\in\{3,4,6,9,10\}\cup\{3^s \mid s\ge 3 \}$, so 
		\[ \cP(n'-N(\pi)+1)\star(\cP(a)\setminus\{(a-1,1)\}^\circ) \supseteq \Omega(\triv_{P_{n'-N(\pi)+1}})\star\Omega(\triv_{P_{3^{h-1}}}) = \Omega(\triv_{P_{|\mu|}})=\cP(|\mu|) \]
		by Theorem~\ref{thm:GL1A}, since $h\ge 4$ and so $(n'-N(\pi)+1) + 3^{h-1}$ is a 3-adic expansion of $|\mu|$. Hence $\lambda\in\Omega(\phi)$ whenever $\lambda_1=N(\pi)+2a-1$.
		
		Finally if $\lambda_1=N(\pi)+2a$, then $\lambda\in\Omega(\phi)$ if and only if $\mu\in\Omega(\triv_{P_{n'-N(\pi)}}) \star(\cP(a)\setminus\{(a-1,1)\}^\circ)$, but this equals $\Omega(\triv_{P_{|\mu|}})=\cP(|\mu|)$ similarly since $(n'-N(\pi))+3^{h-1}$ is a 3-adic expansion of $|\mu|$.
		Thus $\Omega(\phi)=\cB_n(N(\phi))$.
		
		\item[$\circ$] If $\sigma(s)=6$ (i.e.~``(iii) $\star$ (II)"): this follows from the same argument as in the preceding case, 
		setting $a=9$ and observing that $(16,11),(16,1^{11})\in\cQ$, and that $(n'-N(\pi)+1) + 3^{h-1}$ and $(n'-N(\pi))+3^{h-1}$ are still 3-adic expansions when $h=3$ for the given values of $n'-N(\pi)$. Thus $\Omega(\phi)=\cB_n(N(\phi))$.
	\end{itemize}
\end{proof}

\begin{theorem}\label{thm:pre-A'}
	Suppose $l\ge 1$. If $R=1$ then further suppose $\phi=\phi(s)$ with $\sigma(s)\ne 6$. Then
	\begin{small}
		\[ \Omega(\phi\times\psi) = \begin{cases}
			\cB_{n+l}(N-1)\sqcup\{(N,\mu) \mid \mu\in\Omega(\triv_{P_{n+l-N}}) \}^\circ & \text{if}\ \sigma(\phi)\in E\ \text{and}\ \psi=\triv_{P_l},\\
			& \quad\sigma(\phi)\in F\ \text{and}\ \psi\in\Psi_2,\\
			& \quad\sigma(\phi)=(R,0,0,0,0,0,0)\ \text{and}\ \psi\in\Psi_1,\\
			& \quad\sigma(\phi)=(R-1,0,0,0,1,0,0)\ \text{and}\\
			& \qquad\qquad \psi=\phi(1,0)\cdot\triv_{P_{l-9}},\ l\ge 9,\ \text{or}\\
			& \quad\sigma(\phi)=(R-1,1,0,0,0,0,0)\ \text{and}\\
			& \qquad\qquad \psi=\phi(1,0)\cdot\triv_{P_{l-9}},\ l\ge 9;\\
			\cB_{n+l}(N) & \text{otherwise}.
		\end{cases} \]
	\end{small}
\end{theorem}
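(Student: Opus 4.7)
The plan is to apply Lemma~\ref{lem: omega star omega} to reduce the problem to $\Omega(\phi\times\psi)=\Omega(\phi)\star\Omega(\psi)$, and then substitute: $\Omega(\phi)$ is given by Proposition~\ref{prop:qt} (when $R\geq 2$) or Theorem~\ref{thm:a-prime power} (when $R=1$, recalling the hypothesis excludes $\sigma(s)=6$), and $\Omega(\psi)$ is read off from Table~\ref{table:m<27}. The proof then becomes a finite case analysis of the resulting $\star$-products, split naturally by whether $\sigma(\phi)\in E$.

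When $\sigma(\phi)\notin E$ we have $\Omega(\phi)=\cB_n(N(\phi))$. If in addition $\Omega(\psi)=\cB_l(N(\psi))$, Proposition~\ref{prop: B star B} yields $\cB_{n+l}(N)$ immediately, the hypotheses $N(\phi)>n/2$ and $N(\psi)>l/2$ being automatic from Definition~\ref{def: typephi3} and Table~\ref{table:m<27}. Otherwise $\Omega(\psi)=\cB_l(N(\psi))\setminus X^\circ$ for some small set $X$, and for any $\lambda\vdash n+l$ whose naive LR decomposition would force the $\beta$-factor into $X^\circ$ I would exhibit an alternative $c^\lambda_{\alpha',\beta'}>0$ with $\beta'\in\Omega(\psi)$ via Lemma~\ref{lem: iteratedLR} and replacement arguments parallel to Lemmas~\ref{lem: LR tworow filling} and~\ref{lem: LR hook filling}. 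This replacement succeeds whenever $\phi$ has room to reshape the $\alpha$-factor; the sole obstructions, corresponding to the third and fifth listed cases of the theorem, are $\phi=\triv_{P_n}$ with $\psi\in\Psi_1$ and $\sigma(\phi)=(R-1,1,0,0,0,0,0)$ with $\psi=\phi(1,0)\cdot\triv_{P_{l-9}}$, where $n-N(\phi)\in\{0,1\}$ leaves too little slack to dodge the specific $\psi$-exception; a short top-slice computation yields the first-branch output in these two cases.

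The genuinely delicate scenario is $\sigma(\phi)\in E$, where $\Omega(\phi)=\cB_n(N(\phi)-1)\sqcup\{(N(\phi),\mu)\mid\mu\in\Omega(\triv_{P_{n-N(\phi)}})\}^\circ$. Mirroring Case~(iii) of the proof of Proposition~\ref{prop:qt}, I would stratify $\lambda\vdash n+l$ by the value of $\lambda_1$:
\begin{itemize}
\item For $\lambda_1\leq N-2$, the component $\cB_n(N(\phi)-1)\star\Omega(\psi)$ covers everything via Proposition~\ref{prop: B star B} and Lemma~\ref{lem: 10.3}.
\item For $\lambda_1=N-1$, a boundary-band argument using both pieces of $\Omega(\phi)$ suffices.
\item For $\lambda_1=N$, Lemma~\ref{lem: iteratedLR} reduces membership to whether $(\lambda_2,\lambda_3,\ldots)\in\mathcal{T}$, where $\mathcal{T}\subseteq\cP(n+l-N)$ is the $\star$-product of $\Omega(\triv_{P_{n-N(\phi)}})$ with the tails of those $\nu\in\Omega(\psi)$ with $\nu_1=N(\psi)$.
\end{itemize}
Thus $\Omega(\phi\times\psi)=\cB_{n+l}(N-1)\sqcup\{(N,\mu)\mid\mu\in\mathcal{T}\}^\circ$, and the theorem's dichotomy reduces to whether $\mathcal{T}=\Omega(\triv_{P_{n+l-N}})$ (first branch) or $\mathcal{T}=\cP(n+l-N)$ (``otherwise'').

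The main obstacle is characterising $\mathcal{T}$ in each case. Since $\sigma(\phi)\in E$ forces $n-N(\phi)\in\{3,4,6,9,10\}\cup\{3^s:s\geq 3\}$ by Definition~\ref{def: all p=3}, the first factor of $\mathcal{T}$ is genuinely constrained. For $\psi=\triv_{P_l}$, the unique $\nu\in\Omega(\psi)$ with $\nu_1=l$ is $(l)$, so $\mathcal{T}=\Omega(\triv_{P_{n-N(\phi)}})=\Omega(\triv_{P_{n+l-N}})$, producing the first listed case. For $\psi\in\Psi_2$ (a pure ball with $N(\psi)=l-1$), $\mathcal{T}=\Omega(\triv_{P_{n-N(\phi)}})\star\cP(1)$, and a direct add-a-box computation using Theorem~\ref{thm:GL1A} (checking the small values $n-N(\phi)\in\{3,4,6,9,10\}$ individually and invoking the analogue of Lemma~\ref{lem: P star 00} for the $3^s$ tail) shows this equals $\Omega(\triv_{P_{n+l-N}})$ precisely when $n-N(\phi)\in\{3,9\}$, equivalently $\sigma(\phi)\in F$, and equals $\cP(n+l-N)$ otherwise. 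The $\psi=\phi(1,0)\cdot\triv_{P_{l-9}}$ subcases are handled similarly, with $n+l-N$ pinned down to $\{4,6\}$ exactly for the two listed $\sigma(\phi)$ forms. Every remaining combination gives $\mathcal{T}=\cP(n+l-N)$ after absorption (verified via Lemmas~\ref{lem: LR tworow filling} and~\ref{lem: LR hook filling}), whence $\Omega(\phi\times\psi)=\cB_{n+l}(N)$.
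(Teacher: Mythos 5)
Your overall strategy is the paper's: reduce to $\Omega(\phi)\star\Omega(\psi)$ via Lemma~\ref{lem: omega star omega}, split on whether $\sigma(\phi)\in E$, cover the bulk of $\cB_{n+l}(N-1)$ with Proposition~\ref{prop: B star B} and Lemma~\ref{lem: 10.3}, and decide the top slice $\lambda_1=N$ by an iterated-LR computation of what you call $\mathcal{T}$; your identification of the exceptional $(\sigma(\phi),\psi)$ pairs via $n-N(\phi)$ and $\Omega(\triv_{P_{n-N(\phi)}})\star\cP(1)$ (resp.\ $\star\,\Omega(\triv_{P_{|\nu|}})$) is exactly the mechanism in the paper's Cases 1 and 2.

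However, there is a genuine gap in your case analysis: your dichotomy assumes that $\Omega(\phi)$ is either $\cB_n(N(\phi))$ (when $\sigma(\phi)\notin E$) or $\cB_n(N(\phi)-1)\sqcup\{(N(\phi),\mu)\mid\mu\in\Omega(\triv_{P_{n-N(\phi)}})\}^\circ$ (when $\sigma(\phi)\in E$). Both assertions fail when $R=1$. For $\sigma(s)=1$ one has $\sigma(\phi)=(1,0,\dotsc,0)\notin E$ but $\Omega(\phi)=\cP(3^k)\setminus\{(3^k-1,1)\}^\circ\neq\cB_n(N(\phi))$, so Proposition~\ref{prop: B star B} does not apply directly and one must route around the missing $(3^k-1,1)$ via Lemma~\ref{lem: 10.3} (and the top-slice computation must be redone with $\alpha=(n)$ forced). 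For $\sigma(s)=5$ one has $\sigma(\phi)=(0,0,0,0,0,1,0)\in E$ but $\Omega(\phi)=\cC$ additionally excludes $\tworow{3^k}{2a-1}$ and $\hook{3^k}{2a-1}$ from the $\lambda_1=N(\phi)-1$ band, so your ``$\cB_n(N(\phi)-1)$ covers everything'' step starts from a false premise; one must instead reach the band $\lambda_1=N-1$ through the top slice of $\Omega(\phi)$ (e.g.\ via $c^\lambda_{(2a,\alpha),(N(\psi)-1,\beta)}>0$, or explicit constituents such as $c^\lambda_{(3^k),(2a,a)}>0$). The paper devotes two separate cases (its Cases 3 and 4, mirroring cases (i) and (ii)/(iii) $\star$ (I) of Proposition~\ref{prop:qt}) precisely to these two shapes of $\Omega(\phi)$. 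The conclusions you reach are correct, but as written your argument does not establish them for $R=1$ with $\sigma(s)\in\{1,5\}$; the fix is local but requires the extra LR constructions above rather than the uniform ball/ball-plus-slice treatment.
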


\begin{proof}
	Let $N:=N(\phi\times\psi)=N(\phi)+N(\psi)$, and $\Omega:=\Omega(\phi\times\psi)=\Omega(\phi)\star\Omega(\psi)$. Our goal is to compute $\Omega$ where the form of $\Omega(\phi)$ is given by Proposition~\ref{prop:qt} if $R\ge 2$ or Table~\ref{table:summary} if $R=1$, and $\Omega(\psi)$ is given in Table~\ref{table:m<27}. 
	First suppose $R\ge 2$.
	
	\noindent\emph{Case 1: $\Omega(\phi)=\cB_n(N(\phi))$, i.e.~$\sigma(\phi)\notin E$.}
	\begin{itemize}
		\item[$\circ$] If $\Omega(\psi)=\cB_l(m(\psi))$: then $\Omega=\cB_{n+l}(N)$ by Proposition~\ref{prop: B star B} since $N(\phi)>n/2$ and $N(\psi)=m(\psi)>l/2$.
		
		\item[$\circ$] If $\Omega(\psi)=\cB_l(l-|\nu|)\setminus\{(l,\nu)\}^\circ$ for some $\nu\in\{(2,1),(2,2),(3,2,1) \}$: then $\Omega$ is of the form in \eqref{eqn: not ball}, so we may apply arguments (1)--(3) from the proof of Lemma~\ref{lem: 1233031} to see that if $n-N(\phi)\ge 2$, then $\Omega=\cB_{n+l}(N)$.
		
		If $n-N(\phi)=1$, i.e.~$\sigma(\phi)=(R-1,1,0,0,0,0,0)$, then since $|\nu|\in\{3,4,6\}$, we observe that $\cP(1)\star\Omega(\triv_{P_{|\nu|}})=\cP(|\nu|)$ when $|\nu|=4$ or 6 but not when $|\nu|=3$. That is, $\Omega=\cB_{n+l}(N-1)\sqcup\{(N,\mu) \mid \mu\in\Omega(\triv_{P_{n+l-N}}) \}^\circ$ if $\psi=\phi(1,0)\cdot\triv_{P_{l-9}}$ for some $l\ge 11$, and $\Omega=\cB_{n+l}(N)$ otherwise.
		
		If $N(\phi)=n$, i.e.~$\sigma(\phi)=(R,0,0,0,0,0,0)$, then (1) gives us $\Omega=\cB_{n+l}(N-1)\sqcup\{(N,\mu) \mid \mu\in\Omega(\triv_{P_{n+l-N}}) \}^\circ$.
		
		\item[$\circ$] What remains in Case 1 is to consider $\Omega(\psi)$ of one of the two above forms but excluding some small partitions $\varepsilon$. Since $\varepsilon_1, l(\varepsilon)<m(\psi)$ or $\varepsilon_1, l(\varepsilon)<l-|\nu|$ respectively, we deduce from an analysis similar to that in Lemma~\ref{lem: 10.3} (since $m(\psi)<l/2$) that $\Omega$ has the same respective form. This same observation holds in each of the remaining cases, so we will not repeat this below.
	\end{itemize}
	
	\noindent\emph{Case 2: $\Omega(\phi)=\cB_n(N(\phi)-1)\sqcup\{(N(\phi),\mu) \mid \mu\in\Omega(\triv_{P_{n-N(\phi)}}) \}^\circ$, i.e.~$\sigma(\phi)\in E$.}
	\begin{itemize}
		\item[$\circ$] If $\Omega(\psi)=\cB_l(m(\psi))$, then $\Omega$ is of the form in \eqref{eqn: not ball}, so we may apply arguments (1)--(3) from the proof of Lemma~\ref{lem: 1233031}: if $l-m(\psi)\ge 2$, then $\Omega=\cB_{n+l}(N)$. 
		
		If $l-m(\psi)=1$, i.e.~$\psi\in\Psi_2$ with $l\notin\{6,9,12\}$, observe that $n-N(\phi)\in\{3,4,6,9,10\}\cup\{3^s\mid s\ge 3\}$. Moreover, $\Omega(\triv_{P_{n-N(\phi)}})\star\cP(1)=\cP(n-N(\phi)+1)$ unless $n-N(\phi)\in\{3,9\}$, i.e.~$\sigma(\phi)\in F$. Hence $\Omega=\cB_{n+l}(N)$ if $\sigma(\phi)\notin F$, but $\Omega=\cB_{n+l}(N-1)\sqcup\{(N,\mu) \mid \mu\in\Omega(\triv_{P_{n+l-N}}) \}^\circ$ if $\sigma(\phi)\in F$.
		
		If $m(\psi)=l$, i.e.~$\psi=\triv_{P_l}$ with $l\notin\{3,4,6,9,10\}$, then (1) gives us $\Omega=\cB_{n+l}(N-1)\sqcup\{(N,\mu) \mid \mu\in\Omega(\triv_{P_{n+l-N}}) \}^\circ$.
		
		\item[$\circ$] If $\Omega(\psi)=\cB_l(l-|\nu|)\setminus\{(l,\nu)\}^\circ$ for $\nu\in\{(2,1),(2,2),(3,2,1) \}$, then similarly to Case 1 we find that $\cB_{n+l}(N-1)\subseteq\Omega$. Noting that $N(\psi)=l-|\nu|$ and $N=N(\phi)+N(\psi)$, if $\lambda=(N,\mu)\vdash n+l$ then $\lambda\in\Omega$ if and only if $\mu\in\Omega(\triv_{n-N(\phi)})\star\Omega(\triv_{|\nu|})$. By Theorem~\ref{thm:GL1A}, this is $\cP(n-N(\phi)+|\nu|)$ unless $|\nu|=n-N(\phi)=3$ (note $\Omega(\triv_{P_3})\star\Omega(\triv_{P_6})=\cP(9)\ne\Omega(\triv_{P_9})$; similarly $\Omega(\triv_{P_{6}})^{\star 2}=\cP(12)\ne\Omega(\triv_{P_{12}})$), in other words, unless $\psi=\phi(1,0)\cdot\triv_{P_{l-9}}$ with $l\ge 11$, and $\sigma(\phi)=(R-1,0,0,0,1,0,0)$, and in this case we find $\Omega=\cB_{n+l}(N-1)\sqcup\{(N,\mu) \mid \mu\in\Omega(\triv_{P_{n+l-N}}) \}^\circ$.

	\end{itemize}

	Now suppose $R=1$. If $\sigma(s)=2$ or $\sigma(s)\in\{3,30,31\}$ then the same argument as in Case 1 or 2 respectively holds, so it remains to consider $\sigma(s)\in\{1,5\}$.
	
	\noindent\emph{Case 3: $\sigma(s)=1$, $\Omega(s)=\cP(3^k)\setminus\{3^k-1,1\}^\circ$.}
	\begin{itemize}
		\item[$\circ$] Observe that $\Omega(\phi)\star\cB_l(m(\psi))=\cB_{n+l}(N)$ by Lemma~\ref{lem: 10.3}, while $\Omega(\phi)\star\Omega(\triv_{P_l})=\Omega(\triv_{3^k+l})$ since $3^k+l$ is a 3-adic expansion. By Theorem~\ref{thm:GL1A}, this equals $\cP(n)$.
		
		\item[$\circ$] On the other hand, observe $\Omega(\phi)\star\cB_l(l-|\nu|)\setminus\{(l,\nu)\}^\circ \supseteq\cB_{n+l}(N-1)$ when $\nu\in\{(2,1),(2,2),(3,2,1)\}$. Moreover, if $\lambda=(N,\mu)\vdash n$ then $\lambda\in\Omega$ if and only if $\mu\ne\nu$, i.e.~$\mu\in\Omega(\triv_{P_{|\nu|}})$. Since $n+l-N=l-N(\psi)=|\nu|$, this means that when $\sigma(s)=1$ then $\Omega=\cB_{n+l}(N-1)\sqcup\{(N,\mu) \mid \mu\in\Omega(\triv_{P_{n+l-N}}) \}^\circ$ exactly when $\psi\in\Psi_1$, and $\Omega=\cB_{n+l}(N)$ otherwise. 
	\end{itemize}
	
	\noindent\emph{Case 4: $\sigma(s)=5$, $\Omega(s)=\cB_{3^k}(2a)\setminus\{(2a,a-1,1),(2a,2,1^{a-2}),(2a-1,a+1),(2a-1,1^{a+1}) \}^\circ$ where $a=3^{k-1}$ and $k\ge 4$.}
	\begin{itemize}
		\item[$\circ$] First, if $\Omega(\psi)=\Omega(\triv_{P_l})$, then $\Omega=\cB_{n+l}(N-1)\sqcup\{(N,\mu) \mid \mu\in\Omega(\triv_{P_{n+l-N}}) \}^\circ$ by similar arguments to cases ``(i) $\star$ (I)" and ``(ii) $\star$ (I)" in the proof of Proposition~\ref{prop:qt}; this is exactly $\sigma(\phi)=(0,0,0,0,0,1,0)\in E$ and $\psi=\triv_{P_l}$.
		
		\item[$\circ$] Next, suppose $\Omega(\psi)=\cB_l(m(\psi))$. As in case ``(ii) $\star$ (I)" in the proof of Proposition~\ref{prop:qt}: if $m(\psi)<l$ then $\Omega=\cB_{n+l}(N)$, but if $m(\psi)=l$ (i.e.~$\psi=\triv_{P_l}$, $l\notin\{3,4,6,9,10\}$) then 
		by the preceding paragraph $\Omega=\cB_{n+l}(N-1)\sqcup\{(N,\mu) \mid \mu\in\Omega(\triv_{P_{n+l-N}}) \}^\circ$.
		
		\item[$\circ$] Finally, suppose $\Omega(\psi)=\cB_l(l-|\nu|)\setminus\{(l,\nu)\}^\circ$. 
		By Proposition~\ref{prop: B star B}, $\cB_{n+l}\big((2a-2)+(l-|\nu|)-1\big)\subseteq\Omega\subseteq\cB_{n+l}(2a+l-|\nu|)$. It suffices to consider $\lambda=(\lambda_1,\mu)\vdash n+l$ with $\lambda_1=2a+l-|\nu|-i$ for $i\in\{0,1,2\}$.
		
		If $i=2$, there exists $\beta\in\Omega(\triv_{P_{|\nu|}})$ such that $\beta\subset\mu$, then choose any $\alpha\in\cLR([\mu\setminus\beta])$. Then $c^\mu_{\alpha,\beta}=c^\mu_{\beta,\alpha}>0$, whence $c^\lambda_{(2a-2,\alpha),(t,\beta)}>0$ by Lemma~\ref{lem: iteratedLR}. Moreover, $(2a-2,\alpha)\in\Omega(s)$ and $(t,\beta)\in\Omega(\psi)$, so $\lambda\in\Omega$.
		
		If $i=1$, then $\lambda_1=(2a)+(l-|\nu|-1)$ so by a similar  
		argument, $\lambda\in\Omega$ if and only if $\mu\in(\cP(a)\setminus\{(a-1,1)\}^\circ)\star\cP(|\nu|+1)$. But this is always $\cP(a+|\nu|+1)$ by Theorem~\ref{thm:GL1A}, so $\lambda\in\Omega$.
		
		If $i=0$, then $\lambda\in\Omega$ if and only if $\mu\in (\cP(a)\setminus\{(a-1,1)\}^\circ)\star\Omega(\triv_{P_{|\nu|}})$. But this equals $\Omega(\triv_{P_{3^{k-1}}+|\nu|})=\cP(3^{k-1}+|\nu|)$ by Theorem~\ref{thm:GL1A} and Lemma~\ref{lem: omega star omega} since $3^{k-1}+|\nu|$ is a 3-adic expansion. Hence $\lambda\in\Omega$.
	\end{itemize}
\end{proof}

\begin{proof}[Proof of Theorem~\ref{thm:a}]
	If $l=0$, then this follows from the results in Section~\ref{sec:prime-power} if $R=1$ and Proposition~\ref{prop:qt} if $R\ge 2$. If $l\ge 1$, then this follows from Theorem~\ref{thm:pre-A'}, unless $R=1$ and $\sigma(\phi)=6$ in which case $n<54$ and $\Omega(1,0,0)\star\Omega(\psi)$ may be computed directly.
\end{proof}

\medskip

\subsection{Proof of Theorem~\ref{thm:b-general}}\label{sec:b-full}

The main aim of this section is to prove Theorem~\ref{thm:b-general}, and then deduce that $\cB_n(\frac{2n}{9})\subseteq\Omega_n$ in Theorem~\ref{thm:c}. We keep the notation $\phi$ and $\psi$ from the previous section for denoting quasi-trivial linear characters, and introduce notation 
to denote non-quasi-trivial components as described below.

\begin{notation}\label{not:nqt}
	Throughout Section~\ref{sec:b-full}, we fix the following notation: let $n,l$ and $\phi,\psi$ be as in Notation~\ref{not:qt}. Additionally, let $u\in\N$ be divisible by 27 and $v\in\{9,10,\dotsc,26\}$ be such that $n+u$ and $l+v$ are each 3-adic expansions (i.e.~if $n=\sum_{i\ge 0}3^i\cdot a_i$ and $u=\sum_{i\ge 0}3^i\cdot b_i$ with $a_i,b_i\in\{0,1,2\}$, then $a_i+b_i\le 2$ for all $i$; similarly for $l+v$).
	
	Let $\theta\in\Lin(P_u)$ and $\vartheta\in\Lin(P_v)$. Further suppose that $\theta$ corresponds to $\{t_1,\cdots,t_T\}$ with $T\ge 1$ (in the sense of Notation~\ref{not:correspond}) and $\sigma(t_i)\in\{10,11,21,22,7\}$ for all $i\in[T]$. Let $\sigma(\theta)=(y_{10},y_{11},y_{21},y_{22},y_7)$ where $y_i:=\#\{j\in[R] \mid \sigma(t_j)=i \}$. In addition, suppose $\vartheta=\phi(1,1)\times\varphi$ for some $\varphi\in\Lin(P_{v-9})$ containing no quasi-trivial components. That is, $\theta\times\vartheta$ contains no quasi-trivial components.
\end{notation}



Any linear character $\Theta$ of a Sylow subgroup of $S_w$ may be split up into its quasi-trivial and non-quasi-trivial components. We call these $\phi\times\psi$ and $\theta\times\vartheta$ respectively, split further as a product of linear characters of $P_{w-w'}$ and $P_{w'}$, where $w'\in\{0,1,\dotsc,26\}$ is given by $w'\equiv w$ (mod 27). We prove Theorem~\ref{thm:b-general} by investigating all possible forms of $\Omega(\phi\times\psi\times\theta\times\vartheta)$, in order to compute $m(\Theta)$ for all linear $\Theta$. More precisely, let $\Theta\in\Lin(P_w)$ where $w=n+u+l+v$ as in Notation~\ref{not:nqt}, or a subsum thereof, such as $w=n+u$, $w=u+l+v$, $w=l$, etc. We describe all possible forms of $\Omega(\Theta)$ in order to deduce the value of $m(\Theta)$. 

The structure of this section is as follows:
\begin{itemize}
	\item We first describe $\Omega(\theta)$ for all $\theta$ as described in Notation~\ref{not:nqt} (Lemma~\ref{lem: type 11}).
	\item Next, we look at $\Omega(\phi\times\theta)$ (Lemma~\ref{lem: RT=1}). In other words, this combines all quasi-trivial and non-quasi-trivial $\sigma$ types when $27\mid w$ (i.e.~this is the case $w=n+u$).
	\item We then describe $\Omega(\theta\times\vartheta)$ (Lemma~\ref{lem: all nqt}). This covers linear characters $\Theta$ where every component of $\Theta$ is non-quasi-trivial (i.e.~$w=u+v$).
	\item We then describe $\Omega(\Theta)$ for the remaining subsums
	: $w=n+v$, $w=u+l$, $w=n+u+l$, $w=n+u+v$, $w=n+l+v$, $w=u+l+v$ and $w=n+l+u+v$ (Lemma~\ref{lem: rest}).
	\item Thus, we are finally able to prove Theorem~\ref{thm:b-general}, and deduce that Theorem~\ref{thm:c} holds.
\end{itemize}

To describe $\Omega(\theta)$, we first look at when $\sigma(\theta)=(y_{10},y_{11},y_{21},y_{22},y_7)$ has $y_i=0$ for some $i$.
\begin{lemma}\label{lem:2122}
	Suppose $\sigma(\theta)=(0,0,y_{21},y_{22},0)$. Then $\cB_u(N(\theta))\subseteq\Omega(\theta)$, and $\Omega(\theta)\setminus\cB_u(N(\theta))$ contains no thin partitions.
\end{lemma}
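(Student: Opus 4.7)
The plan is to decompose $\Omega(\theta)$ via Lemma~\ref{lem: omega star omega} as $\Omega(\theta)=\Omega(t_1)\star\cdots\star\Omega(t_T)$, and then use the description of $\Omega(t_i)$ for types 21 and 22 summarised in Table~\ref{table:summary}. Specifically, in both cases we have $\cB_{3^{k_i}}(N(t_i))\subseteq\Omega(t_i)$ with $N(t_i)=m(t_i)$, and $\Omega(t_i)$ contains no thin partitions outside $\cB_{3^{k_i}}(N(t_i))$. The key numerical input is that in both cases $N(t_i)>3^{k_i}/2$: for type $21$ we have $N(s)=\tfrac{3^k+1}{2}$ by Definition~\ref{def: typephi3}, and for type $22$ we have $N(s)\ge\tfrac{3^k+3}{2}$ by Remark~\ref{rem: 22}.

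For the first containment, I would iterate Proposition~\ref{prop: B star B}. Since $N(t_i)>3^{k_i}/2$ for all $i$, the hypothesis of the proposition is satisfied at each step, so
\[ \cB_{3^{k_1}}(N(t_1))\star\cdots\star\cB_{3^{k_T}}(N(t_T))=\cB_{u}(N(\theta)). \]
As each factor on the left is contained in the corresponding $\Omega(t_i)$, we obtain $\cB_u(N(\theta))\subseteq\Omega(\theta)$.

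For the second assertion, suppose $\lambda\in\Omega(\theta)$ is thin. By the decomposition of $\Omega(\theta)$ above (and the definition of $\star$), there exist $\mu_i\in\Omega(t_i)$ with $c^\lambda_{\mu_1,\ldots,\mu_T}>0$. Standard properties of Littlewood--Richardson coefficients imply that $\mu_i\subseteq\lambda$ (as Young diagrams) for every $i$; since thinness of $\lambda$ is inherited by subpartitions (hooks remain hooks, and the bounds $\lambda_1\le 2$ or $l(\lambda)\le 2$ transfer directly), each $\mu_i$ is thin. By the no-other-thin-partitions property of $\Omega(t_i)$, this forces $\mu_i\in\cB_{3^{k_i}}(N(t_i))$ for each $i$. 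From the LR inequalities $\lambda_1\le\sum_i(\mu_i)_1$ and $l(\lambda)\le\sum_i l(\mu_i)$ we then conclude $\lambda_1\le N(\theta)$ and $l(\lambda)\le N(\theta)$, i.e.~$\lambda\in\cB_u(N(\theta))$, as required.

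There is no real obstacle here: the statement is essentially a routine bookkeeping assembly of the type-21/22 data from Table~\ref{table:summary}, with Proposition~\ref{prop: B star B} providing the containment and subpartition-inheritance of thinness giving the no-other-thin-partitions property. The only mild subtlety is checking uniformly that $N(t_i)>3^{k_i}/2$ so that Proposition~\ref{prop: B star B} applies throughout the iterated star product.
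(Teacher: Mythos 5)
Your proof is correct and follows essentially the same route as the paper: the containment via iterating Proposition~\ref{prop: B star B} (using $N(t_i)>3^{k_i}/2$ from Definition~\ref{def: typephi3} and Remark~\ref{rem: 22}), and the thin-partition statement via the Littlewood--Richardson subpartition/length-and-width inequalities, which is just the pigeonhole argument of Step 2 of Lemma~\ref{lem:111} stated in direct rather than contrapositive form.
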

\begin{proof}
	The first statement follows from Proposition~\ref{prop: B star B}, and the second from a similar argument to Step 2 in the proof of Lemma~\ref{lem:111} (Littlewood--Richardson rule and pigeonhole principle).
\end{proof}

\begin{lemma}\label{lem:102122}
	Suppose $T\ge 2$ and $\sigma(\theta)=(y_{10},0,y_{21},y_{22},y_7)$. Then $\cB_u(N(\theta))\subseteq\Omega(\theta)$, and $\Omega(\theta)\setminus\cB_u(N(\theta))$ contains no thin partitions.
\end{lemma}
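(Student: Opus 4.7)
The claim that $\Omega(\theta) \setminus \cB_u(N(\theta))$ contains no thin partitions follows from the usual pigeonhole argument: any thin $\lambda \in \Omega(\theta)$ admits a decomposition $c^\lambda_{\mu_1,\dots,\mu_T}>0$ with $\mu_i \in \Omega(t_i)$ and $\mu_i \subseteq \lambda$, forcing every $\mu_i$ to be thin. By Table~\ref{table:summary}, $\Omega(t_i) \setminus \cB_{3^{k_i}}(N(t_i))$ contains no thin partitions, so summing $(\mu_i)_1, l(\mu_i) \le N(t_i)$ over $i$ gives $\lambda \in \cB_u(N(\theta))$.

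For the inclusion $\cB_u(N(\theta)) \subseteq \Omega(\theta)$, we induct on $T$. If $y_{10} = y_7 = 0$, apply Lemma~\ref{lem:2122}; otherwise reorder so that $\sigma(t_1) \in \{10, 7\}$ and write $\theta = \phi(t_1) \times \theta'$ with $\theta' \in \Lin(P_{u'})$ corresponding to $\{t_2,\dots,t_T\}$. By the inductive hypothesis (or by Table~\ref{table:summary} when $T = 2$), $\cB_{u'}(N(\theta')) \subseteq \Omega(\theta')$. Fix $\lambda \in \cB_u(N(\theta))$: Proposition~\ref{prop: B star B} yields $\mu_1 \in \cB_{3^{k_1}}(N(t_1))$ and $\mu' \in \cB_{u'}(N(\theta'))$ with $c^\lambda_{\mu_1,\mu'} > 0$. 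If $\mu_1 \in \Omega(t_1)$ we are done; otherwise $\mu_1$ lies in the forbidden thin set $X_1 \subset \cB_{3^{k_1}}(N(t_1))$ of size at most six, namely $\{\tworow{3^{k_1}}{N(t_1)-1}, \hook{3^{k_1}}{N(t_1)-1}\}^\circ$ together with an extra $\{\tworow{3^{k_1}}{N(t_1)-4}\}^\circ$ when $\sigma(t_1) = 7$.

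We then perform a replacement argument modelled on those in Lemmas~\ref{lem: LRa}, \ref{lem: LRb}, \ref{lem: LRc}. When $\lambda$ is non-thin, applying Lemma~\ref{lem: LR tworow filling}(a) or Lemma~\ref{lem: LR hook filling}(a) to the skew shape $[\lambda \setminus \mu']$ (using $c^\lambda_{\mu_1,\mu'} = c^\lambda_{\mu',\mu_1}$) produces a non-thin $\tilde\mu_1 \in \cLR([\lambda \setminus \mu']) \cap \cB_{3^{k_1}}(N(t_1))$, which automatically lies in $\Omega(t_1)$. When $\lambda$ is thin, by conjugation symmetry we reduce to $\lambda$ two-row or a hook and exhibit an explicit candidate $\mu_1^\ast \in \Omega(t_1)$ whose first-part value is chosen from an admissible interval avoiding $X_1$; the residual skew $[\lambda \setminus \mu_1^\ast]$ is then a thin shape whose row- and column-lengths are bounded by $N(\theta')$, so its unique LR weight lies in $\cB_{u'}(N(\theta')) \subseteq \Omega(\theta')$.

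The main obstacle is the thin case: one must verify uniformly across $\sigma(t_1) \in \{10, 7\}$, the shape of $\lambda$, and $\sigma(\theta')$, that the admissible interval for the first-part of $\mu_1^\ast$ contains strictly more integers than $|X_1| \le 6$. The key numerical inputs are the identity $2N(t_1) - 3^{k_1} = 3^{k_1-F(t_1)}$ holding for types $10$ and $7$ together with the hypothesis $T \ge 2$, which forces $u' \ge 27$; combining these provides the necessary slack in every subcase, so that $\mu_1^\ast \in \Omega(t_1)$ can be chosen with $\mu_1^\ast \subseteq \lambda$ and complementary shape compatible with $\cB_{u'}(N(\theta'))$.
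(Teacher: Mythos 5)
Your pigeonhole argument for the second assertion is fine and matches the paper's. The problems are in the inclusion $\cB_u(N(\theta))\subseteq\Omega(\theta)$, and the most serious one is the base case of your induction. You assert that for $T=2$ Table~\ref{table:summary} gives $\cB_{u'}(N(\theta'))\subseteq\Omega(\theta')$ for the single remaining factor $\theta'=\phi(t_2)$. That is false when $\sigma(t_2)\in\{10,7\}$: the table only guarantees that $\Omega(t_2)$ contains $\cB_{3^{k_2}}(N(t_2))$ \emph{minus} the forbidden thin partitions $\{\tworow{3^{k_2}}{N(t_2)-1},\hook{3^{k_2}}{N(t_2)-1}\}^\circ$ (plus $\{\tworow{3^{k_2}}{N(t_2)-4}\}^\circ$ for type $7$). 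Since $y_{11}=0$ but $y_{10}$ and $y_7$ may both be positive, the configuration $T=2$ with $\sigma(t_1),\sigma(t_2)\in\{10,7\}$ lies squarely inside the lemma, and there your induction has no valid starting point: the $\mu'$ produced by Proposition~\ref{prop: B star B} may itself be forbidden, and your replacement procedure only ever repairs $\mu_1$, never $\mu'$. This two-sided situation is exactly what Step~2 of the paper's proof (the computation of $\cQ\star\cQ'$) is there to handle.

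The replacement step itself is also not justified as written. The dichotomy in Lemma~\ref{lem: LR tworow filling}(a) and Lemma~\ref{lem: LR hook filling}(a) is not ``$\lambda$ non-thin versus $\lambda$ thin'': the failure branch (all $t$ twos, resp.\ $s-1$ ones, in a single row of the skew shape) occurs whenever $[\lambda\setminus\mu']$ is congruent to a two-row or hook diagram or its rotation, which is perfectly compatible with $\lambda$ being non-thin (take $\mu'$ non-thin). In that branch you must either move boxes between $\mu'$ and $[\lambda\setminus\mu']$ (as in Step~1 of Lemma~\ref{lem: LRb}) or choose the decomposition differently, and you have not done so; likewise the thin-$\lambda$ case is left as a counting heuristic (``the admissible interval has more than six integers'') with no verification. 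For comparison, the paper sidesteps replacements entirely when $y_7=0$: it restricts to the guaranteed subset $\cQ\subseteq\Omega(t_1)$ and shows by explicit constructions (Lemma~\ref{lem: iteratedLR} for the interior, and constituents such as $c^\lambda_{\tworow{3^k}{m+2},\tworow{n}{N}}>0$ for the boundary thin $\lambda$) that $\cQ\star\cB_n(N)$ and $\cQ\star\cQ'$ already equal the full balls, reserving a replacement argument only for reinstating $\{\tworow{3^k}{m+1}\}^\circ$ in the type-$7$ case. Your outline could probably be completed along the paper's lines, but as it stands these are genuine gaps rather than omitted routine details.
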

\begin{proof}
	First suppose $y_7=0$. We are done by Lemma~\ref{lem:2122} if $y_{10}=0$ also, so we may assume $\sigma(t_1)=10$ and let $\Omega(t_1)=\cQ\sqcup\mathcal{N}$ where $\cQ=\cB_{3^k}(m+2)\setminus\{\tworow{3^k}{m+1},\hook{3^k}{m+1}\}^\circ$ and $\mathcal{N}$ contains no thin partitions, $k\ge 3$ and $m:=m(t_1)=\tfrac{3^k+3^{k-F(t_1)}}{2}-2\ge\tfrac{3^k+23}{2}$. 
	\begin{itemize}
		\item[$\circ$] \emph{Step 1.} We consider $\Omega:=\cQ\star\cB_n(N)$ where $N>n/2$.
		
		By Proposition~\ref{prop: B star B}, $\cB_{n+3^k}(m+N)\subseteq\Omega\subseteq\cB_{n+3^k}(m+2+N)$. Let $\lambda=(\lambda_1,\mu)\vdash n+3^k$. 
		
		If $\lambda_1=m+2+N$, then $c^\mu_{\alpha,\beta}>0$ for some $\alpha\in\cP(3^k-(m+2))$ and $\beta\in\cP(n-N)$, as $\cP(3^k-(m+2))\star\cP(n-N)=\cP(|\mu|)$ by Proposition~\ref{prop: B star B}. Hence $c^\lambda_{(m+2,\alpha),(N,\beta)}=c^\mu_{\alpha,\beta}>0$ by Lemma~\ref{lem: iteratedLR}, and so $\lambda\in\Omega$ since $(m+2,\alpha)\in\cQ$ and $(N,\beta)\in\cB_n(N)$.
		
		If $\lambda_1=m+1+N$, then by the same argument if $\mu\in\cB_{3^k-(m+1)}(3^k-(m+1)-1)\star\cP(n-N)=\cB_{|\mu|}(|\mu|-1)$ then $\lambda\in\Omega$. Otherwise, $\lambda\in\{\tworow{n+3^k}{m+1+N},\hook{n+3^k}{m+1+N} \}$. In this case, notice that $c^\lambda_{\tworow{3^k}{m+2},\tworow{n}{N}}>0$ or $c^\lambda_{\hook{3^k}{m+2},\hook{n}{N}}>0$ respectively, and so $\lambda\in\Omega$.
		Thus $\Omega=\cB_{n+3^k}(m+2+N)$.
		
		\item[$\circ$] \emph{Step 2.} Now let $\Omega:=\cQ\star\cQ'$, where $\cQ':=\cB_{3^{k'}}(m'+2)\setminus\{\tworow{3^{k'}}{m'+1},\hook{3^{k'}}{m'+1}\}^\circ$ for some $k'\in\N_{\ge 3}$ and $m'\ge\tfrac{3^{k'}+23}{2}$.
		Clearly $\cB_{3^k+3^{k'}}(m+m')\subseteq\Omega\subseteq\cB_{3^k+3^{k'}}(m+m'+4)$. By an analogous argument to Step 1, we find that $\lambda\in\Omega$ for all $\lambda\vdash 3^k+3^{k'}$ with $m+m'+1\le\lambda_1\le m+m'+4$. Hence $\Omega=\cB_{3^k+3^{k'}}(m+m'+4)$.
		
		\item[$\circ$] \emph{Step 3.} Using Steps 1 and 2, we deduce from Lemma~\ref{lem:2122} that if $T\ge 2$, $y_{10}\ge 1$ and $y_7=0$, then $\Omega(\theta)\supseteq\cB_{u}(N(\theta))$ and contains no other thin partitions.
	\end{itemize}

	Now suppose $y_7>0$. Assume $\sigma(t_1)=7$ and recall 
	\[ \Omega(t_1)\supseteq\cB_{3^k}(m+5)\setminus\{\tworow{3^k}{m+4},\hook{3^k}{m+4},\tworow{3^k}{m+1}\}^\circ\]
	and $\Omega(t_1)$ contains no other thin partitions. Moreover, $k\ge 4$ and set $m:=m(t_1)=\tfrac{3^k-1}{2}$.
	By a similar argument to the proof of Lemma~\ref{lem: 56} (in particular using Lemma~\ref{lem: LR tworow filling} (b) to replace $\{\tworow{3^k}{m+1}\}^\circ$), we have that 
	$$\Omega(t_1)\star\Omega(\theta')=(\Omega(t_1)\sqcup\{\tworow{3^k}{m+1}\}^\circ)\star\Omega(\theta')$$ 
	for any linear character $\theta'$ with $\sigma(\theta')=(y_{10},0,y_{21},y_{22},0)$ such that $y_{10}+y_{21}+y_{22}>0$. Then, by using Steps 1--3 above but where we replace $\cQ$ with $\cB_{3^k}(m+5)\setminus\{\tworow{3^k}{m+4},\hook{3^k}{m+4}\}^\circ$, we find that $\Omega(\theta)\supseteq\cB_u(N(\theta))$ and contains no other thin partitions.
\end{proof}

\begin{lemma}\label{lem: type 11}
	Suppose $T\ge 2$ and $\sigma(\theta)=(y_{10},y_{11},y_{21},y_{22},y_7)$. Then $$\Omega(\theta)\supseteq\begin{cases}
		\cB_u(N(\theta))\setminus\{(\tfrac{u}{2},\tfrac{u}{2}) \}^\circ & \text{if}\ T=2\ \text{and}\ y_{11}=y_{21}=1,\\
		\cB_u(N(\theta)) & \text{otherwise},
	\end{cases}$$
	and $\Omega(\theta)$ contains no other thin partitions.
\end{lemma}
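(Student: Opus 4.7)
The plan is to extend the inductive structure used in the proof of Lemma~\ref{lem:102122}. If $y_{11}=0$, the conclusion follows directly from that lemma, so assume $y_{11}\ge 1$ and write $\theta=\phi(s_1)\times\theta'$ with $\sigma(s_1)=11$. Set $\cQ_{11}:=\cB_{3^{k_1}}(m_1+2)\setminus\{\tworow{3^{k_1}}{m_1+1},\hook{3^{k_1}}{m_1+1}\}^\circ$, where $m_1=m(s_1)=(3^{k_1}-1)/2$ and $k_1\ge 3$; Table~\ref{table:summary} gives $\Omega(s_1)\supseteq\cQ_{11}$ and contains no other thin partitions. The key observation is that $\cQ_{11}$ has precisely the structural form of the set $\cQ$ appearing in Step~1 of the proof of Lemma~\ref{lem:102122} (only the numerical parameters differ), so the star-product analyses in Steps~1--2 of that proof, which rely on Proposition~\ref{prop: B star B}, Lemma~\ref{lem: LR tworow filling}, and Lemma~\ref{lem: LR hook filling}, transfer verbatim to $\cQ_{11}$.

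I would then argue by induction on $T$. In the non-exceptional cases (i.e.~$T\ge 3$, or $T=2$ with $\sigma(s_2)\ne 21$), the inductive hypothesis combined with Lemmas~\ref{lem:2122} and~\ref{lem:102122} gives $\Omega(\theta')\supseteq\cB_{u-3^{k_1}}(N(\theta'))$ with no other thin partitions; if the inductive call falls into the sub-exceptional branch excluding a two-row partition $((u-3^{k_1})/2,(u-3^{k_1})/2)^\circ$, this missing partition lies safely inside $\cB_u(N(\theta))$ and is recovered once we star-multiply by $\cQ_{11}$ via Step~1 of Lemma~\ref{lem:102122}. Applying the analogue of Step~1 or Step~2 of that proof (with $\cQ$ replaced by $\cQ_{11}$) then yields $\Omega(\theta)\supseteq\cB_u(N(\theta))$ together with the no-other-thin-partitions property.

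For the exceptional case $T=2$, $y_{11}=y_{21}=1$, write $\theta=\phi(s_1)\times\phi(s_2)$ with $\sigma(s_2)=21$, so $m_2=(3^{k_2}+1)/2$ and $N(\theta)=(m_1+2)+m_2=u/2+2$. Proposition~\ref{prop: B star B} gives $\cB_u(u/2)\subseteq\Omega(\theta)\subseteq\cB_u(u/2+2)$, so it remains to analyse $\lambda$ with $\lambda_1\in\{u/2+1,u/2+2\}$. By a direct case split parallel to Step~3 of the proof of Lemma~\ref{lem:1000}, for every such $\lambda\notin\{(u/2,u/2)\}^\circ$ one produces explicit $\mu\in\cQ_{11}$ and $\nu\in\cB_{3^{k_2}}(m_2)$ with $c^\lambda_{\mu,\nu}>0$; for the boundary two-row and hook partitions $\tworow{u}{u/2+i}$ and $\hook{u}{u/2+i}$ with $i\in\{1,2\}$ one takes $\mu$ and $\nu$ among $\tworow{3^{k_j}}{m_j+2-r_j}$ or $\hook{3^{k_j}}{m_j+2-r_j}$ for small shifts $r_j\in\{0,1,2\}$, and verifies the LR filling directly. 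To establish that $(u/2,u/2)$ (hence also its conjugate) does not lie in $\Omega(\theta)=\Omega(s_1)\star\Omega(s_2)$, the LR rule restricts $\mu\subseteq(u/2,u/2)$ to have at most two rows, and the column-strictness of any LR filling of weight $\nu$ forces every row~1 entry in the skew shape to be $1$ and every row~2 entry in the overlap columns to be $2$; tracing the good-sequence condition through the reading word shows that the only option is $\mu=((3^{k_1}+1)/2,(3^{k_1}-1)/2)=\tworow{3^{k_1}}{m_1+1}$ (the unique two-row partition excluded from $\cQ_{11}$) paired with $\nu=((3^{k_2}+3)/2,(3^{k_2}-3)/2)$ (which has $\nu_1>m_2$, hence $\nu\notin\Omega(s_2)$), while a one-row or hook choice of $\mu$ forces $\mu_1\ge 3^{k_1}-1>m_1+2$. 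Finally, $\Omega(\theta)$ contains no other thin partitions: if $\lambda$ is thin and $c^\lambda_{\mu,\nu}>0$, then $\mu$ and $\nu$ are forced to be thin sub-partitions, and since $\Omega(s_i)$ contains no thin partitions outside $\cQ_{11}$ (resp.~$\cB_{3^{k_2}}(m_2)$), we obtain $\lambda_1\le\mu_1+\nu_1\le(m_1+2)+m_2=N(\theta)$, and likewise $l(\lambda)\le N(\theta)$ by conjugation.

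The main obstacle is the exceptional case: the LR-rigidity argument excluding $(u/2,u/2)$ must be set up precisely so that the excluded partition of $\cQ_{11}$ exactly matches the forced choice of $\mu$, while the complementary boundary enumeration showing that every other partition with first row $u/2+1$ or $u/2+2$ is reached requires a careful case-by-case LR construction; each individual step is routine once the correct $\mu,\nu$ are selected, but coordinating them so that no thin partition is missed or doubly excluded is the delicate part.
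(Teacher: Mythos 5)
Your overall strategy is the same as the paper's: peel off the type-$11$ factor, enlarge its $\Omega$-set to the full box $\cB_{3^{k_1}}(m_1+2)$ via the LR replacement lemmas (Lemmas~\ref{lem: LR tworow filling} and~\ref{lem: LR hook filling}) together with the box-moving trick, combine with Proposition~\ref{prop: B star B}, isolate the $\{11,21\}$ pair where the rectangle $(\tfrac u2,\tfrac u2)$ is the obstruction, and get the thin upper bound from the fact that constituents of a thin partition are thin. But the exceptional case, which is the delicate heart of the lemma, contains concrete errors as written. Proposition~\ref{prop: B star B} cannot be invoked for $\cB_{3^{k_1}}(m_1)\star\cB_{3^{k_2}}(m_2)$, since $m_1=\tfrac{3^{k_1}-1}{2}\le\tfrac{3^{k_1}}{2}$ violates its hypothesis, so even the containment $\cB_u(\tfrac u2)\subseteq\Omega(\theta)$ already needs the replacement analysis; this is precisely why the paper's Step~1 proves a refined dichotomy according to whether $N\ge\tfrac{n+3}{2}$, and it is also why your claim that Steps~1--2 of Lemma~\ref{lem:102122} ``transfer verbatim'' is not quite right: for a type-$10$, $22$ or $7$ partner the relevant parameter comfortably exceeds $\tfrac{n+3}{2}$, for a type-$21$ partner it does not. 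Likewise $\Omega(\theta)\subseteq\cB_u(\tfrac u2+2)$ is false ($M(\theta)=\tfrac{2u}{3}$); only the thin members of $\Omega(\theta)$ obey that bound, which is all you need, but the statement must be restricted accordingly. Finally, in the $T\ge 3$ induction the recovery of a missing rectangle coming from an exceptional sub-call requires an explicit box-moving argument (as in case (ii) of the proof of Lemma~\ref{lem: RT=1}), not merely a citation of Step~1.

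More seriously, your exclusion of $(\tfrac u2,\tfrac u2)$ rests on a pairing that does not exist: if $\mu=\tworow{3^{k_1}}{m_1+1}=(m_1+1,m_1)$, its complement in the rectangle is $(m_2,m_2-1)=\tworow{3^{k_2}}{m_2}$, not $(m_2+1,m_2-2)$, so $c^{(\frac u2,\frac u2)}_{\mu,\nu}=0$ for the pair you name. The correct argument must dispose of two genuine complementary pairings: $\bigl(\tworow{3^{k_1}}{m_1+1},\tworow{3^{k_2}}{m_2}\bigr)$, which fails because the first partition is one of the two thin partitions excluded from the type-$11$ set $\Omega(t_1)$, and $\bigl(\tworow{3^{k_1}}{m_1+2},(m_2+1,m_2-2)\bigr)$, which fails because $(m_2+1,m_2-2)$ is thin with first part exceeding $m_2$ and the type-$21$ set contains no thin partitions outside $\cB_{3^{k_2}}(m_2)$. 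Your write-up fuses these two cases into a single impossible pair, so the exclusion step does not go through as stated, even though both mechanisms needed for the repair appear in your text; the paper's version of this point is the final paragraph of its Step~1, namely that the only two-row partition in $\Omega(t_1)$ is $\tworow{3^{k_1}}{m_1+2}$, whose complement lies outside $\cB_{3^{k_2}}(N)$ exactly when $N<\tfrac{3^{k_2}+3}{2}$.
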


\begin{proof}
	We first investigate $\Omega(t_1)\star\Omega(\theta')$ for $\sigma(t_1)=11$ and various forms of $\Omega(\theta')$. Recall $\Omega(t_1)$ contains $\cB_{3^k}(m+2)\setminus\{\tworow{3^k}{m+1},\hook{3^k}{m+1}\}^\circ$ and no other thin partitions, where $k\ge 3$ and $m=m(t_1)=\tfrac{3^k-1}{2}$. Let $\cQ:=\Omega(t_1)\sqcup\{\tworow{3^k}{m+1},\hook{3^k}{m+1}\}^\circ$.
	
	\noindent\emph{Step 1.} Let $N>n/2$. We claim that 
	$$\Omega(t_1)\star\cB_n(N) = \begin{cases}
		\big( \cQ\star\cB_n(N) \big) \setminus \{(\tfrac{n+3^k}{2},\tfrac{n+3^k}{2}) \}^\circ & \text{if}\ n\ \text{is odd and}\ N < \tfrac{n+3}{2},\\
		\cQ\star\cB_n(N) & \text{otherwise}.\\
	\end{cases}$$
	To show this claim, first let $\lambda\in \big(\cQ\star\cB_n(N)\big) \setminus \{(\tfrac{n+3^k}{2},\tfrac{n+3^k}{2}) \}^\circ$. Then there exists $\alpha\in\cQ$ and $\beta\in\cB_n(N)$ such that $c^\lambda_{\alpha\beta}=c^\lambda_{\beta\alpha}>0$. If $\alpha\notin\{\tworow{3^k}{m+1},\hook{3^k}{m+1}\}^\circ$, then $\lambda\in\Omega(t_1)\star\cB_n(N)$ as desired. Since $c^\lambda_{\alpha\beta}=c^{\lambda'}_{\alpha'\beta'}$, we may assume $\alpha\in\{\tworow{3^k}{m+1},\hook{3^k}{m+1}\}$.
	
	If $\alpha=\tworow{3^k}{m+1}$, by Lemma~\ref{lem: LR tworow filling} (b) we may either replace $\alpha$ by some $\gamma\in\Omega(t_1)$, or $[\lambda\setminus\beta]\in\{[(m+1,m)], [(m+1,m)]^c \}$. If $[\lambda\setminus\beta]\cong [(m+1,m)]$, then as in Step 1 in the proof of Lemma~\ref{lem: LRb}, we choose $\mathsf{b}$ to be any removable box of $\beta$ and $\mathsf{c}$ to be the top left box of $[\lambda\setminus\beta]$. Then we set $[\tilde{\beta}]=([\beta]\setminus\mathsf{b})\cup\mathsf{c}$ and observe that $\cLR([\lambda\setminus\tilde{\beta}])\cap\Omega(t_1)\ne\emptyset$. Moreover, $\mathsf{c}$ cannot be in position $(1,N+1)$ or $(N+1,1)$ as $N>n/2$, so $\tilde{\beta}\in\cB_n(N)$. Hence $\lambda\in\Omega(t_1)\star\cB_n(N)$.
	
	If $[\lambda\setminus\beta]\cong[(m+1,m)]^c$, we instead take $\mathsf{c}$ to be the leftmost box in the top row of $[\lambda\setminus\beta]$ and $\mathsf{b}$ to be any removable box of $\beta$ not immediately to the left of $\mathsf{c}$. Such $\mathsf{b}$ always exists since $\lambda\ne(\tfrac{n+3^k}{2},\tfrac{n+3^k}{2})$, so $[\lambda\setminus\beta]\cong[(m+1,m)]^c$ implies that $\lambda$ is not a two-row partition. Then we define $\tilde{\beta}$ similarly and deduce that $\lambda\in\Omega(t_1)\star\cB_n(N)$. 
	
	Next, if $\alpha=\hook{3^k}{m+1}$, then a similar argument follows where we use Lemma~\ref{lem: LR hook filling} instead of Lemma~\ref{lem: LR tworow filling}. In the case $[\lambda\setminus\beta]\cong[(m+1,1^m)]^c$, we may choose $\mathsf{c}$ to be either the foot or hand of the skew hook $[\lambda\setminus\beta]$, and $\mathsf{b}$ to be the north-west neighbour of the corner of $[\lambda\setminus\beta]$.
	
	Note that $\tworow{3^k}{m+2}$ is the only two-row partition in $\Omega(t_1)$. Thus if ($n$ is odd and) $\lambda=(\tfrac{n+3^k}{2},\tfrac{n+3^k}{2})$, then $c^\lambda_{\alpha\beta}>0$ with $\alpha\in\Omega(t_1)$ implies that $\alpha=\tworow{3^k}{m+2}$ necessarily. This in turn implies that $\beta=(\tfrac{n+3}{2},\tfrac{n-3}{2})$, which belongs to $\cB_n(N)$ if and only if $N\ge\tfrac{n+3}{2}$. The case of $\lambda=(\tfrac{n+3^k}{2},\tfrac{n+3^k}{2})'$ follows similarly.
	
	\smallskip
	
	\noindent\emph{Step 2.} Next, we consider $\Omega(t_1)\star\Omega(t_2)$ where $\sigma(t_2)=10$.
	
	Recall $\Omega(t_2)$ contains $\cB_{3^{k'}}(m'+2)\setminus\{\tworow{3^{k'}}{m'+1},\hook{3^{k'}}{m'+1} \}^\circ$ and no other thin partitions, for some $k'\ge 3$ and $m'=m(t_2)\ge\tfrac{3^{k'}+23}{2}$. First we show that $\Omega(t_1)\star\Omega(t_2)=\cQ\star\Omega(t_2)$. 
	We follow the strategy of Step 1. 
	If $\lambda\in \big(\cQ\star\Omega(t_2)\big)\setminus\{ (\tfrac{3^k+3^{k'}}{2},\tfrac{3^k+3^{k'}}{2}) \}^\circ$, we define $\alpha,\beta$ accordingly. 
	In the cases where $[\lambda\setminus\beta]\in\{[(m+1,m)], [(m+1,m)]^c \}$, we define $\mathsf{b}$, $\mathsf{c}$ and $\tilde{\beta}$ via $[\tilde{\beta}]=([\beta]\setminus\mathsf{b})\cup\mathsf{c}$ as before. However, we must now also ensure that $\tilde{\beta}\notin\{\tworow{3^{k'}}{m'+1},\hook{3^{k'}}{m'+1} \}^\circ$. This is clear by inspection from the choices of $\mathsf{b}$ and $\mathsf{c}$.
	If $\lambda=(\tfrac{3^k+3^{k'}}{2},\tfrac{3^k+3^{k'}}{2})$, observe that $\tworow{3^k}{m+2}\in\Omega(t_1)$ and $\tworow{3^{k'}}{\tfrac{3^{k'}+3}{2}}\in\Omega(t_2)$ as $m'+1>\tfrac{3^{k'}+3}{2}$.
	Thus $\Omega(t_1)\star\Omega(t_2)=\cQ\star\Omega(t_2)$.
	
	Iterating, $\Omega(t_1)\star\Omega(t_2)=\cQ\star\Omega(t_2)=\cQ\star\cQ'$ where 
	$\cQ':=\Omega(t_2)\sqcup \{\tworow{3^{k'}}{m'+1},\hook{3^{k'}}{m'+1} \}^\circ$.
	Finally, observe that $\cQ\star\cQ'$ contains $\cB_{3^k+3^{k'}}(m+2+m'+2)$ and no other thin partitions, as in the proof of Lemma~\ref{lem:2122} 
	
	\noindent\emph{Step 3.} Next, we consider $\Omega(t_1)\star\Omega(t_2)$ where $\sigma(t_2)=7$.
	
	Recall $\Omega(t_2)$ contains $\cB_{3^{k'}}(m'+5)\setminus\{\tworow{3^{k'}}{m'+4},\hook{3^{k'}}{m'+4},\tworow{3^{k'}}{m'+1} \}^\circ$ and no other thin partitions, where $k'\ge 3$ and $m'=\tfrac{3^{k'}-1}{2}$. By a similar argument to the proof of Lemma~\ref{lem: 56}, $\Omega(t_1)\star\Omega(t_2)=\Omega(t_1)\star \big(\Omega(t_2)\sqcup\{\tworow{3^{k'}}{m'+1} \}^\circ\big)$. Then applying the argument in Step 2, 
	\[ \Omega(t_1)\star\Omega(t_2)=\cQ\star\big(\Omega(t_2)\sqcup\{\tworow{3^{k'}}{m'+4},\hook{3^{k'}}{m'+4},\tworow{3^{k'}}{m'+1} \}^\circ\big), \]
	which contains $\cB_{3^k+3^{k'}}(m+2+m'+5)$ and no other thin partitions.
	
	\noindent\emph{Step 4.} Finally, we consider $\Omega(t_1)\star\Omega(t_2)$ where $\sigma(t_2)=11$.
	
	Then $\Omega(t_2)$ contains $\cB_{3^{k'}}(m'+2)\setminus\{\tworow{3^{k'}}{m'+1},\hook{3^{k'}}{m'+1} \}^\circ$ and no other thin partitions, for some $k'\ge 3$ and $m'=\tfrac{3^{k'}-1}{2}$. This exactly the same argument as Step 2. (Note if $\lambda=(\tfrac{3^k+3^{k'}}{2},\tfrac{3^k+3^{k'}}{2})$ then we observe that $\tworow{3^{k'}}{\tfrac{3^{k'}+3}{2}}=\tworow{3^{k'}}{m'+2}\in\Omega(t_2)$.)
	
	\noindent\emph{Step 5.} The assertions of the current lemma follows from Steps 1--4 if $T=2$ and $y_{11}\ge 1$, or from Lemma~\ref{lem:102122} and Step 1 if $T\ge 3$ or $y_{11}=0$.
\end{proof}

\begin{lemma}\label{lem: RT=1}
	Suppose $T\ge 1$ and $R\ge 1$. Then $\Omega(\phi\times\theta)\supseteq\cB_{n+u}(N(\phi)+N(\theta))$ and contains no other thin partitions.
\end{lemma}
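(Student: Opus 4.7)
By Lemma~\ref{lem: omega star omega}, $\Omega(\phi \times \theta) = \Omega(\phi) \star \Omega(\theta)$, so this reduces to a Littlewood--Richardson computation combining the explicit descriptions of $\Omega(\phi)$ in Proposition~\ref{prop:qt} (using the appropriate form from Theorem~\ref{thm:a-prime power} in the degenerate case $R=1$, $\sigma(s)=1$) and of $\Omega(\theta)$ in Lemma~\ref{lem: type 11}. I would tackle the two assertions separately.

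For the containment $\cB_{n+u}(N(\phi) + N(\theta)) \subseteq \Omega(\phi \times \theta)$, I would argue by layers. Writing $\lambda = (\lambda_1, \mu) \in \cB_{n+u}(N(\phi) + N(\theta))$ with $\lambda_1 \ge l(\lambda)$ (permissible since $p$ odd gives $\Omega(\phi \times \theta)^\circ = \Omega(\phi \times \theta)$), when $\lambda_1 \le N(\phi) + N(\theta) - 1$ one applies Proposition~\ref{prop: B star B} to $\cB_n(N(\phi)-1) \subseteq \Omega(\phi)$ and $\cB_u(N(\theta)-1) \subseteq \Omega(\theta)$; the required bounds $N(\phi)-1 > n/2$ and $N(\theta)-1 > u/2$ follow from the inequality $N(s) \ge \tfrac{2}{3}\cdot 3^{k_s}$ for each admissible type (Tables~\ref{table:types} and~\ref{table:m<27}) together with $n, u \ge 27$. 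For the top layer $\lambda_1 = N(\phi) + N(\theta)$, I would decompose $\lambda = (N(\phi) + N(\theta), \mu)$ via Lemma~\ref{lem: iteratedLR}, taking $\alpha = (N(\phi), \mu_\phi)$ and $\beta = (N(\theta), \mu_\theta)$ so that $c^\lambda_{\alpha,\beta} = c^\mu_{\mu_\phi, \mu_\theta}$. The partition $\beta$ automatically lies in $\cB_u(N(\theta)) \setminus \{(u/2, u/2)\}^\circ \subseteq \Omega(\theta)$ since $\beta_1 = N(\theta) > u/2$ rules out both conjugates of the only possible exception; the constraint $\alpha \in \Omega(\phi)$ only restricts $\mu_\phi$ when $\sigma(\phi) \in E$, where it amounts to $\mu \in \Omega(\triv_{P_{n-N(\phi)}}) \star \cP(u - N(\theta))$. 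This latter set equals $\cP(n+u - N(\phi) - N(\theta))$ by Lemmas~\ref{lem: 10.3} and~\ref{lem: P star 00} when $n - N(\phi) \in \{3\} \cup \{3^k : k \ge 3\}$, and by a direct verification using $u - N(\theta) \ge 4$ (a bound established from Table~\ref{table:types}) when $n - N(\phi) \in \{4, 6, 10\}$.

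For the assertion that $\Omega(\phi \times \theta)$ contains no thin partitions outside $\cB_{n+u}(N(\phi) + N(\theta))$, suppose for contradiction that $\lambda \in \Omega(\phi) \star \Omega(\theta)$ is thin with $\lambda_1 > N(\phi) + N(\theta)$. Pick $\alpha \in \Omega(\phi)$, $\beta \in \Omega(\theta)$ with $c^\lambda_{\alpha,\beta} > 0$; since $\alpha, \beta \subseteq \lambda$ and $\lambda$ is thin (a hook, or with $\lambda_1 \le 2$, or with $l(\lambda) \le 2$), both $\alpha$ and $\beta$ are thin. Lemma~\ref{lem: type 11} then forces $\beta_1 \le N(\theta)$, and inspection of each form of $\Omega(\phi)$ from Proposition~\ref{prop:qt} yields $\alpha_1 \le N(\phi)$ for thin $\alpha$: the only non-immediate case is $\alpha = (N(\phi), \gamma)' \in \{(N(\phi), \gamma)\}^\circ$ with $\sigma(\phi) \in E$, where $\alpha_1 = 1 + l(\gamma) \le 1 + (n - N(\phi)) \le N(\phi)$ by $N(\phi) > n/2$. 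Hence $\lambda_1 \le \alpha_1 + \beta_1 \le N(\phi) + N(\theta)$, a contradiction.

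The main technical obstacle is the careful bookkeeping of the top-layer decomposition when $\sigma(\phi) \in E$, in particular the verification of $\Omega(\triv_{P_{n-N(\phi)}}) \star \cP(u-N(\theta)) = \cP(n+u-N(\phi)-N(\theta))$ for the three exceptional small values $n - N(\phi) \in \{4, 6, 10\}$; additional care is required in the degenerate subcase $R = 1$, $\phi = \triv_{P_{3^k}}$, where one applies Lemma~\ref{lem: 10.3} directly to $\Omega(\triv_{P_n}) = \cB_n(n-2) \cup \{(n)\}^\circ$, and where in the exceptional $T=2$, $y_{11} = y_{21} = 1$ subcase of $\Omega(\theta)$ one must exchange the forbidden $\beta = (u/2, u/2)$ for an alternative $\beta' \in \cB_u(N(\theta)) \setminus \{(u/2, u/2)\}^\circ$ via Lemma~\ref{lem: LR tworow filling}(a) -- specifically, producing $(u/2, u/2 - 1, 1) \in \cLR([\lambda \setminus \alpha])$ unless $[\lambda \setminus \alpha] \cong [(u/2, u/2)]$, a shape for which one modifies $\alpha$ instead by an addable-corner/removable-corner swap as in Step~1 of the proof of Lemma~\ref{lem: LRb}.
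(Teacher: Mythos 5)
Your overall strategy is the same as the paper's: reduce to $\Omega(\phi)\star\Omega(\theta)$ via Lemma~\ref{lem: omega star omega}, cover the bulk with Proposition~\ref{prop: B star B}, treat the top layer with Lemma~\ref{lem: iteratedLR}, and rule out extra thin partitions by the subpartition/pigeonhole argument; that last half of your proof is fine. The containment half, however, has a genuine gap in the degenerate cases. You quote Lemma~\ref{lem: type 11} for $\Omega(\theta)$, but that lemma assumes $T\ge 2$, whereas the statement allows $T=1$. When $T=1$ and $\sigma(t_1)\in\{10,11,7\}$, the set $\Omega(\theta)=\Omega(t_1)$ excludes the thin partitions $\tworow{u}{m+1}$ and $\hook{u}{m+1}$ (and for type $7$ also $\tworow{u}{m+4}$, $\hook{u}{m+4}$), all of which lie inside $\cB_u(N(\theta)-1)$; so both of your claims ``$\cB_u(N(\theta)-1)\subseteq\Omega(\theta)$'' and ``$\beta$ automatically lies in $\cB_u(N(\theta))\setminus\{(\tfrac{u}{2},\tfrac{u}{2})\}^\circ\subseteq\Omega(\theta)$'' fail there. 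The same problem occurs on the $\phi$ side for $R=1$ with $\sigma(s)\in\{5,6\}$, which you do not flag: for type $5$, for instance, $(2a-1,a+1)$ and $(2a-1,1^{a+1})$ belong to $\cB_n(N(\phi)-1)$ but not to $\Omega(\phi)$. These are exactly the cases the paper isolates as (i)$\star(\cdot)$ and $(\cdot)\star(1)$, and it handles them by first proving $\Omega(t_1)\star X=\cQ\star X$ for the ``filled-in'' set $\cQ$ using the replacement arguments of Lemmas~\ref{lem: LR tworow filling} and~\ref{lem: LR hook filling} (as in the proofs of Lemmas~\ref{lem: type 11} and~\ref{lem: 56}). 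Some version of that step is indispensable and is absent from your proposal; your closing remark invokes such a replacement only for the $(\tfrac{u}{2},\tfrac{u}{2})$ exception in the $T=2$ subcase, which is a different and easier exception.

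There is also an off-by-one slip: Proposition~\ref{prop: B star B} applied to $\cB_n(N(\phi)-1)$ and $\cB_u(N(\theta)-1)$ yields only $\cB_{n+u}(N(\phi)+N(\theta)-2)$, so the layer $\lambda_1=N(\phi)+N(\theta)-1$ is covered neither by your bulk step nor by your top-layer step. It can be repaired by the same iterated Littlewood--Richardson decomposition with $\alpha=(N(\phi)-1,\mu_\phi)$, but in the degenerate cases above, where the two-row and hook partitions at level $N(\cdot)-1$ are missing from $\Omega(\phi)$ or $\Omega(\theta)$, one must again argue as in Step~1 of the proof of Lemma~\ref{lem:102122}, e.g.\ exhibiting explicit pairs such as $c^\lambda_{\tworow{n}{N(\phi)},\tworow{u}{N(\theta)}}>0$ when $\lambda$ is the two-row partition at that level.
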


\begin{proof}
	From Lemma~\ref{lem: type 11}, one of the following statements holds for $\Omega(\theta)$:
	\begin{itemize}
		\item[(i)] $T=1$ and $\sigma(t_1)\in\{10,11,7\}$ (and $\Omega(\theta)=\Omega(t_1)$ is described e.g.~in Table~\ref{table:summary});
		\item[(ii)] $T=2$, $y_{11}=y_{21}=1$, and $\Omega(\theta)$ contains $\cB_u(N(\theta))\setminus\{(\tfrac{u}{2},\tfrac{u}{2} )\}^\circ$ and no other thin partitions; or
		\item[(iii)] $\Omega(\theta)$ contains $\cB_u(N(\theta))$ and no other thin partitions (note in this case $u-N(\theta)\ge 2$).
	\end{itemize}
	From Theorem~\ref{thm:a-prime power} and Proposition~\ref{prop:qt}, one of the following statements holds for $\Omega(\phi)$:
	\begin{itemize}
		\item[(1)] $R=1$ and $\sigma(s_1)\in\{1,5,6\}$ (and $\Omega(\phi)=\Omega(s)$ is recorded e.g.~in Table~\ref{table:summary});
		\item[(2)] $\Omega(\phi)=\cB_n(N(\phi)-1)\sqcup\{(N(\phi),\mu) \mid \mu\in\Omega(\triv_{P_{n-N(\phi)}}) \}^\circ$; or
		\item[(3)] $\Omega(\phi)=\cB_n(N(\phi))$.
	\end{itemize}
	
	Let $\Omega:=\Omega(\phi)\star\Omega(\theta)$ and $N:=N(\phi)+N(\theta)$. We list why $\Omega$ contains $\cB_{n+u}(N)$ in each of the possible cases. That $\Omega\setminus\cB_{n+u}(N)$ contains no thin partitions then follows by the Littlewood--Richardson rule and the pigeonhole principle (as in Lemma~\ref{lem:2122})
	
	\begin{itemize}
		\item \emph{Case (iii) $\star$ (3):} by Proposition~\ref{prop: B star B}.
		
		\item \emph{Case (iii) $\star$ (2):} by the argument in the proof of Lemma~\ref{lem: 1233031}, since $u-N(\theta)\ge 2$. 
		
		\item \emph{Case (iii) $\star$ (1):} if $\sigma(s)=1$, then by Lemma~\ref{lem: 10.3}; if $\sigma(s)\in\{5,6\}$, then by the argument in the proof of Lemma~\ref{lem: 56}.
		
		\item \emph{Case (ii) $\star$ (3):} we show that $\Omega=\Omega(\phi)\star ( \Omega(\theta)\sqcup\{(\tfrac{u}{2},\tfrac{u}{2}) \}^\circ)$, from which it follows that $\Omega\supseteq\cB_{n+u}(N)$. Let $\lambda\in\Omega(\phi)\star ( \Omega(\theta)\sqcup\{(\tfrac{u}{2},\tfrac{u}{2}) \}^\circ)$, so $c^\lambda_{\beta\alpha}=c^\lambda_{\alpha\beta}>0$ for some $\alpha\in\cB_u(N(\theta))$ and $\beta\in\Omega(\phi)$.  If $\alpha\notin\{(\tfrac{u}{2},\tfrac{u}{2}) \}^\circ$ then $\lambda\in\Omega$ as desired. Without loss of generality we may assume $\alpha=(\tfrac{u}{2},\tfrac{u}{2})$. 
		Note $c^\lambda_{\gamma\beta}>0$ for $\gamma=(\alpha_1+1,\alpha_2-1)=(\tfrac{u}{2}+1,\tfrac{u}{2}-1)\in\Omega(\theta)$ unless $[\lambda\setminus\beta]\cong[\alpha]$ (else the leftmost 2 in an LR filling of $[\lambda\setminus\beta]$ of weight $\alpha$ may be replaced by a 1). 
		When this holds, let $\mathsf{c}$ be the top left box of $[\lambda\setminus\beta]$, and $\mathsf{b}$ be any removable box of $\beta$. Defining $\tilde{\beta}$ by $[\tilde{\beta}]=([\beta]\setminus\mathsf{b})\cup\mathsf{c}$, clearly $\cLR([\lambda\setminus\tilde{\beta}])\cap\Omega(\theta)\ne\emptyset$. Moreover, $\tilde{\beta}\in\cB_n(N(\phi))$ as $\mathsf{c}$ cannot be in position $(1,N(\phi)+1)$ or $(N(\phi)+1,1)$, since $N(\phi)>n/2$. Thus $\lambda\in\Omega(\phi)\star\Omega(\theta)=\Omega$.
		
		\item \emph{Case (ii) $\star$ (2):} we first show $\Omega=\Omega(\phi)\star ( \Omega(\theta)\sqcup\{(\tfrac{u}{2},\tfrac{u}{2}) \}^\circ)$, by using the same argument as in the previous case 
		with one modification. 
		We choose $\mathsf{b}$ to be the box in position $(1,N(\phi))$ or $(N(\phi),1)$ if either exists in $\beta$ ($\beta$ cannot have both as $|\beta|=n<2N(\phi)-1$: because $\Omega(\phi)$ belongs to case (2), therefore $\sigma(\phi)\in E$ and so $N(\phi)\ge\tfrac{n+7}{2}$ in fact). Thus $\tilde{\beta}\in\cB_n(N(\phi)-1)\subseteq\Omega(\phi)$ by inspection, where $[\tilde{\beta}]=([\beta]\setminus\mathsf{b})\cup\mathsf{c}$.
		Then, $\Omega(\phi)\star ( \Omega(\theta)\sqcup\{(\tfrac{u}{2},\tfrac{u}{2}) \}^\circ)$ contains $\cB_{n+u}(N)$ by the same argument as in the case (iii) $\star$ (2).
		
		\item \emph{Case (ii) $\star$ (1):} similarly we find that $\Omega=\Omega(\phi)\star ( \Omega(\theta)\sqcup\{(\tfrac{u}{2},\tfrac{u}{2}) \}^\circ)$, and then proceed as in case (iii) $\star$ (1).
		
		\item \emph{Case (i) $\star$ (3):} we replace $\Omega(t_1)$ by $\cQ$ as in the proof of Lemma~\ref{lem: type 11}, then proceed as in case (iii) $\star$ (3).
		
		\item \emph{Case (i) $\star$ (2) (or (1)):} we again apply similar replacements then proceed as in case (iii) $\star$ (2) (or (1), respectively).
	\end{itemize}
\end{proof}

\begin{lemma}\label{lem: all nqt}
	Recall $\theta\in\Lin(P_u)$ and $\vartheta\in\Lin(P_v)$ from Notation~\ref{not:nqt}. We have that $\Omega(\theta\times\vartheta)$ contains $\cB_{u+v}(N(\theta)+N(\vartheta))$ and no other thin partitions.
\end{lemma}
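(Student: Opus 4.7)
The plan is to apply Lemma~\ref{lem: omega star omega} to write $\Omega(\theta\times\vartheta)=\Omega(\theta)\star\Omega(\vartheta)$, then combine the descriptions of the two factors via a Littlewood--Richardson replacement argument mirroring those in the proofs of Lemmas~\ref{lem: type 11} and~\ref{lem: RT=1}. By Lemma~\ref{lem: type 11}, $\Omega(\theta)\supseteq\cB_u(N(\theta))$ up to at worst the missing pair $\{(\tfrac{u}{2},\tfrac{u}{2})\}^\circ$ in the exceptional case $T=2$, $y_{11}=y_{21}=1$, and $\Omega(\theta)$ contains no other thin partitions. The first subgoal is to establish the analogous statement for $\vartheta$.

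For $\vartheta$, the constraints that $v\le 26$ and that $\varphi\in\Lin(P_{v-9})$ has only non-quasi-trivial components force $v-9\in\{0,9\}$, so $\vartheta=\phi(1,1)$ (when $v=9$) or $\vartheta=\phi(1,1)\times\phi(1,1)$ (when $v=18$). In the first case Table~\ref{table:m<27} gives $\Omega(\vartheta)\supseteq\cB_9(5)\setminus\{(3^3)\}$ and no other thin partitions. For the second case, I would compute $\Omega(\phi(1,1))\star\Omega(\phi(1,1))$ directly: Proposition~\ref{prop: B star B} already supplies $\cB_{18}(10)$, and for $\lambda\vdash 18$ with $\lambda_1\in\{11,12,13,14\}$ the cases are handled by exhibiting explicit pairs $(\mu,\nu)\in\Omega(\phi(1,1))^{\times 2}$ satisfying $c^\lambda_{\mu\nu}>0$; the only obstruction is the occurrence of $(3^3)\notin\Omega(\phi(1,1))$ in an LR filling, which Lemma~\ref{lem: LR tworow filling} (or its hook analogue Lemma~\ref{lem: LR hook filling}) lets us swap out except on shapes that can be verified by hand. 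The outcome is $\Omega(\vartheta)\supseteq\cB_v(N(\vartheta))$, possibly minus a finite list of \emph{non-thin} partitions (e.g.\ $(3^3)$-type), with no further thin partitions.

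With both descriptions in hand, let $\cQ_\theta=\Omega(\theta)\cup\{(\tfrac{u}{2},\tfrac{u}{2})\}^\circ$ (or $\Omega(\theta)$ if no exceptional partitions are missing) and $\cQ_\vartheta=\Omega(\vartheta)$ together with its finitely many missing non-thin partitions. I would then show
\[
\Omega(\theta)\star\Omega(\vartheta)\ =\ \cQ_\theta\star\cQ_\vartheta
\]
by the same box-swap argument as Step~1 of Lemma~\ref{lem: type 11}: given $\lambda$ with a filling $c^\lambda_{\alpha\beta}>0$ for $\alpha\in\cQ_\theta\setminus\Omega(\theta)$, pick a removable box $\mathsf{b}$ of $\beta$ and an addable box $\mathsf{c}$ of $\beta$ lying in $[\lambda\setminus\beta]$, and move $\mathsf{b}$ to $\mathsf{c}$; the constraints $N(\theta)>u/2$, $N(\vartheta)>v/2$ ensure the resulting $\tilde\beta$ stays in $\cB_v(N(\vartheta))$ (hence in $\Omega(\vartheta)$ up to the controlled non-thin exceptions). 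A symmetric swap handles exceptional $\beta$, and Lemma~\ref{lem: LR tworow filling}(b)/Lemma~\ref{lem: LR hook filling}(b) cover the sub-case $[\lambda\setminus\beta]\cong[\alpha]$ or $[\alpha]^c$. Once this equality is established, Proposition~\ref{prop: B star B} immediately yields $\cB_{u+v}(N(\theta)+N(\vartheta))\subseteq\cQ_\theta\star\cQ_\vartheta=\Omega(\theta\times\vartheta)$.

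For the second conclusion, suppose $\lambda\in\Omega(\theta\times\vartheta)$ is thin, so some $\mu\in\Omega(\theta)$ and $\nu\in\Omega(\vartheta)$ satisfy $c^\lambda_{\mu\nu}>0$. Because $[\mu],[\nu]\subseteq[\lambda]$ in an LR sense and $\lambda$ is a hook or satisfies $\lambda_1\le 2$ or $l(\lambda)\le 2$, both $\mu$ and $\nu$ inherit thinness. Then Lemmas~\ref{lem: type 11} and the description of $\Omega(\vartheta)$ force $\mu\in\cB_u(N(\theta))$ and $\nu\in\cB_v(N(\vartheta))$, so $\lambda_1\le\mu_1+\nu_1\le N(\theta)+N(\vartheta)$ and likewise $l(\lambda)\le N(\theta)+N(\vartheta)$, i.e.\ $\lambda\in\cB_{u+v}(N(\theta)+N(\vartheta))$. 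The main obstacle is the first subgoal, namely verifying the $v=18$ case $\Omega(\phi(1,1))\star\Omega(\phi(1,1))$ in enough detail to pin down precisely which non-thin partitions (if any) are absent, since these must then be absorbed cleanly into the replacement argument above.
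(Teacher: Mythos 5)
Your overall strategy (write $\Omega(\theta\times\vartheta)=\Omega(\theta)\star\Omega(\vartheta)$, enlarge each factor by box-swap/LR-replacement arguments to a set containing the appropriate $\cB$-box, apply Proposition~\ref{prop: B star B}, and get the thin-partition statement by the pigeonhole argument) is the same as the paper's, which simply re-runs the case analysis of Lemma~\ref{lem: RT=1}. However, there is a genuine gap in your description of $\Omega(\theta)$. Lemma~\ref{lem: type 11} assumes $T\ge 2$, whereas Notation~\ref{not:nqt} only requires $T\ge 1$, and the statement you quote is false when $T=1$ and $\sigma(t_1)\in\{10,11,7\}$ (possibility (i) in the proof of Lemma~\ref{lem: RT=1}). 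In that case $\Omega(\theta)=\Omega(t_1)$ does \emph{not} contain $\cB_u(N(\theta))$ up to $\{(\tfrac{u}{2},\tfrac{u}{2})\}^\circ$: by Table~\ref{table:summary} it omits the thin partitions $\tworow{u}{N(\theta)-1}$ and $\hook{u}{N(\theta)-1}$ (and for type $7$ also $\tworow{u}{N(\theta)-4}$) together with their conjugates, and these omissions are genuine (e.g.\ for type $11$ one has $Z^{\tworow{u}{N(\theta)-1}}_{\theta}=0$ by the proof of Lemma~\ref{lem:111}). Consequently your $\cQ_\theta=\Omega(\theta)\cup\{(\tfrac{u}{2},\tfrac{u}{2})\}^\circ$ need not contain $\cB_u(N(\theta))$, and the final appeal to Proposition~\ref{prop: B star B} does not deliver $\cB_{u+v}(N(\theta)+N(\vartheta))\subseteq\cQ_\theta\star\cQ_\vartheta$ in this case. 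The repair is exactly what the paper does in case (i) $\star$ (3) of Lemma~\ref{lem: RT=1}: enlarge $\Omega(t_1)$ to the set $\cQ$ obtained by adding back those missing two-row and hook partitions, and prove $\Omega(t_1)\star X=\cQ\star X$ using Lemmas~\ref{lem: LR tworow filling}(b) and~\ref{lem: LR hook filling}(b) (as in Step 1 of Lemma~\ref{lem: type 11}, plus the argument of Lemma~\ref{lem: 56} for the extra $\tworow{}{m+1}$ of type $7$). Your swap template is the right tool; you just never apply it to these partitions because you asserted the wrong list of exceptions on the $\theta$ side.

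Two smaller points. Your self-declared ``main obstacle'', the case $v=18$, is unnecessary: $\vartheta=\phi(1,1)\times\phi(1,1)$ falls under the catch-all row ``$s_i=(1,1)$ for some $i$, except above'' of Table~\ref{table:m<27}, which already records that $\Omega(\vartheta)$ contains $\cB_{18}(10)$ and no other thin partitions; this is precisely what the paper cites, so no fresh computation of $\Omega(\phi(1,1))^{\star 2}$ is needed. Moreover, your proposed use of Lemma~\ref{lem: LR tworow filling} (or~\ref{lem: LR hook filling}) to swap out a constituent of weight $(3^3)$ does not literally apply, since those lemmas treat fillings of two-row and hook weights only; replacing the small exceptions $(3^3)$ (and, in the paper's broader reading, $(6^2)$, $(9^2)$) is instead handled by the analysis of case (ii) $\star$ (3) of Lemma~\ref{lem: RT=1}, or by direct finite checks. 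Your concluding thin-partition argument is correct and matches the paper's.
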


\begin{proof}
	We have the same possibilities (i)--(iii) for $\Omega(\theta)$ as in Lemma~\ref{lem: RT=1}. Moreover, from Table~\ref{table:m<27} we see that $\Omega(\vartheta)$ contains $\cB_v(m(\vartheta))$ (sometimes excepting $(3^3), (6^2), (9^2)$ and conjugates: call these the small exceptions) and no other thin partitions.
	
	When $\Omega(\vartheta)$ contains the whole of $\cB_v(m(\vartheta))$
	, then $\Omega(\theta\times\vartheta)\supseteq\cB_{u+v}(N(\theta)+N(\vartheta))$ by the same argument as in cases (i)--(iii) $\star$ (3) in the proof of Lemma~\ref{lem: RT=1}. When there are the small exception(s), then the result follows by a similar analysis to case (ii) $\star$ (3) in the proof of Lemma~\ref{lem: RT=1}.
\end{proof}

\begin{lemma}\label{lem: rest}
	Recall $\phi\in\Lin(P_n)$, $\theta\in\Lin(P_u)$, $\psi\in\Lin(P_l)$ and $\vartheta\in\Lin(P_v)$ from Notation~\ref{not:nqt}, where $27\mid n,u\in\N$, $l\in\{0,1,\dotsc,26\}$ and $v\in\{9,10,\dotsc,26\}$. Abbreviating `and no other thin partitions' to `a.n.o.t.p', then
	\begin{enumerate}[(a)]
		\item if $T=1, y_{21}=1$ and $\psi=\triv_{P_3}$, then $\Omega(\theta\times\psi)$ contains $\cB_{u+l}(N(\theta)+N(\psi))\setminus\{(\tfrac{u+l}{2},\tfrac{u+l}{2}) \}^\circ$ a.n.o.t.p. Otherwise, $\Omega(\theta\times\psi)$ contains $\cB_{u+l}(N(\theta)+N(\psi))$ a.n.o.t.p.
		
		\item $\Omega(\phi\times\vartheta)$ contains $\cB_{n+v}(N(\phi)+N(\vartheta))$ a.n.o.t.p.
		
		\item $\Omega(\phi\times\theta\times\vartheta)$ contains $\cB_{n+u+v}(N(\phi)+N(\theta)+N(\vartheta))$ a.n.o.t.p.
		
		\item $\Omega(\phi\times\theta\times\psi)$ contains $\cB_{n+u+l}(N(\phi)+N(\theta)+N(\psi))$ a.n.o.t.p.
		
		\item $\Omega(\theta\times\psi\times\vartheta)$ contains $\cB_{u+l+v}(N(\theta)+N(\psi)+N(\vartheta))$ a.n.o.t.p. 
		
		\item $\Omega(\phi\times\psi\times\vartheta)$ contains $\cB_{n+l+v}(N(\phi)+N(\psi)+N(\vartheta))$ a.n.o.t.p.
		
		\item $\Omega(\phi\times\psi\times\theta\times\vartheta)$ contains $\cB_{n+l+u+v}(N(\phi)+N(\psi)+N(\theta)+N(\vartheta))$ a.n.o.t.p.
	\end{enumerate}
\end{lemma}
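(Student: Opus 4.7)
The strategy is to handle the seven parts in sequence, reducing each to a $\star$-product via Lemma~\ref{lem: omega star omega} and then substituting in the known shapes of $\Omega$ on the factors: Theorem~\ref{thm:a-prime power} and Proposition~\ref{prop:qt} for $\phi$, Lemma~\ref{lem: type 11} for $\theta$, Lemma~\ref{lem: RT=1} for $\phi\times\theta$, Lemma~\ref{lem: all nqt} for $\theta\times\vartheta$, and Table~\ref{table:m<27} for $\psi$ and $\vartheta$. Throughout, the containment of $\cB_w(N)$ (for the appropriate $N$ = sum of $N$-values and $w$ = sum of sizes) will come from Proposition~\ref{prop: B star B}, exploiting that each component has $N(\cdot) > \tfrac{|\cdot|}{2}$. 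The "no other thin partitions" claim in each case is a restriction-pigeonhole argument: if $\lambda$ is thin and in $\Omega$, then by Theorem~\ref{thm:mackey} there exist component partitions $\mu_1,\ldots,\mu_s$ with $\mu_i\in\Omega$ of the $i$-th factor and $c^\lambda_{\mu_1,\ldots,\mu_s}>0$; each $\mu_i$ is then forced to be thin (as $\mu_i\subseteq\lambda$), so if $\lambda\notin\cB_w(N)$ then some $\mu_i$ exceeds its $m$-bound, contradicting its membership in the corresponding $\Omega$.

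I begin with part (a), which carries the only genuine subtlety. Here $\Omega(\theta)$ takes one of three forms as recorded in the proof of Lemma~\ref{lem: RT=1}, and $\Omega(\psi)$ is one of the explicit sets of Table~\ref{table:m<27}. In the "generic" forms, $\cB_{u+l}(N(\theta)+N(\psi))\subseteq\Omega(\theta\times\psi)$ by Proposition~\ref{prop: B star B}; to absorb the small exceptional partitions excluded from $\Omega(\psi)$ (namely $(3^3),(6^2),(9^2)$ and conjugates) or from $\Omega(\theta)$ (namely $(\tfrac{u}{2},\tfrac{u}{2})^\circ$ in case (ii) of Lemma~\ref{lem: type 11}), I use the replacement arguments from Lemmas~\ref{lem: LR tworow filling} and~\ref{lem: LR hook filling} in the same style as Step 1 of Lemma~\ref{lem: type 11}: given $c^\lambda_{\alpha,\beta}>0$ with $\alpha$ one of the excluded partitions, either an LR-filling modification produces $\alpha'\in\Omega$-of-factor with $c^\lambda_{\alpha',\beta}>0$, or $[\lambda\setminus\beta]$ has a forced shape that can be resolved by picking a removable box of $\beta$ and swapping it against a top-left or hand/foot box of $[\lambda\setminus\beta]$. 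The main obstacle is the single case $\sigma(t_1)=21$ (so $\theta=\phi(1^k)$) with $\psi=\triv_{P_3}$: one verifies by direct inspection of LR fillings of weights $(3)$ and $(1^3)$ that the square partition $(\tfrac{u+l}{2},\tfrac{u+l}{2})$ cannot be realized from any $\alpha\in\Omega(\theta)$ with $\alpha_1\le\tfrac{3^k+1}{2}$, because the relevant skew shape fails to be a horizontal or vertical strip. This is exactly the partition excluded from $\cB_{u+l}(N(\theta)+N(\psi))$ in the statement, and the "a.n.o.t.p" clause follows from the pigeonhole argument above.

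Parts (b), (c), and (e) are then established by the same pattern applied once more: in (b) use the form of $\Omega(\phi)$ from Proposition~\ref{prop:qt} combined with $\Omega(\vartheta)$ from Table~\ref{table:m<27}, where the only thin partitions excluded from $\Omega(\vartheta)$ are a few small squares/hooks that are absorbed via the replacement argument; in (c) use Lemma~\ref{lem: RT=1} (which already handles the quasi-trivial$\times$non-quasi-trivial combination) and then $\star\,\Omega(\vartheta)$ as in Lemma~\ref{lem: all nqt}; in (e) use Lemma~\ref{lem: all nqt} and $\star\,\Omega(\psi)$ with the replacement argument to absorb the thin exceptions. Note that in (c)--(e) no square-partition exception survives, since $v\ge 9$ forces at least one factor to be far from a two-row partition, so the squares $(\tfrac{w}{2},\tfrac{w}{2})$ are always inside the resulting $\cB_w(N)$.

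Parts (d), (f) and (g) are then direct combinations. For (d), combine part (a) with $\Omega(\phi)$: here the $\sigma(t_1)=21$ exception of (a) is harmless because $\phi$ contributes an additional $N(\phi)>\tfrac{n}{2}$, which pushes the square partition of size $u+l$ safely into $\cB_{n+u+l}(N(\phi)+N(\theta)+N(\psi))$ (the offending partition has both parts at most $\tfrac{u+l}{2}<N(\phi)+N(\theta)+N(\psi)$). For (f), combine (b) with $\Omega(\psi)$ via Table~\ref{table:m<27}, using the same replacement arguments to absorb the small thin exceptions of $\Omega(\psi)$. Finally for (g), combine (d) with $\Omega(\vartheta)$ (or equivalently (c) with $\Omega(\psi)$), again absorbing Table~\ref{table:m<27}'s small exceptions. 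The "no other thin partitions" assertion in every part follows uniformly from the restriction-pigeonhole argument described above, completing the proof.
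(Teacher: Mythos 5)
Your overall route is the same as the paper's: decompose via Lemma~\ref{lem: omega star omega}, substitute the known shapes of $\Omega$ on each factor (Lemmas~\ref{lem: type 11}, \ref{lem: RT=1}, \ref{lem: all nqt} and Table~\ref{table:m<27}), obtain the ball containment from Proposition~\ref{prop: B star B} together with Littlewood--Richardson replacement arguments for the finitely many excluded partitions, prove ``no other thin partitions'' by the restriction/pigeonhole argument, and isolate the exception in (a) as $\sigma(t_1)=21$ with $\psi=\triv_{P_3}$. Your strip analysis there is correct: any $\alpha\subseteq(\tfrac{u+l}{2},\tfrac{u+l}{2})$ has at most two rows, hence is thin, so $\alpha\in\Omega(\theta)$ forces $\alpha\in\cB_u(N(\theta))$, and then $[\lambda\setminus\alpha]$ is neither a horizontal nor a vertical strip. (A small mislabelling: the exceptions $(3^3),(6^2),(9^2)$ belong to $\Omega(\vartheta)$, not $\Omega(\psi)$; the exceptions of $\Omega(\psi)$ are the various small partitions listed in Table~\ref{table:m<27}. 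This does not affect the method.)

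The one genuine weak point is (d). You compute $\Omega(\phi\times\theta\times\psi)$ as $\Omega(\phi)\star\Omega(\theta\times\psi)$ and dismiss the square missing from part (a) because ``it has both parts at most $\tfrac{u+l}{2}<N(\phi)+N(\theta)+N(\psi)$''. That observation does not prove anything: the issue is whether a partition $\lambda\vdash n+u+l$ inside the big ball, all of whose LR decompositions against $\cB_{u+l}(N(\theta)+N(\psi))$ pass through the excluded square, is still attained, and the size of the square relative to the outer ball is irrelevant to this. Two repairs are available. The paper's is to regroup: take $\pi=\phi\times\theta$, so $\Omega(\pi)\supseteq\cB_{n+u}(N(\pi))$ with no exceptions by Lemma~\ref{lem: RT=1}, and then apply the case analysis of (a) to $\pi\times\psi$, noting that $N(\pi)=N(\phi)+N(\theta)>\tfrac{n+u+1}{2}$ (since $N(\phi)>\tfrac{n+1}{2}$ strictly for every quasi-trivial type), so the $\psi=\triv_{P_3}$ exception of Lemma~\ref{lem: 10.3} cannot recur; the same strict inequality, rather than your informal ``$v\ge 9$ forces a factor far from a two-row partition'', is also the correct reason no square exception survives in (c)--(f). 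Alternatively, keep your grouping but supply an explicit box-replacement argument (exactly as you do for case (ii) in part (a)) trading the missing square for another member of $\Omega(\theta\times\psi)$. With either fix the remainder of your argument, including (f) and (g), goes through as in the paper.
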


\begin{proof}
	We have the same possibilities (i)--(iii) for $\Omega(\theta)$ and (1)--(3) for $\Omega(\phi)$ as in Lemma~\ref{lem: RT=1}. From Table~\ref{table:m<27}, we have that $\Omega(\psi)=\cB_l(m(\psi))$ or $\Omega(\psi)=\cB_l(l-|\nu|)\setminus\{(l-|\nu|,\nu) \}^\circ$ where $\nu\in\{(2,1),(2,2),(3,2,1)\}$, or $\Omega(\psi)$ satisfies one of these two but with some small partitions excluded. Also, $\Omega(\vartheta)$ contains $\cB_v(m(\theta))$ (sometimes with some small partitions excluded) and no other thin partitions. In all instances, that $\Omega(-)$ contains no other thin partitions than those described 
	follows from the Littlewood--Richardson rule and the pigeonhole principle.
	
	\noindent\emph{(a)} If $\Omega(\psi)=\cB_l(m(\psi))$ then $\Omega(\theta\times\psi)$ contains $\cB_{u+l}(N(\theta)+N(\psi))$ by the same argument as in cases (i)--(iii) $\star$ (3) in the proof of Lemma~\ref{lem: RT=1}. If $\Omega(\psi)=\cB_l(l-|\nu|)\setminus\{(l-|\nu|,\nu) \}^\circ$, then we follow cases (i)--(iii) $\star$ (2) instead. A similar case analysis follows if $\Omega(\psi)$ has either of these forms but with small exceptions, by modifying skew shapes and considering adjusted LR fillings similar to the proof of Lemma~\ref{lem: RT=1}, since there are only very few possibilities that connected components of such skew shapes can take. The only difference is when $\psi=\triv_{P_3}$ and $\Omega(\psi)=\{(3),(1^3)\}$, and $\theta$ is such that $N(\theta)=\tfrac{u+1}{2}$, in which case $\Omega(\theta\times\psi)$ contains $\cB_{u+l}(N(\theta)+N(\psi))\setminus\{(\tfrac{u+l}{2},\tfrac{u+l}{2}) \}^\circ$. This occurs precisely when $\theta$ is such that $T=1$ and $y_{21}=1$.
	
	\noindent\emph{(b)} If $\Omega(\vartheta)$ contains $\cB_v(m(\vartheta))$, then $\Omega(\phi\times\vartheta)$ contains $\cB_{n+v}(N(\phi)+N(\vartheta))$ and no other thin partitions by the same argument as in cases (iii) $\star$ (1)--(3) in Lemma~\ref{lem: RT=1}. If $\Omega(\vartheta)$ contains $\cB_v(m(\vartheta))$ except for some small partitions, then we instead follow cases (ii) $\star$ (1)--(3).
	
	\noindent\emph{(c)} Observe $\Omega(\phi\times\theta)$ contains $\cB_{n+u}(N(\phi)+N(\theta))$ by Lemma~\ref{lem: RT=1}. Thus $\Omega(\phi\times\theta\times\vartheta)=\Omega(\phi\times\theta)\star\Omega(\vartheta)$ contains $\cB_{n+u+v}(N(\phi)+N(\theta)+N(\vartheta))$ by the same argument as in the proof of Lemma~\ref{lem: all nqt}.
	
	\noindent\emph{(d)--(f)} Let $\pi=\phi\times\theta$ (respectively $\theta\times\vartheta$ or $\phi\times\vartheta$). Then $\Omega(\pi\times\psi)$ contains $\cB_{w'}(N')$ where $N'=N(\pi)+N(\psi)$ and $w'$ is defined appropriately. This follows from the same argument as in (a), since $\Omega(\pi)$ contains $\cB_{w'-l}(N(\pi))$ (by Lemma~\ref{lem: RT=1}, Lemma~\ref{lem: all nqt} or (b) resp.) and since $N(\pi)>\tfrac{w'-l+1}{2}$.
		
	\noindent\emph{(g)} Since $\Omega(\phi\times\theta\times\psi)$ contains $\cB_{n+u+l}(N(\phi)+N(\theta)+N(\psi))$ by (d), then $\Omega(\phi\times\theta\times\psi)\star\Omega(\vartheta)$ contains $\cB_{n+l+u+v}(N(\phi)+N(\psi)+N(\theta)+N(\vartheta))$ by the same argument as (c).
\end{proof}

The results of Theorems~\ref{thm:a} and~\ref{thm:b-primepower} and Lemmas~\ref{lem: RT=1}, \ref{lem: all nqt} and~\ref{lem: rest} can be summarised in the following corollary, from which we can also immediately deduce Theorem~\ref{thm:c} below.

\begin{corollary}\label{cor: theorem B'}
	Let $w\in\N$ and $\Theta\in\Lin(P_w)$, where $w=n+u+l+v$ or a subsum thereof, as in Notation~\ref{not:nqt}. Then $m(\Theta)= N(\Theta)$, unless (at least) one of the following holds:
	\begin{itemize}
		\item $\Theta=\phi(s)$ where $\sigma(s)\in\{1,5,6\}$, in which case $m(s)$ 
		is given in Theorem~\ref{thm:b-primepower} (or~\ref{thm:a}). 
		\item $\Theta=\phi\times\psi$ and one of the following holds, in which case $m(\Theta)=N(\Theta)-1$:
		\begin{itemize}
			\item[$\circ$] $\sigma(\phi)\in E$ and $\psi=\triv_{P_l}$;
			\item[$\circ$] $\sigma(\phi)\in F$ and $\psi\in\Psi_2$;
			\item[$\circ$] $\sigma(\phi)=(R,0,0,0,0,0,0)$ (i.e.~$\phi=\triv_{P_n}$) and $\psi\in\Psi_1$;
			\item[$\circ$] $\sigma(\phi)=(R-1,0,0,0,1,0,0)$ and $\psi=\phi(1,0)\cdot\triv_{P_{l-9}}$, $l\ge 9$; or
			\item[$\circ$] $\sigma(\phi)=(R-1,1,0,0,0,0,0)$ and $\psi=\phi(1,0)\cdot\triv_{P_{l-9}}$, $l\ge 9$.
		\end{itemize}
		\item $\Theta=\phi(t)$ for $\sigma(t)\in\{10,11,7\}$, in which case $m(t)$ 
		is given in Theorem~\ref{thm:b-primepower} 
		(or Table~\ref{table:summary}).
		\item $\Theta=\phi(t_1)\times\phi(t_2)$ where $\sigma(t_1)=11$ and $\sigma(t_2)=21$, in which case $m(\Theta)=\tfrac{w}{2}-1$.
		\item $\Theta=\phi(t)\times\triv_{P_3}$ where $\sigma(t)=21$, in which case $m(\Theta)=\tfrac{w}{2}-1$.
		\item $w<27$, in which case $m(\Theta)$ may be calculated directly; in all such instances, $m(\Theta)\ge\tfrac{2w}{9}$.
	\end{itemize}
\end{corollary}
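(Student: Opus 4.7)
The plan is to deduce Corollary~\ref{cor: theorem B'} by combining the structural descriptions of $\Omega(\Theta)$ obtained in the preceding lemmas with the definition $m(\Theta)=\max\{t:\cB_w(t)\subseteq\Omega(\Theta)\}$. First I would decompose any $\Theta\in\Lin(P_w)$ as a product $\phi\times\psi\times\theta\times\vartheta$, or an appropriate subproduct, as in Notation~\ref{not:nqt}, where $\phi,\psi$ collect the quasi-trivial components on the large ($27\mid n$) and small ($l<27$) parts respectively, and $\theta,\vartheta$ the non-quasi-trivial ones. By the remark following Notation~\ref{not:correspond} this decomposition is canonical up to $N_{S_w}(P_w)$-conjugacy, so $\Omega(\Theta)$ depends only on its data.

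Next, for each subsum appearing in the hypothesis I would cite the appropriate lemma to pin down $\Omega(\Theta)$: Theorem~\ref{thm:a} handles the pure quasi-trivial case $\Theta=\phi\times\psi$ exactly; Lemma~\ref{lem: RT=1} handles $\Theta=\phi\times\theta$; Lemma~\ref{lem: all nqt} handles $\Theta=\theta\times\vartheta$; and Lemma~\ref{lem: rest}(a)--(g) covers the remaining seven subsums. In each non-exceptional case the relevant result gives $\cB_w(N(\Theta))\subseteq\Omega(\Theta)$ with $\Omega(\Theta)\setminus\cB_w(N(\Theta))$ containing no thin partitions. The inclusion yields $m(\Theta)\ge N(\Theta)$, and the ``no other thin partitions'' statement implies $(N(\Theta)+1,w-N(\Theta)-1)\notin\Omega(\Theta)$, since this two-row partition is thin and fails $\lambda_1\le N(\Theta)$ (note $N(\Theta)\ge w/2$ always, so it is a genuine partition). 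This forces $m(\Theta)\le N(\Theta)$, hence equality.

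The exceptional cases require separate treatment. When $\Theta=\phi(s)$ is a single prime-power character with $\sigma(s)\in\{1,5,6,10,11,7\}$, the value of $m(s)$ is supplied by Theorem~\ref{thm:b-primepower} and strictly exceeds $N(s)$-minus-some-constant in the way recorded in Table~\ref{table:summary}. When $\sigma(\phi)\in E$ (or $F$) and $\psi$ lies in the appropriate set from $\Psi_1\cup\Psi_2\cup\{\triv_{P_l}\}$, Theorem~\ref{thm:a} gives
\[ \Omega(\Theta) = \cB_w(N(\Theta)-1)\sqcup\{(N(\Theta),\mu)\mid\mu\in\Omega(\triv_{P_{w-N(\Theta)}})\}^\circ, \]
and since $w-N(\Theta)\in\{3,4,6,9,10\}\cup\{3^k:k\ge 3\}$, there exists a partition $\mu$ of $w-N(\Theta)$ missing from $\Omega(\triv_{P_{w-N(\Theta)}})$ by Theorem~\ref{thm:GL1A} and Table~\ref{table:m<27}. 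The partition $(N(\Theta),\mu)$ then lies in $\cB_w(N(\Theta))\setminus\Omega(\Theta)$, which combined with $\cB_w(N(\Theta)-1)\subseteq\Omega(\Theta)$ yields $m(\Theta)=N(\Theta)-1$. The two additional sporadic exceptions, $\sigma(t_1)=11$ paired with $\sigma(t_2)=21$, and $\sigma(t)=21$ paired with $\psi=\triv_{P_3}$, both produce the excluded partition $(\tfrac{w}{2},\tfrac{w}{2})$ identified in Lemma~\ref{lem: type 11} and Lemma~\ref{lem: rest}(a), forcing $m(\Theta)=\tfrac{w}{2}-1$.

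The main obstacle I foresee is purely organisational: one must confirm that the partition of cases into generic and exceptional is exhaustive and non-overlapping, and that in each generic case the lemma genuinely provides $\cB_w(N(\Theta))$ rather than a ball-minus-small-exceptions, so that the upper bound on $m(\Theta)$ is indeed witnessed by the thin partition of first-row length $N(\Theta)+1$. Conversely, in each exceptional case one must verify that exactly one step is lost and no more, using that the missing $\mu$ above can be chosen so that $(N(\Theta),\mu)\in\cB_w(N(\Theta))$ (which holds since $|\mu|=w-N(\Theta)\le N(\Theta)$). Finally, the residual case $w<27$ and the bound $m(\Theta)\ge\tfrac{2w}{9}$ reduce to finite direct verification from Table~\ref{table:m<27}.
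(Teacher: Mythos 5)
Your proposal is correct and follows essentially the same route as the paper, which presents this corollary as a direct summary of Theorems~\ref{thm:a} and~\ref{thm:b-primepower} and Lemmas~\ref{lem: RT=1}, \ref{lem: all nqt} and~\ref{lem: rest} without writing out a separate argument. Your explicit bookkeeping — using the thin partition $(N(\Theta)+1,w-N(\Theta)-1)$ to cap $m(\Theta)$ in the generic cases, and a partition $(N(\Theta),\mu)$ with $\mu\notin\Omega(\triv_{P_{w-N(\Theta)}})$ to witness the loss of exactly one step in the exceptional ones — is precisely the verification the paper leaves implicit.
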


\begin{proof}[Proof of Theorem~\ref{thm:b-general}]
	This follows from Corollary~\ref{cor: theorem B'}.
\end{proof}

\begin{theorem}\label{thm:c}
	Let $w\in\N$ and $\Omega_w:=\bigcap_{\Theta\in\Lin(P_w)}\Omega(\Theta)$. Then $\cB_w(\tfrac{2w}{9})\subseteq\Omega_w$.
	In particular, $\underset{w\to\infty}{\lim} \frac{|\Omega_w|}{|\cP(w)|} = 1$.
\end{theorem}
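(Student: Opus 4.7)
The plan is to translate the containment $\cB_w(2w/9) \subseteq \Omega_w$ into the equivalent pointwise bound
\[ m(\Theta) \geq \lfloor 2w/9 \rfloor \qquad \text{for every } \Theta \in \Lin(P_w), \]
which holds by the very definition of $m$. The asymptotic limit then follows: the proportion of partitions of $w$ whose first row (or length) exceeds a fixed positive fraction of $w$ tends to $0$ as $w \to \infty$, so $|\cB_w(\lfloor 2w/9 \rfloor)|/|\cP(w)| \to 1$, and the sandwich $\cB_w(\lfloor 2w/9 \rfloor) \subseteq \Omega_w \subseteq \cP(w)$ forces $|\Omega_w|/|\cP(w)| \to 1$.

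The case $w < 27$ is precisely the parenthetical assertion in the fourth bullet of Theorem~\ref{thm:b-general}, reducing to finite inspection of Table~\ref{table:m<27}. For $w \geq 27$, I would decompose $\Theta$ into prime-power components via the 3-adic expansion of $w$, collect the small components (those over $P_{3^k}$ with $k \leq 2$) into a single character $\psi \in \Lin(P_l)$ with $l \in \{0, 1, \dotsc, 26\}$, and collect the remaining large components into $\phi \in \Lin(P_n)$ with $27 \mid n$ and $n \geq 27$ (since $w \geq 27$ and the small parts cannot exceed $2 \cdot 1 + 2 \cdot 3 + 2 \cdot 9 = 26$ in total), so that $\Theta = \phi \times \psi$ with large components indexed by sequences $s_1, \dotsc, s_R$ of lengths $k_i \geq 3$. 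In the generic case of Theorem~\ref{thm:b-general}, $m(\Theta) \in \{N(\Theta), N(\Theta) - 1\}$, where $N(\Theta) = \sum_{i=1}^R N(s_i) + N(\psi)$. The two key inputs I would verify by inspection are
\begin{enumerate}[label=(\roman*)]
    \item $N(s) \geq 3^k/2$ for every $s \in \{0,1\}^k$ with $k \geq 3$, from Table~\ref{table:types}; and
    \item $N(\psi) \geq 2l/9$ for every $\psi \in \Lin(P_l)$ with $l < 27$, from Table~\ref{table:m<27}.
\end{enumerate}
Granting these,
\[ N(\Theta) \geq \tfrac{n}{2} + \tfrac{2l}{9} = \tfrac{2w}{9} + \tfrac{5n}{18}, \]
and the surplus $5n/18 \geq 5 \cdot 27/18 > 1$ absorbs the possible $-1$ in the $m = N - 1$ case. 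The remaining exceptional cases of Theorem~\ref{thm:b-general}, namely the single-component prime-power formulas of Theorem~\ref{thm:b-primepower} when $\sigma(s) \in \{1, 5, 6, 10, 11, 7\}$ and the $R = 2$ cases giving $m(\Theta) = w/2 - 1$, are handled by direct substitution: $m(\Theta) \geq (3^k - 1)/2$ or $m(\Theta) = w/2 - 1$, each exceeding $2w/9$ for the constrained values of $w$ (at least $27$, $54$, and $30$ respectively).

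The main technical obstacle is verifying input~(i) uniformly for type $22$ sequences, which by definition absorb all $s$ not otherwise classified in Table~\ref{table:types} and for which $N(s) = m(s)$ is given by Theorem~\ref{thm:b-primepower}. The tightest sub-case is $\sfs_1 = 1$ with $z \geq F(s) + 2$, yielding $m(s) = \tfrac{3^k + 3^{k-F(s)}}{2} - 3^{k-G(s)} - 3^{k-H(s)}$. The bound $m(s) \geq 3^k/2$ reduces to $3^{k-F(s)}/2 \geq 3^{k-G(s)} + 3^{k-H(s)}$, which is tightest at $G(s) = F(s)+1$ and $H(s) = F(s)+2$; in that extremal configuration,
\[ \tfrac{3^{k-F(s)}}{2} - 3^{k-F(s)-1} - 3^{k-F(s)-2} = \bigl(\tfrac{9}{2} - 3 - 1\bigr)\cdot 3^{k-F(s)-2} = \tfrac{3^{k-F(s)-2}}{2} > 0, \]
and the other sub-cases of type $22$ (and of the remaining types) are easier. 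Input~(ii) is a finite check; the extremal row is $\phi(1,1)$ at $l = 9$, where $N = 5$ gives ratio $5/9 > 2/9$ and where the underlying $m(\phi(1,1)) = 2$ (due to the excluded partition $(3^3) \in \cB_9(3)$) makes the overall bound $2w/9$ in Theorem~\ref{thm:c} sharp at $w = 9$.
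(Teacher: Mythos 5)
Your proposal is correct and follows essentially the same route as the paper, which deduces Theorem~\ref{thm:c} directly from Corollary~\ref{cor: theorem B'} by noting that $m(\Theta)\ge \tfrac{2w}{9}$ in every case (with the $w=9$ characters $\phi(0,1)$ and $\phi(1,1)$ as the sharp bottleneck, exactly as you observe), together with the standard fact that almost all partitions of $w$ lie in $\cB_w(cw)$ for any fixed $c>0$. Your explicit verification of the inputs $N(s)\ge 3^k/2$ (including the tight type-$22$ sub-case, which matches the bound $N(s)\ge\tfrac{3^k+3}{2}$ recorded in Remark~\ref{rem: 22}) and $N(\psi)\ge 2l/9$ simply makes explicit the check the paper leaves to the reader.
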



\begin{remark}\label{rem:2/9}
	We remark that the value of the constant $\tfrac{2}{9}$ in Theorem~\ref{thm:c} was limited by $m(s)=2$ for $s=(0,1)$ and $s=(1,1)$ when $w=9$. In general, $m(\Theta)$ is much closer to $\tfrac{w}{2}$, resembling $\cB_w(\tfrac{w}{2})\subseteq\Omega_w$ for all primes $p\ge 5$ \cite[Theorem C]{GL2}. Indeed, when $p=3$, then for all sufficiently small $\epsilon>0$ and sufficiently large $w$, our main results 
	imply that $\cB_w(\tfrac{w}{2+\epsilon})\subseteq\Omega(\Theta)$ for all $\Theta\in\Lin(P_w)$ (recall $N(\Theta)$ may be calculated using Tables~\ref{table:m<27}, \ref{table:summary} and Definition~\ref{def: all p=3}).\hfill$\lozenge$
\end{remark}

\subsection*{Acknowledgements}
The author is grateful to the anonymous referee for their many detailed and helpful comments and suggestions. The author would also like to thank Karin Erdmann for her support and hospitality and the University of Oxford for hosting a London Mathematical Society Early Career Fellowship during which part of this work was done. 
\bigskip


\end{document}